\newcounter{pos} % needed for tikz
\tikzset{									% for pictures
	initcounter/.code={\setcounter{pos}{0}},
	style between/.style n args={3}{
		postaction={
			initcounter,
			decorate,
			decoration={
				show path construction,
				curveto code={
					\addtocounter{pos}{1}
					\pgfmathtruncatemacro{\min}{#1 - 1}
					\ifthenelse{\thepos < #2 \AND \thepos > \min}{
						\draw[#3]
						(\tikzinputsegmentfirst)
						..
						controls (\tikzinputsegmentsupporta) and (\tikzinputsegmentsupportb)
						..
						(\tikzinputsegmentlast);
					}{}
				}
			}
		},
	},
}
\theoremstyle{plain} % style plain
\newtheorem{theorem}{Theorem}[section]
\newtheorem{lemma}[theorem]{Lemma}
\newtheorem{proposition}[theorem]{Proposition}
\newtheorem{corollary}[theorem]{Corollary}
\theoremstyle{definition} % style definition
\newtheorem{example}[theorem]{Example}
\newtheorem{definition}[theorem]{Definition}
\newtheorem{remark}[theorem]{Remark}
\newtheorem{assumption}[theorem]{Assumption}
\newcommand{\End}[1]{\operatorname{\rm End}_{#1}}
\newcommand{\Hom}[1]{\operatorname{{\rm Hom}}_{#1}}
\newcommand{\rad}{\operatorname{rad}}
\renewcommand{\mod}{\mbox{{\rm mod \!}}}
\newcommand{\proj}{\operatorname{{\rm proj }}}
\newcommand{\Ho}{\mbox{{\rm H}}}
\newcommand{\basis}{\mathfrak{B}}
\newcommand{\Dfd}[1]{\mathcal{D}_{\operatorname{fd}}(#1)}
\newcommand{\cA}{\mathcal{A}}
\newcommand{\cB}{\mathcal{B}}
\newcommand{\cC}{\mathcal{C}}
\newcommand{\cD}{\mathcal{D}}
\newcommand{\cE}{\mathcal{E}}
\newcommand{\cF}{\mathcal{F}}
\newcommand{\cG}{\mathcal{G}}
\newcommand{\cI}{\mathcal{I}}
\newcommand{\cM}{\mathcal{M}}
\newcommand{\cO}{\mathcal{O}}
\newcommand{\cP}{\mathcal{P}}
\newcommand{\cS}{\mathcal{S}}
\newcommand{\cT}{\mathcal{T}}
\newcommand{\bZ}{\mathbb{Z}}
\renewcommand{\P}{P^\bullet}
\newcommand{\gr}{\operatorname{gr}}
\newcommand{\Perf}{\operatorname{Perf}}
\newcommand{\Tw}{\operatorname{Tw}}
\newcommand{\Vectgr}{k[t,t^{-1}]\operatorname{-grmod}}
\newcommand{\invex}{{\scriptstyle \text{\rm !`}}}
\begin{document}
\title[A geometric model for the derived category of gentle algebras]{A geometric model for the derived category of gentle algebras}

\author{Sebastian Opper}
\address{Charles University, Faculty of Mathematics and Physics, Ke Karlovu 3, 121 16 Praha 2, Czech Republic}
\email{opper@karlin.mff.cuni.cz}

\author{Pierre-Guy Plamondon}
\address{Université Paris-Saclay, UVSQ, CNRS, Laboratoire de Mathématiques de Versailles, 78000, Versailles, France, and Institut Universitaire de France (IUF).}
\email{pierre-guy.plamondon@uvsq.fr}

\author{Sibylle Schroll}
\address{Department of Mathematics, University of Cologne, Weyertal 86-90, 50931 K\"oln, Germany}
\email{schroll@math.uni-koeln.de}

\keywords{}
\thanks{The first author is supported by the DFG grants BU 1866/4-1 and CRC/TRR 191. 
The second author was supported by the French ANR grant SC3A (ANR-15-CE40-0004-01), ANR grant CHARMS (ANR-19-CE40-0017-02)), and the Institut Universitaire de
France (IUF).  The third author received partial support by  the EPSRC through an Early Career Fellowship EP/P016294/1.
}

\date{\today}

\begin{abstract}
In this paper we construct a geometric model for the triangulated category generated by the simple modules of any graded gentle algebra. This leads to a geometric model of their perfect derived categories and by \cite{BoothGoodbodyOpper} also of their derived categories of objects with finite-dimensional cohomology. The construction is based on the ribbon graph associated to  a gentle algebra  in \cite{Schroll15},
and is linked to partially wrapped Fukaya categories by the work of \cite{HaidenKatzarkovKontsevich} and to derived categories of coherent sheaves on nodal stacky curves by the work of \cite{LekiliPolishchuk17}.
The ribbon graph gives rise to an oriented surface with boundary and marked points in the boundary. 
We show that the homotopy classes of curves connecting marked points and of closed curves are in bijection with the isomorphism classes of indecomposable objects in the  derived category of the graded gentle algebra. 
Intersections of curves correspond to morphisms and resolving the crossings of curves gives rise to mapping cones. 
The Auslander-Reiten translate corresponds to rotating endpoints of curves along the boundary. 
Furthermore, we show that the surface encodes the derived invariant of Avella-Alaminos and Geiss.
\end{abstract}

\maketitle

\tableofcontents

\section*{Introduction}

Giving a description of the (bounded) derived category of a finite dimensional algebra is a difficult undertaking in general.
This problem is best approached by restricting to special classes of examples; 
for instance, in \cite{GeissKrause} the notion of derived-tameness of a bounded derived category of finite dimensional modules over a finite dimensional algebra was introduced 
and in \cite{BekkertMerklen, BurbanDrozd2003} it was shown that for a gentle algebra its bounded derived category is derived-tame
(a result which also follows from \cite{Ringel1997} for gentle algebras of finite global dimension).

Gentle algebras first appeared in the form of iterated tilted algebras of type $A$ \cite{AssemHappel, AssemHappelErratum} and type $\tilde{A}$ \cite{AssemSkowronski}. 
It has transpired since that they naturally appear in many different contexts; these include
dimer models \cite{Bocklandt, Broomhead2}, enveloping algebras of Lie algebras \cite{KhovanovHuerfano}, 
and cluster theory, where they appear as $m$-cluster tilted and $m$-Calabi--Yau tilted algebras as well as Jacobian algebras associated to surfaces with marked points in the boundary \cite{ABCP,  Elsener, Labardini}.
Of particular interest in relation to our construction is the appearance of the derived category of graded gentle algebras in the context of partially wrapped Fukaya categories \cite{HaidenKatzarkovKontsevich}.

In \cite{HaidenKatzarkovKontsevich}, the authors obtained formal generators of certain partially wrapped Fukaya categories.
The endomorphism rings of these formal generators are graded gentle algebras.  
Then, as we were finalising our paper, an independent construction of a geometric model of the bounded derived category of homologically smooth gentle algebras has come to our attention \cite{LekiliPolishchuk}. 
These works show that there are equivalences of categories between partially wrapped Fukaya categories and derived categories of graded gentle algebras. 

 In a first version of this work (available on the arXiv), we provided a  geometric model for the bounded derived category of ungraded (not necessarily homologically smooth) gentle algebras.
That model encodes  much of the information of the derived category, such as its indecomposable objects,
morphisms, triangles in the form of mapping cones of morphisms, Serre functor (or equivalently, Auslander-Reiten translation) for the perfect objects, and Auslander--Reiten triangles. Furthermore, the model for the derived category of a gentle algebra is at the same time a  geometric model for the Koszul dual of the same gentle algebra (Section~\ref{sec::Koszul dual}).

The bounded derived categories of gentle algebras have been extensively studied.
Their indecomposable objects were completely classified in terms of homotopy strings and bands in \cite{BekkertMerklen}
and using different matrix reduction techniques in \cite{BurbanDrozd2002, BurbanDrozd2003, BurbanDrozd2004}.
A basis for their morphism spaces was given in \cite{ArnesenLakingPauksztello}
and the cones of these morphisms were studied in \cite{CanakciPauksztelloSchroll}.
The almost-split triangles of these categories were described in \cite{Bobinski}, see also \cite{ArnesenLakingPauksztello}.
The introduction of a combinatorial derived invariant for gentle algebras in \cite{AG} has also sparked a lot of research on
the question of when two gentle algebras are derived equivalent, see for instance \cite{ Avella-Alaminos, David-RoeslerSchiffler, David-Roesler, Amiot, AmiotGrimeland, Kalck, Bobinski2017, LekiliPolishchuk}
(other invariants had also been introduced in \cite{BessenrodtHolm}).
This study was also extended to unbounded homotopy categories in \cite{BurbanDrozd}.
The derived category of related classes of algebras have also been studied in some of the references mentioned above,
see also \cite{BekkertMarcosMerklen, BekkertVyacheslav2003, BurbanDrozd2006, BekkertDrozd2009}

In this new version of the paper, given a finite-dimensional graded gentle algebra $A$, we construct a geometric model of the thick subcategory $\cT \subseteq \cD(A)$ generated by the simple $A$-modules in form of a lamination of an oriented surface with boundary and marked points in the boundary. The category $\cT$ contains the perfect derived category of $A$ and, by upcoming work of the first author together with  Booth and Goodbody  \cite{BoothGoodbodyOpper}, in the case of graded gentle algebras, the category  $\mathcal T$ is equivalent to the derived category  $\Dfd{A} \subseteq \cD(A)$ of objects with finite-dimensional total cohomology.  Note that if $A$ is an ordinary (ungraded) gentle algebras, one has $\Dfd{A}\simeq \cD^b(A)$.  
In \cite{Schroll15}, see also \cite{Schroll}, for every gentle algebra, a ribbon graph was given. 
Our model is based on the embedding of the ribbon graph  into its ribbon surface where the marked points correspond to the vertices of the ribbon graph  embedded in the boundary of the surface.  
The lamination then corresponds to a form of  dual of the ribbon graph within the surface, that is, the lamination corresponds to the ribbon graph of the Koszul dual of the gentle algebra. 
 Furthermore, we show that the fundamental group of the surface is isomorphic to the fundamental group of the quiver considered as a graph.

We give an explicit description of the correspondence of homotopy classes of (possibly infinite) graded curves in the surface with the indecomposable objects  in the derived category of a graded gentle algebra based on the graded homotopy strings and bands of \cite{BekkertMerklen}  (Theorem \ref{theo::objects-as-arcs}).
Using a graded analogue of the basis of homomorphisms in $\Dfd{A}$  given in \cite{ArnesenLakingPauksztello}, 
we show that these basis elements correspond to crossings of curves (Theorem \ref{TheoremMorphismsIntersections}) and show that they constitute a basis for the morphism spaces of $\Dfd A$ in the graded setting.
Building on the graphical mapping cone calculus given in \cite{CanakciPauksztelloSchroll} (which we generalize to the graded setting),
we show that the mapping cone of a map corresponding to a crossing of curves is given by  the resolution of the crossing (Theorem \ref{theo::mapping-cones}).  
The Auslander-Reiten translate of a perfect object in $\cD(A)$ then corresponds to the rotation of the endpoints along the boundary of the corresponding curve in the surface (Corollary \ref{CorollaryTauIsGeometric} \& Corollary \ref{TheoremMiddleTermsARTriangleAsArcsGraded}). 
Finally, we show that the surface encodes the derived invariant of Avella-Alaminos and Geiss \cite{AG} 
in terms of the number of boundary components, the number of marked points on each boundary component and the number of laminates starting and ending on each boundary component (Theorem \ref{thm-AG}). 

We thus obtain an explicit description of the category $\cT$ and hence by \cite{BoothGoodbodyOpper} of $\Dfd{A}$ .  Based on \cite{HaidenKatzarkovKontsevich, LekiliPolishchuk17}, this category gives a complete instance of  homological mirror symmetry (in the trivially graded case) for two dimensional manifolds corresponding to oriented surfaces with stops on the one side \cite{HaidenKatzarkovKontsevich} and for nodal stacky curves on the other \cite{LekiliPolishchuk17}.  Namely, if $A$ is a  trivially graded gentle algebra of finite global dimension with ribbon graph $\Gamma_A$ and associated ribbon surface with marked points $(S_A, (\Gamma_A)_0)$ and if $\check{X}$ is a certain nodal stacky curve (see \cite{LekiliPolishchuk17}), then  
 $$\mathcal{F}( (S_A, (\Gamma_A)_0) \simeq  \Dfd{A}  \simeq D^b( {\rm coh} \check{X}),$$ 
 where the first equivalence representing the 'A-model', given by  the partially wrapped Fukaya category $\mathcal{F}( (S_A, (\Gamma_A)_0)$  of   $ (S_A, (\Gamma_A)_0)$ as surface with stops, follows from the general graded construction in   \cite{HaidenKatzarkovKontsevich}. The second equivalence representing the 'B-model', given by the bounded derived category of coherent sheaves on $\check{X}$,  is described in  \cite{LekiliPolishchuk17}.   Thus, the geometric model of  $\Dfd{A}$ constructed in this paper gives a complete description in this case. 
 \newline
 A more concrete example, in which case both the A- and the B-side have been well studied in representation theory, is given by the Kronecker algebra. More precisely,  if $A = K( \xymatrix{
 \cdot \ar@<-.5ex>[r] \ar@<.5ex>[r] & \cdot
})$ is the Kronecker algebra with zero grading and ribbon graph $\Gamma_A$ (see \cite{Schroll15} or Definition~\ref{defi::ribbon-graph-of-gentle-algebra} for the definition of the ribbon graph of a gentle algebra) and if $(S_A, (\Gamma_A)_0)$ is the associated surface $S_A$ with marked points $(\Gamma_A )_0$  (as defined in Definition~\ref{defi::surface-of-a-gentle-algebra}) then we have 
 $$\mathcal{F}( (S_A, (\Gamma_A)_0) \simeq  \Dfd{A}  \simeq D^b( {\rm coh} \mathbb{P}^1),$$
  where the equivalence  $\Dfd{A}  \simeq D^b( {\rm coh} \mathbb{P}^1)$ is a well-known result of Be\u{\i}linson \cite{Beilinson}.

In the context of a classification of thick subcategories of discrete derived categories, a geometric model was given in \cite{Broomhead}. 
Discrete derived categories were classified in \cite{Vossieck}, where it is shown that they correspond to bounded derived categories of a class of gentle algebras. 
The geometric model constructed in \cite{Broomhead} coincides with our model for the class of discrete derived algebras. 

Jacobian algebras of (ideal) triangulations of marked surfaces with all marked points in the boundary are gentle algebras \cite{Labardini, ABCP}. We note that the ribbon graph of such a gentle algebra corresponds exactly to the triangulation of the surface. 
%We note that the ribbon graph of the Jacobian algebra of a triangulated surface without punctures corresponds exactly to the triangulation of the surface.} 
In this context, the indecomposable objects of the associated cluster category were classified in \cite{BrustleZhang} 
in terms of arcs and closed curves on the surface, and the Auslander-Reiten translation was described in \cite{BrustleQiu}.
Bases for the extension spaces were described in terms of crossings of arcs in \cite{CanakciSchroll}.
These results were then extended to the case where the surface has punctures 
(that is, marked points in its interior)  in \cite{QiuZhou}, and a complete description of indecomposable
objects using arcs and closed curves was given in \cite{AmiotPlamondon}. 
It was shown in \cite{BaurSimoes} that any gentle algebra is the endomorphism ring of a rigid object associated to a partial triangulation of an unpunctured surface, and the modules over this algebra are then modelled on the surface.
Furthermore, for gentle algebras associated to triangulations of surfaces with marked points in the boundary, the geometric description of the Auslander-Reiten translation is the same in both the associated module category \cite{BrustleZhang}, the cluster category  \cite{BrustleZhang} and we show in this paper, that it is the case also  in the  bounded derived category.
We note that in the case of a gentle Jacobian algebra $A$, the oriented closed surface (obtained by gluing open discs to the boundary components) is the same for the module category, for the cluster category and for the derived category. However, the corresponding surfaces with boundary differ in that in the model for the derived category we possibly have less marked points on some of the boundary components and there might also be additional boundary components that do not have any marked points.

The layout of the paper is as follows. 
In Section \ref{sect::surfaces} we construct the  marked bounded surface $S_A$ of a graded gentle algebra $A$ from its ribbon graph $\Gamma_A$ as well as a lamination  of $S_A$, we identify the fundamental group of the surface with the fundamental group of the quiver of $A$ and we show that the ribbon graph of the (left) Koszul dual of $A$ is the dual graph of $\Gamma_A$. 
The correspondence of homotopy classes of graded  curves with the objects in the bounded derived category $\Dfd{A}$
 is given in Section \ref{sect::indecomposables}. 
In Section \ref{sect::homomorphisms-description} we establish a correspondence between a basis of homomorphism in $\Dfd{A}$ given in \cite{ArnesenLakingPauksztello} and the crossing of curves (in minimal position) in $S_A$. 
The mapping cones of the basis of homomorphism in terms of resolutions of crossings is given in Section \ref{sect::mapping-cones}, 
and it is shown in Section \ref{sect::AR-triangles} that the Auslander-Reiten translate corresponds to a rotation of both endpoints of the homotopy class of curves corresponding to an indecomposable perfect object in $\Dfd{A}$.
Finally, in Section \ref{sect::AG-invariant} a description of the derived invariant of Avella-Alaminos and Geiss in terms of the surface is given.

\subsection*{Comparison with  work of Qiu-Zhang-Zhou/Li-Qiu-Zhou and comments on new results}
While working on the current generalisation of results from a previous version of this article to the case of graded gentle algebras, Qiu-Zhang-Zhou and Li-Qiu-Zhou showed related results in \cite{QiuZhangZhou} and \cite{LiQiuZhou}, respectively. 
In the former, based on techniques in \cite{BurbanDrozd} and in \cite{Deng}, a classification of indecomposable objects in the perfect derived category of a non-positively graded (skew)-gentle algebra is given in terms of curves on an (orbifold) surface and the dimension of the 
homomorphism
spaces between indecomposables is given in terms of intersections of the corresponding curves. 
In the latter paper, using a similar approach via Koszul duality with homologically smooth graded gentle algebras and the classification of objects in the perfect derived categories of such from \cite{HaidenKatzarkovKontsevich} as we do in our paper, they provided a description of indecomposable objects and certain mapping cones for the perfect derived categories of homologically smooth and proper graded gentle algebras alongside a formula for the dimension of morphism spaces in terms of intersections of curves. Our results are more general in that we do allow non-smooth graded gentle algebras and provide a description of non-perfect objects. It is worth pointing out that the case of non-smooth gentle algebras requires a much more detailed treatment of the Koszul functor which, in this generality, is neither faithful nor full but rather a non-commutative generalisation of the completion functor from perfect complexes over the polynomial ring in one variable to perfect complexes over the formal power series ring. This leads to many non-trivial technical issues that need to be addressed. For example, assertions such as that the Koszul functor is essentially surjective and that the images of indecomposable objects are indecomposable, are no longer automatic but require more elaborate arguments. By adapting arguments from \cite{ArnesenLakingPauksztello}, we also describe an explicit basis for the morphisms spaces between indecomposable objects. In addition, we describe mapping cones along basis elements as well as the Auslander-Reiten theory. Since the known proofs of the latter in the ungraded case does not generalise, we instead adapt and incorporate methods of functorial filtrations to give a description of Auslander-Reiten triangles of band complexes as well as techniques from \cite{OpperDerivedInvariants} to treat the case of string complexes.

\subsection*{Remarks on logical structure}While the overall structure of the paper has largely remained unchanged in this version, most of the new material was outsourced into four appendices which contain the generalisation of  various results in the literature from gentle algebras to graded gentle algebras: Appendix \ref{AppendixMorphisms} discusses a basis of morphisms between string and band complexes over graded gentle algebras, Appendix \ref{AppendixIndecomposableObjectsGraded} discusses the classification of indecomposable objects in $\cT$ (and therefore of $\Dfd{A}$ by \cite{BoothGoodbodyOpper}), for any graded gentle algebra $A$. Appendix \ref{AppendixARTheoryBands} gives a description of the Auslander-Reiten triangles involving band complexes over graded gentle algebras. Finally, Appendix \ref{AppendixMappingCones} provides a description of mapping cones of standard basis elements (as discussed in Appendix \ref{AppendixMorphisms}) between string and band complexes. We want to emphasize that the order of the appendices is according to their logical dependency and differs from the order of presentation in the main body of this work. For example, the description of morphisms between string and band complexes in Appendix \ref{AppendixMorphisms} does not depend on the assumption that such objects are indecomposable or constitute all indecomposable objects in $\Dfd A$.

\section*{Conventions}
In this paper, unless otherwise stated, all algebras will be assumed to be finite-dimensional over a base field $K$.  
All modules over such algebras will be assumed to be finite-dimensional left modules.
Arrows in a quiver are composed from left to right as follows, for arrows $a$ and $b$ we write $ab$ for the path from the start of $a$ to the target of $b$ and maps are composed right to left, that is if $f: X \to Y$ and $g: Y \to Z$ then $gf : X \to Z$.

%%%%%%%%%%%%%%%%%%%%%%%%%%%%%%%%%%%%%%%%%%%%%%%%%%%%%%%%%%%%%%%%%%%%%%%%%%%%%
\section{Surfaces with boundaries for gentle algebras}\label{sect::surfaces}
In this section, we recall the construction of a surface with boundary associated to a gentle algebra.  
Our main references in this section are \cite{Schroll} and \cite{Labourie}.

%%%%
\subsection{Ribbon graphs and ribbon surfaces}
A graded ribbon graph is an unoriented graph with a cyclic ordering of the edges around each vertex.
In order to give a precise definition, it is useful to define a graph as a collection of vertices and \emph{half-edges}, 
each of which is attached to a vertex and another half-edge.  More precisely:

\begin{definition}
	A \emph{graph} is a quadruple $\Gamma = (V,E,s,\iota)$, where
	\begin{itemize}
		\item $V$ is a finite set, whose elements are called \emph{vertices};
		\item $E$ is a finite set, whose elements are called \emph{half-edges};
		\item $s:E\to V$ is a function;
		\item $\iota: E\to E$ is an involution without fixed points.
	\end{itemize}
\end{definition}
We think of $s$ as a function sending each half-edge to the vertex it is attached to, 
and of $\iota$ as sending each half-edge to the other half-edge it is glued to.
This definition is equivalent to the usual definition of a graph, and in practice we will draw graphs in the usual way.

\begin{definition}\label{defi::ribbonGraph}
	A \emph{graded ribbon graph} is a graph $\Gamma$ endowed with a permutation $\sigma:E\to E$ whose orbits correspond to the sets $s^{-1}(v)$,
	for all $v\in V$, together with a degree map~$d$ assigning an integer to each pair~$\left( e, \sigma(e)\right)$ with~$e\in E$.
\end{definition}
In other words, a graded ribbon graph is a graph endowed with a cyclic ordering of the half-edges attached to each vertex with an integer attached to each pair of consecutive half-edges around each vertex.

Any ribbon graph can be embedded in the interior of a canonical oriented surface with boundary, called the \emph{ribbon surface}, 
in such a way that the orientation of the surface is induced by the cyclic orderings of the ribbon graph.
Whenever we deal with oriented surfaces in this paper, we will call \emph{clockwise orientation} the orientation of the surface,
and \emph{anti-clockwise orientation} the opposite orientation.  
When drawing surfaces or graphs in the plane, we will do so that locally, the orientation of the surface or graph becomes 
the clockwise orientation of the plane.

\begin{definition}\label{defi::ribbon-surface}
	Let $\Gamma$ be a connected ribbon graph.
	The \emph{ribbon surface} $S_\Gamma$ is constructed by gluing polygons as follows.
	\begin{itemize}
		\item For any vertex $v\in V$ with valency $d(v)\geq 1$, let $P_v$ be an oriented $2d(v)$-gon.  
		\item Following the cyclic orientation, label every other side of $P_v$  with the half-edges $e \in E$ such that $s(e) = v$.
		\item For any half-edge $e$ of $\Gamma$, identify the side of $P_v$ labelled $e$ with the side of the polygon $P_{s(\iota(e))}$  labelled $\iota(e)$, 
		respecting the orientations of the polygons.
	\end{itemize} 
	In this definition, we exclude the degenerate case where $\Gamma$ has only one vertex and no half-edges.
\end{definition}

\begin{figure}[ht]
	\includegraphics[width=8cm]{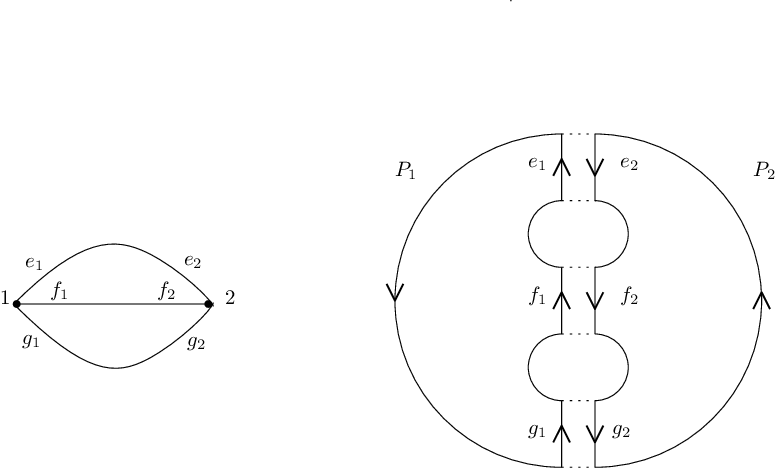}
	\caption{Example of a ribbon graph $\Gamma$ with orientation given by the planar embedding and with half edge labelling on the left and on the right the associated ribbon surface $S_\Gamma$ obtained by gluing the two polygons $P_1$ and $P_2$ corresponding to vertices $1$ and $2$ of the ribbon graph.  }
\end{figure}

Note that $S_\Gamma$ is oriented, and that we can embed $\Gamma$ in $S_\Gamma$ as follows: the vertices of $\Gamma$ are the centers of the polygons $P_v$,
and the half edges of $\Gamma$ are arcs joining the center of each $P_v$ to the middle of the side with the same label.
By \cite[Corollary 2.2.11]{Labourie}, $S_\Gamma$ is, up to homeomorphism, the only oriented surface $S$ in which we can embed $\Gamma$,
preserving the cyclic ordering around each vertex, and such that the complement of the embedding of $\Gamma$ in $S$ is a disjoint union
of discs (we say that $\Gamma$ is \emph{filling} for $S$).  
Moreover, by \cite[Proposition 2.2.7]{Labourie}, the number of boundary components of $S_\Gamma$ is equal to the number of \emph{faces}
of $\Gamma$, according to the following definition.

\begin{definition}\label{defi::face}
	Let $\Gamma$ be a ribbon graph.  A \emph{face} of $\Gamma$ is an equivalence class, up to cyclic rotation, of tuples
	of half-edges $(e_1, \ldots, e_n)$ such that
	\begin{itemize}
		\item $e_{p+1} = \begin{cases}
		\iota(e_p) & \textrm{if $s(e_p)= s(e_{p-1})$,} \\
		\sigma(e_p) & \textrm{otherwise,}
		\end{cases}$
		where the indices are taken modulo $n$;
		\item the tuple is non-repeating, in the sense that if $p\neq q$ and $e_p = e_q$, then $e_{p+1} \neq e_{q+1}$.
	\end{itemize}
\end{definition}

%%%
\subsection{Marked ribbon graphs}\label{sect::marked-ribbon-graphs}
When we study gentle algebras in Section \ref{sect::gentle-algebras}, we will obtain ribbon graphs endowed with one additional piece
of information.  
We will call these \emph{marked ribbon graph}, and we define them as follows.

\begin{definition}
	A \emph{marked ribbon graph} is a ribbon graph $\Gamma$ together with a map $m:V\to E$
	such that for every vertex $v\in V$, $m(v)\in s^{-1}(v)$.  A \emph{graded marked ribbon graph} is a marked ribbon graph with a degree map~$d$ as in Definition~\ref{defi::ribbonGraph}, with the difference that~$d$ does not assign an integer to the pairs~$\left(m(v), \sigma(m(v)) \right)$ with~$v\in V$.
\end{definition}
In other words, a marked ribbon graph is a ribbon graph in which we have chosen one half-edge $m(v)$ around each vertex $v$.

If $\Gamma$ is a marked ribbon graph, we can construct its ribbon surface $S_\Gamma$ like in Definition \ref{defi::ribbon-surface}.
Moreover, with the additional information given by the map $m$, we can do the following:

\begin{proposition}\label{prop::embedding-marked-ribbon-graph}
	There is an orientation-preserving embedding of $\Gamma$ in $S_\Gamma$ which sends all vertices of $\Gamma$ to points in boundary components of $S_\Gamma$
	such that for each vertex $v\in V$, the boundary component lies between $m(v)$ and $\sigma(m(v))$ in the clockwise orientation.
	This embedding is unique up to homotopy relative to $\partial S_\Gamma$.
\end{proposition}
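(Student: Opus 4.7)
The plan is to build the desired embedding by modifying the canonical embedding $\Gamma \hookrightarrow S_\Gamma$ which places each vertex at the center of its polygon $P_v$, and then to establish uniqueness via an isotopy argument relying on the fact, already quoted from \cite{Labourie}, that $\Gamma$ is filling for $S_\Gamma$.

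For existence, recall that $P_v$ is a $2d(v)$-gon whose sides alternate between \emph{labeled} ones, which are glued to labeled sides of other polygons, and \emph{unlabeled} ones, which after gluing assemble into the boundary components of $S_\Gamma$. Each corner of $P_v$ sits between two consecutive labeled sides $e$ and $\sigma(e)$, and therefore lies in $\partial S_\Gamma$. First I would identify, for each $v$, the distinguished corner $c_v$ of $P_v$ lying between the sides labeled $m(v)$ and $\sigma(m(v))$ in the clockwise orientation. I would then isotope the canonical embedding inside each $P_v$ separately, dragging the central vertex along a simple path to $c_v$ and deforming each half-edge to a simple arc from $c_v$ to the midpoint of its labeled side, keeping $\partial P_v$ pointwise fixed. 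Because each local isotopy is orientation-preserving and leaves the polygon boundaries untouched, gluing the modified pictures yields a well-defined orientation-preserving embedding of $\Gamma$ into $S_\Gamma$ with the prescribed behaviour at the vertices.

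For uniqueness, let $\phi_0,\phi_1:\Gamma\hookrightarrow S_\Gamma$ be two embeddings satisfying the conclusion and write $\Sigma_i:=\phi_i(\Gamma)$. Since $\phi_i$ preserves the cyclic ordering around each vertex and $\Gamma$ is filling, the connected components of $S_\Gamma\setminus\Sigma_i$ are open discs indexed by the faces of $\Gamma$ in the sense of Definition \ref{defi::face}. The marking $m$, combined with the permutation $\sigma$, determines completely the cyclic sequence of half-edge images and boundary arcs of $S_\Gamma$ that one reads off as one travels around the boundary of each complementary disc. Hence the combinatorial boundary data of the discs agree for $\Sigma_0$ and $\Sigma_1$. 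An Alexander-trick-style argument on each complementary disc then assembles into an ambient isotopy of $S_\Gamma$ taking $\Sigma_0$ to $\Sigma_1$ and moving points of $\partial S_\Gamma$ only along $\partial S_\Gamma$, which gives the desired homotopy relative to $\partial S_\Gamma$.

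The main obstacle I anticipate is the bookkeeping step in the uniqueness argument: one must verify carefully that the matching condition prescribed by $m$ forces the cyclic labels around every complementary disc of $\Sigma_0$ and $\Sigma_1$ to coincide, so that the Alexander trick can be applied disc-by-disc. Once this combinatorial matching is in hand, the remainder is routine surface topology.
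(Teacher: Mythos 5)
Your approach is essentially the one the paper takes, but there is a geometric slip in the existence step that contradicts your own (correct) recollection of how $P_v$ is built. You correctly state that the $2d(v)$-gon $P_v$ has \emph{alternating} labeled and unlabeled sides, so that between two consecutive labeled sides $m(v)$ and $\sigma(m(v))$ there is an entire \emph{unlabeled side}, not a corner. Your sentence ``Each corner of $P_v$ sits between two consecutive labeled sides $e$ and $\sigma(e)$'' is therefore false as written: every corner of $P_v$ separates one labeled side from one unlabeled side, and there is no well-defined corner ``between $m(v)$ and $\sigma(m(v))$.'' The paper's construction moves $v$ to a point on the unlabeled side lying between $m(v)$ and $\sigma(m(v))$; you should do the same, replacing $c_v$ by such a point. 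With this correction the rest of the existence argument (isotoping inside each $P_v$, then gluing) goes through unchanged, since any point on that unlabeled side is as good as any other up to boundary-preserving isotopy of $P_v$. A further minor imprecision: you say the isotopy in $P_v$ keeps $\partial P_v$ pointwise fixed while dragging $v$ to a boundary point; strictly speaking, the final map sends an interior point to the boundary, so this is a deformation that is only boundary-preserving in the set-wise sense, but this does not affect the argument.

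For uniqueness, your route via the Alexander trick on the complementary regions is a fleshed-out version of the paper's one-sentence observation that each face of $\Gamma$ encloses exactly one boundary component. Your argument is correct in spirit and more explicit than the paper's, though one should be a little careful calling the complementary regions ``open discs'': they are regions of $S_\Gamma$ each containing one boundary circle, so they are discs only after capping off (or, equivalently, they are half-open annuli deformation-retracting onto the enclosed boundary component). The combinatorial matching of the boundary cyclic data, which you flag as the main remaining step, is indeed the heart of the matter and follows from the fact that $m$ and $\sigma$ determine, for every vertex, which boundary segment of the associated face the vertex must sit on; since each face has a single boundary component, this pins down the picture uniquely, which is exactly what the paper's terse proof asserts.
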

\begin{proof}
	With the notations of Definition \ref{defi::ribbon-surface}, to prove the existence of the embeding, 
	it suffices to move $v$ to the unlabelled side of $P_v$ that lies between the sides labeled with $m(v)$ and $\sigma(m(v))$.
	Uniqueness follows from the fact that there is precisely one boundary component inside every face of $\Gamma$, see for instance~\cite[Proposition 2.2.7]{Labourie}.
\end{proof}

We call an embedding as in Proposition~\ref{prop::embedding-marked-ribbon-graph} a \emph{marked embedding of $\Gamma$ in $S_\Gamma$}.
We usually denote by $M$ the set of marked points on $S_\Gamma$ corresponding to the vertices of $\Gamma$.

%%%
\subsection{The graded marked ribbon graph of a graded gentle algebra}\label{sect::gentle-algebras}
Here, we follow \cite{Schroll15}, see also \cite[Section 3]{Schroll}; we generalize the constructions to the graded case.
Gentle algebras are finite-dimensional algebras having a particularly nice description in terms of generators and relations.  
Their representation theory is well understood and their study goes back to \cite{GelfandPonomarev, DonovanFreislich, WaldWaschbusch, ButlerRingel}.
Let us recall their definition:

\begin{definition}
	A graded algebra $A$ is \emph{gentle} if it is isomorphic to an algebra of the form $kQ/I$, where
	\begin{enumerate}
		\item $Q$ is a finite graded quiver;
		\item $I$ is an admissible ideal of $Q$ (that is, if $R$ is the ideal generated by the arrows of $Q$, 
		then there exists an integer $m\geq 2$ such that $R^m\subset I \subset R^2$);
		\item $I$ is generated by paths of length $2$;
		\item for every arrow $\alpha$ of $Q$, there is at most one arrow $\beta$ such that $\alpha\beta \in I$;
		at most one arrow $\gamma$ such that $\gamma\alpha \in I$;
		at most one arrow $\beta'$ such that $\alpha\beta' \notin I$;
		and at most one arrow $\gamma'$ such that $\gamma'\alpha \notin I$.
	\end{enumerate}
\end{definition}

\begin{definition}\label{defi::ribbon-graph-of-gentle-algebra}
	For a graded gentle algebra $A=kQ/I$, let 
	\begin{itemize}
		\item $\cM$ be the set of maximal paths in $(Q,I)$, that is, paths $w\notin I$ such that for any arrow $\alpha$, $\alpha w\in I$ and $w\alpha\in I$;
		\item $\cM_0$ be the set of trivial paths $e_v$ such that either $v$ is the source or target of only one arrow,
		or $v$ is the target of exactly one arrow $\alpha$ and the source of exactly one arrow $\beta$, and $\alpha\beta\notin I$;
		\item $\overline\cM = \cM \cup \cM_0$.
	\end{itemize}
	
	We call $\overline \cM$ the {\it augmented set of maximal paths of $A$}. 
	
	Then the \emph{graded marked ribbon graph $\Gamma_A$ of $A$} is defined as follows.
	\begin{enumerate}
		\item The set of vertices of $\Gamma_A$ is $\overline\cM$.
		\item For every vertex of $\Gamma_A$ corresponding to a path $\omega$, 
		there is a half-edge attached to $\omega$ and labeled by $i$ for every vertex $i$ of $Q$ through which $\omega$ passes. Note that this includes the vertices at which $\omega$ starts and ends. Furthermore, if $\omega$ passes through $i$ multiple times (at most $2$), then there is one half-edge labeled by $i$ for every such passage.
		\item For every vertex $i$ of $Q$, there are exactly two half-edges labeled with $i$.
		The involution $\iota$ sends each one to the other.
		\item For each vertex $\omega$ of $\Gamma_A$, the vertices through which the path $\omega$ passes are ordered from starting point to ending point.
		The permutation $\sigma$ sends each vertex in this ordering to the next,
		with the additional property that it sends the ending point of $\omega$ to its starting point.
		\item The map $m$ takes every $\omega$ to the half-edge labeled by its ending point.
		\item Each pair~$\left(e, \sigma(e)\right)$ with~$e$ a half-edge (which we identify to a vertex of~$Q$) corresponds to an arrow~$\alpha:e\to \sigma(e)$; the degree map~$d$ of~$\Gamma_A$ sends~$\left(e,\sigma(e)\right)$ to~$|\alpha|$.
	\end{enumerate}
\end{definition}

\begin{remark}
	In a dual construction, the marked ribbon graph of a gentle algebra can also be defined using instead of $\overline \cM$, the augmented set of all paths in $Q$ such that any subpaths of length 2 is in $I$. This is the set of forbidden threads as defined in \cite{AG}. 
\end{remark}

Using Section \ref{sect::marked-ribbon-graphs}, we can now define a surface with boundary and marked points for every gentle algebra.

\begin{definition}\label{defi::surface-of-a-gentle-algebra}
	Let $A=kQ/I$ be a graded gentle algebra.  Its \emph{ribbon surface} $S_A$ is the ribbon surface of $\Gamma_A$. It is an oriented surface $(S_A, M)$ with marked points in the boundary, where the set of marked points $M$ corresponds to the vertices $(\Gamma_A)_0$ of $\Gamma_A$ and where the embedding of $\Gamma_A$ in $(S_A, (\Gamma_A)_0)$ is given by  the collection of arcs joining marked points as in Proposition \ref{prop::embedding-marked-ribbon-graph}.
\end{definition}
Thus the marked points of $S_A$ are in bijection with the vertices of $\Gamma_A$,
and the edges of $\Gamma_A$ are in bijection with the vertices of $Q$.

\begin{example}\label{Ex:::A4}
	\begin{enumerate}
		\item Let $A$ be the algebra defined by the quiver
		\[
		1 \xrightarrow{\alpha_1} 2 \xrightarrow{\alpha_2} 3 \xrightarrow{\alpha_3} 4
		\]
		with no relations and with~$|\alpha_1| = 1, |\alpha_2| = 4$ and~$|\alpha_3| = 7$.
		The ribbon graph $\Gamma_A$ of this algebra is 
		\begin{center} 
			\begin{tikzpicture}
			\draw (-2,0) node(A){$e_1$} ;
			\draw (0,0) node(B){$\alpha_1\alpha_2\alpha_3$} ;
			\draw (2,0) node(C){$e_4$} ;
			\draw (-1,1) node(D){$e_2$} ;
			\draw (1,1) node(E){$e_3$} ;
			\draw (-.7,.3) node(F){$1$} ;
			\draw (0,.4) node(G){$4$} ;
			\draw (.7,.3) node(H){$7$} ;
			\draw (A) -- (B) ;
			\draw (B) -- (C) ;
			\draw (B) -- (D) ;
			\draw (B) -- (E) ;
			\end{tikzpicture}
		\end{center}
		where the degrees of the~$(e,\sigma(e))$ are represented as numbers between~$e$ and~$\sigma(e)$, and its ribbon surface $S_A$ is a disc.
		\begin{center}
			\begin{tikzpicture}
			\draw (0,0) circle[radius=1.5] ;
			%\draw (-1.5,0) node(A){$\bullet$} (0,-1.5) node(B){$\bullet$} (1.5,0) node(C){$\bullet$} (60 : 1.5) node(D){$\bullet$} (120 : 1.5) node(E){$\bullet$} ;    
			\coordinate [label=center:$\bullet$] (A) at (-1.5,0) ;
			\coordinate [label=center:$\bullet$] (B) at (0,-1.5) ;
			\coordinate [label=center:$\bullet$] (C) at (1.5,0) ;
			\coordinate [label=center:$\bullet$] (D) at (60 : 1.5) ;
			\coordinate [label=center:$\bullet$] (E) at (120 : 1.5) ;
			\draw (A) -- (B) ;
			\draw (B) -- (C) ;
			\draw (B) -- (D) ;
			\draw (B) -- (E) ;
			\end{tikzpicture}
		\end{center}

		\item Let $A$ be the algebra defined by the quiver
		\[
		1 \xrightarrow{\alpha_1} 2 \xrightarrow{\alpha_2} 3 \xrightarrow{\alpha_3} 4
		\]
		with relations $\alpha_1\alpha_2$ and $\alpha_2\alpha_3$.
		The ribbon graph $\Gamma_A$ of this algebra is 
		\begin{center} 
			\begin{tikzpicture}
			\draw (-2,0) node(A){$e_1$}  (-1,0) node(B){$\alpha_1$}  (0,0) node(C){$\alpha_2$}  (1,0) node(D){$\alpha_3$} (2,0) node(E){$e_4$} ; 
			\draw (A) -- (B) -- (C) -- (D) -- (E) ;
			\end{tikzpicture}
		\end{center}
		and its ribbon surface $S_A$ is, again, a disc.
		\begin{center}
			\begin{tikzpicture}
			\draw (0,0) circle[radius=1.5] ;
			%\draw (-1.5,0) node(A){$\bullet$} (0,-1.5) node(B){$\bullet$} (1.5,0) node(C){$\bullet$} (60 : 1.5) node(D){$\bullet$} (120 : 1.5) node(E){$\bullet$} ;    
			\coordinate [label=center:$\bullet$] (A) at (-30 : 1.5) ;
			\coordinate [label=center:$\bullet$] (B) at (30 : 1.5) ;
			\coordinate [label=center:$\bullet$] (C) at (90 : 1.5) ;
			\coordinate [label=center:$\bullet$] (D) at (150 : 1.5) ;
			\coordinate [label=center:$\bullet$] (E) at (210 : 1.5) ;
			\draw (A) -- (B) -- (C) -- (D) -- (E) ;
			\end{tikzpicture}
		\end{center}

	\end{enumerate}
\end{example}

For any graded gentle algebra $A$, the edges of $\Gamma_A$ cut $S_A$ into polygons as follows.

\begin{proposition}\label{prop::polygons-with-one-boundary}
	Let $A=kQ/I$ be a graded gentle algebra, and let $\Gamma_A$ and $S_A$ be as in Definitions \ref{defi::ribbon-graph-of-gentle-algebra} and \ref{defi::surface-of-a-gentle-algebra}.
	Then $S_A$ is divided into two types of pieces glued together by their edges:
	
	\begin{enumerate}
		\item polygons whose edges are edges of $\Gamma_A$, except for exactly one boundary edge, and whose interior contains no boundary component of $S_A$;
		\item polygons whose edges are edges of $\Gamma_A$ and whose interior contains exactly one boundary component of $S_A$ with no marked points.
	\end{enumerate}
\end{proposition}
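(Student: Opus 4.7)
The plan is to analyze the connected components of $S_A\setminus\Gamma_A$ via the polygonal decomposition of $S_A$ coming from the ribbon surface construction. First I will observe that inside each polygon $P_\omega$ for $\omega\in\overline{\cM}$, the marked point $\omega$ sits on the distinguished unlabeled side $u_0$ between $m(\omega)$ and $\sigma(m(\omega))$, and the $d(\omega)$ half-arcs drawn from $\omega$ to the middles of the labeled sides subdivide $P_\omega$ into $d(\omega)+1$ smaller cells: two triangles $T_+,T_-$ adjacent to $\omega$ on $u_0$, and $d(\omega)-1$ pentagons. When $\omega\in\cM$ corresponds to a path $x_0\to\cdots\to x_k$, the pentagons are in bijection with the regular corners $(x_{i-1},x_i)$ at $\omega$ (equivalently, with the arrows of $\omega$), while the pair $(T_+,T_-)$ collectively corresponds to the single special corner $(x_k,x_0)$.

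Next I will track how these cells merge in $S_A$. Since the labeled sides of the polygons lie in the interior of $S_A$ and are \emph{not} part of $\Gamma_A$, any two cells on opposite sides of a glued labeled side automatically belong to the same component of $S_A\setminus\Gamma_A$. Each pentagon has two labeled-half boundary pieces and therefore two merging neighbors, whereas each triangle has exactly one. Following these identifications produces chains of cells, and I will argue, using the cyclic orderings $\sigma$ at the vertices together with the orientation-reversing identifications of Definition~\ref{defi::ribbon-surface}, that each chain is either (a) linear, capped by a terminal triangle at each end with pentagons in between, or (b) cyclic, consisting entirely of pentagons. The crucial observation is that consecutive cells in such a chain correspond precisely to consecutive corners in the face traversal of Definition~\ref{defi::face}: pentagons to regular corners, and triangles to the two halves of a special corner cut by the marked point. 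It follows that a face $F$ of $\Gamma_A$ with $c_s(F)$ special corners decomposes in the marked picture into $c_s(F)$ linear chains when $c_s(F)\geq 1$ (one per arc of the traversal between two consecutive special corners), and into a single cyclic chain when $c_s(F)=0$.

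Finally I will identify each chain with a region of the claimed type. For a linear chain, the pieces of $\partial S_A$ contributed by its cells, namely two halves of distinguished sides from the terminal triangles and full unlabeled sides from the intermediate pentagons, are consecutive along the boundary component of $S_A$ associated with $F$ (via the bijection between corners of $F$ and unlabeled-side pieces on that component), and therefore combine into a single boundary edge joining two adjacent marked points; all remaining edges of the chain are edges of $\Gamma_A$, so the region is of type~(1). For a cyclic chain, the pentagons' unlabeled sides exhaust the entire marked-point-free boundary component of $S_A$ sitting inside $F$, while the arcs form a closed loop in $\Gamma_A$ around it, giving a region of type~(2). The main obstacle in executing this plan is the bookkeeping in the second paragraph: one must verify, by carefully tracing the orientation-reversing identifications of labeled sides, that the cell-gluing operation exactly implements the $\sigma$-$\iota$ alternation of the face traversal, and that the sequence of boundary pieces collected along a chain is actually consecutive along a single boundary component of $S_A$. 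Once this local consistency is established, the partition into the two claimed types of polygons follows.
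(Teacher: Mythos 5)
Your proof is correct and takes essentially the same approach as the paper's: both subdivide each polygon $P_v$ by the arcs from the marked point to the midpoints of the labeled (glued) sides, and both trace the resulting cells around a boundary component to distinguish the two cases. Your version is more combinatorially explicit---matching the chains of cells to the alternating $\sigma$- and $\iota$-steps of the face traversal in Definition~\ref{defi::face}---whereas the paper's proof relies on the accompanying pictures, but the underlying geometry and the case split (marked vs.\ unmarked boundary component) are the same.
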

\begin{proof}
	Take any point $X$ in the interior of $S_A$ which does not belong to any edge of $\Gamma_A$.
	Then this point belongs to a polygon $P_v$ as in Definition \ref{defi::ribbon-surface}.
	This polygon has $2d$ sides (for a certain integer $d$) and contains exactly one marked point on one of its boundary segments,
	from which emanate $d$ edges of $\Gamma_A$. Below is the local picture if $P_v$ is an octogon:
	\begin{center}
		\begin{tikzpicture}
		%Define the corners of the octagon
		\foreach \x in {0,...,7}
		\coordinate  (\x) at (\x * 360/8 : 1.5) ;

		%Draw the boundary and the marked point
		\draw[thick] (0) -- (1) ;
		\draw[thick] (2) -- (3) ;
		\draw[thick] (4) -- (5) node[fill, circle, draw, minimum size=3, inner sep=0,  midway](M){} ;
		\draw[thick] (6) -- (7) ;
		
		%Draw the laminates
		\draw[thick, dotted, orange] (1) -- (2) node[shape=coordinate, midway](A){} ;
		\draw[thick, dotted, orange] (3) -- (4) node[shape=coordinate, midway](B){} ;
		\draw[thick, dotted, orange] (5) -- (6) node[shape=coordinate, midway](C){} ;
		\draw[thick, dotted, orange] (7) -- (0) node[shape=coordinate, midway](D){} ;
		
		%Draw the arcs   
		\draw (M) to [bend right=10] (A) ;
		\draw (M) to [bend right=30] (B) ;
		\draw (M) to [bend left=30] (C) ;
		\draw (M) to [bend left=10] (D) ;
		
		%Gray area
		\filldraw[gray!30] (M) to [bend left=10] (D) to (7) to (6) to (C) to [bend right=30] (M) ; 
		
		%Redraw the arcs   
		\draw (M) to [bend right=10] (A) ;
		\draw (M) to [bend right=30] (B) ;
		\draw (M) to [bend left=30] (C) ;
		\draw (M) to [bend left=10] (D) ;
		\draw[thick, dotted, orange] (7) -- (0) ;
		\draw[thick, dotted, orange] (5) -- (6) ;
		\draw[thick] (6) -- (7) ;
		%Draw X
		\draw (.5,-.7) node[fill, circle, draw, minimum size=3, inner sep=0] (X){} ;

		\end{tikzpicture}
	\end{center}
	We see that $X$ belongs to a region of $P_v$ (grayed on the picture) that is partly bounded by a segment of a boundary component $B$ of $S_A$.
	Around this boundary component are other polygons $P_{v_1}, P_{v_2}, \ldots, P_{v_r}$, each containing exactly one marked point on one of its boundary segments.
	The picture around this boundary component $B$ is as follows (in the example, $B$ is a square).
	\begin{center}
		\begin{tikzpicture}[scale=.7]
		%Define the corners of the octagon
		\foreach \x in {0,...,7}
		\coordinate  (\x) at (\x * 360/8 : 1.5) ;

		%Draw the boundary and the marked point
		\draw[thick] (0) -- (1) ;
		\draw[thick] (2) -- (3) ;
		\draw[thick] (4) -- (5) node[fill, circle, draw, minimum size=3, inner sep=0,  midway](M){} ;
		\draw[thick] (6) -- (7) ;
		
		%Draw the laminates
		\draw[thick, dotted, orange] (1) -- (2) node[shape=coordinate, midway](A){} ;
		\draw[thick, dotted, orange] (3) -- (4) node[shape=coordinate, midway](B){} ;
		\draw[thick, dotted, orange] (5) -- (6) node[shape=coordinate, midway](C){} ;
		\draw[thick, dotted, orange] (7) -- (0) node[shape=coordinate, midway](D){} ;
		
		%Draw the arcs   
		\draw (M) to [bend right=10] (A) ;
		\draw (M) to [bend right=30] (B) ;
		\draw (M) to [bend left=30] (C) ;
		\draw (M) to [bend left=10] (D) ;
		
		%Gray area
		\filldraw[gray!30] (M) to [bend left=10] (D) to (7) to (6) to (C) to [bend right=30] (M) ; 
		
		%Redraw the arcs   
		\draw (M) to [bend right=10] (A) ;
		\draw (M) to [bend right=30] (B) ;
		\draw (M) to [bend left=30] (C) ;
		\draw (M) to [bend left=10] (D) ;
		\draw[thick, dotted, orange] (7) -- (0) ;
		\draw[thick, dotted, orange] (5) -- (6) ;
		\draw[thick] (6) -- (7) ;
		%Draw X
		\draw (.5,-.7) node[fill, circle, draw, minimum size=3, inner sep=0] (X){} ;
		
		%Draw the boundary component B
		\draw[thick] (6) -- (7) -- (1.5, -2) -- (0.5, -2.4) -- (6) ; 
		
		%Draw the right polygon
		\draw[thick, dotted, orange] (1.5, -2) -- (2.6, -2.2) ;
		\draw[thick] (2.6, -2.2) to (3, -1.7) ;
		\draw[thick] (0) to (2.2, .1) ;
		\draw[thick, orange, dotted] (2.2, .1) to (3, -1.7) ;
		
		%Draw the left polygon
		\draw[thick] (5) to (-1, -4.3) ;
		\draw[thick, dotted, orange] (-1, -4.3) to (0.5, -2.4) ;
		
		%Draw the botton polygon
		\draw[thick] (-1, -4.3) to (1, -5) ;
		\draw[thick, dotted, orange] (1, -5) to (2.2, -4.5) ;
		\draw[thick] (2.2, -4.5) to (2.6, -2.2) ;
		\end{tikzpicture}
	\end{center}
	Two cases arise.
	
	\noindent {\it Case 1: There is at least one marked point on $B$.}
	In this case, the point $X$ belongs to a polygon cut out by edges of $\Gamma_A$ and by exactly one boundary edge on $B$, 
	as illustrated in the following picture.
	\begin{center}
		\begin{tikzpicture}[scale=.7]
		%Define the corners of the octagon
		\foreach \x in {0,...,7}
		\coordinate  (\x) at (\x * 360/8 : 1.5) ;

		%Draw the boundary and the marked point
		\draw[thick] (0) -- (1) ;
		\draw[thick] (2) -- (3) ;
		\draw[thick] (4) -- (5) node[fill, circle, draw, minimum size=3, inner sep=0,  midway](M){} ;
		\draw[thick] (6) -- (7) ;
		
		%Draw the laminates
		\draw[thick, dotted, orange] (1) -- (2) node[shape=coordinate, midway](A){} ;
		\draw[thick, dotted, orange] (3) -- (4) node[shape=coordinate, midway](B){} ;
		\draw[thick, dotted, orange] (5) -- (6) node[shape=coordinate, midway](C){} ;
		\draw[thick, dotted, orange] (7) -- (0) node[shape=coordinate, midway](D){} ;
		
		%Draw the arcs   
		\draw (M) to [bend right=10] (A) ;
		\draw (M) to [bend right=30] (B) ;
		\draw (M) to [bend left=30] (C) ;
		\draw (M) to [bend left=10] (D) ;
		
		%Gray area
		\filldraw[gray!30] (M) to [bend left=10] (D) to (7) to (6) to (C) to [bend right=30] (M) ; 
		
		%Redraw the arcs   
		\draw (M) to [bend right=10] (A) ;
		\draw (M) to [bend right=30] (B) ;
		\draw (M) to [bend left=30] (C) ;
		\draw (M) to [bend left=10] (D) ;
		\draw[thick, dotted, orange] (7) -- (0) ;
		\draw[thick, dotted, orange] (5) -- (6) ;
		\draw[thick] (6) -- (7) ;
		%Draw X
		\draw (.5,-.7) node[fill, circle, draw, minimum size=3, inner sep=0] (X){} ;
		
		%Draw the boundary component B, with marked points N and O
		\draw[thick] (6) -- (7) -- (1.5, -2) node[fill, circle, draw, minimum size=3, inner sep=0,  midway](N){} -- (0.5, -2.4) node[fill, circle, draw, minimum size=3, inner sep=0,  midway](O){} -- (6) ; 
		
		%Draw the right polygon
		\draw[thick, dotted, orange] (1.5, -2) -- (2.6, -2.2) ;
		\draw[thick] (2.6, -2.2) to (3, -1.7) ;
		\draw[thick] (0) to (2.2, .1) ;
		\draw[thick, orange, dotted] (2.2, .1) to (3, -1.7) ;
		
		%Draw the left polygon
		\draw[thick] (5) to node[fill, circle, draw, minimum size=3, inner sep=0, midway](Q){} (-1, -4.3)  ;
		\draw[thick, dotted, orange] (-1, -4.3) to (0.5, -2.4) ;
		
		%Draw the botton polygon
		\draw[thick] (-1, -4.3) to (1, -5) ;
		\draw[thick, dotted, orange] (1, -5) to (2.2, -4.5) ;
		\draw[thick] (2.2, -4.5) to (2.6, -2.2) ;
		
		%Draw more arcs
		\draw (N) to [bend right=60] (D) ;
		\draw (C) to [bend left = 20] (Q) ; 
		\draw (Q) to [bend right = 50] (O) ;
		
		%Fill with gray
		\filldraw[gray!30] (C) to [bend left = 20] (Q) to [bend right = 60] (O) to (0.5, -2.4) to (6) to (C) ;
		\draw (C) to [bend left = 20] (Q) ; 
		\draw (Q) to [bend right = 50] (O) ;
		
		\filldraw[gray!30] (N) to [bend right=60] (D) to (7) to (N) ;
		
		\draw (N) to [bend right=60] (D) ;
		\draw[thick] (6) -- (7) -- (1.5, -2) node[fill, circle, draw, minimum size=3, inner sep=0,  midway](N){} -- (0.5, -2.4) node[fill, circle, draw, minimum size=3, inner sep=0,  midway](O){} -- (6) ; 
		\draw[thick, dotted, orange] (5) to (6) ;
		\draw[thick, dotted, orange] (7) to (0) ;
		\draw[thick, dotted, orange] (-1, -4.3) to (0.5, -2.4) ;
		
		\end{tikzpicture}
	\end{center}
	
	\noindent {\it Case 2: There are no marked points on $B$.}
	In this case, the point $X$ belongs to a polygon on $S_A$ cut out by edges of $\Gamma_A$ and which contains the boundary component $B$,
	as illustrated below.
	\begin{center}
		\begin{tikzpicture}[scale=.7]
		%Define the corners of the octagon
		\foreach \x in {0,...,7}
		\coordinate  (\x) at (\x * 360/8 : 1.5) ;

		%Draw the boundary and the marked point
		\draw[thick] (0) -- (1) ;
		\draw[thick] (2) -- (3) ;
		\draw[thick] (4) -- (5) node[fill, circle, draw, minimum size=3, inner sep=0,  midway](M){} ;
		\draw[thick] (6) -- (7) ;
		
		%Draw the laminates
		\draw[thick, dotted, orange] (1) -- (2) node[shape=coordinate, midway](A){} ;
		\draw[thick, dotted, orange] (3) -- (4) node[shape=coordinate, midway](B){} ;
		\draw[thick, dotted, orange] (5) -- (6) node[shape=coordinate, midway](C){} ;
		\draw[thick, dotted, orange] (7) -- (0) node[shape=coordinate, midway](D){} ;
		
		%Draw the arcs   
		\draw (M) to [bend right=10] (A) ;
		\draw (M) to [bend right=30] (B) ;
		\draw (M) to [bend left=30] (C) ;
		\draw (M) to [bend left=10] (D) ;
		
		%Gray area
		\filldraw[gray!30] (M) to [bend left=10] (D) to (7) to (6) to (C) to [bend right=30] (M) ; 
		
		%Redraw the arcs   
		\draw (M) to [bend right=10] (A) ;
		\draw (M) to [bend right=30] (B) ;
		\draw (M) to [bend left=30] (C) ;
		\draw (M) to [bend left=10] (D) ;
		\draw[thick, dotted, orange] (7) -- (0) ;
		\draw[thick, dotted, orange] (5) -- (6) ;
		\draw[thick] (6) -- (7) ;
		%Draw X
		\draw (.5,-.7) node[fill, circle, draw, minimum size=3, inner sep=0] (X){} ;
		
		%Draw the boundary component B
		\draw[thick] (6) -- (7) -- (1.5, -2) -- (0.5, -2.4) -- (6) ; 
		
		%Draw the right polygon
		\draw[thick, dotted, orange] (1.5, -2) to (2.6, -2.2) ;
		\draw[thick] (2.6, -2.2) to node[fill, circle, draw, minimum size=3, inner sep=0, midway](S){} (3, -1.7) ;
		\draw[thick] (0) to (2.2, .1) ;
		\draw[thick, orange, dotted] (2.2, .1) to (3, -1.7) ;
		
		%Draw the left polygon
		\draw[thick] (5) to node[fill, circle, draw, minimum size=3, inner sep=0, midway](Q){} (-1, -4.3) ;
		\draw[thick, dotted, orange] (-1, -4.3) to (0.5, -2.4) ;
		
		%Draw the botton polygon
		\draw[thick] (-1, -4.3) to (1, -5) ;
		\draw[thick, dotted, orange] (1, -5) to (2.2, -4.5) ;
		\draw[thick] (2.2, -4.5) to node[fill, circle, draw, minimum size=3, inner sep=0, midway](R){} (2.6, -2.2) ;

		%Fill with gray
		\filldraw[gray!30] (C) to [bend left=20] (Q) to [bend right=10] (R) to [bend left=60] (S) to [bend right=20] (D) to (7) to (1.5, -2) to (0.5, -2.4) to (6) to (C) ;
		
		%Draw more arcs
		\draw (C) to [bend left=20] (Q) to [bend right=10] (R) to [bend left=60] (S) to [bend right=20] (D) ;
		
		%Redraw the boundary component B and orange lines
		\draw[thick] (6) -- (7) -- (1.5, -2) -- (0.5, -2.4) -- (6) ; 
		\draw[thick, dotted, orange] (1.5, -2) to (2.6, -2.2) ;
		\draw[thick, dotted, orange] (-1, -4.3) to (0.5, -2.4) ;
		\draw[thick, dotted, orange] (5) to (6) ;
		\draw[thick, dotted, orange] (7) to (0) ;
		\end{tikzpicture}
	\end{center}
	This finishes the proof.
\end{proof}

\begin{remark} Let $A$ be a graded gentle algebra with graded marked ribbon graph $\Gamma_A$ and associated ribbon surface $S_A$. Suppose that  $\Gamma_A$ has $v$ vertices, $2e$ half-edges and $f$ faces.\\
	1) The complement of $S_A$ is a disjoint union of open discs. \\
	2) The Euler characteristic $\chi(\Gamma_A) = v-e+f$ of the ribbon graph $\Gamma$ is equal to the Euler characteristic of $\widehat{S_A}$, where $\widehat{S_A}$ is the surface without boundary obtained from $S_A$ by gluing an open disc to each of the boundary components of $S_A$. \\
	3) The genus of $S_A$ (as well as the genus of $\widehat{S_A}$) is equal to $1 - \chi(\Gamma_A) /2$  
	that is the genus of $S_{A}$ is $(e - v - f + 2)/2$.
\end{remark}

%%%
\subsection{A lamination on the surface of a graded gentle algebra}
On any surface with boundary and marked points on the boundary, the notion of lamination is defined in \cite[Definiton 12.1]{FominThurston}.
We need to modify the definition slightly for what follows.

\begin{definition}\label{defi::lamination}
	Let $S$ be a surface with boundary and a finite set $M$ of marked points on its boundary. 
	A \emph{lamination} on $S$ is a finite collection of non-selfintersecting and pairwise non-intersecting curves on $S$,
	considered up to isotopy  relative to $M$.  Each of these curves is one of the following:
	\begin{itemize}
		\item a closed curve not homotopic to a point; or
		\item a curve from one non-marked point to another non-marked point, both on the boundary of $S$.
		We exclude such curves that are isotopic to a part of the boundary of $S$ containing no marked points.
	\end{itemize}
	A curve that is part of a lamination is called a \emph{laminate}. A graded lamination is a lamination~$L$ with the following additional function~$d$:
	\begin{itemize}
	 \item whenever two curves $i$ and $j$ in $L$ both have an endpoint on the same boundary segment of $S_A\setminus M$ so that no other curve has an endpoint in between, let~$s$ be the boundary segment joining~$i$ and~$j$.  Then~$d$ assigns an integer to each such~$s$.
	\end{itemize}
	
\end{definition}

\begin{remark}
	In \cite[Definition 12.1]{FominThurston}, the case of a curve from a non-marked point to another on the boundary
	that is isotopic to a part of the boundary containing exactly one marked point is also excluded.
	For our purposes, we need to allow such curves in our laminations.
\end{remark}

Let $A=kQ/I$ be a graded gentle algebra, and let $S_A$ be its ribbon surface as in Definition \ref{defi::surface-of-a-gentle-algebra}.
We will now define a canonical graded lamination of the ribbon surface of a gentle algebra.

\begin{proposition}\label{prop::lamination-of-a-gentle-algebra}
	Let $A=kQ/I$ be a graded gentle algebra, and let $S_A$ be its ribbon surface as in Definition \ref{defi::surface-of-a-gentle-algebra}.
	There exists a unique graded lamination $L$ of $S_A$ such that
	\begin{enumerate}
		\item $L$ contains no closed loops;
		\item for every vertex $i$ of $Q$ (that is, every edge of $\Gamma_A$), there is a unique curve $\gamma_i\in L$ 
		such that $\gamma_i$ crosses the edge labeled by $i$ of the embedding of $\Gamma_A$ once, and crosses no other edges;
		\item $L$ contains no other curves than those described in (2).
		\item\label{pt4} to each pair of curves $\gamma_i$ and $\gamma_j$ in $L$ joined by a boundary segment~$s$ in $S_A\setminus M$ so that no other curve has an endpoint in between, there corresponds an arrow~$\alpha$ between~$i$ and~$j$ in~$Q$.  Set~$d(s) = |\alpha|$.
	\end{enumerate}
\end{proposition}
\begin{proof}
	Every edge $E$ of $\Gamma_A$ is part of two (not necessarily distinct) faces, in the sense of Definition \ref{defi::face},
	and each of these faces encloses a boundary component in $S_A$.  
	Therefore, if a curve $\gamma$ in a lamination crosses $E$, then either it starts and ends on these two boundary components,
	or it has to cross at least another edge.  Moreover, there is a unique curve starting on one of these two boundary components
	and ending on the other that crosses $E$ once and no other edges of $\Gamma_A$. The bijection between arrows of~$Q$ and configurations as in (\ref{pt4}) is immediate.
	\end{proof}

\begin{example}
	We give the laminations for the two graded gentle algebras in Example~\ref{Ex:::A4}.
	
	(1) 
	\begin{figure}[H]
		\includegraphics[width=10cm]{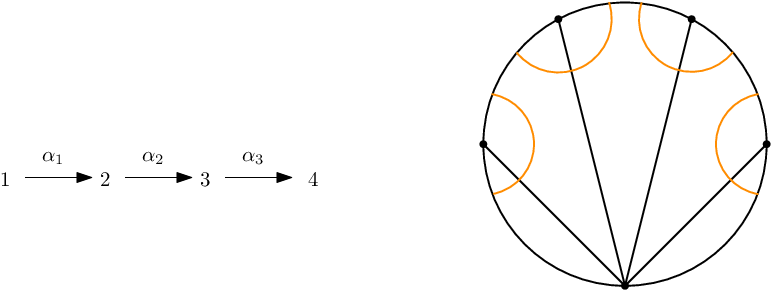}
		\caption{On the right side is the ribbon graph  embedded in the ribbon surface as well as the lamination of the graded gentle algebra on the left. In clockwise order of the boundary, the grading function $d$ assigns to the boundary segments in between the laminates the integers $|\alpha_1|=1$, $|\alpha_2|=4$ and $|\alpha_3|=7$, .}\label{fig:::A4 hereditary}
		\label{A4}
	\end{figure}
	
	(2)
	\begin{figure}[H]
		\includegraphics[width=10cm]{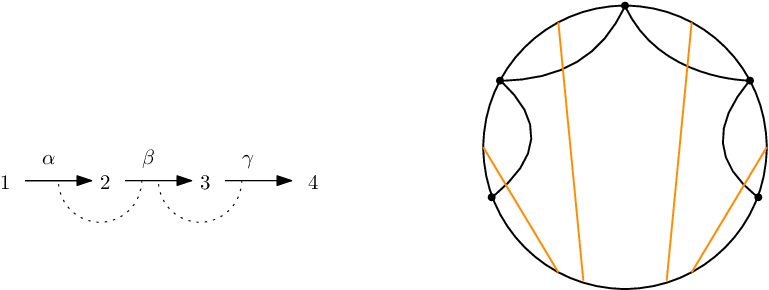}
		\caption{On the right side is the ribbon graph embedded in the ribbon surface as well as the lamination of the gentle algebra on the left with relations $\alpha \beta$ and $\beta \gamma$.   } \label{A4relations}
	\end{figure}
	
\end{example}

\begin{definition}\label{defi::lamination-of-a-gentle-algebra}
	Let $A=kQ/I$ be a graded gentle algebra.  Then we denote by $L_A$ the graded lamination described in Proposition \ref{prop::lamination-of-a-gentle-algebra},
	and we call it the \emph{graded lamination of $A$}.
\end{definition}

\begin{definition}
	Let $A$ be a graded gentle algebra with graded marked ribbon graph $\Gamma_A$ and associated ribbon surface $S_A$ together with a marked embedding of $\Gamma_A$ into $S_A$. Denote by $M$ the set of marked points (these are the vertices of $\Gamma_A$) in $S_A$. 
	
	A \emph{finite arc} (or simply \emph{arc}) $\gamma$ in $S_A$ is a homotopy class of non-contractible curves whose endpoints coincide with marked points on the boundary; an arc is \emph{simple} if it does not intersect itself in its interior.
	A \emph{closed curve} in $S_A$ is a free homotopy class of non-contractible curves whose starting points and ending points are equal and lie in the interior of $S_A$.
	A closed curve is \emph{primitive} if it is not a non-trivial power of a different closed curve in the fundamental group of $S_A$; it is \emph{simple} if it does not intersect itself in its interior.
	
\end{definition} 

In addition to the arcs with endpoints in the marked points, we also consider particular classes of rays and lines in $S_A$. Recall that a \emph{ray} is a map $r: (0,1] \to S_A$ or a map $r: [0,1) \to S_A$ and that a \emph{line} is a map $l: (0,1) \to S_A$. In what follows all  rays will be such they start or end in a marked point and such that the other end  wraps infinitely many times  around a single boundary component with no marked points. All lines will be such that on each end they wrap infinitely many times around a single boundary component with no marked points. 

An \emph{infinite arc}  is given by homotopy classes associated to rays or lines in $S_A$  as follows: 
Let $B$ and $B'$ be boundary components in $S_A$ such that $B \cap M = B' \cap M = \emptyset$. We call such boundary components \emph{unmarked}.  We say two rays $r: (0,1] \to S_A$ and $r' : (0,1] \to S_A$  wrapping infinitely many times around the same unmarked boundary component $B$ are equivalent if $r(1) = r'(1) \in  M$ and if for every closed neighbourhood $N$ of $B$  the induced maps $r, r' : [0,1] \to 
S_A / N$ are homotopic relative to their endpoints. Similarly, we say two lines $l: (0,1) \to S_A$ and $l': (0,1)  \to S_A$  are equivalent if they wrap infinitely many times around the same unmarked boundary components $B$ and $B'$ on each end and if for every closed neighbourhood $N$ of $B$ and $N'$ of $B'$ the induced maps $l, l' : [0,1] \to 
S_A / (N \cup N')$ are homotopy equivalent relative to their endpoints.

\begin{remark}\label{rema::puncture}
	It will sometimes be useful to think of boundary components with no marked points as punctures in the surface. 
	Infinite arcs wrapping around such a boundary component can then be viewed as arcs going to the puncture.
\end{remark}

%%%

\subsection{Recovering the gentle algebra from its lamination}
The surface $S_A$ and the graded lamination $L_A$ of a graded gentle algebra $A$ contain, by construction, enough information to recover the algebra $A$.
We record the procedure in the following proposition.

\begin{proposition}\label{prop::gentle-from-lamination}
	Let $A=kQ/I$ be a graded gentle algebra, and let $L_A$ be the associated graded lamination (see Definition \ref{defi::lamination-of-a-gentle-algebra}).  
	Define a graded quiver $Q_L$ as follows:
	\begin{itemize}
		\item its vertices correspond to curves in $L_A$;
		\item whenever two curves $i$ and $j$ in $L_A$ are joined by a boundary segment~$s$ on~$S_A\setminus M$ so that no other curve 
		has an endpoint in between, then there is an arrow of degree~$d(s)$ from $i$ to $j$ if the endpoint of $j$ follows that of $i$ on the boundary in the clockwise order.
	\end{itemize}
	Let $I_L$ be the ideal of $kQ_L$ defined by the following relations: whenever there are curves $i, j$ and $k$ in $L_A$ that
	have an endpoint on the same boundary segment of $S_A$, so that the endpoint of $k$ follows that of $j$, which itself follows that of $i$,
	and if $\alpha:i\to j$ and $\beta: j\to k$ are the corresponding arrows, then $\alpha\beta$ is a relation.  
	Then $A \cong kQ_L /I_L$.
	
\end{proposition}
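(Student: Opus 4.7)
The plan is to recover $A = kQ/I$ from the lamination by matching each unmarked corner on a boundary segment of $S_A$ with an arrow of $Q$ and each pair of consecutive unmarked corners on the same segment with a generating relation. I will identify laminates with edges of $\Gamma_A$ and with vertices of $Q$ via Definitions \ref{defi::ribbon-graph-of-gentle-algebra} and \ref{defi::lamination-of-a-gentle-algebra} throughout; this gives the bijection of vertices between $Q_L$ and $Q$.

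For arrows, fix $\alpha \colon p \to q$ in $Q$ and let $\omega \in \cM$ be the unique non-trivial maximal path containing $\alpha$. At the vertex $\omega$ of $\Gamma_A$ the half-edges labeled $p$ and $q$ are cyclically consecutive with $q = \sigma(p)$, and since $m(\omega)$ is the end-half-edge of $\omega$ while $p$ appears strictly earlier, this angle is distinct from the marked angle. By Proposition \ref{prop::embedding-marked-ribbon-graph} the marked point $\omega$ occupies the unique wraparound angle, so our angle lies in the interior of some boundary segment $\delta$. The local picture inside $P_\omega$, combined with the uniqueness statement of Proposition \ref{prop::lamination-of-a-gentle-algebra} and the non-crossing condition, forces $\gamma_p$ and $\gamma_q$ to exit $P_\omega$ through this unmarked angle and to land on $\delta$ in consecutive positions, with $\gamma_p$ preceding $\gamma_q$ in the clockwise direction inherited from the orientation of $P_\omega$. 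This yields the arrow $p \to q$ in $Q_L$. Conversely, a pair of consecutive laminate endpoints on $\delta$ sits at some unmarked angle of a unique polygon $P_\omega$, producing by the same correspondence an arrow of $Q$, so we obtain a bijection on arrows.

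For relations, let $\gamma_p, \gamma_q, \gamma_r$ be three laminate endpoints consecutive on a segment $\delta$. Between them lie two unmarked angles at vertices $\omega_1$ and $\omega_2$ of $\Gamma_A$ joined by a cross-edge step along the edge of $\Gamma_A$ labeled $q$; thus $\omega_1, \omega_2$ are the two endpoints of that edge. Because the marked angle at $\omega_1$ immediately follows $q$ in the cyclic order while the marked angle at $\omega_2$ immediately precedes $q$, the half-edge $q$ is the \emph{final} half-edge of $\omega_1$ and the \emph{initial} half-edge of $\omega_2$; this is the combinatorial signature of a length-two relation in a gentle algebra, with $\alpha \colon p \to q$ the last arrow of $\omega_1$ and $\beta \colon q \to r$ the first arrow of $\omega_2$ forcing $\alpha \beta \in I$ by maximality of $\omega_1$ and $\omega_2$. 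Symmetrically, every generator $\alpha \beta \in I$ produces such a triple by choosing the maximal paths $\omega_1, \omega_2$ containing $\alpha$ and $\beta$ respectively, which completes the verification that $I_L = I$.

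The main obstacle is to pin down, inside each polygon $P_\omega$, which unlabeled side a given laminate exits through, since a priori $\gamma_p$ could escape through the neighbouring unmarked angle on the other side of the labeled side $p$. The control comes from the uniqueness clause of Proposition \ref{prop::lamination-of-a-gentle-algebra} and from the fact that the marked point occupies the wraparound angle of $P_\omega$ (Proposition \ref{prop::embedding-marked-ribbon-graph}), which blocks laminates from escaping through it. A minor subtlety arises when a vertex of $Q$ is traversed twice by the same maximal path, so that the corresponding edge of $\Gamma_A$ is a self-loop and $\omega_1 = \omega_2$ can occur above; there one uses the uniqueness axioms in the definition of a gentle algebra to check directly that the composition produced still lies in $I$.
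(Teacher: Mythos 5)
The paper does not actually supply a proof of this proposition; the authors record it as following ``by construction.'' Your argument therefore fills a genuine gap, and your overall strategy---laminates as vertices of $Q$, unmarked corners as arrows, adjacent pairs of unmarked corners as relations---is the natural direct verification. The vertex and arrow identifications are argued correctly.

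However, the relations step contains an error. You claim that when $\gamma_p,\gamma_q,\gamma_r$ are consecutive laminate endpoints on a boundary segment, the half-edge $q$ must be the final half-edge of $\omega_1$ and the initial half-edge of $\omega_2$, and you deduce $\alpha\beta\in I$ from ``maximality.'' This is false. Take $Q$ with vertices $1,\dots,6$, arrows $a\colon 1\to 3$, $b\colon 2\to3$, $c\colon 3\to4$, $d\colon 3\to5$, $e\colon 5\to6$, and $I=(ad,\ bc)$. Then $\cM=\{ac,\ bde\}$, the relation $ad$ yields a consecutive triple $\gamma_1,\gamma_3,\gamma_5$, with $\alpha=a\in\omega_1=ac$ and $\beta=d\in\omega_2=bde$; yet $3$ is the \emph{middle} half-edge of both $ac$ and $bde$, so $q=3$ is neither final in $\omega_1$ nor initial in $\omega_2$, and maximality of the $\omega_i$ does not by itself force $\alpha\beta\in I$.

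The correct argument is simpler and also subsumes your ``minor subtlety'' about self-loops. The shared corner $c$, viewed in $P_{\omega_1}$, lies on the side labeled by a half-edge $q_1$ of the edge $q$; viewed in $P_{\omega_2}$, it lies on the side labeled by $q_2=\iota(q_1)$; and $q_1\neq q_2$ because $\iota$ is a fixed-point-free involution (this is the content of the orientation-reversing gluing in Definition~\ref{defi::ribbon-surface}). So $\alpha$ is the arrow of $\omega_1$ ending at the occurrence of $q$ given by $q_1$, while $\beta$ is the arrow of $\omega_2$ starting at the \emph{other} occurrence $q_2$. Since the gentle axioms allow at most one arrow $\beta'$ with $\alpha\beta'\notin I$, and that $\beta'$ is precisely the arrow (if any) continuing $\omega_1$ past $q_1$, whereas $\beta$ continues past $q_2\neq q_1$, we get $\beta\neq\beta'$ and hence $\alpha\beta\in I$---whether or not $\omega_1=\omega_2$. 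Replacing your final/initial claim with this observation repairs the relations step.
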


\subsection{The fundamental group of the surface of a gentle algebra}\label{sec::fundamental group}
We show that the fundamental group of the surface $S_A$ of a graded gentle algebra $A=kQ/I$ is isomorphic to $\pi_1(Q)$, the fundamental group of the graph underlying its quiver $Q$. \\

\begin{proposition}\label{prop::fundamental group} Let $A=kQ/I$ be a graded gentle algebra. 
Let  $\pi_1(Q)$ be the fundamental group of its underlying graph.
There exists an isomorphism $\pi_1(S_A) \cong \pi_1(Q)$.
\end{proposition}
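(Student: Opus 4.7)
The plan is to identify both groups as free groups and compare their ranks. $\pi_1(Q)$ is free because $Q$ is a one-dimensional CW complex, and $\pi_1(S_A)$ is free because $S_A$ is a connected compact orientable surface with non-empty boundary. The rank of $\pi_1(Q)$ is $1 - \chi(Q) = 1 - |Q_0| + |Q_1|$, so the task reduces to computing $\rank \pi_1(S_A) = 1 - \chi(S_A)$ and matching it with this formula.

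To compute $\chi(S_A)$, I will use the remark after Proposition~\ref{prop::polygons-with-one-boundary}: $\chi(\widehat{S_A}) = v - e + f$, where $v = |\overline{\cM}|$ is the number of vertices of $\Gamma_A$, $e = |Q_0|$ is the number of edges of $\Gamma_A$, and $f$ is the number of faces (equal to the number of boundary components of $S_A$). Capping off each boundary component with a disc increases Euler characteristic by $1$, so $\chi(S_A) = v - e$, giving $\rank \pi_1(S_A) = 1 + |Q_0| - |\overline{\cM}|$. The equality of ranks then reduces to the combinatorial identity $|\overline{\cM}| = 2|Q_0| - |Q_1|$.

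To establish this identity, I will double-count the half-edges of $\Gamma_A$. By Definition~\ref{defi::ribbon-graph-of-gentle-algebra}(3), every vertex of $Q$ labels exactly two half-edges, so the total number of half-edges is $2|Q_0|$. On the other hand, summing valencies of vertices of $\Gamma_A$: a vertex $\omega \in \cM$ corresponding to a maximal path of length $\ell(\omega)$ has valency $\ell(\omega) + 1$ (one half-edge per vertex of $Q$ traversed by $\omega$, counted with multiplicity), and each $e_v \in \cM_0$ has valency $1$. The key input, and the main obstacle, is the claim that in a gentle algebra every arrow of $Q$ belongs to exactly one maximal path. This follows from the local uniqueness conditions in the definition of a gentle algebra: given an arrow $\alpha$, there is at most one arrow extending $\alpha$ on each side while staying outside $I$, so iteratively closing up $\alpha$ yields a unique maximal path containing it. Consequently $\sum_{\omega \in \cM} \ell(\omega) = |Q_1|$, and the total valency is $|\cM_0| + \sum_{\omega \in \cM}(\ell(\omega) + 1) = |\overline{\cM}| + |Q_1|$. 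Equating with $2|Q_0|$ yields the identity, and substituting back gives $\rank \pi_1(S_A) = 1 - |Q_0| + |Q_1| = \rank \pi_1(Q)$, completing the proof.
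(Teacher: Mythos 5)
Your proof is correct, but it takes a genuinely different route from the paper. The paper proves the stronger statement that the embedding of $Q$ into $S_A$ furnished by Proposition~\ref{prop::gentle-from-lamination} (vertices on laminates, arrows crossing boundary segments of the polygons) is a strong deformation retract of $S_A$; it builds the retraction polygon by polygon and glues. That yields a canonical isomorphism $\pi_1(S_A)\cong\pi_1(Q)$ induced by inclusion, which carries more homotopy-theoretic information. Your argument instead observes that both groups are free (graph and compact oriented surface with nonempty boundary), so it suffices to match ranks, and then reduces the whole problem to the combinatorial identity $|\overline{\cM}| = 2|Q_0| - |Q_1|$, proved by double-counting half-edges of $\Gamma_A$: $2|Q_0|$ on one side, $|\overline{\cM}| + \sum_{\omega\in\cM}\ell(\omega)$ on the other, together with the observation that each arrow lies in exactly one maximal path (correct: existence follows since $I$ is admissible so direct paths outside $I$ are bounded, and uniqueness from the gentleness conditions on extensions of an arrow; and a maximal path cannot repeat an arrow, as that would produce an unbounded family of paths avoiding all length-two relations). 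Your proof is shorter and more elementary, at the cost of producing only an abstract, non-canonical isomorphism; the paper's proof produces a concrete homotopy equivalence, which is in the spirit of the geometric model the rest of the paper builds. Both are valid, and for the proposition as literally stated (``there exists an isomorphism''), yours suffices. Two small points you should make explicit: you need $S_A$ connected (i.e.\ $A$ connected), which is implicit throughout; and you rely on Definition~\ref{defi::ribbon-graph-of-gentle-algebra}(3) (exactly two half-edges per vertex of $Q$), which is a non-trivial combinatorial fact about gentle algebras that the paper imports from \cite{Schroll15}. Citing it as a definition is fine, but it is doing real work in your count.
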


\begin{corollary}\label{coro::no-cycles}
Let $A=KQ/I$ be an ungraded gentle algebra. Then following are equivalent \\
(i)  the graph underlying $Q$ is a tree \\
(ii)  $S_A$ is a disc \\
(iii)  $A$ is derived equivalent to a path  algebra of Dynkin type  $\mathbb A$.  
\end{corollary}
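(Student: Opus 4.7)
The plan is to prove (i) $\Leftrightarrow$ (ii) as an immediate consequence of Proposition~\ref{prop::fundamental group}, and then to close the cycle via (ii) $\Rightarrow$ (iii), from a classical tilting-theoretic result, and (iii) $\Rightarrow$ (ii), using the derived invariance provided by Theorem~\ref{thm-AG}. For (i) $\Leftrightarrow$ (ii), Proposition~\ref{prop::fundamental group} gives $\pi_1(S_A) \cong \pi_1(Q)$. The underlying graph of $Q$ is a tree exactly when $\pi_1(Q)$ is trivial. On the other hand $S_A$ is a connected compact orientable surface with boundary, and such a surface of genus $g$ with $b$ boundary components has free fundamental group of rank $2g+b-1$, which vanishes if and only if $g = 0$ and $b = 1$, that is, precisely when $S_A$ is a disc.

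For (ii) $\Rightarrow$ (iii), I would invoke the classical fact that a gentle algebra whose ribbon surface is a disc is iterated tilted of type $\mathbb{A}$, and hence derived equivalent to $kA_n$, where $n$ is the rank of its Grothendieck group (equivalently, the number of edges of $\Gamma_A$). This is in spirit the content of \cite{AssemHappel} together with later extensions to all gentle algebras on the disc; alternatively, it can be read off from the geometric model developed in this paper, since for the disc the lamination $L_A$ provides a dissection whose associated derived category identifies with $D^b(kA_n)$.

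For the remaining direction (iii) $\Rightarrow$ (ii), the plan is to exploit the derived-invariance of the Avella--Alaminos--Geiss invariant. By Theorem~\ref{thm-AG}, for a gentle algebra this invariant records the number of boundary components of $S_A$, the distribution of marked points on each, and the number of laminates incident to each. If $A$ is derived equivalent to $kA_n$, then the two invariants coincide; since $S_{kA_n}$ is a disc (by the direction already established applied to the linearly oriented $A_n$-quiver), the invariant forces $S_A$ to have a single boundary component with the same marked-point and laminate data. The main obstacle is then to pin down the genus: combining the derived-invariant rank of the Grothendieck group (which gives the number $e$ of edges of $\Gamma_A$) with the Euler characteristic identity $v - e + f = 2 - 2g$ of the remark after Proposition~\ref{prop::polygons-with-one-boundary}, together with the values of $v$ and $f$ extracted from the boundary data, one obtains $g = 0$, whence $S_A$ is a disc.
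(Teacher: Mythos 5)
Your equivalence (i) $\Leftrightarrow$ (ii) is exactly the paper's argument. For the remaining link, however, the paper simply cites \cite{AssemHappel} for the equivalence (i) $\Leftrightarrow$ (iii) directly, so the whole proof is two lines. You instead reroute through (ii) $\Rightarrow$ (iii) $\Rightarrow$ (ii), which is correct but genuinely different in one direction: your (ii) $\Rightarrow$ (iii) is really (ii) $\Rightarrow$ (i) $\Rightarrow$ (iii), i.e.\ the same Assem--Happel input; but your (iii) $\Rightarrow$ (ii) replaces the (iii) $\Rightarrow$ (i) direction of Assem--Happel with an argument from the derived invariance of the AG-invariant (Theorem~\ref{thm-AG}) plus the Euler characteristic of the ribbon graph. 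That argument is sound: the AG-invariant, as a multiset of pairs $(b_i,c_i)$ over boundary components, determines the number of boundary components $f$ and the total number of marked points $v=\sum b_i$, while the rank of the Grothendieck group (derived invariant) gives the number of edges $e$ of $\Gamma_A$; the remark after Proposition~\ref{prop::polygons-with-one-boundary} then yields the genus via $v-e+f=2-2g$, and matching these to the disc of $kA_n$ forces $g=0$, $f=1$. What this alternative buys you is a proof that, in the simply-connected situation, the surface $S_A$ itself is a derived invariant without invoking both directions of Assem--Happel --- which is a nice observation, and is in the spirit of the later sections of the paper. What it costs you is length, and also a forward reference: Theorem~\ref{thm-AG} is proved in Section~\ref{sect::AG-invariant}, well after this corollary appears, so in the paper's logical order your route would not be available at this point. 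The paper's version is the one that fits where the statement sits.

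One further minor note: in stating ``a gentle algebra whose ribbon surface is a disc is iterated tilted of type $\mathbb{A}$'' you are implicitly using (ii) $\Rightarrow$ (i) together with \cite{AssemHappel}; it would be cleaner to say so, since the phrase as written suggests a self-standing surface-theoretic fact that the paper does not actually provide at this stage.
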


\begin{proof}
The equivalence of (i) and (ii) directly follows from Proposition~\ref{prop::fundamental group}, the equivalence of (i) and (iii) is due to \cite{AssemHappel}.
\end{proof}

\begin{corollary}\label{coro::one-cycle}
Let $A=KQ/I$ be a graded gentle algebra. Then $S_A$ is an annulus if and only if $A$ has precisely one cycle. 
\end{corollary}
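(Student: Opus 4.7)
The plan is to reduce the statement to a comparison of fundamental groups via Proposition~\ref{prop::fundamental group} and then invoke the classification of compact orientable surfaces with boundary. By Proposition~\ref{prop::fundamental group}, $\pi_1(S_A) \cong \pi_1(Q)$, where the right-hand side is the fundamental group of the graph underlying the quiver $Q$. Since the fundamental group of a connected finite graph is free of rank equal to its first Betti number $|E|-|V|+1$, the condition ``$A$ has precisely one cycle'' is equivalent to $\pi_1(Q) \cong \mathbb{Z}$, and hence to $\pi_1(S_A) \cong \mathbb{Z}$.

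Next, I would observe that $S_A$ is a compact connected oriented surface with non-empty boundary: orientedness and the polygonal description are built into Definition~\ref{defi::ribbon-surface}, connectedness follows from $Q$ being connected (so that $\Gamma_A$ is a connected ribbon graph), and the boundary is non-empty because every vertex of $\Gamma_A$ is placed on a boundary component in the marked embedding. For such a surface of genus $g$ with $b \geq 1$ boundary components, the fundamental group is free of rank $2g + b - 1$. Solving $2g + b - 1 = 1$ subject to $g \geq 0$ and $b \geq 1$ forces $(g,b) = (0,2)$, i.e.\ $S_A$ is an annulus. Conversely, an annulus has $\pi_1 \cong \mathbb{Z}$, so the equivalence with ``$Q$ has precisely one cycle'' follows.

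The main potential obstacle is making sure the topological input is applied to the right category of surfaces; in particular, one must rule out closed surfaces (excluded since $\partial S_A \neq \emptyset$) and non-orientable surfaces (excluded by construction), both of which can also have $\mathbb{Z}$ as fundamental group in low rank. Once these are ruled out, the classification of orientable surfaces with boundary by the pair $(g,b)$ makes the argument essentially immediate, and no further combinatorics on $Q$ is required beyond what is already packaged into Proposition~\ref{prop::fundamental group}.
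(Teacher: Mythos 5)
Your proof is correct and takes essentially the same approach as the paper's, which simply cites Proposition~\ref{prop::fundamental group} together with the fact that the annulus is the unique compact oriented surface with fundamental group $\mathbb{Z}$. You spell out the supporting ingredients in more detail (the Betti-number description of $\pi_1(Q)$, the rank formula $2g+b-1$, and the need to exclude closed and non-orientable surfaces such as the M\"obius band), but the substance is identical.
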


\begin{proof}
Follows directly from Proposition~\ref{prop::fundamental group} and the fact that the annulus is the only (compact oriented) surface with fundamental group $\mathbb Z$. 
\end{proof}

By \cite{AssemHappel}, the algebras appearing in Corollary \ref{coro::no-cycles} are precisely the algebras which are derived equivalent to a path algebra of type $\mathbb{A}$;
by \cite{AG}, those appearing in Corollary \ref{coro::one-cycle} are determined, up to derived equivalence, by their AG-invariant (for more on the AG-invariant, see Section \ref{sect::AG-invariant}).

\bigskip

\noindent {\it Proof of Proposition~\ref{prop::fundamental group}.} 
%\begin{proof} 
 It follows from  Proposition \ref{prop::gentle-from-lamination}, that there exists an embedding of $Q$ into $S_A$ such that each vertex is mapped to an interior point on the corresponding laminate and such that each arrow is mapped to a path with no intersection with the boundary and no intersection with any of the laminates apart from its endpoints.
 
	For our assertions it is sufficient to prove that this embedding is a strong deformation retract of the surface. We do this by gluing deformation retractions of the individual polygons cut out by the lamination.\\
	For each polygon $P_v$, $v \in \overline{\mathcal{M}}$, denote by $Q(v)$ the subquiver of $Q$, which contains all arrows of the path $v$, if $v \in \mathcal{M}$, and, in case $v \in \mathcal{M}_0$, let $Q(v)$ be the subquiver with a  single vertex corresponding to $v \in \mathcal{M}_0$.  We define a strong deformation retraction of $P_v$ onto the embedding of $Q(v)$, which contracts each laminate to a single point and projects each boundary segment onto an arrow.
	\begin{displaymath}
	\begin{tikzpicture}
	\filldraw ({2*cos(10)},0)  circle (2pt); % marked point

	\foreach \u in {4} % laminates
	\draw [style between={2}{3}{ }, line width=0.5, color=orange] plot  [smooth, tension=1] coordinates {   ({2*cos(-10+360/5*\u+360/5)}, {2*sin(-10+360/5*\u+360/5)}) ({1.3*cos(360/5*\u+360/10)}, {1.3*sin(360/5*\u+360/10)}) ({2*cos(10+360/5*\u)}, {2*sin(10+360/5*\u)})};
	\foreach \u in {1} % laminates
	\draw [style between={1}{2}{ }, line width=0.5, color=orange] plot  [smooth, tension=1] coordinates {   ({2*cos(-10+360/5*\u+360/5)}, {2*sin(-10+360/5*\u+360/5)}) ({1.3*cos(360/5*\u+360/10)}, {1.3*sin(360/5*\u+360/10)}) ({2*cos(10+360/5*\u)}, {2*sin(10+360/5*\u)})};

	\foreach \u in {1, 2,3, 4, 5} % laminates
	\draw [color=orange] plot  [smooth, tension=1] coordinates {   ({2*cos(-10+360/5*\u+360/5)}, {2*sin(-10+360/5*\u+360/5)}) ({1.3*cos(360/5*\u+360/10)}, {1.3*sin(360/5*\u+360/10)}) ({2*cos(10+360/5*\u)}, {2*sin(10+360/5*\u)})} [decoration={markings, mark=at position 0.75 with {\arrow{<}}}, postaction={decorate} ] [decoration={markings, mark=at position 0.25 with {\arrow{>}}}, postaction={decorate} ];
	%\foreach \u in {5} % laminates
	%\draw [line width=0.5, color=orange] plot  [smooth, tension=1] coordinates {   ({2*cos(-10+360/5*\u+360/5)}, {2*sin(-10+360/5*\u+360/5)}) ({1.3*cos(360/5*\u+360/10)}, {1.3*sin(360/5*\u+360/10)}) ({2*cos(10+360/5*\u)}, {2*sin(10+360/5*\u)})};
	
	\foreach \u in {5} % boundary
	\draw[line width=0.75] ({2*cos(-10+360/5*\u)}, {2*sin(-10+360/5*\u)})--({2*cos(+10+360/5*\u)}, {2*sin(+10+360/5*\u)});
	\foreach \u in {1,2,3,4} % blue boundary
	\draw[ thick, color=blue] ({2*cos(-10+360/5*\u)}, {2*sin(-10+360/5*\u)})--({2*cos(+10+360/5*\u)}, {2*sin(+10+360/5*\u)});
	
	\foreach \u in {1,...,4} % boundary
	\draw[blue, ->] ({1.8*cos(360/5*\u)}, {1.8*sin(360/5*\u)})--({0.95*cos(360/5*\u)}, {0.95*sin(360/5*\u)});

	% arrows
	\foreach \u in {2,3,4,1} % 
	\draw [line width=0.5, color=black] plot  [smooth, tension=1] coordinates {    ({1.3*cos(360/5*\u-360/10)}, {1.3*sin(360/5*\u-360/10)}) ({0.75*cos(360/5*\u)}, {0.75*sin(360/5*\u)}) ({1.3*cos(360/5*\u+360/10)}, {1.3*sin(360/5*\u+360/10)})  };

	\draw[->] ({1.8*cos(10)}, 0)--(({0.95*cos(10)}, 0);
	
	% arrow
	\draw[thick, ->] (2.5,0)--(4,0);
	%-------------- right hand picture

	\foreach \u in {1,...,5} % points q_1, q_2   
	\filldraw[orange] ({6+1.3*cos(360/5*\u+360/10)}, {1.3*sin(360/5*\u+360/10)})  circle (1.5pt);	 
	
	% arrows
	\foreach \u in {2,3,4,1} % 
	\draw [line width=0.5, thick, color=blue] plot  [smooth, tension=1] coordinates {    ({6+1.3*cos(360/5*\u-360/10)}, {1.3*sin(360/5*\u-360/10)}) ({6+0.75*cos(360/5*\u)}, {0.75*sin(360/5*\u)}) ({6+1.3*cos(360/5*\u+360/10)}, {1.3*sin(360/5*\u+360/10)})  } ;

	\end{tikzpicture}
	\end{displaymath}
	This is done in two steps. Every arrow $\alpha$ of $Q(v)$ singles out a square in $P_v$ bounded by the edge $\alpha$, a boundary segment and segments of the laminates crossed by $\alpha$. $P_v$ is glued from these squares and another polygon $P_v'$, which contains the marked point. For each $L \in L_A$ and for $a_L \in [0,1]$, such that $L(a_L)=p_L$, denote $H_L$ the homotopy from $\text{Id}_L$ to the constant map $p_L$ corresponding to $t \rightarrow a_L + (1-t) \cdot (t-a_L)$. Convex linear combinations enable us to extend any homotopy, which is constant on $\{0,1\} \times \{0\}$, from $\text{Id}_{\{0,1\}\times [0,1]}$ to the map $(a,t) \mapsto (a,0)$ to a homotopy, which is constant on $\{0,1\} \times [0,1]$,  from $\text{Id}_{[0,1]^2}$ to the map $(a,t) \mapsto (a,0)$. In particular, we find a homotopy from the identity of each square to a map, which projects the square onto the corresponding arrow of $Q(v)$ and which extends the contractions of the segments of laminates $L$ to the point $p_L$ (as restrictions of $H_L$). We finally find a homotopy from $\text{Id}_{P_v'}$, which is constant on all arrows of $Q(v)$, to a map, which projects each point of $P_v'$ to a point of the embedding of $Q(v)$.
	By construction, we can glue all the homotopies showing that $P_v$ strongly deformation retracts onto  the embedding of $Q(v)$. All such homotopies can be glued at the laminates, which finishes the proof.

 \hfill $\Box$

\subsection{The ``left'' Koszul dual of a graded gentle algebra}\label{sec::Koszul dual}
The Koszul dual of a graded algebra $A$ is any model for the dg endomorphism ring $E$ of the $A$-module $S=A/\operatorname{rad}(A)$. In its most general form it can be computed by means of the bar construction, that is $E=(BA)^{*}$, where $BA$ is denotes Bar construction of $A$ relative to the augmentation $A \rightarrow S$ given by the action map and $*$ the dual over $k$ which turns the dg coalgebra $BA$ into a dg algebra. In particular, the cohomology of $E$ is isomorphic to the Ext-algebra of the simple $A$-modules. If $A$ is non-positively graded and finite dimensional algebra $A$, $E$ may alternatively be computed by means of the cobar construction $\Omega$: since $A$ is finite-dimensional $A^{*}$ is naturally a graded coalgebra and $(BA)^{*} \cong \Omega(A^{*})$ as dg algebras, c.f.\cite[Proposition 3.2.3.]{BoothThesis}. However, if the grading assumption on $A$ is dropped, the two constructions are no longer equivalent and $(BA)^{*}$ is in general larger than  $\Omega(A^{*})$. In order to distinguish between the two constructions we will therefore refer to $\Omega(A^{*})$ as the left Koszul dual of $A$. We note that the cohomology of the left Koszul dual of a finite-dimensional algebra is not necessarily finite-dimensional. 

 In case of quadratic monomial algebras, such as graded gentle algebras, one can provide a smaller quasi-isomorphic model for the left Koszul dual in the form of the quadratic dual $A^!$.  Assuming that  $A$ is of the form $KQ/I$ for a graded quiver $Q$ there is an explicit description of $A^!$ as $kQ^{op}/I^{\perp}$, where $Q^{op}$ denotes the opposite quiver of $Q$ graded so that the opposite  $\alpha^{op} \in Q_1^{op}$ of an arrow $\alpha \in Q_1$ has degree $|\alpha^{op}|=1-|\alpha|$. For the convenience of the 
 reader we briefly recall the well-known construction of  $I^\perp$, see for example \cite{Martinez-Villa}. Let $V = KQ_2$ be the vector space generated by all paths of length 2 in $Q$ with basis  $\{ \gamma_1, \ldots, \gamma_r \}$ and let $V^{op}$ be the vector space generated by all path of length two in $Q^{op}$ with dual basis $\{ \gamma_1^{op}, \ldots, \gamma_r^{op} \}$. 
 Define a bilinear form $< \; , > :  V \times V^{op} \to K$ on the basis elements as follows:
 $$  < \gamma_i, \gamma_j^{op} >  \; =  \left\{ \begin{array}{cc}
 0  & \mbox{ otherwise } \\
 1 & \mbox{ if } \gamma_i = \gamma_j 
 \end{array} \right.
 $$
 and denote by $I^\perp$ the ideal in $KQ^{op}$ generated by  $ \{ v \in V^{op} \mid < v, u > = 0  \mbox{ for all  } u \in I \}$.

In case $A$ is a graded gentle algebra, the ideal $I^\perp$ is generated by all paths $\alpha^{op} \beta^{op}$ such that $\beta \alpha \notin I$. In particular, $A^!$ is also gentle or locally gentle (that is, it might not be finite-dimensional anymore). The latter happens if and only if $A$ has a full cycle of relations, that is there exists $\alpha_0 \alpha_1 \ldots \alpha_m$ such that $s(\alpha_0) = t(\alpha_m)$ and $\alpha_i \alpha_{i+1} \in I$ for all $i+1 \rm{mod} \, m+1$.  Indeed, for example, the Koszul dual of the ungraded gentle algebra given by a three-cycle with arrows $a_0, a_1, a_2$ such that $s(a_i) = t(a_{i-1})$ with $a_{i-1} a_{i}  \in I$ for all $i$ (considered $\operatorname{mod} 3$) has as (left) Koszul dual the algebra given by a three-cycle with arrows $\overline{a}_0, \overline{a}_1, \overline{a}_2$ and no relations.  
 
 We will now describe the graded marked ribbon graph of $A^!$ of a graded gentle algebra $A$. 
Let $\Gamma_A$ be the  marked ribbon graph associated to $A$ with ribbon surface $S_A$ and let $L_A$ be the associated graded lamination. 
For boundary components with marked points, on every boundary segment contract those segments between adjacent endpoints of laminates in $L_A$ such that  the segment contains no marked point of $\Gamma_A$ and place a vertex at the newly created  intersections of laminates. Furthermore, consider every puncture or marked point in the interior as a vertex. For every new vertex, the orientation of the surface induces a cyclic ordering of the  laminates incident with this vertex; moreover, any pair of consecutive laminates~$(e,\sigma(e))$ for this ordering arises from a pair of laminates~ joined by a boundary segment~$s$; we let~$d(e, \sigma(e)) = d(s)$. In this way we have constructed from  $L_A$ a graded ribbon graph which we will refer to as the dual ribbon graph $\Gamma_A^\perp$. 
Furthermore, denote by $L_A^\perp$ the  associated lamination.

Then the following is a direct consequence of the above construction.

\begin{proposition}
Let $ A$ be a finite dimensional gentle algebra with associated ribbon graph $\Gamma_A$ and lamination $L_A$. Then $\Gamma_A^\perp$ is the ribbon graph associated to $A^!$. 

Furthermore, if $\rm{gl.dim} (A) < \infty$, then  $\Gamma_A$ is isomorphic to the ribbon graph induced from $L_A^\perp$ by contracting boundary segments between adjacent endpoints of laminates in $L_A^\perp$ not separated by a vertex of $\Gamma_A^\perp$. 
In particular, there exists an orientation preserving homeomorphism between $S_A$ and $S_{A^!}$.
\end{proposition}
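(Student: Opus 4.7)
The plan is to verify that the combinatorial data used to build the marked ribbon graph $\Gamma_{A^!}$ (vertices, edges, involution, cyclic permutation, marking) all match, step by step, the data appearing in $\Gamma_A^\perp$ as obtained from the lamination construction. The bridge between the two sides is the observation that the augmented set of maximal paths for $A^!$ is precisely the set of what Avella-Alaminos and Geiss call \emph{forbidden threads} of $A$, i.e.\ maximal sequences of arrows $\alpha_1,\ldots,\alpha_r$ in $Q$ with $\alpha_i\alpha_{i+1}\in I$ (augmented by trivial paths at the appropriate vertices), because the relations of $A^!=kQ^{op}/I^\perp$ are generated by those $\alpha^{op}\beta^{op}$ with $\beta\alpha\notin I$. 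Under the alternative description of the ribbon graph of a gentle algebra mentioned in the remark after Definition~\ref{defi::ribbon-graph-of-gentle-algebra}, this set is exactly what indexes the vertices of $\Gamma_{A^!}$.

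First, I would show that the vertices of $\Gamma_A^\perp$ are in canonical bijection with forbidden threads of $A$. By Proposition~\ref{prop::polygons-with-one-boundary}, each boundary component of $S_A$ sits inside a unique face of $\Gamma_A$; reading a face of $\Gamma_A$ (Definition~\ref{defi::face}) amounts to alternating between traversing a path $\omega\in\overline{\mathcal{M}}$ at some vertex and jumping to a neighbouring maximal path via $\iota$, which combinatorially records exactly a forbidden thread (the jumps $\sigma$ correspond to transitions $\alpha_i\alpha_{i+1}\in I$). The new vertices of $\Gamma_A^\perp$ are placed either at a boundary segment between adjacent laminate endpoints (one such vertex per forbidden thread going around a boundary component of $S_A$ with marked points) or at an interior puncture (one vertex per unmarked boundary component, corresponding to a forbidden cycle in the sense that yields $\infty$ global dimension). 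In both cases we obtain precisely one vertex per forbidden thread of $A$, i.e.\ per element of $\overline{\mathcal{M}}_{A^!}$.

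Next, the edges of $\Gamma_A^\perp$ are the laminates, and by Proposition~\ref{prop::lamination-of-a-gentle-algebra} these are indexed by the vertices of $Q=Q^{op}$, matching the edges of $\Gamma_{A^!}$. A laminate $\gamma_i$ meets exactly those two vertices of $\Gamma_A^\perp$ corresponding to the two forbidden threads through vertex $i$ of $Q$; this recovers the involution $\iota$ of $\Gamma_{A^!}$. The cyclic order $\sigma$ around a vertex of $\Gamma_A^\perp$ is the order in which the laminates attach to the boundary segment or wind around the puncture; by Proposition~\ref{prop::polygons-with-one-boundary} and the local picture around each polygon $P_v$, this order is exactly the order in which the corresponding forbidden thread passes through the vertices of $Q^{op}$. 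Finally, the marked point $m(v)$ on $\Gamma_{A^!}$ corresponds to the endpoint of the forbidden thread; on the surface side, this endpoint is distinguished as the unique laminate endpoint lying on the boundary segment just before the next marked point of $\Gamma_A$ (clockwise), which is precisely how the marking of $\Gamma_A^\perp$ is set up. This identifies $\Gamma_A^\perp$ with $\Gamma_{A^!}$ as marked ribbon graphs; the homeomorphism $S_A\cong S_{A^!}$ is then automatic because both are the ribbon surface of the same embedded ribbon graph.

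For the second assertion, when $\mathrm{gl.dim}(A)<\infty$ the algebra $A$ has no forbidden cycle, hence no puncture in $S_A$ corresponds to an interior vertex of $\Gamma_A^\perp$; equivalently, the construction $(-)^\perp$ is involutive on the level of combinatorial data. Applying the first part with $A$ replaced by $A^!$ gives that $\Gamma_{A^{!!}}$ is isomorphic to the graph obtained from $L_A^\perp$ by the same contraction process, and the Koszul involutivity $A\cong A^{!!}$ for finite-global-dimension gentle algebras yields the stated isomorphism. I expect the main technical nuisance to be the careful bookkeeping of the cyclic order $\sigma$ and the marking map $m$ at vertices of $\Gamma_A^\perp$ lying on boundary components with several marked points of $\Gamma_A$; the presence of unmarked boundary components (corresponding to forbidden cycles and the infinite-dimensionality of $A^!$) requires treating interior vertices as in Remark~\ref{rema::puncture} and verifying that the infinite cyclic ordering around the puncture matches the cyclic order of arrows in the corresponding forbidden cycle of $Q^{op}$.
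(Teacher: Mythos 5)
The paper gives no explicit proof, stating only that the proposition is a direct consequence of the construction of $\Gamma_A^\perp$; your step-by-step verification fills that in along the intended lines, with the correct combinatorial bridge (maximal paths of $A^!$ are the forbidden threads of $A$, and these index boundary segments of $S_A$ together with punctures) and the correct use of quadratic double-duality $A\cong A^{!!}$ together with the observation that finite global dimension rules out punctures so that $A^!$ is again a finite-dimensional gentle algebra. One small slip: you invoke the remark after Definition~\ref{defi::ribbon-graph-of-gentle-algebra} to justify that forbidden threads of $A$ index the vertices of $\Gamma_{A^!}$, but that remark describes an alternative construction of $\Gamma_A$ from forbidden threads of $A$, whereas what you actually need is simply Definition~\ref{defi::ribbon-graph-of-gentle-algebra} applied verbatim to $A^!=kQ^{op}/I^\perp$ (whose maximal paths are precisely the forbidden threads of $A$ read in $Q^{op}$), so the conclusion you reach is correct but the cited justification is not the right one.
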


%%%%%%%%%%%%%%%%%%%%%%%%%%%%%%%%%%%%
\section{Indecomposable objects in the derived category of a graded gentle algebra}\label{sect::indecomposables}

Throughout this section let $A= KQ/I$ be a graded gentle algebra.
In this section, we will see that the indecomposable objects in $\Dfd A$ are in bijection, up to shift, with certain curves on the surface $S_A$. Here, as in other parts of the paper, the proofs are based on the triangulated category $\cT$ generated by the simple $A$-modules, using the upcoming work by Booth, Goodbody and the first author which shows that in fact $\Dfd A \simeq \cT$. Based on this equivalence we freely change between the two categories.

%%%%
\subsection{Homotopy strings and bands}

In the ungraded case there are several approaches to the description of indecomposable objects in the bounded derived category of a gentle algebra.
One approach makes use of combinatorial objects called homotopy strings and bands \cite{BekkertMerklen, BekkertMarcosMerklen} and this is the approach that we will use in this paper.

In what follows, we denote by $\cD(A)$ the derived category by regarding the graded gentle algebra $A$ as a dg algebra with trivial differential. We further consider its perfect derived category $\Perf(A) \subseteq \cD(A)$, which agrees with the smallest idempotent-complete triangulated subcategory $\cD(A)$ containing $A$, as well as the category $\Dfd{A} \subseteq \cD(A)$, which consists of all differential graded $A$-modules with finite dimensional total cohomology. Because $A$ is finite-dimensional, one has the inclusion $\Perf(A) \subseteq \Dfd{A}$. In case $A$ is concentrated in degree zero there are triangle equivalences $\Dfd{A} \simeq K^{ -, b} (A-\proj)$ and $\Perf(A) \simeq K^b(A-\proj)$, where $A-\proj$ is the full subcategory of $A-\mod$ given by the finitely generated projective $A$-modules, $K^b(A-\proj)$ is the homotopy category of bounded complexes of objects in $A-\proj$ and  
	$K^{ -, b}(A-\proj)$ is the homotopy category of complexes in $A-\proj$ which are bounded on the right and have bounded cohomology.

In this section, we describe the classification of the indecomposable objects in the  subcategory $\Dfd{A} \subseteq \cD(A)$, which by \cite{BoothGoodbodyOpper}  is generated by the simple $A$-modules, in terms of homotopy string and band complexes generalising the corresponding notions in \cite{BekkertMerklen}.  The proof of the classification is contained in Appendix \ref{AppendixIndecomposableObjectsGraded} where objects in $\Dfd{A}$ are described as certain two-sided and possibly infinite twisted complexes.

%}
\subsubsection{Twisted complexes}\label{SectionTwistedComplexes}
\noindent We recall an unbounded generalisation from \cite{AnnoLogvinenko} of the category of two-sided twisted complexes first introduced in \cite{BondalKapranov}. Let $\cC$ be a dg category. We consider formal direct sums
\begin{displaymath}
	Z= \bigoplus_{i \in I} X_i[m_i], 
\end{displaymath}
\noindent where $I$ is a countable index set, $X_i \in \cC$ and $m_i \in \mathbb{Z}$ for all $i \in I$. We denote by $\Tw^{\Sigma}_{\pm} \cC$ the \emph{dg category of two-sided twisted complexes}. Its objects are pairs $(Z, z)$, where $Z$ is a formal direct sum as above and $z=(z_i^j)_{i,j \in I}$ is a family of morphisms $z_i^j \in \Hom{\cC}^{m_i-m_j+1}(X_i, X_j)$ such that for all $i \in I$, the set of $j \in I$ with $z_i^j \neq 0$ is finite and such that for all $i, j \in I$,
\begin{displaymath}
	(-1)^{m_j}d_{\cC}(z_i^j) + \sum_{u \in I} z_u^j z^u_i =0.
\end{displaymath}
\noindent By assumption, the above sum  is in fact finite. The morphism space between twisted complexes $X=(\bigoplus_{i \in I}X_i[l_i], x_{i}^j)$ and $Y=(\bigoplus_{j \in J} Y_j[m_j], y_i^j)$ in $\Tw^{\Sigma} \cC$ is given by the complex
\begin{displaymath}
\Hom{\Tw^{\Sigma} \cC}^q(X, Y) \coloneqq \bigoplus_{n \in \mathbb{Z}} \prod_{i \in I} \bigoplus_{j \in J} \Hom{\cC}^n(X_i, Y_j)[l_i-m_j+n],
\end{displaymath}
\noindent equipped with the natural composition and differential which assigns to $f \in \Hom{\cC}^n(X_i, Y_j)[i-j+n]$, the element
\begin{displaymath}
d(f) \coloneqq (-1)^{j} d_{\cC}(f) + \sum_{r} \big( y_r^j \circ f - (-1)^{n+j-i} f \circ x_i^r\big).
\end{displaymath}
\noindent The category $\Tw^{\Sigma}_{\pm} \cC$ contains a chain of full dg subcategories
\begin{displaymath}
	\Tw \cC \subseteq \Tw_{\pm} \cC \subseteq \Tw^{\Sigma}_{\pm} \cC,
\end{displaymath} 
\noindent where $\Tw_{\pm} \cC$ consists of all \emph{finite} twisted complexes, that is, twisted complexes defined by finite formal direct sums. A twisted complex $X=(\bigoplus_{i \in I}X_i[m_i], x_i^j)$ is \emph{one-sided} if there exists a partial order on $I$ such that $x_i^j=0$ whenever $i \geq j$. The subcategory $\Tw \cC$ consists of all finite, one-sided twisted complexes. The category $\Tw_{\pm} \cC$ (resp.~$\Tw \cC$) is equivalent to the category two-sided (resp.~one-sided) twisted complexes from \cite{BondalKapranov} over the dg category $\cC_{\oplus}$ consisting of finite formal direct sums of shifts of objects in $\cC$.

 There is a fully faithful dg convolution functor from $\Tw^{\Sigma}_{\pm}$ to the category of dg modules over $\cC$, see \cite{AnnoLogvinenko} (and \cite{BondalKapranov} for the case $\Tw_{\pm} \cC$ and $\Tw \cC$). The convolution of a one-sided complex is a semi-free dg module and hence cofibrant. In particular, convolution yields a canonical fully-faithful functor $\Ho^0(\Tw \cC) \rightarrow \Perf(\cC)$ into the perfect derived category of $\cC$. There is also a totalization functor $\Tw_{\pm} \Tw^{\Sigma}_{\pm} \cC \rightarrow \Tw^{\Sigma}_{\pm} \cC$ generalizing the totalization functor $\Tw_{\pm} \Tw_{\pm} \cC \rightarrow \Tw_{\pm} \cC$ from \cite{BondalKapranov}. We remark that the totalization of a finite, one-sided twisted complex of one-sided twisted complexes is a one-sided twisted complex.

\subsubsection{Homotopy strings and homotopy bands}\label{SectionHomotopyStringAndBands}

For every $ a \in Q_1$, we define a formal inverse $\overline{a}$ where $ s(\overline{a}) = t(a)$ and $t(\overline{a}) = s(a)$. 
We denote by $\overline{Q_1}$ the set of formal inverses of the elements in $Q_1$, 
and we extend the operation $\overline{(-)}$ to an involution of $Q_1 \cup \overline{Q_1}$ by setting $\overline{\overline{a}} = a$.  We also set~$|\overline{a}| = -|a|$.

A \emph{walk} is a sequence $w_1 \ldots w_n$, where $w_i \in Q_1 \cup \overline{Q_1}$ is such that $s(w_{i+1}) = t(w_i)$.  
We also allow \emph{trivial walks} $e_u$ for every vertex $u$ of $Q$.
A \emph{string} is a walk $w$ such that $w_{i+1}  \neq   \overline{w_i}$ and 
 such that for all substrings $w'=w_{i}w_{i+1}\cdots w_{j}$ of $w$ with the $w_i, \ldots, w_j$ all in $Q_1$ (or all in $\overline{Q_1}$),
we have that $w' \notin I$ (or $\overline{w'} \notin I$, respectively).
We say that $w = w_1 \ldots w_n$ is a \emph{direct} (resp. \emph{inverse}) string if for all $1 \leq i \leq n$, we have  $w_i \in Q_1$ (resp. $w_i \in \overline{Q_1}$). The \emph{degree} $|w|$ of a direct string $w=w_1 \ldots w_n$ is $|w|=\sum_{i=1}^n{|w_i|}$ and the degree of an inverse string $u$ is $-|\overline{u}|$.

A \emph{generalized walk} is a sequence $\sigma_1 \ldots \sigma_m$ such that each $\sigma_i $ is a string such that $s(\sigma_{i+1}) = t(\sigma_i)$.

\begin{definition}\label{defi:homotopystringbands}
Let $A = KQ/I$ be a graded gentle algebra. A \emph{finite homotopy string} $\sigma = w_1 \ldots w_n$, where $w_i \in Q_1 \cup \overline{Q_1}$, 
is a (possibly trivial) walk  in $(Q,I)$ consisting of subwalks $\sigma_1, \ldots, \sigma_r$ with  $\sigma = \sigma_1 \ldots \sigma_r$ 
%with  $p_k = w_{j_k} \ldots w_{i_k}$ such that 
and such that
\begin{enumerate}
\item $\sigma_k$ is a direct or inverse string;
\item if $\sigma_k, \sigma_{k-1}$ are both direct strings then $\sigma_{k-1} \sigma_{k} \in I$ 
(resp. if both $\overline{\sigma_k}, \overline{\sigma_{k-1}}$ are inverse strings then $\overline{\sigma_{k-1} \sigma_{k}}  \in I$). 
\end{enumerate} 
If $\sigma_k$ is a direct string it is  called a \emph{direct homotopy letter}, otherwise it is called an \emph{inverse homotopy  letter}.

A \emph{homotopy band} is a finite homotopy string  $\sigma = \sigma_1 \ldots \sigma_r$   such that 
\begin{itemize}
	\item $t(\sigma_r) = s(\sigma_1)$ and $\sigma_1 \neq \overline{\sigma_r}$;
	\item $\omega(\sigma) := \sum_{i=1}^r \omega(\sigma_i)=0$, where $\omega(\sigma_i)= \begin{cases} |\sigma_i|-1 & \text{if }\sigma_i \text{ is direct}; \\ |\sigma_i|+1 & \text{if }\sigma_i \text{ is inverse}. \end{cases}$. 
	\item $\sigma \neq \tau^m$ for some homotopy string $\tau $ and $m>1$.
	\item if all $\sigma_i$ are direct or all $\sigma_i$ are inverse, then at least one of them has length at least $2$.
\end{itemize}
We refer to $\omega(\sigma)$ as the \textit{winding number} of $\sigma$. Note that if $A$ is concentrated in degree zero, then $\omega(\sigma)=0$ if and only if $\sigma$ has an equal number of direct and inverse homotopy letters.

A finite homotopy string which satisfies all of the previous conditions of a homotopy band with exception of the last, will be called an \emph{exceptional homotopy band}. The exceptional homotopy bands are (up to rotation) in bijection with primitive cyclic sequences of composable arrows $\alpha_1, \dots, \alpha_m$ such that $\alpha_i \alpha_{i+1}=0$ for all $1 \leq i \leq m$ (with $m+1\coloneqq 1$).

A homotopy string or band $\sigma = \sigma_1 \ldots \sigma_r$ is \emph{reduced} if $\sigma_i \neq \overline{\sigma_{i+1}}$
for all $i\in \{1, \ldots, r-1\}$. 

\end{definition} 

A generalized walk is called a \emph{direct} (resp. \emph{inverse}) antipath if each homotopy letter is a direct (resp. inverse) homotopy letter. 

\begin{definition}
A  left (resp. right) infinite generalized walk $\sigma = \ldots \sigma_{-2} \sigma_{-1}$ 
(resp. $\sigma = \sigma_{1} \sigma_{2} \ldots$)  is called a \emph{left (resp. right) infinite homotopy string} 
if there exists $k \geq 1$ such that $\ldots  \sigma_{k} \sigma_{k+1} $  (resp. $\sigma_{-k-1} \sigma_{-k} \ldots $) is a direct (resp. inverse) antipath which is eventually periodic and eventually involves only homotopy letters of length~$1$. 

A two-sided infinite generalized walk $\sigma =  \ldots  \sigma_{-1} \sigma_0 \sigma_{1} \ldots$ is called a \emph{two-sided infinite homotopy string}
if $ \ldots \sigma_{-1} \sigma_0$ is a left infinite homotopy string and $\sigma_0 \sigma_{1} \ldots$ is a right infinite homotopy string. 
\end{definition}

We will need a notion of grading on a homotopy string or band. 

Following \cite{BekkertMerklen}, we introduce a grading on homotopy strings and bands.

\begin{definition}\label{defi::grading-on-homotopy-strings-or-bands}
 \begin{enumerate}
  \item Let $\sigma = \sigma_1\cdots \sigma_r$ be a finite reduced homotopy string.
 Define $v_0 = s(\sigma_1)$ and $v_i = t(\sigma_i)$ for all $i\in \{1, \ldots, r\}$.
  A \emph{grading} on $\sigma$ is a sequence of integers $\mu=(\mu_0, \ldots, \mu_r)$ such that
    \[     
          \mu_{i+1} = \begin{cases}
                          \mu_i +|\sigma_{i+1}|-1 & \textrm{if $\sigma_{i+1}$ is a direct homotopy letter;} \\
                          \mu_i +|\sigma_{i+1}|+ 1 & \textrm{if $\sigma_{i+1}$ is an inverse homotopy letter,}
                      \end{cases}
    \]
    for each $i\in \{0,\ldots, r-1\}$.
   The pair $(\sigma, \mu)$ is called a \emph{graded finite homotopy string}.
    
  \item One defines \emph{graded infinite homotopy strings} similarly.
  
  \item Let $\sigma = \sigma_1\cdots \sigma_r$ be a reduced homotopy band.
  Define $v_0 = s(\sigma_1)$ and $v_i = t(\sigma_i)$ for all $i\in \{1, \ldots, r\}$.
  Note that $v_r = v_0$ by definition of a homotopy band.
  
  A \emph{grading} on $\sigma$ is a sequence of integers $\mu=(\mu_0, \ldots, \mu_{r-1})$ such that        
    \[     
          \mu_{i+1} = \begin{cases}
                          \mu_i +|\sigma_{i+1}|-1& \textrm{if $\sigma_{i+1}$ is a direct homotopy letter;} \\
                          \mu_i +|\sigma_{i+1}|+1 & \textrm{if $\sigma_{i+1}$ is an inverse homotopy letter,}
                      \end{cases}
    \]
    for each $i\in \{0,\ldots, r-1\}$, where $i$ is considered modulo $r$.
   The pair $(\sigma, \mu)$ is called a \emph{graded homotopy band}.
 \end{enumerate}

\end{definition}

To each graded homotopy string and homotopy band $(\sigma, \mu)$ as described above is associated a (possibly infinite) two-sided twisted complex $\P_{(\sigma, \mu)} \in \Tw^{\Sigma}_{\pm} \cP_A$, where $\cP_A$ denotes the category with objects $\{P_x \, | \, x \in Q_0\}$ and morphisms $P_x \rightarrow P_y$ given by the linear combinations of paths $p \not \in I$ in $Q$ with $s(p)=x$ and $t(p)=y$. The category $\cP_A$ is canonically identified with a subcategory of the category of dg $A$-modules with $P_x$ corresponding to the direct summand submodule $Ax \subseteq A$. In particular, $\P_{(\sigma, \mu)}$ defines an object in $\cD(A)$. If $A$ is concentrated in degree zero, $\cP_A$ is identified with an exhaustive system of isomorphism classes of finite-dimensional indecompsoable projective $A$-modules. Our definition of string and band complexes generalises those in \cite{BekkertMerklen}.

\begin{definition}[\cite{BekkertMerklen}]\label{defi::complex-from-string-or-band}
\begin{enumerate}
 \item Let $\sigma = \sigma_1\cdots \sigma_r$ be a finite reduced homotopy string, and let $\mu$ be a grading on $\sigma$.
 Then the twisted complex $(\P_{(\sigma,\mu)}, d) \in \Tw^{\Sigma}_{\pm} \cP_A$

 is given by
   \begin{itemize}
    \item
      \[
       \P_{(\sigma,\mu)} = \bigoplus_{i=0}^{r-1} P_{v_i}[\mu_i].
      \]
    \item each direct (resp. inverse) homotopy letter $\sigma_i$ defines a morphism $P_{v_{i-1}}[\mu_{i-1}] \xrightarrow{\sigma_i} P_{v_{i}}[\mu_i]$
    (resp. $P_{v_{i}}[\mu_i] \xrightarrow{\overline{\sigma_i}} P_{v_{i-1}}[\mu_{i-1}]$) of degree $1$.
    These form the components of the differential $d$ in the natural way.
    We call $\P_{(\sigma, \mu)}$ a \emph{string complex}.
   \end{itemize}
   
 \item The definition of $\P_{(\sigma,\mu)}$ when $\sigma$ is an infinite reduced homotopy string is analogous,
  and we again call $\P_{(\sigma,\mu)}$ a \emph{string complex}.
 
 \item Let $\sigma = \sigma_1\cdots \sigma_r$ be a reduced homotopy band, and let $\mu$ be a grading on $\sigma$.
  Let $M$ be a finite-dimensional indecomposable $K[X]$-module, and let $m=\dim_K M$.
  Let $F$ be the matrix of the multiplication by $X$ for a given basis of $M$.
  Define $v_0, \ldots, v_r, \mu_0, \ldots, \mu_r$ as for homotopy strings.
  
  Then the complex $(\P_{(\sigma,\mu),F}, d)$ is defined by
    \begin{itemize}
     \item
       \[
        \P_{(\sigma,\mu), F} = \bigoplus_{i=0}^{r-1} \left(P_{v_i}[\mu_i]\right)^{\oplus m};
       \]
     \item for all $i\in \{1, \ldots, r-1\}$, the direct (resp. inverse) homotopy letter $\sigma_i$ 
     defines a morphism $${\left(P_{v_{i-1}}[\mu_{i-1}]\right)}^{\oplus m} \xrightarrow{\sigma_i Id_m} {\left(P_{v_{i}}[\mu_i]\right)}^{\oplus m}$$
    (resp. ${\left(P_{v_{i}}[\mu_i]\right)}^{\oplus m} \xrightarrow{\overline{\sigma_i} Id_m}{\left(P_{v_{i-1}}[\mu_{i-1}]\right)}^{\oplus m}$) of degree $1$, where $Id_m$ is the $m\times m$ identity matrix.
    These form the components of the differential $d$ in the natural way.
    
     \item The homotopy letter $\sigma_r$ defines a final component of the differential.
     If it is a direct letter, then the morphism used is $(P_{v_{r-1}}[\mu_{r-1}])^{\oplus m} \xrightarrow{\sigma_i F} (P_{v_{0}}[\mu_0])^{\oplus m}$;
     otherwise, the morphism is $(P_{v_{0}}[\mu_0])^{\oplus m} \xrightarrow{\overline{\sigma_i}F} (P_{v_{r-1}}[\mu_{r-1}])^{\oplus m}$.
    \end{itemize}
  In this case, $\P_{(\sigma,\mu), F}$ is called a \emph{band complex}.
\end{enumerate}
\end{definition}
\noindent Via the convolution functor, every homotopy band and homotopy string yields an object in $\cD(A)$ which is easily seen to lie in the subcategory $\Dfd{A}$. While clear for finite homotopy strings and homotopy bands, for infinite strings this is a consequence of the assumption that they eventually consists of direct (resp.~inverse) homtopy letters of length $1$. 

The above definitions can also be applied to exceptional homotopy bands, resulting in a \emph{two-sided} twisted complex we call an \emph{exceptional band complex}. Note however that the convolution dg module of such an exceptional band complex is acyclic and hence does only yield a zero object in $\Dfd{A}$.

We note that the shift functor $[1]$ in $\Dfd{A}$ corresponds a degree shift on the graded homotopy strings: given a graded homotopy string $(\sigma,\mu)$, we define $(\sigma,\mu[1])$ to be the graded  homotopy string associated to   $\P_{(\sigma,\mu)}[1]$. 

Furthermore, in the ungraded case it is shown in \cite{BekkertMerklen} that the isomorphism classes of indecomposable objects in $\Dfd{A}$ are in bijection with graded homotopy strings and bands, see also \cite{BurbanDrozd2002,BurbanDrozd2003,BurbanDrozd2004} for a different proof of the classification). More precisely, in this bijection one considers homotopy strings up to inverse  $\sigma\sim \overline{\sigma}$ and homotopy bands up to inverse and rotation pairs as well as compatible change of the associated isomorphism class of indecomposable $K[X]$-modules. This bijection is the one described in Definition \ref{defi::complex-from-string-or-band}. In the graded case, a classification of the indecomposable objects in the perfect derived category of a possibly infinite dimensional but homologically smooth gentle algebra is given in \cite{HaidenKatzarkovKontsevich}. In Appendix \ref{AppendixIndecomposableObjectsGraded}, we use Koszul duality to show that the recipe from Definition \ref{defi::complex-from-string-or-band} yields a classification of indecomposable objects in $\Dfd{A}$ considered as the subcategory of $\cD(A)$ generated by the simple $A$-modules for any finite-dimensional graded gentle algebra~$A$.

\begin{remark}
 If the field $K$ is algebraically closed, then the matrix $F$ of Definition~\ref{defi::complex-from-string-or-band} (3)
 can always be chosen to be a Jordan block $J_m(\lambda)$ of size $m$ corresponding to a scalar $\lambda \in K$.  Note that for $m=1$, 
 we have $P^\bullet_{(\sigma,\mu), J_1(\lambda)} = P^\bullet_{(\sigma,\mu), \lambda}$. 
 In the text, if the result or proof does not depend on the scalar $\lambda$, 
 we will sometimes omit it in our notation and we will write  $P^\bullet_{(\sigma,\mu)}$ instead of $P^\bullet_{(\sigma,\mu), \lambda}$ . 
\end{remark}

\begin{remark}\label{RemarkNonPrimitiveBands}
	For purposes which will become apparent during our discussion of geometrical models for mapping cones in $\Dfd A$, it is useful to expand the definition of homotopy band and band complex a little and allow for \textit{non-primitive homotopy bands} by which we mean a homotopy string which satisfies all conditions of Definition \ref{defi:homotopystringbands} with the exception that $\sigma$ is allowed to be of the form $\tau^m$ for some homotopy string $\tau$ and $m  > 1$. The resulting ``band complex''(defined in the same way) is no longer indecomposable but splits into band complexes with the precise splitting depending on the underlying field. They will appear in Appendix \ref{AppendixMappingCones}.
	\color{black}
\end{remark}

%%%%
\subsection{Graded curves}

Our main result in this section requires a notion of grading on arcs and closed curves.
Before giving the definition, we will need some results on the geometry of the lamintation $L_A$ of $S_A$.

\begin{lemma}\label{lemm::lamination-polygons}
 \begin{enumerate}
  \item The lamination $L_A$ subdivides $S_A$ into polygons whose sides are laminates and boundary segments.
        The laminates of $L_A$ can be chosen to be the ``glued edges'' of Definition \ref{defi::ribbon-surface}.
  \item Each polygon contains exactly one marked point.
  \item Every boundary segment of $S_A$ contains the endpoint of at least one laminate of $L_A$.
 \end{enumerate} 
\end{lemma}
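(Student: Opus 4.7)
The plan is to identify the laminates of $L_A$ with the glued edges from the polygonal construction of $S_A$ in Definition \ref{defi::ribbon-surface}. Once this identification is made, all three statements will follow readily from the structure of that construction together with the marked embedding of Proposition \ref{prop::embedding-marked-ribbon-graph}.

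The first step is to verify that the glued edges may indeed serve as the laminates. Inside each polygon $P_v$, the edge of $\Gamma_A$ corresponding to a vertex $i$ of $Q$ is drawn from the centre of $P_v$ to the midpoint of the labelled side carrying the relevant half-edge label. The labelled side itself (which, after identification with its $\iota$-partner, becomes a glued edge in the interior of $S_A$) crosses this edge of $\Gamma_A$ transversely at exactly one point, and crosses no other edge of $\Gamma_A$. Its two endpoints are corners of polygons, which lie on $\partial S_A$ but are disjoint from the set of marked points (these are placed in the interior of unlabelled sides by Proposition \ref{prop::embedding-marked-ribbon-graph}). By the uniqueness clause of Proposition \ref{prop::lamination-of-a-gentle-algebra}, we may therefore take $\gamma_i$ to be the glued edge associated to $i$. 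Statement (1) is then immediate, since these glued edges subdivide $S_A$ back into the original polygons $P_v$, whose $2d(v)$ sides alternate between glued edges (laminates) and unlabelled sides (segments of $\partial S_A$).

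Statement (2) follows directly from Proposition \ref{prop::embedding-marked-ribbon-graph}, which places the unique marked point of $P_v$ on one of its unlabelled sides and leaves no other marked points inside $P_v$. For statement (3), I would reduce to showing that every boundary segment of $S_A$ contains at least one corner of some polygon $P_v$ in its interior, since each such corner is the endpoint of a glued edge and hence of a laminate. For a segment between two consecutive marked points on a boundary component, the key observation is that each unlabelled side belongs to a unique polygon and thus carries at most one marked point, so two distinct consecutive marked points cannot both lie on the same unlabelled side; the segment joining them must therefore cross at least one corner. For an unmarked boundary component (a single boundary segment forming a closed loop, as in Remark \ref{rema::puncture}), I would use the correspondence between faces of $\Gamma_A$ and boundary components of $S_A$ recorded after Definition \ref{defi::ribbon-surface}: a face with $k$ half-edges produces a boundary component consisting of $k$ unlabelled sides separated by $k$ corners, any of which gives the required laminate endpoint.

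The main subtlety lies in (3), particularly in the case of unmarked boundary components where one cannot simply appeal to a pair of consecutive marked points. The face-counting argument circumvents this, but it will be important to check that no degenerate cases (for example, vertices of $\Gamma_A$ of valency one, leading to $2$-gons $P_v$) undermine the conclusion that every boundary component meets at least one laminate endpoint.
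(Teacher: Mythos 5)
Your proof is correct and takes essentially the same approach as the paper's: identify the laminates of $L_A$ with the glued edges of Definition \ref{defi::ribbon-surface}, so that the polygonal decomposition of $S_A$ into the pieces $P_v$ immediately yields all three claims. Your more elaborate treatment of part (3) — separately handling segments between consecutive marked points and unmarked boundary components via face-counting — fleshes out the paper's one-line observation that every boundary side of $P_v$ meets a laminate at its corners, but rests on the same underlying observation.
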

\begin{proof}
 It suffices to observe that the ``glued edges'' of Definition \ref{defi::ribbon-surface} 
 cut the surface $S_A$ into the polygons $P_v$ of Definition \ref{defi::ribbon-surface}, 
 which contain exactly one marked point each by definition.
 Moreover, every boundary segment of these polygons is adjacent to at least one laminate.
\end{proof}

Once we know that a surface is cut into polygons, then any arc is determined by the order in which it crosses the edges of the polygons. 
Note that the edges crossed correspond exactly to the laminates.

Whenever we shall be dealing with collections of arcs and curves on a surface, we will make the following assumption.
\begin{assumption}
Any finite collection of curves or arcs is in \textit{minimal position}, that is, 
the number of intersections of each pair of (not necessarily distinct) curves in this set is minimal in their respective homotopy class.
\end{assumption}
As pointed out in \cite{Thurston}, 
it follows from \cite{FreedmanHassScott} and \cite{Neumann-Coto} that, up to homotopy, this assumption is always satisfied.

\begin{lemma}\label{lemm::arcs-determined-by-crossing}
 Let $\gamma$ be a possibly infinite arc or a closed curve on $S_A$,
 and assume that  every laminate of $L_A$ that $\gamma$ crosses, it crosses transversally (we can assume this, up to homotopy).
 \begin{enumerate}
  \item If $\gamma$ is an arc, then it is completely determined by the (possibly infinite) sequence of the laminates that it crosses.
  \item If $\gamma$ is a closed curve, then it is completely determined by the sequence of the laminates that it crosses,
        up to cyclic ordering.
 \end{enumerate} 
\end{lemma}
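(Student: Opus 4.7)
The plan is to reduce the claim to a purely local uniqueness statement inside each region of the decomposition of $S_A$ cut out by the lamination $L_A$. By Lemma \ref{lemm::lamination-polygons}, $L_A$ partitions $S_A$ into closed discs, each containing exactly one marked point on its boundary, with the remaining boundary consisting of laminate segments and boundary segments of $S_A$.

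First, I would use the minimal position hypothesis to decompose $\gamma$ into a (possibly infinite) sequence of segments, each contained in a single such polygon $P$. Each interior segment has its two endpoints on laminates in $\partial P$; the endpoints of $\gamma$ (if $\gamma$ is a finite arc) contribute initial or terminal segments with one endpoint at a marked point. The minimal position assumption rules out segments that enter and exit through the same laminate, since any such segment would cobound a bigon with that laminate, contradicting minimality.

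Second, I would establish the key local uniqueness claim: inside each polygon $P$, any two arcs with endpoints on the same pair of edges of $\partial P$ (resp. with one endpoint at the marked point of $P$ and the other on a given laminate edge) are homotopic, rel those endpoint-edges and rel the marked point. This follows from simple connectivity of the disc $P$: any two such arcs are ambient isotopic, and since the marked point lies on $\partial P$ rather than in the interior of $P$ it imposes no additional constraint.

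Finally, the sequence of laminates crossed by $\gamma$ records exactly which pair of boundary edges of each visited polygon is used by the corresponding segment. By the local uniqueness just established, each segment is determined up to homotopy, and gluing these segments along their shared laminate crossings reconstructs $\gamma$ up to homotopy rel $M$. For a closed curve the same argument applies, except that there is no distinguished initial segment, so only the cyclic order of the crossing sequence is reconstructed. The main technical subtlety will be the infinite case: for rays or lines wrapping around an unmarked boundary component $B$, I would need to check that the tail of the crossing sequence becomes eventually periodic, cycling through the finite chain of polygons adjacent to $B$, and that this periodic tail suffices to pin down the wrapping equivalence class as defined before Remark \ref{rema::puncture}.
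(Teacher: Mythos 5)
The paper does not actually give a proof of this lemma: it states it immediately after the observation that the lamination cuts $S_A$ into polygons (Lemma~\ref{lemm::lamination-polygons}) and the remark ``Once we know that a surface is cut into polygons, then any arc is determined by the order in which it crosses the edges of the polygons,'' together with the standing minimal-position assumption. Your proposal expands exactly this implicit argument, so the approach is the same as the paper's; you are filling in details the authors took for granted. Two small points worth tightening. First, the relevant minimal-position hypothesis is between $\gamma$ and the collection of laminates in $L_A$ (not only among finitely many curves under consideration); that is what lets you invoke the bigon criterion to exclude a segment entering and leaving a polygon through the same laminate edge, and it is also what makes the crossing sequence a well-defined invariant of the homotopy class in the first place. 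Second, for the infinite case you correctly flag the issue but leave it open; the resolution is that the eventually periodic tail of the crossing sequence records which cyclic chain of polygons around a given unmarked boundary component $B$ is traversed and in which direction, and, for every closed neighbourhood $N$ of $B$, the finite initial segment of the crossing sequence (the part outside $N$) pins down the induced path in $S_A/N$ up to homotopy rel endpoints by the same polygon-by-polygon argument, which is precisely the equivalence of rays defined in the paper. With those two refinements the argument is complete and matches the paper's intent.
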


\begin{definition}\label{defi::L-gamma}
 Let $\gamma$ be an arc or a closed curve.  
 We denote by $L_\gamma$ the multiset of laminates crossed by $\gamma$.
\end{definition}
Note that $L_\gamma$ comes with an ``ordering'': each element of $L_\gamma$ corresponds to a crossing of $\gamma$ with a laminate, and there is a well-defined ``next crossing'' and ``previous crossing''.
Moreover, note that $L_\gamma$ is finite, infinite or with cyclic ordering if $\gamma$ is a finite arc, an infinite arc or a closed curve, respectively.

\begin{definition}\label{defi::grading-on-curves}
 Let $\gamma$ be an arc or a closed curve on $S_A$.
 A \emph{grading} on $\gamma$ is a function $f:L_\gamma \to \bZ$ satisfying the following condition.
 Let $\ell\in L_\gamma$, and let $\ell'\in L_\gamma$ be the next crossing.
 The laminates $\ell$ and $\ell'$ are in a polygon $P$ on $S_A$ which $\gamma$ enters via $\ell$ and leaves via $\ell'$.
 Recall (Lemma \ref{lemm::lamination-polygons}) that $P$ contains exactly one marked point $X$.
 We require that

 \[
  f(\ell') = \begin{cases}
              f(\ell)  - |\sigma(\delta)|  + 1 & \textrm{if $X$ is to the left of $\gamma$ in $P$;} \\
              f(\ell)  +|\sigma(\delta)| - 1 & \textrm{if $X$ is to the right of $\gamma$ in $P$;}
             \end{cases}
 \]
where by $\delta$ we denote the segment of $\gamma$ between $l$ and $l'$ and $\sigma(\delta)$ denotes the admissible path constructed in Definition \ref{defi::homotopy-letter-of-a-segment} below.
 A pair $(\gamma, f)$, where $f$ is an arc or a closed curve on $S_A$ and $f$ is a grading on $\gamma$, will be called a \emph{graded curve}.
 Such a pair is a \emph{graded arc} or a \emph{graded closed curve} depending on whether $\gamma$ is an arc or a closed curve, respectively. Mimicking our definitions for homotopy bands, we say that a graded closed curve on $S_A$ is \emph{exceptional} if it is a simple curve which surrounds an unmarked boundary component of $S_A$.
\end{definition}

\begin{remark}
Let $\gamma$ be an arc or a closed curve on $S_A$.
 \begin{itemize}
  \item Any grading on $\gamma$ is completely determined by its value on a single element of $L_\gamma$.
  \item If a grading $f$ exists on $\gamma$, then the map $f[n] : \ell \mapsto f(\ell)-n$ with $n\in \bZ$ is also a grading, and all gradings on $\gamma$ are of this form.
  \item If $\gamma$ is a finite or infinite arc, then there always exists a grading on $\gamma$.
  \item If $\gamma$ is a closed curve, then there may not exist a grading on $\gamma$.
 \end{itemize}
\end{remark}

%%%%
\subsection{Main result on indecomposable objects of the derived category}

We are now ready to prove our classification of indecomposable objects  in $\Dfd A$ using graded curves on the surface $S_A$.

\begin{theorem}\label{theo::objects-as-arcs}
Let $A = KQ/I$ be a graded gentle algebra with marked ribbon graph $\Gamma_A$ and a marked embedding in the associated ribbon surface $S_A$. 
Then

\begin{enumerate}
 \item the isomorphism classes of the indecomposable string objects  in $\Dfd A$ are in bijection with 
graded arcs $(\gamma, f)$, where $\gamma$ is a finite arc on $S_A$ or 
an infinite arc on $S_A$ whose infinite rays circle around a boundary component in counter-clockwise orientation;
 \item the isomorphism classes of the indecomposable band objects in $\Dfd A$ are in bijection with 
   the pairs $\big((\gamma,f), M\big)$, where $(\gamma,f)$ is a non-exceptional graded  closed curve on $S_A$
   and $M$ is an isomorphism class of indecomposable $K[X]$-modules.
\end{enumerate}

\end{theorem}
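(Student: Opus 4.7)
My plan is to deduce the theorem from the classification of indecomposable objects of $D^b(A-\mod)$ by Bekkert--Merklen \cite{BekkertMerklen}: indecomposable string objects correspond bijectively to graded homotopy strings (up to formal inverse), and indecomposable band objects correspond bijectively to pairs consisting of a graded homotopy band (up to inverse and cyclic rotation) and an isomorphism class of indecomposable $K[X]$-module. It therefore suffices to establish two bijections: between graded arcs on $S_A$ (of the specified type) and graded homotopy strings, and between graded closed curves on $S_A$ and graded homotopy bands.

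The core of the argument is a dictionary via the lamination $L_A$. By Lemma~\ref{lemm::lamination-polygons}, the laminates of $L_A$ cut $S_A$ into polygons each containing a unique marked point, and each laminate corresponds to a vertex of $Q$. Given a graded arc $(\gamma, f)$, I read off the ordered multiset $L_\gamma$ of laminate crossings; this yields an ordered sequence of vertices $v_0, v_1, \ldots$ of $Q$ which, by Lemma~\ref{lemm::arcs-determined-by-crossing}, determines $\gamma$. Between two consecutive crossings $\ell_i, \ell_{i+1}$ the arc lies in a single polygon $P_\omega$ whose marked point corresponds to a path $\omega \in \overline{\mathcal{M}}$ passing through both $v_i$ and $v_{i+1}$; the sub-path of $\omega$ between $v_i$ and $v_{i+1}$ defines a homotopy letter, which is taken to be direct if $\gamma$ leaves the marked point to the right and inverse if it leaves it to the left. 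The grading $\mu$ matches the grading $f$ by Definition~\ref{defi::grading-on-curves}: the marked point lies on the left (resp. right) of $\gamma$ exactly when the corresponding letter increases (resp. decreases) the grading by $1$. For the inverse direction, given a graded homotopy string $(\sigma, \mu)$ I reconstruct $\gamma$ by concatenating segments inside the polygons $P_\omega$ dictated by each homotopy letter, the choice of side of the marked point being forced by whether the letter is direct or inverse; Lemma~\ref{lemm::arcs-determined-by-crossing} ensures well-definedness up to homotopy.

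The delicate check is that the geometrically defined sequence of homotopy letters satisfies the relation condition in a homotopy string: if two consecutive letters are both direct (resp. inverse), their concatenation (resp. the concatenation of their inverses) must lie in $I$. Two consecutive homotopy letters come from polygons $P_\omega, P_{\omega'}$ sharing the laminate $\ell$ labeled by some $v \in Q_0$. If both letters are direct, then the corresponding arrows out of $v$ in $\omega$ and into $v$ in $\omega'$ form a composition that cannot lie in the path $\omega$ nor $\omega'$ (since $\omega, \omega' \in \overline{\mathcal{M}}$ are maximal). By the gentle condition this composition must lie in $I$, which is precisely the definition of $\omega, \omega'$ being distinct maximal paths meeting at $v$ in the construction of $\Gamma_A$. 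The two other cases are analogous, and trivial paths $e_v \in \mathcal{M}_0$ are handled by viewing them as zero-length boundary segments of $P_\omega$.

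For infinite arcs, the wrapping of an endpoint in counter-clockwise direction around an unmarked boundary component $B$ corresponds to iterating the laminate crossings through the ring of polygons $P_{\omega_1}, P_{\omega_2}, \ldots$ surrounding $B$ (Proposition~\ref{prop::polygons-with-one-boundary}(2)). These polygons determine an eventually periodic sequence of homotopy letters, and since the arc wraps indefinitely on one side, the letters near infinity all have length $1$ and all point in the same direction (direct or inverse), yielding precisely the antipath condition in the definition of an infinite homotopy string. For closed curves, a cyclic sequence of laminate crossings yields a cyclic sequence of vertices and homotopy letters, i.e.\ a homotopy band, and the primitivity assumption on the closed curve corresponds exactly to the non-power condition $\sigma \neq \tau^m$; the parameter $M$ attached to a band object has no intrinsic geometric content on $\gamma$ and is recorded separately. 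The main obstacle in carrying this out is the careful case analysis needed to verify the relation condition at every laminate crossing and to match the geometric behaviour of infinite rays around unmarked boundary components with the combinatorial antipath condition; the rest is largely a bookkeeping exercise, though one that requires choosing consistent conventions for orientation and direct/inverse letters throughout.
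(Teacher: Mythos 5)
Your proposal takes essentially the same route as the paper: reduce to the Bekkert--Merklen classification, then construct the dictionary between graded curves and graded homotopy strings/bands by reading off the ordered sequence of laminate crossings, using the fact that the lamination cuts $S_A$ into polygons each containing one marked point and that the gentle relations correspond to consecutive direct (or inverse) homotopy letters crossing a shared laminate. The only point you leave implicit is that for the band case the balance between direct and inverse homotopy letters is forced by the existence of a grading on the closed curve, a detail the paper spells out in Lemma~\ref{lemm::homotopy-string-from-an-arc}(3); otherwise your argument mirrors the paper's Lemmas~\ref{lemm::homotopy-string-from-an-arc} and~\ref{lemm::uniqueness-of-arc}.
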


More precisely, arcs correspond to homotopy string complexes, infinite arcs correspond to infinite homotopy string complexes and  closed curves  admitting a grading correspond to homotopy bands.

The order in which an arc or a closed curve crosses the laminates gives rise to a homotopy string or band, as we will see in Lemma \ref{lemm::homotopy-string-from-an-arc}.

\begin{definition}\label{defi::homotopy-letter-of-a-segment}
 Let $P_v$ be a polygon on the surface $S_A$, as per Lemma \ref{lemm::lamination-polygons},
 and let $M_v$ be the unique marked point in $P_v$.
 Let $\delta$ be a curve in $P_v$ starting and ending on edges $\ell_1$ and $\ell_2$ of $P_v$ which are laminates.
 \begin{itemize}
  \item If $M_v$ lies between $\ell_{2}$ and $\ell_1$ in the clockwise order, 
        then let $w_1, \ldots, w_r$ be the laminates between $\ell_1 = w_1$ and $\ell_2=w_r$ in clockwise order.
        By Proposition \ref{prop::gentle-from-lamination}, these correspond to vertices of the quiver $Q$ of $A$
        which are joined by arrows $\alpha_1, \ldots, \alpha_{r-1}$.
        
        Then define $\sigma(\delta):=\alpha_1\cdots\alpha_{r-1}$. % This is a homotopy letter.
        
  \item If $M_v$ lies between $\ell_{1}$ and $\ell_2$ in the clockwise order, 
        then let $w_1, \ldots, w_r$ be the laminates between $\ell_2 = w_1$ and $\ell_1=w_r$ in clockwise order.
        By Proposition \ref{prop::gentle-from-lamination}, these correspond to vertices of the quiver $Q$ of $A$
        which are joined by arrows $\alpha_1, \ldots, \alpha_{r-1}$.
        
        Then define $\sigma(\delta):=(\alpha_1\cdots\alpha_{r-1})^{-1}$. % This is a homotopy letter.
 \end{itemize}
\end{definition}

\begin{displaymath}
\begin{tikzpicture}
\filldraw ({2*cos(10)},0)  circle (2pt); % marked point

\foreach \u in {4} % laminates
\draw [style between={2}{3}{ }, line width=0.5, color=orange] plot  [smooth, tension=1] coordinates {   ({2*cos(-10+360/5*\u+360/5)}, {2*sin(-10+360/5*\u+360/5)}) ({1.3*cos(360/5*\u+360/10)}, {1.3*sin(360/5*\u+360/10)}) ({2*cos(10+360/5*\u)}, {2*sin(10+360/5*\u)})};
\foreach \u in {1} % laminates
\draw [style between={1}{2}{ }, line width=0.5, color=orange] plot  [smooth, tension=1] coordinates {   ({2*cos(-10+360/5*\u+360/5)}, {2*sin(-10+360/5*\u+360/5)}) ({1.3*cos(360/5*\u+360/10)}, {1.3*sin(360/5*\u+360/10)}) ({2*cos(10+360/5*\u)}, {2*sin(10+360/5*\u)})};

\foreach \u in {2,3} % laminates
\draw [color=orange] plot  [smooth, tension=1] coordinates {   ({2*cos(-10+360/5*\u+360/5)}, {2*sin(-10+360/5*\u+360/5)}) ({1.3*cos(360/5*\u+360/10)}, {1.3*sin(360/5*\u+360/10)}) ({2*cos(10+360/5*\u)}, {2*sin(10+360/5*\u)})};
\foreach \u in {5} % laminates
\draw [line width=0.5, color=orange] plot  [smooth, tension=1] coordinates {   ({2*cos(-10+360/5*\u+360/5)}, {2*sin(-10+360/5*\u+360/5)}) ({1.3*cos(360/5*\u+360/10)}, {1.3*sin(360/5*\u+360/10)}) ({2*cos(10+360/5*\u)}, {2*sin(10+360/5*\u)})};

\foreach \u in {1,5} % boundary
\draw[line width=0.75] ({2*cos(-10+360/5*\u)}, {2*sin(-10+360/5*\u)})--({2*cos(+10+360/5*\u)}, {2*sin(+10+360/5*\u)});
\foreach \u in {2,3,4} % blue boundary
\draw[very thick, color=black] ({2*cos(-10+360/5*\u)}, {2*sin(-10+360/5*\u)})--({2*cos(+10+360/5*\u)}, {2*sin(+10+360/5*\u)});

% dashed arrow
\draw[dashed,thick, ->] ({1*cos(360/5*1+360/10)}, {1*sin(360/5*1+360/10)})--({1*cos(360/5*4+360/10)}, {1*sin(360/5*4+360/10)});

\foreach \u in {1,4} % points q_1, q_2   
\filldraw[black] ({1.3*cos(360/5*\u+360/10)}, {1.3*sin(360/5*\u+360/10)})  circle (1.5pt);	 

% arrows
\foreach \u in {2,3,4} % 
\draw [line width=0.5, color=black] plot  [smooth, tension=1] coordinates {    ({1.3*cos(360/5*\u-360/10)}, {1.3*sin(360/5*\u-360/10)}) ({0.75*cos(360/5*\u)}, {0.75*sin(360/5*\u)}) ({1.3*cos(360/5*\u+360/10)}, {1.3*sin(360/5*\u+360/10)})  } [decoration={markings, mark=at position 0.51 with {\arrow{>}}}, postaction={decorate} ];

% labels of arrows
\foreach \u in {1, 2,3} %
{  \pgfmathsetmacro\v{5-\u}
	\draw   ({1.1*cos(360/5*\v)}, {1.1*sin(360/5*\v)}) node{$\alpha_{{\u}}$};
}       
% label of dashed arrow
\draw   ({0.6*cos(360/10)}, {0.6*sin(360/10)}) node{$\delta$};
% label points q_1, q_2
%\filldraw[black] ({1.3*cos(360/5*\u+360/10)}, {1.3*sin(360/5*\u+360/10)})  circle (1.5pt);
\end{tikzpicture}
\end{displaymath}

\begin{lemma}\label{lemm::homotopy-letter-of-a-segment}
 Let $P_v$ and $\delta$ be as in Definition \ref{defi::homotopy-letter-of-a-segment}.
 Then $\sigma(\delta)$ is a homotopy letter.
\end{lemma}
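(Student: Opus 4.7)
The plan is to identify the arrows $\alpha_1, \ldots, \alpha_{r-1}$ appearing in $\sigma(\delta)$ as a consecutive subpath of the path $v \in \overline{\cM}$ labelling the polygon $P_v$, and then to invoke the defining property of $\overline{\cM}$ (no relations occur inside $v$) to conclude that this subpath is a string.

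First, I would make explicit the correspondence between the combinatorial data of $P_v$ and the path $v$. By Definition~\ref{defi::ribbon-surface} and Lemma~\ref{lemm::lamination-polygons}, $P_v$ is a $2d(v)$-gon whose sides alternate between laminates (the ``glued edges'' that the curves $\gamma_i$ cross) and boundary segments. By Definition~\ref{defi::ribbon-graph-of-gentle-algebra}, the half-edges incident to $v$ are labelled, in the cyclic order given by $\sigma$, by the sequence of vertices of $Q$ through which $v$ passes, from source to target; under the marked embedding of Proposition~\ref{prop::embedding-marked-ribbon-graph} the unique marked point $M_v$ is placed on the boundary segment that sits (in the clockwise orientation) between the last half-edge of this sequence and the first. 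Consequently, listing the laminates of $P_v$ in clockwise order starting just after $M_v$ produces exactly the sequence of vertices traversed by $v$.

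Next, I would match up the boundary segments with arrows of $Q$. Any two consecutive laminates $w_i, w_{i+1}$ of $P_v$ that are not separated by $M_v$ are joined by a boundary segment, and by Proposition~\ref{prop::gentle-from-lamination} this boundary segment corresponds to an arrow $\alpha_i$ from the vertex represented by $w_i$ to the one represented by $w_{i+1}$. Since the cyclic order of laminates recovers the vertex sequence of $v$, the arrow $\alpha_i$ is precisely the $i$-th arrow in the path $v$ (when $v \in \cM$). Hence, in the first case of Definition~\ref{defi::homotopy-letter-of-a-segment}, the product $\alpha_1 \cdots \alpha_{r-1}$ is a consecutive subpath of $v$.

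Finally, I would conclude. When $v \in \cM$, the path $v$ is maximal, in particular $v \notin I$; as $I$ is generated by paths of length $2$, every subpath of $v$ also avoids $I$, so $\alpha_1 \cdots \alpha_{r-1}$ is a direct string, hence a direct homotopy letter. When $v \in \cM_0$ the polygon $P_v$ has only two laminates (at a pendant vertex of $Q$) or the single boundary segment between them still corresponds to a composition $\alpha\beta \notin I$, and the same argument applies; in the degenerate case $r=1$ one obtains the trivial string at a vertex, which is still a (trivial) homotopy letter. In the second case of Definition~\ref{defi::homotopy-letter-of-a-segment}, $\delta$ is read in the opposite orientation and the same reasoning produces the formal inverse of a direct string, i.e.\ an inverse homotopy letter. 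The only slightly delicate point is the bookkeeping of which end of $v$ corresponds to which side of $M_v$ (so that the clockwise traversal really reads $v$ and not $\bar v$); this is forced by the choice of marking $m(v)=$ the half-edge labelled by the endpoint of $v$ in Definition~\ref{defi::ribbon-graph-of-gentle-algebra}(5) together with Proposition~\ref{prop::embedding-marked-ribbon-graph}, so no obstacle arises.
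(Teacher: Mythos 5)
Your proof is correct and takes essentially the same approach as the paper, whose one-line proof simply invokes Proposition~\ref{prop::gentle-from-lamination}; your argument unpacks that invocation by reading $\alpha_1\cdots\alpha_{r-1}$ as a consecutive subpath of the (maximal or trivial) path $v\in\overline\cM$ indexing $P_v$, so that $v\notin I$ together with the quadratic monomial nature of $I$ gives the string condition. One minor slip: when $v\in\cM_0$ the polygon $P_v$ has a \emph{single} laminate side rather than two, but this case cannot occur for a segment $\delta$ between two distinct laminate crossings of a curve in minimal position, so the conclusion is unaffected.
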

\begin{proof}
 By Proposition \ref{prop::gentle-from-lamination}, the compositions of the arrows of $\sigma(\delta)$ are not in the ideal of relations of $A$.
\end{proof}

\begin{lemma}\label{lemm::homotopy-string-from-an-arc}
 \begin{enumerate} 
  \item Let $(\gamma,f)$ be a finite graded arc on $S_A$.
  Let $\ell_1, \ell_2, \ldots, \ell_r$ be the laminates crossed (in that order) by $\gamma$, so that they form the set $L_\gamma$ of Definition \ref{defi::L-gamma}.
  For every~$i\in \{1, 2, \ldots, r-1\}$, let $\gamma_i$ be the part of $\gamma$ between its crossing of $\ell_i$ and of $\ell_{i+1}$.
  Let $$\sigma(\gamma):=\begin{cases}
                        \prod_{i=1}^{r-1} \sigma(\gamma_i) & \textrm{if $r>1$;} \\
                        e_{\ell_1} & \textrm{if r=1,}
                       \end{cases}$$
  and let $\mu(f) = (f(\ell_1), \ldots, f(\ell_{r}))$.
  Then $(\sigma(\gamma), \mu(f))$ is a graded homotopy string.

  \item Let $(\gamma,f)$ be an infinite graded arc.  
  Assume that on any infinite end of $\gamma$, the arc cycles infinitely many times around a boundary component in counter-clockwise orientation.
  Let $(\ell_i)$ be the sequence of laminates crossed by $\gamma$ (this sequence can be infinite on either side and forms $L_\gamma$).
  For every $i$, let $\gamma_i$ be the part of $\gamma$ between its crossing of $\ell_i$ and of $\ell_{i+1}$.
  Let $$\sigma(\gamma):=\prod_{i} \sigma(\gamma_i),$$
  and let $\mu(f) = (f(\ell_i))$. 
  Then $(\sigma(\gamma), \mu(f))$ is a graded infinite homotopy string.
  
  \item Let $(\gamma,f)$ be a primitive graded closed curve on $S_A$.  
  Let  $\ell_1, \ell_2, \ldots, \ell_r$ be the laminates crossed (in that order) by $\gamma$ (these laminates form $L_\gamma$).
  For every~$i\in \{1, 2, \ldots, r-1\}$, let $\gamma_i$ be the part of $\gamma$ between its crossing of $\ell_i$ and of $\ell_{i+1}$,
  and let $\gamma_r$ be the part of $\gamma$ between its crossing of $\ell_r$ and of $\ell_{1}$.
   Let $$\sigma(\gamma):= \prod_{i=1}^{r}  \sigma(\gamma_i),$$
  and let $\mu_f = (f(\ell_1), \ldots, f(\ell_{r-1}))$.
  Then $\omega(\sigma(\gamma))=0$,
   and $(\sigma(\gamma), \mu(f))$ is a graded (possibly exceptional) homotopy band. Moreover, $\sigma(\gamma)$ is exceptional if and only if $\gamma$ is exceptional.
  
 \end{enumerate}
\end{lemma}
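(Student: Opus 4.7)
The three parts share a common local structure at each laminate crossing of $\gamma$, so I would first establish the local compatibility statements and then assemble them separately in the finite-arc, infinite-arc, and closed-curve settings. The first step is to verify that consecutive homotopy letters $\sigma(\gamma_i)$ and $\sigma(\gamma_{i+1})$ are composable: by Lemma~\ref{lemm::homotopy-letter-of-a-segment} each $\sigma(\gamma_i)$ is a homotopy letter, and by Proposition~\ref{prop::gentle-from-lamination} the endpoint of $\sigma(\gamma_i)$ and the starting point of $\sigma(\gamma_{i+1})$ are both the vertex of $Q$ associated to the shared laminate $\ell_{i+1}$, so $\sigma(\gamma)$ is a well-defined walk in $Q$.

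The technical heart of the argument is then to verify the two defining conditions of a homotopy string, namely that two consecutive letters of the same direction compose to give an element of $I$, and that the sequence $\mu(f)$ satisfies the grading relation of Definition~\ref{defi::grading-on-homotopy-strings-or-bands}. Both reduce to a case analysis in the pair of polygons on either side of $\ell_{i+1}$. From Definition~\ref{defi::homotopy-letter-of-a-segment}, the letter $\sigma(\gamma_i)$ is direct exactly when the marked point of its polygon lies on a specific side (say the left) of the arc $\gamma_i$. If $\sigma(\gamma_i)$ and $\sigma(\gamma_{i+1})$ are both direct, then in the two polygons meeting at $\ell_{i+1}$ the marked points lie on the same side of $\gamma$; the last arrow of $\sigma(\gamma_i)$ and the first arrow of $\sigma(\gamma_{i+1})$ are then the two arrows at the vertex $\ell_{i+1}$ that lie farthest from those marked points, and by the dictionary of Proposition~\ref{prop::gentle-from-lamination} their composition is a generator of $I$. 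The consecutive-inverse case is symmetric. The same comparison of sides immediately yields the grading compatibility, because the signs in Definition~\ref{defi::grading-on-curves} and Definition~\ref{defi::grading-on-homotopy-strings-or-bands} are identified through the correspondence ``direct $\leftrightarrow$ marked point on the left''.

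Having established this, part (1) follows directly. For part (2), I would in addition need that the tails of $\sigma(\gamma)$ are eventually periodic direct (or inverse) antipaths consisting of length-one homotopy letters: the assumption that $\gamma$ wraps counter-clockwise around a boundary component with no marked points forces $\gamma$ eventually to pass through the same polygons in the same orientation, each contribution being a length-one letter of a fixed direction, which is precisely the antipath condition in the definition of an infinite homotopy string. For part (3), the cyclic assembly of the local data gives a homotopy string $\sigma(\gamma)$ closing up on itself; the existence of a grading on the closed curve forces the grading shifts around the loop to sum to zero, which under the ``direct $\leftrightarrow$ left'' dictionary becomes the assertion that $\sigma(\gamma)$ contains equally many direct and inverse letters, while primitivity of $\gamma$ prevents $\sigma(\gamma)$ from being a non-trivial power of a shorter homotopy string.

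The main obstacle is the relation verification sketched in the second paragraph. Although conceptually it is just a careful reading of the local picture through Proposition~\ref{prop::gentle-from-lamination}, the argument is sensitive to the orientation conventions, since ``direct versus inverse'' depends on the side of the oriented arc on which the marked point sits; an inconsistent sign convention here would propagate through the gradings in all three parts and break the entire correspondence with Definition~\ref{defi::complex-from-string-or-band}.
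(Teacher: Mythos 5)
Your proposal is correct and follows essentially the same route as the paper: in all three parts the relation condition for consecutive same-direction homotopy letters is checked via Proposition~\ref{prop::gentle-from-lamination}, the grading condition is read off directly from Definitions~\ref{defi::grading-on-curves} and~\ref{defi::grading-on-homotopy-strings-or-bands}, and in part~(2) the counter-clockwise wrapping around an unmarked boundary component is converted into an eventually periodic direct (or inverse) antipath of length-one letters. You are a bit more explicit than the paper about the role of primitivity in part~(3) and about the ``direct versus inverse'' sign convention governed by the side of the arc on which the marked point lies, but the substance of the argument is the same.
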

\begin{proof}
 In all three cases, for any index $i$, if $\sigma(\gamma_i)$ and $\sigma(\gamma_{i+1})$ are both direct  homotopy letters,
 then by Proposition \ref{prop::gentle-from-lamination}, composition of the last arrow of $\sigma(\gamma_i)$ 
 and of the first of $\sigma(\gamma_{i+1})$ form a relation.  The argument for consecutive inverse homotopy letters is similar. 
 It is also clear from its definition that $\mu(f)$ is a grading on $\sigma(\gamma)$.
 This proves (1).
 
 To prove (2), assume that $(\gamma,f)$ is an infinite graded arc.  
 Then $\gamma$ eventually wraps around one of the boundary components without marked points.
 By Lemma \ref{lemm::lamination-polygons}, there is at least one laminate with one endpoint on this boundary component.
 Thus, by Proposition \ref{prop::gentle-from-lamination}, every full turn of $\gamma$ around the boundary component
 induces a subword of $\sigma(\gamma)$ of the form $\alpha_1 \cdots \alpha_r$, 
 where the $\alpha_i$ form an oriented cycle of $Q$ such that every composition is a relation.
 Thus $\sigma(\gamma)$ is eventually periodic, with homotopy letters of length one.  
 Since the infinite ends of $\gamma$ cycle around a boundary component in counter-clockwise direction,
 we get that the start (or the end) of $\sigma(\gamma)$, if infinite, is a direct (resp. inverse) antipath.
 
 Again it is clear from its definition that $\mu(f)$ is a grading on $\sigma(\gamma)$.
 This proves (2). 

 To prove (3), assume that $(\gamma,f)$ is a primitive graded closed curve. 
 Write $\sigma(\gamma):=\prod_{i=1}^r \sigma(\gamma_i)$ as in the statement of the Lemma.
 Clearly, $s(\sigma(\gamma_1)) = t(\sigma(\gamma_r))$ and $\sigma(\gamma_1) \neq \overline{\sigma(\gamma_r)}$.
 The condition on the winding number of $\sigma(\gamma)$ is implied by the existence of a grading on $\gamma$.
 This ensures that $\sigma(\gamma)$ is a homotopy band.
 That $\mu(f)$ is a grading on $\sigma(\gamma)$ follows again from its definition. If $\gamma$ is an exceptional graded closed curve, it follows easily that each homotopy letter of $\sigma(\gamma)$ consists of a single arrow and that either all arrows are direct or all are inverse. One observes that this in fact gives a correspondence between such homotopy bands and unmarked boundary components so that the corresponding simple closed curve which surrounds them once is gradable.
\end{proof}

Conversely, any graded homotopy string or band defines a graded arc or a graded closed curve on $S_A$.

\begin{lemma}\label{lemm::uniqueness-of-arc}
 \begin{enumerate}
  \item For any finite graded homotopy string $(\tau,\mu)$, 
  there exists a unique finite graded arc $(\gamma,f)$ on $S_A$ (up to homotopy) such that $(\tau,\mu)=(\sigma(\gamma),\mu(f))$.
  
  \item For any infinite graded homotopy string $(\tau,\mu)$,
  there exists a unique infinite graded arc $(\gamma,f)$ on $S_A$ (up to homotopy) such that $(\tau,\mu)=(\sigma(\gamma),\mu(f))$.
  
  \item For any graded homotopy band $(b,\mu)$,
  there exists a unique graded closed curve $(\gamma,f)$ on $S_A$ (up to homotopy) such that $(b,\mu)=(\sigma(\gamma),\mu(f))$.
 \end{enumerate}
\end{lemma}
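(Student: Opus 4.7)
The strategy is constructive: I build the curve piece by piece in the polygonal decomposition of $S_A$ induced by $L_A$ (Lemma \ref{lemm::lamination-polygons}), then invoke the fact that in each polygon the homotopy class of a simple arc between two given sides, passing on a prescribed side of the unique interior marked point, is uniquely determined.

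For case (1), write $\tau = \tau_1 \cdots \tau_r$ with associated vertex sequence $v_0, v_1, \ldots, v_r$ of $Q$, where $v_i = t(\tau_i) = s(\tau_{i+1})$. By Proposition \ref{prop::gentle-from-lamination}, each $v_i$ corresponds to a laminate $\ell_i \in L_A$, and since $\tau_i$ is a single homotopy letter the laminates $\ell_{i-1}$ and $\ell_i$ lie on the boundary of a common polygon $P_i$ of the decomposition, with the unique marked point of $P_i$ sitting on the side determined by whether $\tau_i$ is direct or inverse (Definition \ref{defi::homotopy-letter-of-a-segment}). Draw a simple arc $\gamma_i$ inside $P_i$ from $\ell_{i-1}$ to $\ell_i$ passing so that the marked point of $P_i$ lies on the left if $\tau_i$ is direct and on the right if $\tau_i$ is inverse, and extend $\gamma_1$ and $\gamma_r$ past $\ell_0$ and $\ell_r$ to terminate at the neighbouring marked points. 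Concatenating along the crossings $\ell_i$ produces the arc $\gamma$. Set $f(\ell_0) := \mu_0$; the grading condition of Definition \ref{defi::grading-on-curves} then forces $f(\ell_i) = \mu_i$ for every $i$, exactly matching Definition \ref{defi::grading-on-homotopy-strings-or-bands}. By construction, $(\sigma(\gamma), \mu(f)) = (\tau, \mu)$. Uniqueness follows because any other graded arc realising $(\tau, \mu)$ crosses the same laminates in the same order and passes on the same side of each marked point, and a homotopy may be built polygon by polygon using the uniqueness of homotopy classes in a disc with one interior marked point.

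For case (2), extend the construction of (1) to semi-infinite or bi-infinite sequences. By hypothesis, at each infinite end $\tau$ is an eventually periodic direct (resp. inverse) antipath with all homotopy letters of length one. Via Proposition \ref{prop::gentle-from-lamination}, such a periodic antipath corresponds to a cyclic sequence of laminates whose endpoints all lie on a single boundary component $B$ with no marked points (an oriented cycle of relations in $Q$ giving rise to a face of $\Gamma_A$ enclosing $B$). Extend $\gamma$ at this end by letting it wrap around $B$ once per period; a direct antipath places all intervening marked points on the left of $\gamma$, which by the orientation convention forces the wrapping to be counter-clockwise. The resulting ray is unique up to the equivalence on infinite arcs introduced before Remark \ref{rema::puncture}, since any other realisation agrees with $\gamma$ on every finite truncation by case (1), and any two counter-clockwise wrappings around $B$ are equivalent modulo a neighbourhood of $B$.

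Case (3) is the cyclic analogue of (1). Applied to the homotopy band $b = b_1 \cdots b_r$ the construction yields segments $\gamma_1, \ldots, \gamma_r$ in polygons $P_1, \ldots, P_r$; the identity $s(b) = t(b)$ forces $\ell_r = \ell_0$, so concatenation produces a closed curve $\gamma$. The assumption that a grading $\mu$ exists on $b$ is precisely the condition that the number of direct and inverse letters agree, which in turn guarantees that $f$ returns to its initial value after one loop and hence descends to a grading on the closed curve. Primitivity of $b$ translates into primitivity of $\gamma$, and uniqueness is established exactly as in (1).

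The principal technical point is the polygon-by-polygon uniqueness used throughout: given a polygon $P$ with a single interior marked point and two chosen edges, one must show that simple arcs between these edges passing on a prescribed side of the marked point form a single homotopy class rel endpoints. This is a standard fact about once-punctured discs, but its careful application at the infinite ends of rays, where homotopies must be controlled uniformly inside shrinking neighbourhoods of a boundary component $B$, is the delicate part of the argument.
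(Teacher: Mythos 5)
Your construction of the graded arc from the graded homotopy string is essentially the paper's own proof: you build one segment $\gamma_i$ per homotopy letter inside the polygon singled out by Proposition~\ref{prop::gentle-from-lamination}, concatenate, extend to the neighbouring marked points, and pin down $f$ by $f(\ell_0)=\mu_0$ together with Definition~\ref{defi::grading-on-curves}. The paper only writes this out for case~(1) and declares (2) and (3) similar; you usefully spell out the infinite-end wrapping and the cyclic closing-up of the band, which is exactly what "similar" is hiding.

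The one place where your argument is thinner than the paper's is the uniqueness step. The paper takes two arbitrary graded arcs $(\gamma,f)$, $(\gamma',f')$ with $(\sigma(\gamma),\mu(f))=(\sigma(\gamma'),\mu(f'))$, notes that $\sigma(\gamma)$ need not be reduced (it depends on the representative, and $\sigma$ is a priori defined on representatives rather than on homotopy classes), shows that cancelling a pair of inverse homotopy letters corresponds to a homotopy of the curve inside a single polygon, and hence by induction reduces both curves to $\gamma(\tau)$ for the reduced $\tau$. You instead assert directly that "any other graded arc realising $(\tau,\mu)$ crosses the same laminates in the same order." That is correct once you fix a minimal-position representative, since then $\sigma(\gamma')$ is already the reduced word $\tau$ and Lemma~\ref{lemm::arcs-determined-by-crossing} gives the homotopy; but the reduction argument deserves at least a sentence, since without it one has only shown well-definedness for minimal-position representatives rather than on homotopy classes. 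A cosmetic remark: in a polygon with a single boundary marked point, once $\ell_{i-1}$ and $\ell_i$ are chosen there is only one homotopy class of arcs between them, so the "side" of the marked point is forced rather than chosen; the direct/inverse nature of $\tau_i$ is thus a consistency check, not a free parameter.
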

\begin{proof}
 We only prove (1); the proofs of (2) and (3) are similar.  
 Write $\tau=\tau_1\cdots \tau_r$, where each $\tau_i$ is a homotopy letter.
 Write $\tau_i = \alpha_i^1 \cdots \alpha_i^{s_i}$, where the $\alpha_i^j$ are either all arrows or all inverse arrows.
 By Proposition \ref{prop::gentle-from-lamination}, since there are no relations in the (possibly inverse) path 
 $\alpha_i^1 \cdots \alpha_i^{s_i}$, then there are laminates $\ell_i^1, \ldots, \ell_i^{s_i + 1}$ inside a unique polygon $P_v$
 such that $\ell_i^j$ and $\ell_i^{j+1}$ have an endpoint on the same boundary segment of $P_v$ and $\ell_i^{j+1}$ follows $\ell_i^j$
 in the clockwise order if $\tau_i$ is a direct homotopy letter, and counter-clockwise order if $\tau_i$ is an inverse homotopy letter.  
 
 Define $\gamma_i$ to be a segment in $P_v$ going from $\ell_i^1$ to $\ell_i^{s_i +1}$ if $\tau_i$ is a direct homotopy letter, 
 or the other way around if $\tau_i$ is an inverse homotopy letter.  
 We can assume that the endpoint of $\gamma_i$ is the starting point of $\gamma_{i+1}$.
 
 If we define $\gamma(\tau)$ to be the concatenation of $\gamma_1, \ldots, \gamma_r$, then~$\sigma(\gamma(\tau)) = \tau_1\cdots \tau_r = \tau$.
 Moreover, we see that $L_{\gamma(\tau)}$ contains exactly $\ell_1^1, \ell_2^1, \ldots, \ell_r^1, \ell_r^{s_{r}+1}$.
 If we put $f(\ell_1^i) = \mu_{i-1}$ and $f(\ell_r^{s_{r}+1}) = \mu_{r}$, then we see that $\mu(f) = \mu$.

 Therefore $\big(\sigma(\gamma(\tau))), \mu(f)\big) = (\tau, \mu)$, and we have proved
 the existence result.  
 
 To prove uniqueness, assume that $(\gamma,f)$ and $(\gamma',f')$ are such that $\big(\sigma(\gamma), \mu(f)\big) = \big(\sigma(\gamma'), \mu(f')\big)$.
 Let $\tau$ be the (unique) reduced expression  of the homotopy string $\sigma(\gamma) = \sigma(\gamma')$.
 Then $\gamma(\tau)$ is homotopic to $\gamma$ and $\gamma'$.
 Indeed, if $\sigma(\gamma)$ is reduced, then $\tau = \sigma(\gamma)$ and we are done.
 Otherwise, it means that in the expression $\sigma(\gamma)_1\cdots\sigma(\gamma)_r$ of $\sigma(\gamma)$ as a product of homotopy letters,
 there are two adjacent letters $\sigma(\gamma)_i$ and $\sigma(\gamma)_{i+1}$ that are inverse to each other.  
 Then the corresponding segments in the polygon $P_v$ described above are the same path going in opposite directions;
 their concatenation is thus homotopic to a trivial path.  
 Thus if we cancel the two inverse homotopy letters, we get that
 $\gamma\Big( \sigma(\gamma)_{1}\cdots \sigma(\gamma)_{i-1}\sigma(\gamma)_{i+2}\cdots \sigma(\gamma)_{r} \Big)$ is homotopic to 
 $\gamma\big(\sigma(\gamma)\big)$.
 By induction on the number of reduction steps to get from $\sigma(\gamma)$ to $\tau$,
 we get that $\gamma\big(\sigma(\gamma)\big) = \gamma(\tau)$.
 
 The same applies if we replace $\gamma$ by $\gamma'$.  Thus $\gamma$ and $\gamma'$ are homotopic.
 Knowing this, it is clear that $\mu(f) = \mu(f')$ implies that $f=f'$. 
 This proves the uniqueness and finishes the proof of the Lemma. 
\end{proof}

With this, we can prove Theorem \ref{theo::objects-as-arcs}.

\medskip
{\it Proof of Theorem \ref{theo::objects-as-arcs}}.
It follows from the result in Appendix \ref{AppendixIndecomposableObjectsGraded} that indecomposable objects in $\Dfd A$ are in bijection with 
graded homotopy strings (finite and infinite) and graded homotopy bands paired with an isomorphism class of indecomposable $K[X]$-modules.
By Lemma \ref{lemm::homotopy-string-from-an-arc}, we can associate a graded homotopy string or band to each of the graded curves listed in the 
statement of Theorem \ref{theo::objects-as-arcs}. 
Then Lemma \ref{lemm::uniqueness-of-arc} ensures that this defines the desired bijections.\qed

%%%%%%%%%%%%%%%%%%%%%%%%%%%%%%%%%%%%%%%%%%%%%%%%%%%%%%%%%%%%%%%%%%%%%%%%%%%%%%%%%%%
\section{Homomorphisms in the derived category of a gentle algebra}\label{sect::homomorphisms-description}

A basis of the morphism spaces in the bounded derived category $\Dfd{A}$ of a trivially graded gentle algebra $A$ was completely described
in \cite{ArnesenLakingPauksztello} in terms of homotopy string and band combinatorics.
Our aim in this section is to describe a graded generalization of this basis in terms
of the graded curves on the surface $S_A$ that was associated to a graded gentle algebra $A$ in Section \ref{sect::surfaces}.

\subsection{Bases for morphism spaces in the derived category}\label{sec::ALPbasis}

We now provide a generalization of the results of \cite{ArnesenLakingPauksztello} to the graded case over a field that is not necessarily algebraically closed. The generalization is straightforward; we provide the argument for completeness.

Let $(\sigma,\mu)$ and $(\tau,\nu)$ be two graded homotopy strings or bands. 
Let $\P_{(\sigma,\mu)}$ and $\P_{(\tau,\nu)}$ be the associated indecomposable objects in $\Dfd{A}$
(if $\sigma$ is a homotopy band and $\lambda\in K^*$, then we write $\P_{(\sigma,\mu)}$ instead of $\P_{(\sigma,\mu, \lambda)}$).
In all that follows, we consider $\sigma$ and $\tau$ only up to the action of the inverse operation $\overline{(-)}$; 
this means that whenever we are comparing $\sigma$ and $\tau$, we also need to compare $\overline{\sigma}$ and $\tau$
in order to obtain all morphisms.  We will also implicitly replace each homotopy band by its ``universal cover'', a periodic homotopy string which is infinite in both directions; a subword of a homotopy band will be a subword of its universal cover up to translation by the period.

\subsubsection{Graph maps}\label{sect::graph-maps}
Assume that $\sigma$ and $\tau$ have a maximal subword in common, say $\sigma_i\sigma_{i+1}\cdots\sigma_{i+j}$ and 
$\tau_{i'}\tau_{i'+1}\cdots\tau_{i'+j}$, with each $\sigma_{\ell}$ equal to $\tau_{\ell'}$ for each $\ell \in \{i, \ldots i+j\}$ and $\ell' \in  \{i', \ldots i'+j\}$. We also allow this subword to be a trivial homotopy string.

Consider the following conditions.
\begin{description}
        \item[DegG] The gradings $\mu$ and $\nu$ agree on this common subword, in the sense that $\mu_{i-1}=\nu_{i'-1}, \mu_i = \nu_{i'}, \ldots, \mu_{i+j}=\nu_{i'+j}$.  
	\item[LG1] Either the homotopy letters $\sigma_{i-1}$ and $\tau_{i'-1}$ are both direct and there exists a non-trivial path $p$ in $Q$ such that $p\tau_{i'-1} = \sigma_{i-1}$,
	or they are both inverse letters and there exists a path $p$ in $Q$ such that $\tau_{i'-1} = \sigma_{i-1}p$.
	\item[LG2] The homotopy letter $\sigma_{i-1}$ is either zero or inverse, and $\tau_{i'-1}$ is either zero or direct.
	\item[RG1] Dual of (LG1).
	\item[RG2] Dual of (LG2).
\end{description}

If (DegG) holds, and one of (LG1) and (LG2) holds, and one of (RG1) and (RG2) holds, then one can construct a morphism from $\P_{(\sigma,\mu)}$ to $\P_{(\tau,\nu)}$ called a \emph{graph map}.  This map~$f$ is defined as follows:~$f_{v_k, w_{k'}} = 1_{P_{v_k}}$ for all~$k\in \{i-1, \ldots, i+j\}$; $f_{v_{i-2}} = p$ if (LG1) is satisfied or is zero if (LG2) is satisfied; $f_{v_{i+j+1}}$ is defined similarly depending on whether (RG1) or (RG2) is true; all other components of~$f$ vanish.
Note that if $\sigma$ and $\tau$ are infinite homotopy strings, then the definition above extends to the case where the strings have an infinite subword in common:
for instance, if this subword is on the left, then one simply drops conditions  (LG1) or (LG2).

\subsubsection{Quasi-graph maps}
Keep the above notations and assumption on a maximal common subword of~$\sigma$ and~$\tau$.  If none of the conditions (LG1), (LG2), (RG1) and (RG2) hold, but (DegG) holds,  then one can construct a family of morphisms of complexes from $\P_{(\sigma,\mu)}$ to $\P_{(\tau,\nu)}[1]$ as follows.  For any~$k\in \{i, \ldots, i+j\}$, we can define a morphism by
\[
 \begin{cases}
  f_{v_{k-1}, w_{k'}} = \sigma_{k} = \tau_{k} & \textrm{if $\sigma_{k}$ is direct,} \\
  f_{v_{k}, w_{k'-1}} = \bar\sigma_{k} = \bar\tau_{k} & \textrm{if $\sigma_{k}$ is inverse.}
 \end{cases}
\]
and all other components are zero.

We can also define a morphism by
\[ 
 \begin{cases}
  f_{v_{i-2}, w_{i'-1}} = \sigma_{i-1} & \textrm{if $\sigma_{i-1}, \tau_{i'-1}$ are direct} \\
  f_{v_{i-2}, w_{i'-1}} = \sigma_{i-1}, f_{v_{i-1}, w_{i'-2}} = \bar\tau_{i'-1} & \textrm{if $\sigma_{i-1}$ direct, $\tau_{i'-1}$ inverse} \\
  f_{v_{i-1}, w_{i'-2}} = \bar \sigma_{i-1} & \textrm{if $\sigma_{i-1}, \tau_{i'-1}$ are inverse.} \\
 \end{cases}
\]
and all other components are zero.  Finally, we can define one last morphism similarly by looking at~$\sigma_{i+j+1}$ instead of~$\sigma_{i-1}$.

Note that these morphisms are all homotopic to each other, and thus define the same morphism in the derived category of~$A$. Note also that a quasi-graph map gives rise to a homotopy class of single and double maps, defined in the next section. In fact, all single and double maps that are not singleton maps arise in this way, see \cite[Definition 3.12]{ArnesenLakingPauksztello}, where the argument generalizes to the graded case.

Again, this definition extends to infinite homotopy strings in the natural way.

\subsubsection{Single maps}\label{sec::SingletonSingleMaps}
Roughly speaking, single maps are morphisms of complexes with only one non-zero component. Assume that there are direct homotopy letters $\sigma_i$ and $\tau_j$ and a non-trivial path $p$ such that $s(p) = t(\sigma_i)$ and $t(p) = t(\tau_j)$
(what follows also works if $\sigma_i$ and $\tau_j$ are both inverse letters by working with $\overline{\sigma}$ and $\overline{\tau}$ instead).

Consider the following conditions:
\begin{description}
    \item[DegS] We have that $\nu_{j+1} = \mu_{i+i} + |p|$. 
	\item[L1] If $\sigma_{i}$ is direct, then $\sigma_i p \in I$.
	\item[L2] If $\tau_j$ is inverse, then $p\overline{\tau}_j \in I$.
	\item[R1] If $\sigma_{i+1}$ is inverse, then $\overline{\sigma}_{i+1} p \in I$.
	\item[R2] If $\tau_{j+1}$ is direct, then $p\tau_{j+1} \in I$.
\end{description}

If conditions (DegS), (L1), (L2), (L3), (R1), (R2) and (R3) are satisfied, then $p$ induces a morphism of complexes from $\P_{(\sigma,\mu)}$ to $\P_{(\tau,\nu)}$ called a \emph{single map}, defined by setting~$f_{v_i, w_j} = p$ and all other components zero.

Assume, moreover, that 
\begin{itemize}
	\item $\sigma_{i+1}$ is zero or is a direct homotopy letter of the form $p\sigma'_{i+1}$,
	where $\sigma'_{i+1}$ is a direct homotopy letter;  
	\item $\tau_j$ is zero or is a direct homotopy letter of the form $\tau'_j p$,
	where $\tau'_j$ is a direct homotopy letter.
\end{itemize}

If that is the case, then $p$ induces a non-zero morphism from  $\P_{(\sigma,\mu)}$ to $\P_{(\tau,\nu)}$ 
in $\Dfd{A}$ called a \emph{singleton single map}.

\subsubsection{Double maps}\label{sec::SingletonDoubleMaps}
Roughly speaking, double maps are morphisms of complexes with only two non-zero components. Keeping the above notations, assume now that there are non-trivial paths $p$ and $q$ such that 
$s(p) = s(\sigma_i)$, $t(p) = s(\tau_j)$, $s(q) = t(\sigma_i)$ and $t(q) = t(\tau_j)$,
and such that $\sigma_i q = p \tau_i$.

If conditions (DegS), (L1) and (L2) above are satisfied for $p$ and conditions (DegS), (R1) and (R2) are satisfied for $q$,
then $p$ and $q$ induce a morphism of complexes from $\P_{(\sigma,\mu)}$ to $\P_{(\tau,\nu)}$ called a \emph{double map}, defined by setting~$f_{v_{i-1}, w_{j-1}} = p$ and~$f_{v_i, w_j} = q$, and all other components zero.

If, moreover, there exists a non-trivial path $r$ such that 
$\sigma_i = \sigma_i'r$ and $\tau_i = r\tau'_i$, with $\sigma'_i$ and $\tau'_i$ direct homotopy letters,
then $p$ and $q$ induce a non-zero morphism from $\P_{(\sigma,\mu)}$ to $\P_{(\tau,\nu)}$ in $\Dfd{A}$ called a \emph{singleton double map}.

\subsubsection{The basis}
We can now state a generalisation of the  main result of \cite[Theorem 3.15]{ArnesenLakingPauksztello}, generalized to graded gentle algebras.

\begin{theorem}\label{theo::ALP-basis}
	A Schauder basis of the space of morphisms from $\P_{(\sigma,\mu)}$ to $\P_{(\tau,\nu)}$ in  $\Dfd{A}$ is given by all  graph maps, quasi-graph maps, 
	singleton single maps and singleton double maps. That is, every morphism $\P_{(\sigma,\mu)}$ to $\P_{(\tau,\nu)}$ is the unique but possibly infinite $k$-linear combination of basis elements. 
\end{theorem}
\begin{proof}
The proof is a generalisation of  the proof in the ungraded case and is found in Appendix \ref{AppendixMorphisms}.
\end{proof}
We refer to Appendix \ref{AppendixMorphisms} for the precise sense in which such infinite linear combinations exist.

\begin{definition}
 The basis described in Theorem \ref{theo::ALP-basis} will be called the \emph{standard basis}. 
\end{definition}

%%%%%%%%%%%%%%%%
\subsection{Statement of the theorem}\label{sect::morphisms-as-intersections}
As before, let $A=kQ/I$ denote a fixed gentle algebra and let $S_A$ denote its surface (see Section \ref{sect::surfaces}). 
To simplify notations, if $(\gamma, f)$ is a graded arc or closed curve with associated graded homotopy string or band $(\sigma(\gamma), \mu(f))$, we write $\P_{(\gamma, f)}$ for the associated object $\P_{(\sigma(\gamma), \mu(f))} $ in $\Dfd{A}$.

Let $(\gamma_1,f_1)$ and $(\gamma_2,f_2)$ be two graded arcs or closed curves on $S_A$. The main result of this section (Theorem \ref{TheoremMorphismsIntersections}) is that the standard basis of the vector space of morphisms from $\P_{(\gamma_1,f_1)}$ to $\P_{(\gamma_2,f_2)}$ in $\Dfd{A}$
can be described in terms of the set of oriented graded intersection points of $(\gamma_1,f_1)$ and $(\gamma_2,f_2)$, defined below in Definition \ref{defi::oriented-graded-intersection}.

\begin{theorem}\label{TheoremMorphismsIntersections}
	Let $(\gamma_1,f_1)$ and $(\gamma_2,f_2)$ be graded arcs or closed curves on $S_A$ 
	and let $\cB$ be the standard basis of $\Hom{\Dfd{A}}(\P_{(\gamma_1,f_1)}, \P_{(\gamma_2,f_2)})$.
	Let $(\gamma_1,f_1) \overrightarrow{\cap}_{\gr} (\gamma_2,f_2)$ be the set of oriented graded intersection points (see Definition \ref{defi::oriented-graded-intersection}).
	Then there exists an explicit injection
	\[\basis: (\gamma_1,f_1) \overrightarrow{\cap}_{\gr} (\gamma_2,f_2) \hookrightarrow \mathcal{B}.\]
	Moreover, the following hold true.
	\begin{itemize}
		\item[i)] The map $\basis$ is a bijection, unless $\gamma_1$ and $\gamma_2$ are the same closed curve and $f_1 = f_2$ or $f_2 = f_1[1]$.
		
		\item[ii)]  If $\gamma_1$ and $\gamma_2$ are the same closed curve and $f_1 = f_2$, 
		            then $\basis$ is not surjective, and the missing element in its image is the identity map.
		            If $\gamma_1$ and $\gamma_2$ are the same closed curve and $f_2 = f_1[1]$, then $\basis$ is not surjective, and the missing element in its image is the quasi graph map $\xi$ that appears in an
		               Auslander-Reiten triangle
		               \[
		                \uptau \P_{(\gamma_1,f_1)} \xrightarrow{} E \xrightarrow{} \P_{(\gamma_1,f_1)} \xrightarrow{\xi} \uptau \P_{(\gamma_1,f_1)}[1].
		               \]                              
	\end{itemize}
\end{theorem}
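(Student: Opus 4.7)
The plan is to analyze the two sides of the correspondence along a natural split: \emph{boundary intersections} where $\gamma_1$ and $\gamma_2$ share a marked endpoint, possibly after an overlap, and \emph{interior intersections} where the curves cross transversally inside $S_A$. Boundary intersections should correspond to graph maps and quasi-graph maps, while interior intersections should correspond to singleton single and double maps. The geometric dictionary is driven by the fact that a maximal common segment of two arcs traverses the same sequence of laminates, which by Lemmas \ref{lemm::homotopy-string-from-an-arc} and \ref{lemm::uniqueness-of-arc} precisely translates into a maximal common subword of the associated homotopy strings.

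First I would set up the construction of $\basis$ on boundary intersections. Given an oriented graded intersection at a marked point $p$, the two curves share a (possibly trivial) common segment emanating from $p$ that ends where they first diverge; this segment corresponds to a maximal common subword of $\sigma(\gamma_1)$ and $\sigma(\gamma_2)$ in the sense of Section \ref{sect::graph-maps}. The grading agreement condition \textbf{(DegG)} follows from the hypothesis that $f_1$ and $f_2$ match on the shared laminates. The four endpoint conditions \textbf{(LG1)}, \textbf{(LG2)}, \textbf{(RG1)}, \textbf{(RG2)} are then read off from the local pictures around $p$ and the opposite end of the overlap: whether the next homotopy letters on either side are direct or inverse, and whether one factors through the other, is determined by the relative clockwise position of the continuations of $\gamma_1$ and $\gamma_2$ at the respective marked points, together with the description of arrows in Proposition \ref{prop::gentle-from-lamination}. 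A case analysis shows that valid orientations of the intersection yield exactly the graph maps and quasi-graph maps between $\P_{(\gamma_1,f_1)}$ and $\P_{(\gamma_2,f_2)}$ (in the quasi-graph case, up to the usual shift).

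Next I would set up $\basis$ on interior intersections. A transversal crossing of $\gamma_1$ and $\gamma_2$ takes place inside a unique polygon $P_v$ of the lamination (Lemma \ref{lemm::lamination-polygons}); its four branches exit through four laminates of $P_v$. By Definition \ref{defi::homotopy-letter-of-a-segment} and Proposition \ref{prop::gentle-from-lamination}, the laminates between consecutive branches around the marked point of $P_v$ determine arrows of $Q$, and reading them off yields the path $p$ (and, when two pairs of opposite branches of the crossing differ, the second path $q$) of Sections \ref{sec::SingletonSingleMaps}--\ref{sec::SingletonDoubleMaps}. The relations \textbf{(L1)},\textbf{(L2)},\textbf{(R1)},\textbf{(R2)} and \textbf{(DegS)} are immediate from gentleness of $A$, the fact that laminates cut $S_A$ into the polygons of Proposition \ref{prop::polygons-with-one-boundary}, and the definition of the grading on a curve. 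The dichotomy between singleton single and singleton double maps corresponds to whether the two branches of $\gamma_1$ at the crossing return or not to the same relative position of those of $\gamma_2$.

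Injectivity follows because different intersections either sit in different polygons $P_v$, or use different marked points and different overlap lengths; in both cases the triples $(i,j,p)$ or $(i,j,p,q)$ distinguishing basis elements in Section \ref{sec::ALPbasis} differ. Surjectivity is proved by inverting the construction: every graph or quasi-graph map in the standard basis singles out a maximal common subword of $\sigma(\gamma_1)$ and $\sigma(\gamma_2)$, which via Lemma \ref{lemm::uniqueness-of-arc} determines a unique boundary intersection; every singleton map singles out a unique polygon $P_v$ together with a unique path inside it, hence a unique interior crossing. The two exceptions in (i) and (ii) are of purely self-intersection nature: when $\gamma_1=\gamma_2$, the identity map (for $f_1=f_2$) arises from the total overlap of the curve with itself, which is not a proper intersection; the Auslander-Reiten quasi-graph map (for $f_2=f_1[1]$) arises from the same total overlap but with the shifted grading, reflecting the geometric rotation of endpoints of Corollary \ref{CorollaryTauIsGeometric}, and is again not a proper intersection. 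The main obstacle will be the exhaustive verification that the sign conventions for orientation, together with the direct/inverse dichotomy of the homotopy letters adjacent to a common subword, match the conditions \textbf{(LG$i$)}/\textbf{(RG$i$)} exactly; care is also needed to rule out spurious extra intersections coming from non-reduced or non-minimal-position configurations, which is handled by our standing assumption on minimal position together with the uniqueness part of Lemma \ref{lemm::uniqueness-of-arc}.
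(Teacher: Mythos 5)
Your proposed dichotomy — boundary intersections (shared marked endpoint, possibly after an overlap) give graph and quasi-graph maps, interior transversal crossings give singleton maps — does not match the structure of the standard basis and would make the surjectivity argument fail. The actual dividing line in the paper's Definition~\ref{defi::oriented-graded-intersection} is whether the common subword $\sigma(q)$ is \emph{empty} or \emph{non-empty}, and this is transverse to the boundary/interior distinction. Two curves in minimal position crossing transversally at an interior point $q$ can still pass through the same sequence of polygons on either side of the crossing (as made precise by lifting to the universal cover $\tilde{S}_A$ and considering the region $S_{\tilde{q}}$); that shared sequence of laminates is exactly the maximal common subword giving a graph map (type (1), possibly at $q$ on the boundary) or a quasi-graph map (type (2), which only occurs when $q$ is interior). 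Singleton maps arise only from interior crossings with $\sigma(q)$ empty, i.e.\ where both curves are confined to a single polygon of the lamination near $q$ (types (3), (4)). In particular, graph maps and quasi-graph maps between band objects necessarily come from interior crossings of closed curves, which your split would incorrectly route to singleton maps; and your description of ``its four branches exit through four laminates'' is only the special case $\sigma(q)$ empty.

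Two further omissions compound this. First, you do not treat intersections at punctures (type (5) in Definition~\ref{defi::oriented-graded-intersection}), which are required to account for graph maps with an infinite common subword between infinite homotopy strings; the construction there involves a horocycle $\zeta$ and a choice of lift, and is not a degenerate case of the others. Second, the universal-cover step is not optional bookkeeping: the common subword $\sigma(q)$ is only well-defined after lifting (Lemma~\ref{prop::IntersectionPointsAndLifts} and Lemma~\ref{lemm::lifts-cross-only-once} guarantee $S_{\tilde{q}}$ is a finite union of polygons and that the lifts cross exactly once), and for a closed curve intersecting itself the passage to $\tilde{S}_A$ is what disambiguates which overlap belongs to which crossing. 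Without these two ingredients, the correspondence you describe cannot be set up correctly, let alone shown to be bijective onto $\mathcal{B}$ minus the two exceptional maps.
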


%%%%%%%%%%%%%
\subsection{Oriented graded intersections}
We now define the set $(\gamma_1,f_1) \overrightarrow{\cap}_{\gr} (\gamma_2,f_2)$ of oriented graded intersection points.
Throughout this section, we will replace all boundary components of $S_A$ without marked points by punctures.
We include  points corresponding to the punctures in the surface by considering the endpoint compactification at the punctures \cite{Freudenthal}.
We consider infinite arcs wrapping around such a boundary component as arcs going to the endpoint (see Remark \ref{rema::puncture}); we can do this, since according to our conventions every infinite arc wraps around such an unmarked boundary component only in one direction, namely the counter-clockwise direction, and it approaches the boundary component asymptotically.
We say that two arcs going to the same endpoint, \emph{intersect} at this endpoint.  By abuse of terminology, we will say that arcs intersect at punctures when they intersect at the corresponding end-point compactification.

Let $\gamma_1 \cap \gamma_2$ be the set of intersection points of $\gamma_1$ and $\gamma_2$ (including intersections on the boundary of $S_A$ and at punctures).
To be very precise, we need to view the curves $\gamma_1$ and $\gamma_2$ as maps from the interval $[0,1]$ to the surface $S_A$;
an intersection point is then a point $(s_1, s_2)\in [0,1]^2$ such that $\gamma_1(s_1) = \gamma_2(s_2)$.
By abuse of language and notation, we will nevertheless speak of intersection points on $S_A$.

In order to define oriented graded intersection points, we will need to lift intersection points of arcs and curves to a universal cover of $S_A$.
Let $\pi: \tilde{S}_A \rightarrow S_A$ be a fixed universal covering map,
and let $\tilde{L}_A$ be the set of all lifts of laminates $\ell \in L_A$.
Note that $\tilde{S}_A$ is a union of polygons whose edges are either boundary segments or laminates in $\tilde{L}_A$.
We lift arcs on $S_A$ to arcs on $\tilde{S}_A$ and closed curves on $S_A$ to infinite lines on $\tilde{S}_A$.
Note that we can also lift the grading of a curve on $S_A$ to a grading on its lift on $\tilde{S}_A$.

Let $\gamma_1$ and $\gamma_2$ be two arcs or closed curves on $S_A$.
Let $q \in  \gamma_1\cap \gamma_2$. 
\begin{itemize}
 \item If $q$ is not a puncture, then let $\tilde{q}$ be any lift of $q$ on $\tilde{S}_A$.
       Then there are unique lifts $\tilde{\gamma}_1$ and $\tilde{\gamma}_2$ of $\gamma_1$ and $\gamma_2$ on $\tilde{S}_A$ such that 
       $\tilde{\gamma}_1$ and $\tilde{\gamma}_2$ intersect at $\tilde{q}$.

 \item If $q$ is a puncture, then we will choose ``lifts'' of $\gamma_1$ and $\gamma_2$ in the following way.
       Let $\zeta$ be a circle around the puncture (one can think of $\zeta$ as of a horocycle, as in \cite{FominThurston}).
       Assume that $\zeta$ intersects with $\gamma_1$ (and $\gamma_2$) exactly once. 
       By forgetting the part of $\gamma_1$ and $\gamma_2$ between $\zeta$ and the puncture, we get two curves $\gamma'_1$ and $\gamma'_2$.
       By homotopy relative to $\zeta$, we can assume that $\gamma'_1$ and $\gamma'_2$ intersect on $\zeta$ (and nowhere else).
       Let $\tilde{q}$ be a lift of this intersection point.
       Then we can then lift $\gamma'_1$ and $\gamma'_2$ to curves $\tilde{\gamma}_1$ and $\tilde{\gamma}_2$ that intersect at $\tilde{q}$ on $\tilde{S}_A$.
\begin{displaymath}       
\begin{array}{ccccc}
{
\begin{tikzpicture}[scale=1]
\draw [color=blue] ({2*exp(-0.1*pi)*cos(180)},{2*exp(-0.1*pi)*sin(180)})--({2*exp(-0.1*pi)*cos(180)},{3.5*exp(-0.1*pi)}) node[near end, right] {$\gamma_1$};
\draw [color=red] 
({1.5*exp(-0.1*0)*cos(0)},0)--({1.5*exp(-0.1*0)*cos(0)},{-3.5*exp(-0.1*pi)}) node[near end, right] {$\gamma_2$};

\draw [domain={pi}:50,variable=\t,smooth,samples=500, color=blue]
        plot ({\t r}: {2*exp(-0.1*\t)});
        
\draw [domain=0:50,variable=\t,smooth,samples=500, color=red]
        plot ({\t r}: {1.5*exp(-0.1*\t)});
        
\filldraw[color=black] (0,0) circle (2pt);

\end{tikzpicture}
}
&
{
	\begin{tikzpicture}
	\draw[->, line width=1.2] (-1,0)--(0,0);
	\draw[opacity=1] (0,{-3.5*exp(-0.1*pi)});
	\end{tikzpicture}
}
&
{
\begin{tikzpicture}[scale=1]
\draw [color=blue] ({2*exp(-0.1*pi)*cos(180)},{2*exp(-0.1*pi)*sin(180)})--({2*exp(-0.1*pi)*cos(180)},{3.5*exp(-0.1*pi)}) node[near end, right] {$\gamma_1'$};
\draw [color=red] 
({1.5*exp(-0.1*0)*cos(0)},0)--({1.5*exp(-0.1*0)*cos(0)},{-3.5*exp(-0.1*pi)}) node[near end, right] {$\gamma_2'$};

\draw [domain={pi}:50,variable=\t,smooth,samples=500, color=blue]
        plot ({\t r}: {2*exp(-0.1*\t)});
        
\draw [domain=0:50,variable=\t,smooth,samples=500, color=red]
        plot ({\t r}: {1.5*exp(-0.1*\t)});

\filldraw[color=white] (0,0) circle (0.75);
\draw[dashed, color=black] (0,0) circle (0.75);
\filldraw[color=black] (0,0) circle (2pt);    
\filldraw[color=black] ({0.75*cos(-180/pi*10*(ln(0.75)-ln(1.5)))},{0.75*sin(-10*180/pi*(ln(0.75)-ln(1.5)))}) circle (1pt);  

\filldraw[color=black] ({0.75*cos(-180/pi*10*(ln(0.75)-ln(2)))},{0.75*sin(-10*180/pi*(ln(0.75)-ln(2)))}) circle (1pt);    
         
\draw (0, 0.5) node {$\zeta$};
\end{tikzpicture}
}
&
{
\begin{tikzpicture}
\draw[->, line width=1.2] (-1,0)--(0,0);
\draw[opacity=1] (0,{-3.5*exp(-0.1*pi)});
\end{tikzpicture}
}
&
{
\begin{tikzpicture}[scale=1]
\draw [color=blue] ({2*exp(-0.1*pi)*cos(180)},{2*exp(-0.1*pi)*sin(180)})--({2*exp(-0.1*pi)*cos(180)},{3.5*exp(-0.1*pi)}) node[near end, right] {$\gamma_1'$};

\draw [color=red] 
({1.5*exp(-0.1*0)*cos(0)},0)--({1.5*exp(-0.1*0)*cos(0)},{-3.5*exp(-0.1*pi)}) node[near end, right] {$\gamma_2'$};

\draw [domain={pi+0.0004}:{-10*(ln(0.75)-ln(1.5))},variable=\t,smooth,samples=1000, color=blue]
plot ({\t r}: {2*exp(-0.1*1*pi)- (-0.75+2*exp(-0.1*1*pi))*exp(-4/(\t-1*pi)+ 4/( (-10*(ln(0.75)-ln(1.5)) ) -1*pi) ) });
        
\draw [domain=0:50,variable=\t,smooth,samples=500, color=red]
        plot ({\t r}: {1.5*exp(-0.1*\t)});

\filldraw[color=black] ({0.75*cos(-180/pi*10*(ln(0.75)-ln(1.5)))},{0.75*sin(-10*180/pi*(ln(0.75)-ln(1.5)))}) circle (1pt);  

\filldraw[color=white] (0,0) circle (0.75);
\draw[dashed, color=black] (0,0) circle (0.75);
\draw (0, 0.5) node {$\zeta$};
\filldraw[color=black] (0,0) circle (2pt); 
\filldraw[color=black] ({0.75*cos(-180/pi*10*(ln(0.75)-ln(1.5)))},{0.75*sin(-10*180/pi*(ln(0.75)-ln(1.5)))}) circle (1pt);         
         
\end{tikzpicture}
}
\end{array}
\end{displaymath}
       
       Note that this procedure depends on the choice of $\zeta$ and on the homotopy relative to $\zeta$ that was used to create an intersection point.
       This fact will be important later.
\end{itemize}

\begin{lemma}\label{lemm::lifts-cross-only-once}
 The curves $\tilde{\gamma}_1$ and $\tilde{\gamma}_2$ intersect only at $\tilde{q}$.
\end{lemma}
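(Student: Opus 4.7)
The plan is to apply the classical bigon criterion for curves in minimal position: two curves on a surface are in minimal position if and only if no pair of lifts to the universal cover intersects in more than one point. Since by our running assumption $\gamma_1$ and $\gamma_2$ are in minimal position on $S_A$, the statement should follow essentially formally, but some care is required at punctures.

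First I would treat the case where $q$ is not a puncture. Suppose for contradiction that $\tilde\gamma_1$ and $\tilde\gamma_2$ meet at a second point $\tilde q' \neq \tilde q$. Let $\tilde\alpha_i \subset \tilde\gamma_i$ be a subarc from $\tilde q$ to $\tilde q'$ for $i=1,2$. The concatenation $\tilde\alpha_1 \cup \tilde\alpha_2$ is a loop in the simply connected space $\tilde S_A$, so by the Jordan curve theorem it bounds an embedded disc $\tilde D \subset \tilde S_A$. Pushing forward through $\pi$ gives an immersed bigon in $S_A$ whose two boundary arcs lie on $\gamma_1$ and $\gamma_2$ respectively. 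By the bigon criterion of Freedman--Hass--Scott and Neumann-Coto (invoked earlier in the paper), the existence of such an immersed bigon contradicts the fact that $\gamma_1$ and $\gamma_2$ are in minimal position. Hence $\tilde q = \tilde q'$.

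For the case where $q$ is a puncture, I would run the same argument for the truncated curves $\gamma_1'$ and $\gamma_2'$ obtained by cutting off the parts of $\gamma_i$ beyond the horocycle $\zeta$. After the small homotopy rel $\zeta$ used in the construction, $\gamma_1'$ and $\gamma_2'$ intersect at a single point on $\zeta$, and their lifts $\tilde\gamma_1, \tilde\gamma_2$ meet at $\tilde q$. Any additional intersection $\tilde q'$ in the interior of $\tilde S_A$ would again produce a disc in $\tilde S_A$ bounded by subarcs of $\tilde\gamma_1$ and $\tilde\gamma_2$, projecting to an immersed bigon between $\gamma_1'$ and $\gamma_2'$ in the compact subsurface of $S_A$ obtained by removing the interior of the horocyclic disc at the puncture. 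This bigon extends unchanged to a bigon between the original $\gamma_1$ and $\gamma_2$, again contradicting minimal position.

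The one step that I expect to require the most care is verifying that the horocyclic truncation and the subsequent homotopy rel $\zeta$ do not spuriously affect minimal position: a priori, the homotopy creating the intersection on $\zeta$ could conceivably create or remove other crossings. The point is that $\gamma_1$ and $\gamma_2$ approach the puncture asymptotically along the laminate polygon structure, so a sufficiently small horocycle $\zeta$ intersects each of $\gamma_1,\gamma_2$ exactly once and the modification can be performed entirely inside the horocyclic disc. With this local control, minimality of $\gamma_1,\gamma_2$ transfers to minimality of $\gamma_1',\gamma_2'$, and the bigon argument applies verbatim.
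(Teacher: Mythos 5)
Your overall strategy — lift a second intersection to a disc in the universal cover and derive a contradiction via the bigon criterion — is exactly the approach the paper takes, but your write-up leaves out the two pieces of the paper's proof that make it go through, and misattributes the criterion along the way.

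First, before you can apply the Jordan curve theorem to the concatenation $\tilde\alpha_1 \cup \tilde\alpha_2$, you need to know that $\tilde\gamma_1$ and $\tilde\gamma_2$ are themselves simple curves; since $\pi$ is far from injective this is not automatic, and the paper invokes Scott's result \cite{Scott} for it. Second, and more importantly, the step ``pushing forward through $\pi$ gives an immersed bigon, and this contradicts minimal position'' hides the actual content: a priori the two intersection points $\tilde q$ and $\tilde q'$ could project to the \emph{same} intersection point $q \in S_A$, in which case the projected region is not a bigon at all. The paper deals with this by an orientation argument: deck transformations are orientation-preserving, so if $\tilde q$ and $\tilde q'$ were lifts of the same point, $\tilde\gamma_1$ would cross $\tilde\gamma_2$ with the same sign at both — impossible at the two corners of a disc, where the crossings necessarily have opposite sign. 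Only after ruling out this degeneracy does the reducing homotopy of \cite[Proposition 1.7]{PrimerOnMappingClassGroups} produce a contradiction. Without that argument, ``immersed bigon contradicts minimal position'' is not a theorem you can simply cite; and in any case the bigon criterion is from Farb--Margalit, while Freedman--Hass--Scott and Neumann-Coto are cited in the paper only for the existence of representatives in minimal position.

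Your separate handling of the puncture case is more explicit than the paper's proof, which subsumes it without comment; the care you take there is fine, but it does not compensate for the missing orientation argument in the main case.
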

\begin{proof}
 It follows from \cite{Scott} that $\tilde{\gamma}_1$ and $\tilde{\gamma}_2$ are simple. 
 Assume that they intersect twice in succession, say at $\tilde{r}_1$ and $\tilde{r}_2$.
 Then $\tilde{r}_1$ and $\tilde{r}_2$ are not two lifts of the same intersection point of $\gamma_1$ and $\gamma_2$.
 Indeed, the sections of $\tilde{\gamma}_1$ and $\tilde{\gamma}_2$ between $\tilde{r}_1$ and $\tilde{r}_2$ form a disc.
 Around the boundary of this disc, we can assume without loss of generality that $\tilde{\gamma}_1$ comes before $\tilde{\gamma}_2$ at $\tilde{r}_1$ in the orientation of the surface.
 But then $\tilde{\gamma}_2$ comes before $\tilde{\gamma}_1$ at $\tilde{r}_2$, which is impossible if $\tilde{r}_1$ and $\tilde{r}_2$ are lifts of the same intersection point of $\gamma_1$ and $\gamma_2$.

 Therefore, by the bigon criterion (see \cite[Proposition 1.7]{PrimerOnMappingClassGroups}), 
 we can find a homotopy of $\tilde{\gamma}_1$ which descends to a homotopy of $\gamma_1$
 that reduces the number of intersections with $\gamma_2$ -- a contradiction with the assumption that the two are in minimal position.
\end{proof}

Next, we define a region $S_{\tilde{q}}$ of $\tilde{S}_A$. 
Let $P_0$ be the polygon of $\tilde{S}_A$ containing $\tilde{q}$.
Define a set of polygons $\cP_n$ recursively by setting $\cP_0 = \{P_0\}$ 
and by letting $\cP_{n+1}$ contain all polygons of $\cP_n$ 
and all polygons $P_v$ adjacent to a polygon of $\cP_n$ such that both $\tilde{\gamma}_1$ and $\tilde{\gamma}_2$ go through $P_v$.
Then $S_{\tilde{q}}$ is defined to be the union of all polygons belonging to one of the $\cP_n$.
In other words, $S_{\tilde{q}}$ is the region of $\tilde{S}_A$ containing the laminates intersected by both $\tilde{\gamma_1}$ and $\tilde{\gamma_2}$ as well as $\tilde{q}$.

\begin{figure}[h]
	\captionsetup{labelformat=empty}
	\captionsetup{justification=centering,singlelinecheck=false, format=hang}
	\centering
	\begin{tikzpicture}
	%----arcs
	\draw [line width=0.5, color=blue] plot  [smooth, tension=1] coordinates {  (-3.5,1.5) (-1, 0.5) (2, 0.5) (4, -0.5) (6, -0.5) (8.5, -1.5)};
	\draw [line width=0.5, color=red] plot  [smooth, tension=1] coordinates {  (-3.5,-1.5) (-1, -0.5) (2, -0.5) (3, 0.5) (6, 0.5) (8.5, 1.5)};
	%----boundary
	%---left+right
	\draw [line width=0.5, color=black] plot  [smooth, tension=1] coordinates {  (-3.5,-1) (-2, 0) (-3.5, 1)};
	\draw [line width=0.5, color=black] plot  [smooth, tension=1] coordinates {  (8.5,-1) (7, 0) (8.5, 1)};
	%---upper right+ lower right
	\draw [line width=0.5, color=black] plot  [smooth, tension=1] coordinates {  (8.5,2) (7, 1.5) (4, 1) (3,2.5)};	
	\draw [line width=0.5, color=black] plot  [smooth, tension=1] coordinates {  (8.5,-2) (7, -1.5) (2, -1) (0.5, -3)};	
	%---upper left+ lower left	
	\draw [line width=0.5, color=black] plot  [smooth, tension=1] coordinates {  (-3.5,2) (-2, 1.5) (1, 1) (2,2.5)};
	\draw [line width=0.5, color=black] plot  [smooth, tension=1] coordinates {  (-3.5,-2) (-1.5, -1.5) (-0.5, -3)};		
	%----laminates
	%---upper left+ lower left
	\draw [line width=1, color=black, dashed] plot  [smooth, tension=1] coordinates {  (-3.5,1) (-3.5, 2)};
	\draw [line width=1, color=black, dashed] plot  [smooth, tension=1] coordinates {  (-3.5,-1) (-3.5, -2)};
	%---upper right+ lower right
	\draw [line width=1, color=black, dashed] plot  [smooth, tension=1] coordinates {  (8.5,1) (8.5, 2)};
	\draw [line width=1, color=black, dashed] plot  [smooth, tension=1] coordinates {  (8.5,-1) (8.5, -2)};
	%---top & down
	\draw [line width=1, color=black, dashed] plot  [smooth, tension=1] coordinates {  (3,2.5) (2, 2.5)};
	\draw [line width=1, color=black, dashed] plot  [smooth, tension=1] coordinates {  (0.5,-3) (-0.5, -3)};
	%---other arcs (left to right)
	\draw [line width=1, color=black, dashed] plot  [smooth, tension=1] coordinates {  (-1.5, -1.5) (-1,0) (-2,1.5)};
	\draw [line width=1, color=black, dashed] plot  [smooth, tension=1] coordinates {  (2,-1) (1,0) (1,1)};
	\draw [line width=1, color=black, dashed] plot  [smooth, tension=1] coordinates {  (3,-0.88) (3.5,0) (4,1)};
	\draw [line width=1, color=black, dashed] plot  [smooth, tension=1] coordinates {  (5, -1.09) (6,0) (7,1.5)};
	
	%----nodes
	\draw[color=blue] (-4, 1.5) node{$\delta_1$};
	\draw[color=red] (-4, -1.5) node{$\delta_2$};
	
	\end{tikzpicture}
	\caption{The surface $S_{\tilde{q}}$: Dashed curves belong to $\tilde{L}_A$, whereas the blue and the red curve show $\delta_1$ and $\delta_2$, 
	respectively, and solid black lines belong to $\partial \tilde{S}_A$. In this example $\sigma(q)$ has $3$ homotopy letters.  }  
\end{figure}

\begin{lemma}\label{prop::IntersectionPointsAndLifts} 
The surface $S_{\tilde{q}}$ is a union of finitely many polygons. 
\end{lemma}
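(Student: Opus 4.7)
I would prove the Lemma in two main steps: first a local finiteness statement, then a global boundedness statement.

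\emph{Step 1 (local finiteness).} Each polygon of $\tilde{S}_A$ is bounded by finitely many laminates and boundary segments (Lemma~\ref{lemm::lamination-polygons}), so the polygon-adjacency graph of $\tilde{S}_A$ — whose vertices are the polygons and whose edges record sharing of a laminate — is locally finite. Consequently, to prove that $S_{\tilde{q}}=\bigcup_n \cP_n$ contains only finitely many polygons, it is enough to show that the increasing sequence $\cP_0\subseteq \cP_1\subseteq \cdots$ stabilizes, i.e.\ that $\cP_N=\cP_{N+1}$ for some $N$, or equivalently that $S_{\tilde q}$ is bounded in this graph.

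\emph{Step 2 (boundedness).} Each curve $\tilde{\gamma}_i$ induces a walk $w_i$ in the polygon-adjacency graph: the sequence of polygons it enters and exits through its successive laminate crossings. A polygon lies in $S_{\tilde{q}}$ only if it appears in both $w_1$ and $w_2$, and in addition must be connected to $P_0$ through polygons with the same property. I would argue by contradiction: if $S_{\tilde{q}}$ were infinite, then by König's Lemma (applied using Step 1) there would be an infinite injective chain $P_0,P_1,P_2,\ldots$ of distinct polygons in $S_{\tilde{q}}$, and both $\tilde{\gamma}_1$ and $\tilde{\gamma}_2$ would pass through every $P_n$. Since the $P_n$ are distinct polygons of bounded combinatorial size, they escape to infinity in $\tilde{S}_A$, which by the classification in Theorem~\ref{theo::objects-as-arcs} forces both curves to be infinite arcs and to wrap asymptotically around the \emph{same} puncture-lift $\tilde{p}$ (as otherwise their walks separate after only finitely many shared polygons, since in a cusp neighborhood of a puncture-lift the polygons arrange in a single chain accessible from exactly one ``side'' in $\tilde S_A$).

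\emph{Main obstacle and its resolution.} The delicate point is ruling out this ``co-asymptotic'' case, i.e.\ showing $\tilde{\gamma}_1$ and $\tilde{\gamma}_2$ cannot both converge to the same puncture-lift $\tilde p$. I would handle this using Lemma~\ref{lemm::lifts-cross-only-once} together with a bigon-type argument at the cusp: two simple lifts meeting only at $\tilde q$ and both terminating at $\tilde p$ would cobound in $\tilde S_A$ a monogon-cusp region with vertices at $\tilde q$ and $\tilde p$. In the universal cover this region is simply connected, and projecting it to $S_A$ produces either a bigon between $\gamma_1$ and $\gamma_2$ or, in the self-intersection case, a self-bigon of the underlying curve; in either case this contradicts the minimal-position assumption on $\gamma_1,\gamma_2$ (compare the use of the bigon criterion in Lemma~\ref{lemm::lifts-cross-only-once}). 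In the puncture-truncation setup of the preceding discussion (with the horocycle $\zeta$ replacing the puncture), the same argument applies directly to the truncated curves and yields the same contradiction. Thus the two infinite ends of $\tilde{\gamma}_1$ and $\tilde{\gamma}_2$ must in fact converge to distinct puncture-lifts, so the walks $w_1$ and $w_2$ diverge after finitely many polygons; combined with Step 1 this gives that $S_{\tilde{q}}$ is a finite union of polygons.
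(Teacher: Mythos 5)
Your proposal has a genuine gap precisely in the case the paper cares most about, namely when both $\gamma_1$ and $\gamma_2$ are closed curves. You argue that if $S_{\tilde{q}}$ were infinite, König's Lemma would produce an infinite chain of polygons through which both lifts pass, and you then claim that this ``forces both curves to be infinite arcs and to wrap asymptotically around the same puncture-lift,'' citing Theorem~\ref{theo::objects-as-arcs}. That theorem classifies indecomposable objects by homotopy type of curves; it says nothing about how lifts of curves to the universal cover behave, and in particular it does not force the underlying curves to be arcs. If $\gamma_1$ and $\gamma_2$ are closed curves, their lifts $\tilde{\gamma}_1,\tilde{\gamma}_2$ are bi-infinite lines in $\tilde{S}_A$ that need not approach any puncture-lift, and your parenthetical about cusp neighborhoods (``otherwise their walks separate after only finitely many shared polygons'') is exactly the assertion that needs proof, not a justification. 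The rest of your argument (the monogon/bigon contradiction at a cusp) is only designed for the puncture case and does not touch the closed-curve case.

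The paper's proof does something shorter and sharper: it dismisses the case where $\gamma_1$ or $\gamma_2$ is an arc, then assumes both are closed and supposes $S_{\tilde{q}}$ is infinite. Since a fundamental domain of $S_A$ in $\tilde{S}_A$ consists of finitely many polygons, some polygon of $S_{\tilde{q}}$ must contain a second lift $\tilde{q}'$ of $q$, at which $\tilde{\gamma}_1$ and $\tilde{\gamma}_2$ would intersect again; this contradicts Lemma~\ref{lemm::lifts-cross-only-once}. This cocompactness argument is what makes the ``strip'' between the two lifts finite and is precisely the ingredient missing from your Step 2. Your Step 1 (local finiteness of the polygon-adjacency graph) and the appeal to the bigon criterion are both sound, but without the fundamental-domain argument you have no way to rule out two lifts of closed curves sharing an infinite chain of polygons.
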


\begin{proof}  
The result is trivial if $\gamma_1$ or $\gamma_2$ is an arc.
Assume that $\gamma_1$ and $\gamma_2$ are closed curves 
and suppose that $S_{\tilde{q}}$ contains an infinite number of polygons.
Since a fundamental domain of $S_A$ in $\tilde{S}_A$ contains only finitely many polygons,
one of the polygons in $S_{\tilde{q}}$ will contain another lift of $q$, say $\tilde{q}'$.
At this lift, $\tilde{\gamma}_1$ and $\tilde{\gamma}_2$ must intersect.
This contradicts Lemma \ref{lemm::lifts-cross-only-once}.
\end{proof}

Let $\delta_1$ and $\delta_2$ be the parts of $\tilde{\gamma}_1$ and $\tilde{\gamma}_2$ contained in $S_{\tilde{q}}$.
In the interior of $S_{\tilde{q}}$, the curves $\delta_1$ and $\delta_2$ cross the same laminates of $\tilde{L}_A$ in the same order.
These crossings define a (possibly empty) homotopy string $\sigma(q)$;
if $q$ is a puncture, we convene that $\sigma(q)$ is an infinite homotopy string by adding to it the infinitely repeating cycle corresponding to the puncture.
If $\sigma(q)$ is non-empty, it is a subwalk of $\sigma(\gamma_1)$ and $\sigma(\gamma_2)$ in a canonical way, 
where in case of a homotopy band $\sigma_1 \cdots \sigma_n$, 
we mean that $\sigma(q)$ is a subwalk of the cyclic two-sided infinite walk $\cdots \sigma_1 \cdots \sigma_n \sigma_1 \cdots$.
% We emphasize that referring to $\sigma(q)$ as a subwalk of $\sigma(\gamma_i)$ refers to a specific occurrence of $\sigma_i$ as a subsequence of  $\sigma(\gamma_i)$.\\
If, on the other hand $\sigma(q)$ is empty, 
it means that $\pi \circ \delta_1$ and $\pi \circ \delta_2$ are contained in a single polygon $P$ of $S_A$.

\begin{lemma}\label{LemmaSigmaEmpty}
	Keeping the above notations, if $\sigma(q)$ is empty, then the unique marked point in $P$ is contained in $\mathcal{M}$ (see Definition \ref{defi::ribbon-graph-of-gentle-algebra}). 
\end{lemma}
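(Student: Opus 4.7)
The plan is to argue by contradiction. Suppose the unique marked point $M$ of $P$ corresponds to a vertex $v \in \mathcal{M}_0$ of $\Gamma_A$. By Definition~\ref{defi::ribbon-graph-of-gentle-algebra}, such a $v$ is a trivial path passing through only a single vertex of $Q$, so $v$ has valency one in $\Gamma_A$ and $P = P_v$ is a bigon whose two sides are a single laminate $\ell \in L_A$ and a single boundary segment of $\partial S_A$ containing $M$.

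I would then analyse how $\pi\circ\delta_1$ and $\pi\circ\delta_2$ sit inside this bigon, using the minimal-position assumption. The only non-boundary side of $P$ is $\ell$, and the interior of $P$ meets no boundary component of $S_A$, so in particular it contains no puncture. Hence the endpoints of the connected component of $\pi\circ\delta_i$ containing $q$ can only lie at $M$ or on $\ell$. The bigon criterion (see e.g.\ \cite[Proposition~1.7]{PrimerOnMappingClassGroups}) forbids both endpoints from lying on $\ell$, since this would produce a bigon between $\gamma_i$ and $\ell$; and a loop at $M$ inside $P$ would be contractible and so excluded. Thus each $\gamma_i$ is a finite arc with an endpoint at $M$, and $\delta_i$ enters the lifted bigon $P_0 = \tilde{P}_v$ through the laminate lift $\tilde{\ell}$ and terminates at the lift $\tilde{M}$ of $M$. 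In particular $\gamma_1, \gamma_2$ cannot be closed curves, as such a curve would need to enter and exit $P$ through $\ell$, again creating a forbidden bigon.

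The final step is to contradict the hypothesis that $\sigma(q)$ is empty. By the paragraph preceding the lemma, this hypothesis forces $S_{\tilde{q}} = P_0$. However, since both lifted curves $\tilde{\gamma}_1$ and $\tilde{\gamma}_2$ cross $\tilde{\ell}$ on their way to $\tilde{M}$, they each have non-trivial interior in the polygon $P_1'$ adjacent to $P_0$ across $\tilde{\ell}$. The recursive definition of the sets $\cP_n$ thus puts $P_1' \in \cP_1 \subseteq S_{\tilde{q}}$, giving $S_{\tilde{q}} \supsetneq P_0$, a contradiction. Hence $v \in \mathcal{M}$.

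The main subtlety will be the careful application of the bigon criterion in the universal cover and at common endpoints of the arcs; in particular, one must verify that the lifts $\tilde{\gamma}_i$ have genuine interior in $P_1'$ (rather than only meeting $\tilde{\ell}$ at a single point) so that $P_1'$ is indeed added to $\cP_1$.
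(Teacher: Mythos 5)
Your proof is correct and is essentially the contrapositive of the paper's argument. The paper reasons: since $\sigma(q)$ is empty, $S_{\tilde q}=P_0$, and so by the recursive definition of $S_{\tilde q}$ the unique polygon $P_1'$ adjacent to $P_0$ across $\tilde\ell$ fails to be traversed by one of the two lifts, forcing that lift to stay inside the bigon $P_0$; the corresponding curve is then contained in the disc $P$ and hence null-homotopic, a contradiction. You instead argue that each $\gamma_i$, being non-contractible, must exit the bigon through $\tilde\ell$, so both lifts go through $P_1'$, forcing $P_1'\in\cP_1$ and hence $S_{\tilde q}\supsetneq P_0$, contradicting the assumption that $\sigma(q)$ is empty. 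Both arguments rest on exactly the same geometric fact (a non-contractible curve cannot be trapped in the bigon $P$) and differ only in which side of the implication carries the contradiction.

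Two small remarks. First, the appeal to the bigon criterion is more than you actually need: to conclude that $\tilde\gamma_i$ enters $P_1'$ it suffices to note that its segment inside $P_0$ cannot have both endpoints at $\tilde M$ (as that would make $\gamma_i$ a contractible loop in the disc $P$), and in \emph{all} remaining cases -- one or two endpoints on $\tilde\ell$ -- the curve crosses $\tilde\ell$ and therefore passes into $P_1'$. The bigon criterion does further restrict to the ``$\tilde M$ to $\tilde\ell$'' configuration, but that refinement is not used in the final contradiction. Second, the deduction that $\gamma_i$ is a \emph{finite} arc is slightly too strong: an infinite arc with one finite end at $M$ would also have a segment in $P_0$ running from $\tilde M$ to $\tilde\ell$. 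This does not affect the argument (only the crossing of $\tilde\ell$ matters), but the phrasing should be ``an arc, possibly infinite, with an endpoint at $M$'' rather than ``a finite arc''.
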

\begin{proof}
	Suppose the marked point in $P$ is an element in $\mathcal{M}_0$. 
	Then the only laminate on the boundary of $S_{\tilde{q}}$ 
	is a lift of the laminate $\ell$ associated with a vertex $v \in Q_0$. 
	But by definition of $S_{\tilde{q}}$, there exists $i \in \{1,2\}$, 
	such that $\gamma_i$ does not cross $\ell$. 
	Thus, $\gamma_i$ is contained in $P$ and homotopic to a constant path - a contradiction.
\end{proof}

We can finally define the set of oriented graded intersections.
Keeping the above notations, for $j \in \{1,2\}$, denote by $p_j^1, \dots, p^m_j$ the ordered sequence of intersections of the curve $\delta_j$ 
with the boundary or the laminates of $L_A$, and let $\ell_j^i$ be the laminate on which $p_j^i$ lies.  
We may assume that if $\sigma(q)$ is non-empty, then for each $i \in (1,m)$, $p_1^i$ and $p_2^i$ lie on the same laminate. \\

\begin{definition}\label{defi::oriented-graded-intersection}
 Let $(\gamma_1,f_1)$ and $(\gamma_2,f_2)$ be graded arcs or closed curves on $S_A$.
 The set $(\gamma_1,f_1) \overrightarrow{\cap}_{\gr} (\gamma_2,f_2)$ of \emph{oriented graded intersections} is the set of oriented intersection points $q\in \gamma_1 \overrightarrow{\cap} \gamma_2$
 which satisfy the following:
 \begin{enumerate}

  \item\label{intersection1} if $q$ is not a puncture (but possibly on the boundary) and $\sigma(q)$ is non-empty,
        and $p_1^1$ comes immediately before $p_2^1$ in the counter-clockwise orientation of the boundary of $S_{\tilde{q}}$, and $f_1(\ell_1^1) = f_2(\ell_2^1)$,
        then $q\in (\gamma_1,f_1) \overrightarrow{\cap}_{\gr} (\gamma_2,f_2)$ (see Figure \ref{pic:IntersectionGraphMap});
      
        \begin{figure}[h!]
	\captionsetup{labelformat=empty}
	\captionsetup{justification=centering,singlelinecheck=false, format=hang}
	\centering
	\scalebox{0.7}{
	\begin{tikzpicture}
	%----arcs
	\draw [line width=0.5, color=black] plot  [smooth, tension=1] coordinates {  (-3.5,1.5) (-1, 0.5) (2, 0.5) (4, -0.5) (6, -0.5) (8.5, -1.5)};
	\draw [line width=0.5, color=black] plot  [smooth, tension=1] coordinates {  (-3.5,-1.5) (-1, -0.5) (2, -0.5) (3, 0.5) (6, 0.5) (8.5, 1.5)};
	%----boundary
	%---left+right
	\draw [line width=0.5, color=black] plot  [smooth, tension=1] coordinates {  (-3.5,-1) (-2, 0) (-3.5, 1)};
	\draw [line width=0.5, color=black] plot  [smooth, tension=1] coordinates {  (8.5,-1) (7, 0) (8.5, 1)};
	%---upper right+ lower right
	\draw [line width=0.5, color=black] plot  [smooth, tension=1] coordinates {  (8.5,2) (7, 1.5) (4, 1) (3,2.5)};	
	\draw [line width=0.5, color=black] plot  [smooth, tension=1] coordinates {  (8.5,-2) (7, -1.5) (2, -1) (0.5, -3)};	
	%---upper left+ lower left	
	\draw [line width=0.5, color=black] plot  [smooth, tension=1] coordinates {  (-3.5,2) (-2, 1.5) (1, 1) (2,2.5)};
	\draw [line width=0.5, color=black] plot  [smooth, tension=1] coordinates {  (-3.5,-2) (-1.5, -1.5) (-0.5, -3)};		
	%----laminates
	%---upper left+ lower left
	\draw [line width=1, color=black, dashed] plot  [smooth, tension=1] coordinates {  (-3.5,1) (-3.5, 2)};
	\draw [line width=1, color=black, dashed] plot  [smooth, tension=1] coordinates {  (-3.5,-1) (-3.5, -2)};
	%---upper right+ lower right
	\draw [line width=1, color=black, dashed] plot  [smooth, tension=1] coordinates {  (8.5,1) (8.5, 2)};
	\draw [line width=1, color=black, dashed] plot  [smooth, tension=1] coordinates {  (8.5,-1) (8.5, -2)};
	%---top & down
	\draw [line width=1, color=black, dashed] plot  [smooth, tension=1] coordinates {  (3,2.5) (2, 2.5)};
	\draw [line width=1, color=black, dashed] plot  [smooth, tension=1] coordinates {  (0.5,-3) (-0.5, -3)};
	%---other arcs (left to right)
	\draw [line width=1, color=black, dashed] plot  [smooth, tension=1] coordinates {  (-1.5, -1.5) (-1,0) (-2,1.5)};
	\draw [line width=1, color=black, dashed] plot  [smooth, tension=1] coordinates {  (2,-1) (1,0) (1,1)};
	\draw [line width=1, color=black, dashed] plot  [smooth, tension=1] coordinates {  (3,-0.88) (3.5,0) (4,1)};
	\draw [line width=1, color=black, dashed] plot  [smooth, tension=1] coordinates {  (5, -1.09) (6,0) (7,1.5)};
	
	%----nodes
	\draw[color=black] (-4, 1.5) node{$\gamma_1$};
	\draw[color=black] (-4, -1.5) node{$\gamma_2$};

	% White circles
	\draw (-1.2,.55) circle (2.4pt);
	\draw (-1.0,-.5) circle (2.4pt);
	
	\end{tikzpicture}
	}
	
	\caption{Figure \label{pic:IntersectionGraphMap} \ref{pic:IntersectionGraphMap}. Dashed curves belong to $\tilde{L}_A$.  The circled intersections have the same grading.  }  
\end{figure}

  \item\label{intersection2} if $q$ is not a puncture nor on the boundary and $\sigma(q)$ is non-empty,
        and $p_1^1$ comes immediately after $p_2^1$ in the counter-clockwise orientation of the boundary of $S_{\tilde{q}}$, and $f_1(\ell_1^1) = f_2(\ell_2^1) + 1$,
        then $q\in (\gamma_1,f_1) \overrightarrow{\cap}_{\gr} (\gamma_2,f_2)$ (see Figure \ref{pic:IntersectionQuasiGraphMap});
        
        \begin{figure}[h!]
	\captionsetup{labelformat=empty}
	\captionsetup{justification=centering,singlelinecheck=false, format=hang}
	\centering
	\scalebox{0.7}{
	\begin{tikzpicture}
	%----arcs
	\draw [line width=0.5, color=black] plot  [smooth, tension=1] coordinates {  (-3.5,1.5) (-1, 0.5) (2, 0.5) (4, -0.5) (6, -0.5) (8.5, -1.5)};
	\draw [line width=0.5, color=black] plot  [smooth, tension=1] coordinates {  (-3.5,-1.5) (-1, -0.5) (2, -0.5) (3, 0.5) (6, 0.5) (8.5, 1.5)};
	%----boundary
	%---left+right
	\draw [line width=0.5, color=black] plot  [smooth, tension=1] coordinates {  (-3.5,-1) (-2, 0) (-3.5, 1)};
	\draw [line width=0.5, color=black] plot  [smooth, tension=1] coordinates {  (8.5,-1) (7, 0) (8.5, 1)};
	%---upper right+ lower right
	\draw [line width=0.5, color=black] plot  [smooth, tension=1] coordinates {  (8.5,2) (7, 1.5) (4, 1) (3,2.5)};	
	\draw [line width=0.5, color=black] plot  [smooth, tension=1] coordinates {  (8.5,-2) (7, -1.5) (2, -1) (0.5, -3)};	
	%---upper left+ lower left	
	\draw [line width=0.5, color=black] plot  [smooth, tension=1] coordinates {  (-3.5,2) (-2, 1.5) (1, 1) (2,2.5)};
	\draw [line width=0.5, color=black] plot  [smooth, tension=1] coordinates {  (-3.5,-2) (-1.5, -1.5) (-0.5, -3)};		
	%----laminates
	%---upper left+ lower left
	\draw [line width=1, color=black, dashed] plot  [smooth, tension=1] coordinates {  (-3.5,1) (-3.5, 2)};
	\draw [line width=1, color=black, dashed] plot  [smooth, tension=1] coordinates {  (-3.5,-1) (-3.5, -2)};
	%---upper right+ lower right
	\draw [line width=1, color=black, dashed] plot  [smooth, tension=1] coordinates {  (8.5,1) (8.5, 2)};
	\draw [line width=1, color=black, dashed] plot  [smooth, tension=1] coordinates {  (8.5,-1) (8.5, -2)};
	%---top & down
	\draw [line width=1, color=black, dashed] plot  [smooth, tension=1] coordinates {  (3,2.5) (2, 2.5)};
	\draw [line width=1, color=black, dashed] plot  [smooth, tension=1] coordinates {  (0.5,-3) (-0.5, -3)};
	%---other arcs (left to right)
	\draw [line width=1, color=black, dashed] plot  [smooth, tension=1] coordinates {  (-1.5, -1.5) (-1,0) (-2,1.5)};
	\draw [line width=1, color=black, dashed] plot  [smooth, tension=1] coordinates {  (2,-1) (1,0) (1,1)};
	\draw [line width=1, color=black, dashed] plot  [smooth, tension=1] coordinates {  (3,-0.88) (3.5,0) (4,1)};
	\draw [line width=1, color=black, dashed] plot  [smooth, tension=1] coordinates {  (5, -1.09) (6,0) (7,1.5)};
	
	%----nodes
	\draw[color=black] (-4, 1.5) node{$\gamma_2$};
	\draw[color=black] (-4, -1.5) node{$\gamma_1$};

	% White circles
	\draw (-1.2,.55) circle (2.4pt) node[above right]{$d-1$};
	\draw (-1.0,-.5) circle (2.4pt) node[below right]{$d$};
	
	\end{tikzpicture}
	}
	
	\caption{Figure \label{pic:IntersectionQuasiGraphMap} \ref{pic:IntersectionQuasiGraphMap}. Dashed curves belong to $\tilde{L}_A$.  The circled intersections have gradings~$d$ and~$d-1$, as shown.
	         Note that~$\gamma_1$ and~$\gamma_2$ are positioned differently than on Figure \ref{pic:IntersectionGraphMap}.  }  
\end{figure}

  \item\label{intersection3} if $q$ is not a puncture and $\sigma(q)$ is empty, then let $P$ be the polygon in which $q$ lies and let $X$ be the marked point on the boundary in $P$.               
               Let $\ell_1$ and $\ell_2$ be the laminates opposite $X$ which are crossed by $\gamma_1$ and $\gamma_2$, respectively (see Figure \ref{FigureIntersection1} for all possible configurations).
               Assume that $X, \ell_1, \ell_2$ appear in this order in the counter-clockwise orientation of the boundary of $P$.
               Finally, assume that $f_1(\ell_1) = f_2(\ell_2)$.
               Then $q$ is in $(\gamma_1,f_1) \overrightarrow{\cap}_{\gr} (\gamma_2,f_2)$.

        \begin{figure}[h]       
		\captionsetup{labelformat=empty}
		\captionsetup{justification=centering,singlelinecheck=false, format=hang}
		%\centering
		\scalebox{.8}{
		\begin{tabular}{c c c c}
			{\begin{tikzpicture} %singleton double map
				
				\foreach \u in {1,...,4} % boundary
				\draw[thick] ({2*cos(360/4*\u-360/16)},{2*sin(360/4*\u-360/16)}) arc ({360/4*\u-360/16}:{360/4*\u+360/16}:2);
				
				\foreach \u in {1,2} % curves
				\draw ({1.3*cos(360/8+360/4*\u)}, {1.3*sin(360/8+360/4*\u)})--({1.3*cos(180+360/8+360/4*\u)}, {1.3*sin(180+360/8+360/4*\u)}); 
				
				%---laminates
				\foreach \u in {1,...,4} % laminates
				\draw [line width=0.5, color=orange] plot  [smooth, tension=1] coordinates {   ({2*cos(360/8+360/4*\u+360/16)}, {2*sin(360/8+360/4*\u+360/16)}) ({1.3*cos(360/8+360/4*\u)}, {1.3*sin(360/8+360/4*\u)}) ({2*cos(360/8+360/4*\u-360/16)}, {2*sin(360/8+360/4*\u-360/16)})};

				\draw ({1.75*cos(360/8)}, {1.75*sin(360/8)}) node {$\gamma_{2}$};
				\draw ({1.75*cos(-360/8)}, {1.75*sin(-360/8)}) node {$\gamma_{1}$};
				%\draw (2.3, {0.3*sin(240)}) node {$\delta_2$};
				
				% marked points
				\filldraw ({2*cos(180)},{2*sin(180)}) circle (2pt);
				\draw (-2.2, 0) node {$X$};

				% intersections with laminates
				\draw ({1.3*cos(45)},{-1.3*sin(45)}) circle (2pt);
				\draw ({1.3*cos(45)},{1.3*sin(45)}) circle (2pt);
				\end{tikzpicture}
			} 
			&
			{\begin{tikzpicture} %singleton double map
				
				\foreach \u in {1,...,4} % boundary
				\draw[thick] ({2*cos(360/4*\u-360/16)},{2*sin(360/4*\u-360/16)}) arc ({360/4*\u-360/16}:{360/4*\u+360/16}:2);
				
				% curves
				\draw ({1.3*cos(45)},{-1.3*sin(45)}) -- ({-1.3*cos(45)},{1.3*sin(45)}); 
				\draw ({1.3*cos(45)},{1.3*sin(45)}) -- ({2*cos(180)},{2*sin(180)}); 
				
				%---laminates
				\foreach \u in {1,...,4} % laminates
				\draw [line width=0.5, color=orange] plot  [smooth, tension=1] coordinates {   ({2*cos(360/8+360/4*\u+360/16)}, {2*sin(360/8+360/4*\u+360/16)}) ({1.3*cos(360/8+360/4*\u)}, {1.3*sin(360/8+360/4*\u)}) ({2*cos(360/8+360/4*\u-360/16)}, {2*sin(360/8+360/4*\u-360/16)})};

				\draw ({1.75*cos(360/8)}, {1.75*sin(360/8)}) node {$\gamma_{2}$};
				\draw ({1.75*cos(-360/8)}, {1.75*sin(-360/8)}) node {$\gamma_{1}$};
				%\draw (2.3, {0.3*sin(240)}) node {$\delta_2$};
				
				% marked points
				\filldraw ({2*cos(180)},{2*sin(180)}) circle (2pt);
				\draw (-2.2, 0) node {$X$};

				% intersections with laminates
				\draw ({1.3*cos(45)},{-1.3*sin(45)}) circle (2pt);
				\draw ({1.3*cos(45)},{1.3*sin(45)}) circle (2pt);
				\end{tikzpicture}
			}
			&
			{\begin{tikzpicture} %singleton double map
				
				\foreach \u in {1,...,4} % boundary
				\draw[thick] ({2*cos(360/4*\u-360/16)},{2*sin(360/4*\u-360/16)}) arc ({360/4*\u-360/16}:{360/4*\u+360/16}:2);
				
				% curves
				\draw ({1.3*cos(45)},{-1.3*sin(45)}) -- ({2*cos(180)},{2*sin(180)}); 
				\draw ({1.3*cos(45)},{1.3*sin(45)}) -- ({-1.3*cos(45)},{-1.3*sin(45)}); 
				
				%---laminates
				\foreach \u in {1,...,4} % laminates
				\draw [line width=0.5, color=orange] plot  [smooth, tension=1] coordinates {   ({2*cos(360/8+360/4*\u+360/16)}, {2*sin(360/8+360/4*\u+360/16)}) ({1.3*cos(360/8+360/4*\u)}, {1.3*sin(360/8+360/4*\u)}) ({2*cos(360/8+360/4*\u-360/16)}, {2*sin(360/8+360/4*\u-360/16)})};

				\draw ({1.75*cos(360/8)}, {1.75*sin(360/8)}) node {$\gamma_{2}$};
				\draw ({1.75*cos(-360/8)}, {1.75*sin(-360/8)}) node {$\gamma_{1}$};
				%\draw (2.3, {0.3*sin(240)}) node {$\delta_2$};
				
				% marked points
				\filldraw ({2*cos(180)},{2*sin(180)}) circle (2pt);
				\draw (-2.2, 0) node {$X$};

				% intersections with laminates
				\draw ({1.3*cos(45)},{-1.3*sin(45)}) circle (2pt);
				\draw ({1.3*cos(45)},{1.3*sin(45)}) circle (2pt);
				\end{tikzpicture}
			}
			&
			{\begin{tikzpicture} %singleton double map
				
				\foreach \u in {1,...,4} % boundary
				\draw[thick] ({2*cos(360/4*\u-360/16)},{2*sin(360/4*\u-360/16)}) arc ({360/4*\u-360/16}:{360/4*\u+360/16}:2);
				
				% curves
				\draw ({1.3*cos(45)},{-1.3*sin(45)}) -- ({2*cos(180)},{2*sin(180)}); 
				\draw ({1.3*cos(45)},{1.3*sin(45)}) -- ({2*cos(180)},{2*sin(180)}); 
				
				%---laminates
				\foreach \u in {1,...,4} % laminates
				\draw [line width=0.5, color=orange] plot  [smooth, tension=1] coordinates {   ({2*cos(360/8+360/4*\u+360/16)}, {2*sin(360/8+360/4*\u+360/16)}) ({1.3*cos(360/8+360/4*\u)}, {1.3*sin(360/8+360/4*\u)}) ({2*cos(360/8+360/4*\u-360/16)}, {2*sin(360/8+360/4*\u-360/16)})};

				\draw ({1.75*cos(360/8)}, {1.75*sin(360/8)}) node {$\gamma_{2}$};
				\draw ({1.75*cos(-360/8)}, {1.75*sin(-360/8)}) node {$\gamma_{1}$};

				% marked points
				\filldraw ({2*cos(180)},{2*sin(180)}) circle (2pt);
				\draw (-2.2, 0) node {$X$};

				% intersections with laminates
				\draw ({1.3*cos(45)},{-1.3*sin(45)}) circle (2pt);
				\draw ({1.3*cos(45)},{1.3*sin(45)}) circle (2pt);
				\end{tikzpicture}
			} 
	        \end{tabular}
	        }
		\caption{Figure \label{FigureIntersection1} \ref{FigureIntersection1}.  
		The two empty dots designate the laminates $\ell_1$ and $\ell_2$ that are such that $f_1(\ell_1) = f_2(\ell_2)$.}        
        \end{figure}

  \item\label{intersection4} if $q$ is not a puncture and $\sigma(q)$ is empty, then let $P$ be the polygon in which $q$ lies, and let $X$ be the marked point on the boundary in $P$.
        Let $\ell_1$ and $\ell_2$ be the laminates which are crossed by $\gamma_1$ and $\gamma_2$, respectively, and which appear after $X$ in the counter-clockwise orientation of the boundary of $P$ (see Figure \ref{FigureIntersection2}). 
        Assume that $X, \ell_1, \ell_2$ appear in this counter-clockwise order.
        Assume finally that $f_2(\ell_2^1) = f_1(\ell_1^1)$. Then $q$ is in $(\gamma_1,f_1) \overrightarrow{\cap}_{\gr} (\gamma_2,f_2)$.
  
        \begin{figure}[h!]
		\captionsetup{labelformat=empty}
		\captionsetup{justification=centering,singlelinecheck=false, format=hang}
		\centering
			{\begin{tikzpicture} %singleton double map
				
				\foreach \u in {1,...,4} % boundary
				\draw[thick] ({2*cos(360/4*\u-360/16)},{2*sin(360/4*\u-360/16)}) arc ({360/4*\u-360/16}:{360/4*\u+360/16}:2);
				
				\foreach \u in {1,2} % curves
				\draw ({1.3*cos(360/8+360/4*\u)}, {1.3*sin(360/8+360/4*\u)})--({1.3*cos(180+360/8+360/4*\u)}, {1.3*sin(180+360/8+360/4*\u)}); 
				
				%---laminates
				\foreach \u in {1,...,4} % laminates
				\draw [line width=0.5, color=orange] plot  [smooth, tension=1] coordinates {   ({2*cos(360/8+360/4*\u+360/16)}, {2*sin(360/8+360/4*\u+360/16)}) ({1.3*cos(360/8+360/4*\u)}, {1.3*sin(360/8+360/4*\u)}) ({2*cos(360/8+360/4*\u-360/16)}, {2*sin(360/8+360/4*\u-360/16)})};

				\draw ({1.75*cos(360/8)}, {1.75*sin(360/8)}) node {$\gamma_{2}$};
				\draw ({1.75*cos(-360/8)}, {1.75*sin(-360/8)}) node {$\gamma_{1}$};
				%\draw (2.3, {0.3*sin(240)}) node {$\delta_2$};
				
				% marked points
				\filldraw ({2*cos(90)},{-2*sin(90)}) circle (2pt);

				% intersections with laminates
				\draw ({1.3*cos(45)},{-1.3*sin(45)}) circle (2pt);
				\draw ({1.3*cos(45)},{1.3*sin(45)}) circle (2pt);
				\end{tikzpicture}
			} 
		\caption{Figure \label{FigureIntersection2} \ref{FigureIntersection2}.  The two empty circles designate the laminates $\ell_1^1$ and $\ell_2^1$ that are such that $f_1(\ell_1^1) = f_2(\ell_2^1)$.}
        \end{figure}

  \item\label{intersection5} if $q$ is a puncture, then $\gamma_1$ and $\gamma_2$, while going to the puncture, intersect the laminates around that puncture infinitely many times in cyclic order.
        Assume that there are $\ell_1 \in L_{\gamma_1}$ and $\ell_2 \in L_{\gamma_2}$ among these laminates such that $\ell_1$ and $\ell_2$ are both instances of the same laminate and such that $f_1(\ell_1) = f_2(\ell_2)$.
        
        One can draw a circle $\zeta$ around the puncture so that the intersection of $\gamma_1$ with $\ell_1$ is the first intersection of $\gamma_1$ with a laminate ``outside'' of $\zeta$,
        and such that the same is true for $\gamma_2$ and $\ell_2$.
        With this choice of $\zeta$, lift $\gamma_1$ and $\gamma_2$ to $\tilde{S}_A$, and let $p_j^1, \dots, p^m_j$ the ordered sequence of intersections of the curve $\delta_j$ 
        with the laminates.   
        If $p_1^1$ comes immediately before $p_2^1$ in the counter clockwise ordering of the boundary of $S_{\tilde{q}}$, then $q$ is in $(\gamma_1,f_1) \overrightarrow{\cap}_{\gr} (\gamma_2,f_2)$.

 \end{enumerate}

\end{definition}

\begin{remark}Let $\gamma_1, \gamma_2$ be arcs or closed curves and let $p$ be an intersection of $\gamma_1$ and $\gamma_2$.
\begin{itemize}
\item If $p$ is in the interior (but not a puncture) and $f_1$ any grading of $\gamma_1$, then $p$ corresponds to a graded oriented intersection from $(\gamma_1, f_1)$ to $(\gamma_2, f_2)$ for some grading $f_2$ of $\gamma_2$ and to a graded oriented intersection from $(\gamma_2, f_2)$ to $(\gamma_1, f_1[1])$.  
\item If $p$ is in the boundary and $f_1$ any grading of $\gamma_1$, then $p$ corresponds to \textit{either} a graded intersection from $(\gamma_1, f_1)$ to $(\gamma_2, f_2)$  \textit{or} a graded intersection from $(\gamma_2, f_2)$ to $(\gamma_1, f_1)$ for some unique grading $f_2$.
\item If $p$ is a puncture, it corresponds to a family of graded intersections from $\gamma_1$ to $\gamma_2$ and vice versa. More precisely, $p$ gives rise to a family from $(\gamma_1, f_1)$ to graded curves $(\gamma_2, f_2[m*l])$, where $f_2$ is some grading, $m \geq 0$ and $l=\sigma(\varepsilon)$ is the degree of the full cycle of relations $\varepsilon$ corresponding to the puncture $p$. On the other hand, $p$ gives rise to a family from $(\gamma_2, f_2)$ to $(\gamma_1, f_1[(m+1)*l])$, $m \geq 0$.
\end{itemize}
\end{remark}

\begin{remark}
 Definition~\ref{defi::oriented-graded-intersection} can be handily summarized in one picture: 
 there is an oriented graded intersection in~$(\gamma_1,f_1) \overrightarrow{\cap}_{\gr} (\gamma_2,f_2)$ whenever the situation of Figure~\ref{FigureIntersetionGeneral} arises.
 \begin{figure}[h!]
		\captionsetup{labelformat=empty}
		\captionsetup{justification=centering,singlelinecheck=false, format=hang}
		\centering
			{\begin{tikzpicture} %singleton double map

				\foreach \u in {1,2} % curves
				\draw ({1.3*cos(360/8+360/4*\u)}, {1.3*sin(360/8+360/4*\u)})--({1.3*cos(180+360/8+360/4*\u)}, {1.3*sin(180+360/8+360/4*\u)}); 
				
				%---laminates
				\foreach \u in {1,...,4} % laminates
				\draw [line width=0.5, color=orange] plot  [smooth, tension=1] coordinates {   ({2*cos(360/8+360/4*\u+360/16)}, {2*sin(360/8+360/4*\u+360/16)}) ({1.3*cos(360/8+360/4*\u)}, {1.3*sin(360/8+360/4*\u)}) ({2*cos(360/8+360/4*\u-360/16)}, {2*sin(360/8+360/4*\u-360/16)})};

				\draw ({1.75*cos(360/8)}, {1.75*sin(360/8)}) node {$\gamma_{2}$};
				\draw ({1.75*cos(-360/8)}, {1.75*sin(-360/8)}) node {$\gamma_{1}$};
				%\draw (2.3, {0.3*sin(240)}) node {$\delta_2$};
				
				% marked points
				\filldraw ({2*cos(90)},{2*sin(90)}) circle (2pt);
				\filldraw ({2*cos(-90)},{-2*sin(90)}) circle (2pt);
				\filldraw ({2*cos(180)},{-2*sin(180)}) circle (2pt);

				% intersections with laminates
				\draw ({1.3*cos(45)},{-1.3*sin(45)}) circle (2pt);
				\draw ({1.3*cos(45)},{1.3*sin(45)}) circle (2pt);
				\end{tikzpicture}
			} 
		\caption{Figure \label{FigureIntersetionGeneral} \ref{FigureIntersetionGeneral}.  
		         The orange curves are laminates and are not necessarily pairwise distinct.
		         The intersection point of~$\gamma_1$ and~$\gamma_2$ may be on the boundary.
		         The two empty circles designate the laminates $\ell_1^1$ and $\ell_2^1$ that are such that $f_1(\ell_1^1) = f_2(\ell_2^1)$.  The three filled circles designate the allowed location of the marked point in the polygon in which~$\gamma_1$ and~$\gamma_2$ intersect.}
        \end{figure}

In one sentence: oriented graded intersections go from one intersection of~$\gamma_1$ with a laminate to one of~$\gamma_2$ with the same grading, counter-clockwise around the intersection of~$\gamma_1$ with~$\gamma_2$.
\end{remark}

%%%%%%%%%%%%%
\subsection{Proof of the theorem}
We now turn to the proof of Theorem \ref{TheoremMorphismsIntersections}. 

\begin{lemma}\label{lemm::intersections-yield-morphisms}
 Let $(\gamma_1,f_1)$ and $(\gamma_2,f_2)$ be graded arcs or closed curves on $S_A$.
 Let $q \in (\gamma_1,f_1) \overrightarrow{\cap}_{\gr} (\gamma_2,f_2)$.
 
 \begin{enumerate}
  \item If $q$ is of type (\ref{intersection1}) in Definition \ref{defi::oriented-graded-intersection},
        then $q$ gives rise to a graph map $\basis(q)$ from $\P_{(\gamma_1, f_1)}$ to $\P_{(\gamma_2,  f_2)}$.
        
  \item If $q$ is of type (\ref{intersection2}) in Definition \ref{defi::oriented-graded-intersection},
        then $q$ gives rise to a quasi-graph map $\basis(q)$ from $\P_{(\gamma_1, f_1)}$ to $\P_{(\gamma_2,  f_2)}$.  
        
  \item If $q$ is of type (\ref{intersection3}) in Definition \ref{defi::oriented-graded-intersection},
        then $q$ gives rise to a singleton single map $\basis(q)$ from $\P_{(\gamma_1, f_1)}$ to $\P_{(\gamma_2,  f_2)}$.
        
  \item If $q$ is of type (\ref{intersection4}) in Definition \ref{defi::oriented-graded-intersection},
        then $q$ gives rise to a singleton double map $\basis(q)$ from $\P_{(\gamma_1, f_1)}$ to $\P_{(\gamma_2,  f_2)}$.
        
  \item If $q$ is of type (\ref{intersection5}) in Definition \ref{defi::oriented-graded-intersection},
        then $q$ gives rise to a graph map $\basis(q)$ from $\P_{(\gamma_1, f_1)}$ to $\P_{(\gamma_2,  f_2)}$.
 \end{enumerate} 
\end{lemma}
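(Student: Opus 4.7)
The strategy is to read off, in each of the five cases, the combinatorial data (common subword, gradings on its endpoints, and the behaviour of $\sigma(\gamma_1)$ and $\sigma(\gamma_2)$ immediately beyond the common subword) that the geometric configuration around $q$ forces, and then match this data with the conditions in Section \ref{sec::ALPbasis} defining each type of basis element. The construction of the surface $S_{\tilde q}$ together with Lemma \ref{LemmaSigmaEmpty} and the description of $\sigma(q)$ already produce, in cases (1), (2), and (5), the required common subword of $\sigma(\gamma_1)$ and $\sigma(\gamma_2)$; in cases (3) and (4) the common subword is empty and one works instead with the marked point $X$ of the single polygon that contains $q$.

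For case (1) I would first observe that, since $\delta_1$ and $\delta_2$ cross the same ordered sequence of laminates in the interior of $S_{\tilde q}$, the subwords of $\sigma(\gamma_1)$ and $\sigma(\gamma_2)$ corresponding to these crossings coincide, and the grading assumption $f_1(\ell_1^1)=f_2(\ell_2^1)$ together with Definition \ref{defi::grading-on-curves} propagates along the shared sequence to give condition \textbf{(DegG)}. The counter-clockwise position of $p_1^1$ immediately before $p_2^1$ on $\partial S_{\tilde q}$ means that the marked point of the polygon beyond the left end of the subword either lies on the $\gamma_2$-side (giving \textbf{(LG1)} via the path supplied by that polygon and Proposition \ref{prop::gentle-from-lamination}) or lies on neither side (giving \textbf{(LG2)}); an analogous dichotomy at the right end yields \textbf{(RG1)} or \textbf{(RG2)}. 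Hence $q$ defines a graph map $\basis(q)$. Case (2) is handled by the same analysis: the counter-clockwise order is now reversed, so the marked points at both ends obstruct each of \textbf{(LG1)}, \textbf{(LG2)}, \textbf{(RG1)}, \textbf{(RG2)}, while the grading shift $f_1(\ell_1^1)=f_2(\ell_2^1)+1$ is exactly the degree condition for a quasi-graph map. Case (5) is essentially case (1) carried out for the infinite common subword produced by the two arcs winding around the puncture; the absence of ``left/right ends'' means only the interior compatibility \textbf{(DegG)} has to be checked, which follows from the grading assumption and the fact that the laminates around the puncture form an oriented cycle of relations.

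For cases (3) and (4), where $\sigma(q)$ is empty, Lemma \ref{LemmaSigmaEmpty} guarantees that the unique marked point $X$ in the polygon corresponds to a maximal path $\omega\in\mathcal{M}$. Reading the arrows of $\omega$ between the entry laminate of $\gamma_i$ and $X$, Proposition \ref{prop::gentle-from-lamination} furnishes the subpaths $p$ (case (3)) or $p$ and $q$ (case (4)) of $\omega$ that will serve as the data of the map. In case (3) the counter-clockwise cyclic order $X,\ell_1,\ell_2$ forces one of the four sub-configurations in Figure \ref{FigureIntersection1}, and in each of them a direct check shows that $p$ satisfies \textbf{(L1)}, \textbf{(L2)}, \textbf{(R1)}, \textbf{(R2)} and the three extra conditions of Section \ref{sec::SingletonSingleMaps}; the grading condition $f_1(\ell_1)=f_2(\ell_2)$ is precisely \textbf{(DegS)}. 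In case (4) both relevant laminates lie on the same side of $X$, which produces the second path $q$ with $\sigma_i q=p\tau_i$ and makes the pair $(p,q)$ into a singleton double map in the sense of Section \ref{sec::SingletonDoubleMaps}.

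The main obstacle, and the place where some care is needed, is the translation in cases (1) and (2) from the counter-clockwise position of the intersection points $p_1^1, p_2^1$ on the one-dimensional boundary $\partial S_{\tilde q}$ into the left/right dichotomy (LG1)/(LG2) vs.\ (RG1)/(RG2) for the homotopy letters sitting at the endpoints of $\sigma(q)$. This requires a careful bookkeeping of which side of $\gamma_i$ the marked point of the outer polygon sits on, and of how the local direct/inverse character of $\sigma(\gamma_i)$ at that letter is determined by this position together with the orientation of the surface. Once this bookkeeping is laid out once, the remaining verifications in all five cases reduce to inspecting the finitely many local pictures.
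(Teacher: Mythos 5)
Your proposal takes essentially the same approach as the paper: in each case one reads the local configuration of $\delta_1$, $\delta_2$ and the marked points around $S_{\tilde q}$, extracts the common subword (or the paths $p$, $q$, $r$ when $\sigma(q)$ is empty), and verifies the appropriate subset of the combinatorial conditions (DegG), (LG1)/(LG2), (RG1)/(RG2), (L1), (L2), (R1), (R2) and the singleton conditions of Section~\ref{sec::ALPbasis}; the paper does exactly this, carrying out the translation via Figures~\ref{pic:IntersectionOfType1WithLetters}, \ref{FigureIntersection1WithLetters}, \ref{FigureIntersection2WithLetters}. One small caveat in your case (5): the common subword is infinite only at the puncture end, so there remains a finite end at which (LG1) or (LG2) must still be checked (as the paper does), not just (DegG); this is a wording slip rather than a structural problem, since your stated plan of ``mirroring case (1)'' would in fact produce that check.
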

\begin{proof}
 For intersections of type (\ref{intersection1}), the homotopy letters $\sigma_{i-1}, \tau_{i-1}, \sigma_{j+1}, \tau_{j+1}$ and path $p$ used in Section \ref{sect::graph-maps} are depicted in Figure \ref{pic:IntersectionOfType1WithLetters}.
 
 \begin{figure}[h!]
	\captionsetup{labelformat=empty}
	\captionsetup{justification=centering,singlelinecheck=false, format=hang}
	\centering
	\scalebox{0.7}{
	\begin{tikzpicture}
	%----arcs
	\draw [line width=0.5, color=black] plot  [smooth, tension=1] coordinates {  (-3.5,1.5) (-1, 0.5) (2, 0.5) (4, -0.5) (6, -0.5) (8.5, -1.5)};
	\draw [line width=0.5, color=black] plot  [smooth, tension=1] coordinates {  (-3.5,-1.5) (-1, -0.5) (2, -0.5) (3, 0.5) (6, 0.5) (8.5, 1.5)};
	%----boundary
	%---left+right
	\draw [line width=0.5, color=black] plot  [smooth, tension=1] coordinates {  (-3.5,-1) (-2, 0) (-3.5, 1)};
	\draw [line width=0.5, color=black] plot  [smooth, tension=1] coordinates {  (8.5,-1) (7, 0) (8.5, 1)};
	%---upper right+ lower right
	\draw [line width=0.5, color=black] plot  [smooth, tension=1] coordinates {  (8.5,2) (7, 1.5) (4, 1) (3,2.5)};	
	\draw [line width=0.5, color=black] plot  [smooth, tension=1] coordinates {  (8.5,-2) (7, -1.5) (2, -1) (0.5, -3)};	
	%---upper left+ lower left	
	\draw [line width=0.5, color=black] plot  [smooth, tension=1] coordinates {  (-3.5,2) (-2, 1.5) (1, 1) (2,2.5)};
	\draw [line width=0.5, color=black] plot  [smooth, tension=1] coordinates {  (-3.5,-2) (-1.5, -1.5) (-0.5, -3)};		
	%----laminates
	%---upper left+ lower left
	\draw [line width=1, color=black, dashed] plot  [smooth, tension=1] coordinates {  (-3.5,1) (-3.5, 2)};
	\draw [line width=1, color=black, dashed] plot  [smooth, tension=1] coordinates {  (-3.5,-1) (-3.5, -2)};
	%---upper right+ lower right
	\draw [line width=1, color=black, dashed] plot  [smooth, tension=1] coordinates {  (8.5,1) (8.5, 2)};
	\draw [line width=1, color=black, dashed] plot  [smooth, tension=1] coordinates {  (8.5,-1) (8.5, -2)};
	%---top & down
	\draw [line width=1, color=black, dashed] plot  [smooth, tension=1] coordinates {  (3,2.5) (2, 2.5)};
	\draw [line width=1, color=black, dashed] plot  [smooth, tension=1] coordinates {  (0.5,-3) (-0.5, -3)};
	%---other arcs (left to right)
	\draw [line width=1, color=black, dashed] plot  [smooth, tension=1] coordinates {  (-1.5, -1.5) (-1,0) (-2,1.5)};
	\draw [line width=1, color=black, dashed] plot  [smooth, tension=1] coordinates {  (2,-1) (1,0) (1,1)};
	\draw [line width=1, color=black, dashed] plot  [smooth, tension=1] coordinates {  (3,-0.88) (3.5,0) (4,1)};
	\draw [line width=1, color=black, dashed] plot  [smooth, tension=1] coordinates {  (5, -1.09) (6,0) (7,1.5)};
	
	%----nodes
	\draw[color=black] (-4, 1.5) node{$\gamma_1$};
	\draw[color=black] (-4, -1.5) node{$\gamma_2$};
	
	% Paths and letters on the left
	\draw [->, color=red] (-1.5,0.6) arc (135:210:1) node[midway, right] {$p$} ;
	\draw [->, color=black] (-1.5,-0.6) arc (210:235:1) node[near end, below right] {$\tau_{i-1}$} ;
	\draw [->, color=black] (-1.7,0.7) arc (135:240:1.3) node[midway, left] {$\sigma_{i-1}$} ;
	
	% Paths and letters on the right
	\draw [->, color=black] (5.2,-.9) arc (-135:40:1.2) node[near end,  right] {$\tau_{j+1}$} ;
	\draw [->, color=black] (5.5,-.7) arc (-100:-40:.5) node[near start, below right] {$\sigma_{j+1}$} ;	
	\draw [->, color=red] (6.6,-.6) arc (-15:18:2) node[midway, left] {$p$} ;
	
	% White circles
	\draw (-1.2,.55) circle (2pt);
	\draw (-1.0,-.5) circle (2pt);
	
	\end{tikzpicture}
	}
	
	\caption{Figure \label{pic:IntersectionOfType1WithLetters} \ref{pic:IntersectionOfType1WithLetters}. Dashed curves belong to $\tilde{L}_A$.  }  
\end{figure}

 We see that conditions (LG1), (LG2), (RG1) and (RG2) are satisfied.
 Moreover, the assumption on the gradings $f_1$ and $f_2$ ensure that (DegG) is satisfied.
 Thus we have defined a graph map from $\P_{(\gamma_1, f_1)}$ to $\P_{(\gamma_2,  f_2)}$.
 
 Similarly, one shows that intersections of type (\ref{intersection2}) yield a quasi-graph map from $\P_{(\gamma_1, f_1)}$ to $\P_{(\gamma_2,  f_2)}$.
 
 For intersections of type (\ref{intersection3}), the homotopy letters $\sigma_{i+1}, \tau_j$ and path $p$ used in Section \ref{sec::SingletonSingleMaps} are depicted in Figure \ref{FigureIntersection1WithLetters}.
 
 \begin{figure}[h]       
		\captionsetup{labelformat=empty}
		\captionsetup{justification=centering,singlelinecheck=false, format=hang}
		%\centering
		\scalebox{.8}{
		\begin{tabular}{c c c c}
			{\begin{tikzpicture} %singleton double map
				
				\foreach \u in {1,...,4} % boundary
				\draw[thick] ({2*cos(360/4*\u-360/16)},{2*sin(360/4*\u-360/16)}) arc ({360/4*\u-360/16}:{360/4*\u+360/16}:2);
				
				\foreach \u in {1,2} % curves
				\draw ({1.3*cos(360/8+360/4*\u)}, {1.3*sin(360/8+360/4*\u)})--({1.3*cos(180+360/8+360/4*\u)}, {1.3*sin(180+360/8+360/4*\u)}); 
				
				%---laminates
				\foreach \u in {1,...,4} % laminates
				\draw [line width=0.5, color=orange] plot  [smooth, tension=1] coordinates {   ({2*cos(360/8+360/4*\u+360/16)}, {2*sin(360/8+360/4*\u+360/16)}) ({1.3*cos(360/8+360/4*\u)}, {1.3*sin(360/8+360/4*\u)}) ({2*cos(360/8+360/4*\u-360/16)}, {2*sin(360/8+360/4*\u-360/16)})};

				\draw ({1.75*cos(360/8)}, {1.75*sin(360/8)}) node {$\gamma_{2}$};
				\draw ({1.75*cos(-360/8)}, {1.75*sin(-360/8)}) node {$\gamma_{1}$};
				%\draw (2.3, {0.3*sin(240)}) node {$\delta_2$};
				
				% marked points
				\filldraw ({2*cos(180)},{2*sin(180)}) circle (2pt);
				\draw (-2.2, 0) node {$X$};

				% intersections with laminates
				\draw ({1.3*cos(45)},{-1.3*sin(45)}) circle (2pt);
				\draw ({1.3*cos(45)},{1.3*sin(45)}) circle (2pt);
				
				% Path p in red 
				\draw [->, color=red] ({0.3*cos(-45)},{0.3*sin(-45)}) arc (-45:45:0.3) node[midway, right] {$p$} ;
				
				% Homotopy letters
				\draw [->] ({0.7*cos(-135)},{0.7*sin(-45)}) arc (-135:45:0.7) node[near start, below] {$\tau_j$} ;
				\draw [->] ({0.8*cos(-45)},{0.8*sin(-45)}) arc (-45:135:0.8) node[near end, above] {$\sigma_{i+1}$} ;
				\end{tikzpicture}
			} 
			&
			{\begin{tikzpicture} %singleton double map
				
				\foreach \u in {1,...,4} % boundary
				\draw[thick] ({2*cos(360/4*\u-360/16)},{2*sin(360/4*\u-360/16)}) arc ({360/4*\u-360/16}:{360/4*\u+360/16}:2);
				
				% curves
				\draw ({1.3*cos(45)},{-1.3*sin(45)}) -- ({-1.3*cos(45)},{1.3*sin(45)}); 
				\draw ({1.3*cos(45)},{1.3*sin(45)}) -- ({2*cos(180)},{2*sin(180)}); 
				
				%---laminates
				\foreach \u in {1,...,4} % laminates
				\draw [line width=0.5, color=orange] plot  [smooth, tension=1] coordinates {   ({2*cos(360/8+360/4*\u+360/16)}, {2*sin(360/8+360/4*\u+360/16)}) ({1.3*cos(360/8+360/4*\u)}, {1.3*sin(360/8+360/4*\u)}) ({2*cos(360/8+360/4*\u-360/16)}, {2*sin(360/8+360/4*\u-360/16)})};

				\draw ({1.75*cos(360/8)}, {1.75*sin(360/8)}) node {$\gamma_{2}$};
				\draw ({1.75*cos(-360/8)}, {1.75*sin(-360/8)}) node {$\gamma_{1}$};
				%\draw (2.3, {0.3*sin(240)}) node {$\delta_2$};
				
				% marked points
				\filldraw ({2*cos(180)},{2*sin(180)}) circle (2pt);
				\draw (-2.2, 0) node {$X$};

				% intersections with laminates
				\draw ({1.3*cos(45)},{-1.3*sin(45)}) circle (2pt);
				\draw ({1.3*cos(45)},{1.3*sin(45)}) circle (2pt);
				
				% Path p in red 
				\draw [->, color=red] ({0.3*cos(-45)-0.5},{0.3*sin(-45)+0.5}) arc (-45:-5:0.5) node[midway, right] {$p$} ;	
				
				% Homotopy letter
				\draw [->] ({0.9*cos(-45)},{0.9*sin(-45)}) arc (-45:135:0.9) node[near end, above] {$\sigma_{i+1}$} ;
				
				\end{tikzpicture}
			}
			&
			{\begin{tikzpicture} %singleton double map
				
				\foreach \u in {1,...,4} % boundary
				\draw[thick] ({2*cos(360/4*\u-360/16)},{2*sin(360/4*\u-360/16)}) arc ({360/4*\u-360/16}:{360/4*\u+360/16}:2);
				
				% curves
				\draw ({1.3*cos(45)},{-1.3*sin(45)}) -- ({2*cos(180)},{2*sin(180)}); 
				\draw ({1.3*cos(45)},{1.3*sin(45)}) -- ({-1.3*cos(45)},{-1.3*sin(45)}); 
				
				%---laminates
				\foreach \u in {1,...,4} % laminates
				\draw [line width=0.5, color=orange] plot  [smooth, tension=1] coordinates {   ({2*cos(360/8+360/4*\u+360/16)}, {2*sin(360/8+360/4*\u+360/16)}) ({1.3*cos(360/8+360/4*\u)}, {1.3*sin(360/8+360/4*\u)}) ({2*cos(360/8+360/4*\u-360/16)}, {2*sin(360/8+360/4*\u-360/16)})};

				\draw ({1.75*cos(360/8)}, {1.75*sin(360/8)}) node {$\gamma_{2}$};
				\draw ({1.75*cos(-360/8)}, {1.75*sin(-360/8)}) node {$\gamma_{1}$};
				%\draw (2.3, {0.3*sin(240)}) node {$\delta_2$};
				
				% marked points
				\filldraw ({2*cos(180)},{2*sin(180)}) circle (2pt);
				\draw (-2.2, 0) node {$X$};

				% intersections with laminates
				\draw ({1.3*cos(45)},{-1.3*sin(45)}) circle (2pt);
				\draw ({1.3*cos(45)},{1.3*sin(45)}) circle (2pt);
				
				% Path p in red 
				\draw [->, color=red] ({0.3*cos(-5)-0.5},{0.3*sin(-5)-0.5}) arc (0:35:0.5) node[midway, right] {$p$} ;				
				
				% Homotopy letter
				\draw [->] ({0.9*cos(-135)},{0.9*sin(-45)}) arc (-135:45:0.9) node[near start, below] {$\tau_j$} ;
				\end{tikzpicture}
			}
			&
			{\begin{tikzpicture} %singleton double map
				
				\foreach \u in {1,...,4} % boundary
				\draw[thick] ({2*cos(360/4*\u-360/16)},{2*sin(360/4*\u-360/16)}) arc ({360/4*\u-360/16}:{360/4*\u+360/16}:2);
				
				% curves
				\draw ({1.3*cos(45)},{-1.3*sin(45)}) -- ({2*cos(180)},{2*sin(180)}); 
				\draw ({1.3*cos(45)},{1.3*sin(45)}) -- ({2*cos(180)},{2*sin(180)}); 
				
				%---laminates
				\foreach \u in {1,...,4} % laminates
				\draw [line width=0.5, color=orange] plot  [smooth, tension=1] coordinates {   ({2*cos(360/8+360/4*\u+360/16)}, {2*sin(360/8+360/4*\u+360/16)}) ({1.3*cos(360/8+360/4*\u)}, {1.3*sin(360/8+360/4*\u)}) ({2*cos(360/8+360/4*\u-360/16)}, {2*sin(360/8+360/4*\u-360/16)})};

				\draw ({1.75*cos(360/8)}, {1.75*sin(360/8)}) node {$\gamma_{2}$};
				\draw ({1.75*cos(-360/8)}, {1.75*sin(-360/8)}) node {$\gamma_{1}$};
				%\draw (2.3, {0.3*sin(240)}) node {$\delta_2$};
				
				% marked points
				\filldraw ({2*cos(180)},{2*sin(180)}) circle (2pt);
				\draw (-2.2, 0) node {$X$};

				% intersections with laminates
				\draw ({1.3*cos(45)},{-1.3*sin(45)}) circle (2pt);
				\draw ({1.3*cos(45)},{1.3*sin(45)}) circle (2pt);
				
				% Path p in red 
				\draw [->, color=red] ({1*cos(-20)-2},{1*sin(-20)}) arc (-20:20:.9) node[midway, right] {$p$} ;
				\end{tikzpicture}
			} 
	        \end{tabular}
	        }
		\caption{Figure \label{FigureIntersection1WithLetters} \ref{FigureIntersection1WithLetters}.  
		The homotopy letters viewed on the surface.  The two empty circles designate the laminates $\ell_1$ and $\ell_2$ that are such that $f_1(\ell_1) = f_2(\ell_2)$.}        
        \end{figure}
 
 Similarly, for intersections of type (\ref{intersection4}), the homotopy letters $\sigma_{i+1}, \tau_j$ and path $p$ used in Section \ref{sec::SingletonDoubleMaps} are depicted in Figure \ref{FigureIntersection2WithLetters}.
 
 \begin{figure}[h!]
		\captionsetup{labelformat=empty}
		\captionsetup{justification=centering,singlelinecheck=false, format=hang}
		\centering
			{\begin{tikzpicture} %singleton double map
				
				\foreach \u in {1,...,4} % boundary
				\draw[thick] ({2*cos(360/4*\u-360/16)},{2*sin(360/4*\u-360/16)}) arc ({360/4*\u-360/16}:{360/4*\u+360/16}:2);
				
				\foreach \u in {1,2} % curves
				\draw ({1.3*cos(360/8+360/4*\u)}, {1.3*sin(360/8+360/4*\u)})--({1.3*cos(180+360/8+360/4*\u)}, {1.3*sin(180+360/8+360/4*\u)}); 
				
				%---laminates
				\foreach \u in {1,...,4} % laminates
				\draw [line width=0.5, color=orange] plot  [smooth, tension=1] coordinates {   ({2*cos(360/8+360/4*\u+360/16)}, {2*sin(360/8+360/4*\u+360/16)}) ({1.3*cos(360/8+360/4*\u)}, {1.3*sin(360/8+360/4*\u)}) ({2*cos(360/8+360/4*\u-360/16)}, {2*sin(360/8+360/4*\u-360/16)})};

				\draw ({1.75*cos(360/8)}, {1.75*sin(360/8)}) node {$\gamma_{2}$};
				\draw ({1.75*cos(-360/8)}, {1.75*sin(-360/8)}) node {$\gamma_{1}$};

				% marked points
				\filldraw ({2*cos(90)},{-2*sin(90)}) circle (2pt);

				% intersections with laminates
				\draw ({1.3*cos(45)},{-1.3*sin(45)}) circle (2pt);
				\draw ({1.3*cos(45)},{1.3*sin(45)}) circle (2pt);
				
				% Paths p and q in blue 
				\draw [->, color=blue] ({0.3*cos(-45)},{0.3*sin(-45)}) arc (-45:45:0.3) node[midway, right] {$p$} ;
				\draw [->, color=blue] ({0.3*cos(135)},{0.3*sin(135)}) arc (135:225:0.3) node[midway, left] {$q$} ;
				
				% Path r in red
				\draw [->, color=red] ({0.4*cos(45)},{0.4*sin(45)}) arc (45:135:0.4) node[midway, above] {$r$} ;
				
				% Homotopy letters
				\draw [->] ({0.8*cos(45)},{0.8*sin(45)}) arc (45:225:0.8) node[near end, left] {$\tau_j$} ;
				\draw [->] ({0.9*cos(-45)},{0.9*sin(-45)}) arc (-45:135:0.9) node[near end, above] {$\sigma_{i}$} ;
				\end{tikzpicture}
			} 
		\caption{Figure \label{FigureIntersection2WithLetters} \ref{FigureIntersection2WithLetters}.  The two empty circles designate the laminates $\ell_1^1$ and $\ell_2^1$ that are such that $f_1(\ell_1^1) = f_2(\ell_2^1)$.}
        \end{figure}

 Finally, let us look at intersections of type (\ref{intersection5}), that is, intersections at punctures.
 The situation is as in the picture below. 
 
 \begin{displaymath}
\begin{tikzpicture}[scale=1.5]

		\draw[orange, dashed] (-3.5,1)--(-3.5,-1);
	\draw[orange, dashed] (-1.5,1)--(-1.5,-1);
	\draw [line width=1.15, color=blue] plot  [smooth, tension=1] coordinates {  (0, 1) (-0.25, 0) (0, -1)} node[midway, right] {$\zeta$};
	\draw (0,1)--(-2.25,1);
	\draw (0,-1)--(-2.25,-1);
	\draw (-2.5,1) node {$\ldots$};
	\draw (-2.5,-1) node {$\ldots$};
		\draw (-2.75,1)--(-4,1);
	\draw (-2.75,-1)--(-4,-1);
	\draw[dashed, orange] ({-4.5+0.5*cos(270)},{1+0.5*sin(270)}) arc(270:360:0.5);
	\draw[dashed, orange] ({-4.5+0.5*cos(0)},{-1+0.5*sin(0)}) arc(0:90:0.5);
	\draw (-4.5,0.5)--(-4.5, -0.5);

%curves	

     	\draw plot  [smooth, tension=.5] coordinates {  (-0.25,0) (-2.5, 0.25) (-3.5, 0.45) ({-4.5+0.5*cos(315)},{1+0.5*sin(315)})};
     \draw plot  [smooth, tension=.5] coordinates {  (-0.25,0) (-2.5, -0.25) (-3.5, -0.45) ({-4.5+0.5*cos(45)},{-1+0.5*sin(45)})};
%labels     
     \draw (-2.5, 0.5) node {$\gamma_1$};
	 \draw (-2.5, -0.5) node {$\gamma_2$};

% small circles
\draw (-3.5,  0.45) circle (2pt);
\draw (-3.5, -0.45) circle (2pt);

\end{tikzpicture}
\end{displaymath}

 The choice of $\zeta$ ensures that the laminates crossed by $\tilde{\gamma}_1$ and $\tilde{\gamma}_2$ after leaving $\zeta$ have the same gradings.  
 Thus the conditions (DegG), (LG1) and (LG2) are satisfied for the corresponding infinite homotopy strings,
 and we have a graph map from $\P_{(\gamma_1, f_1)}$ to $\P_{(\gamma_2,  f_2)}$.
\end{proof}

\begin{remark}\label{rem::morphisms-from-intersections}
If $\gamma_1$ and $\gamma_2$ are the same closed curves, then the graph and quasi graph maps which occur in Lemma \ref{lemm::intersections-yield-morphisms} as $\basis(q)$
cannot be invertible graph maps or maps of the form  $\xi$ occuring in Auslander-Reiten triangles as described in  Theorem \ref{TheoremMorphismsIntersections} (2).  
\end{remark}

\begin{remark}
The precise definition of $\basis$ depends on the homotopy representatives of the curves $\gamma_1$ and $\gamma_2$

Indeed, suppose that $\gamma_1$ and $\gamma_2$ are the same arc, and that $q$ is on the boundary of $S_A$.
The identity morphisms between $\P_{(\gamma_1,f_1)}$ and $\P_{(\gamma_2, f_2)}$ are obtained as follows.
First, choose representatives of the arcs $\tilde{\gamma}_1$ and $\tilde{\gamma}_2$ which only cross at their endpoints.
Then one of the endpoints will be in $(\gamma_1,f_1) \overrightarrow{\cap}_{\gr} (\gamma_2, f_2)$
and the other in $(\gamma_2,f_2) \overrightarrow{\cap}_{\gr} (\gamma_1, f_1)$.
The image of these points by $\basis$ will be the identity graph maps $\P_{(\gamma_1,f_1)} \xrightarrow{} \P_{(\gamma_2, f_2)}$
and $\P_{(\gamma_2, f_2)} \xrightarrow{} \P_{(\gamma_1,f_1)}$.

Choosing different representatives of $\tilde{\gamma}_1$ and $\tilde{\gamma}_2$ could lead to the first intersection point belonging to $(\gamma_2,f_2) \overrightarrow{\cap}_{\gr} (\gamma_1, f_1)$
and the second one belonging to $(\gamma_1,f_1) \overrightarrow{\cap}_{\gr} (\gamma_2, f_2)$.  Then the images of these two points by $\basis$ would be permuted.

\end{remark}

We now describe the image of the map $\basis$.

\begin{lemma}\label{lemm::image-of-basis-map}
	Let $\phi \in \Hom{\Dfd A}(\P_{(\gamma_1,f_1)}, \P_{(\gamma_2, f_2)})$ be an element of the standard basis $\cB$ 
	which is neither an invertible graph map nor a quasi graph map of the form  $\xi$ occuring in Auslander-Reiten triangles as described in  Theorem \ref{TheoremMorphismsIntersections} (2).
	(see also Remark \ref{rem::morphisms-from-intersections}). 
	Then there exists a unique $q \in (\gamma_1,f_1) \overrightarrow{\cap}_{\gr} (\gamma_2, f_2)$ such that $\phi=\basis(q)$. 
\end{lemma}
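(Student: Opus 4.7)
The plan is to perform a case analysis on the four types of elements in the standard basis $\cB$: graph maps, quasi-graph maps, singleton single maps, and singleton double maps. For each type, I will exhibit an oriented graded intersection $q$ such that $\basis(q) = \phi$, using the dictionary of Lemmas \ref{lemm::homotopy-string-from-an-arc} and \ref{lemm::uniqueness-of-arc} to translate combinatorial data on the homotopy strings $\sigma(\gamma_1)$, $\sigma(\gamma_2)$ back to geometric data on $S_A$.

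First, for a graph map $\phi$, the definition in Section \ref{sect::graph-maps} provides a maximal common subword $\sigma_i\cdots\sigma_j = \tau_i\cdots \tau_j$ of $\sigma(\gamma_1)$ and $\sigma(\gamma_2)$ satisfying (DegG) together with one of (LG1)/(LG2) and one of (RG1)/(RG2). By Lemma \ref{lemm::uniqueness-of-arc}, this common subword corresponds to a sequence of laminates crossed in the same order by both $\gamma_1$ and $\gamma_2$; lifting to $\tilde{S}_A$, the lifts $\tilde\gamma_1$ and $\tilde\gamma_2$ run through a common chain of polygons. The conditions (LG*) and (RG*) on the neighbouring homotopy letters describe precisely which side of each lifted curve the next laminates lie on as the two curves separate. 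I will check that they force the lifts to cross exactly once, producing a point $\tilde q$ whose projection $q$ is an intersection of type (\ref{intersection1}) (or type (\ref{intersection5}) if the common subword is infinite, corresponding to arcs converging to the same puncture); the matching gradings (DegG) give the grading equality needed. The excluded case (invertible graph map) corresponds to $\gamma_1=\gamma_2$ with identical grading, which has no non-trivial crossing.

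For a quasi-graph map, we again have a maximal common subword with (DegG) but with none of (LG*), (RG*) holding. I will verify that this local picture is precisely the one of type (\ref{intersection2}): both curves must cross in the interior of $S_A$ (since boundary configurations are incompatible with the failure of all (LG*)/(RG*)), on the opposite side relative to $\gamma_1$ and $\gamma_2$ compared to a graph map. The grading $f_1(\ell_1^1) = f_2(\ell_2^1)+1$ then falls out of the conventions of Definition \ref{defi::complex-from-string-or-band} comparing $\P_{(\gamma_2,f_2)}$ with $\P_{(\gamma_2,f_2)}[1]$. The excluded quasi-graph map in an AR triangle corresponds exactly to $\gamma_1 = \gamma_2$ (up to homotopy) with $f_2 = f_1[1]$, for which the two lifts are the same curve and there is no genuine crossing.

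For a singleton single map with path $p$ as in Section \ref{sec::SingletonSingleMaps}, Proposition \ref{prop::gentle-from-lamination} identifies $p$ with a sequence of laminates sharing endpoints on a single boundary segment of one polygon $P$ of $S_A$; the hypotheses (L1), (L2), (R1), (R2) together with the singleton conditions pin down the marked point $X$ of $P$ and the laminates $\ell_1, \ell_2$ crossed by $\gamma_1, \gamma_2$ to sit in the cyclic order $X, \ell_1, \ell_2$ of Definition \ref{defi::oriented-graded-intersection}(\ref{intersection3}), producing an intersection $q$ in $P$. For a singleton double map with paths $p, q$ satisfying $\sigma_i q = p\tau_i$, the same analysis together with the extra path $r$ of Section \ref{sec::SingletonDoubleMaps} places $X$ on the opposite boundary segment from the laminates $\ell_1^1, \ell_2^1$, matching configuration (\ref{intersection4}). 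In both cases (DegS) supplies the grading condition. Uniqueness of $q$ given $\phi$ is immediate in each case: the common subword (for graph and quasi-graph maps) or the path(s) $p$, $q$ (for single and double maps) are uniquely determined by $\phi$, and by the argument of Lemma \ref{lemm::uniqueness-of-arc} the associated intersection point on $\tilde S_A$, hence on $S_A$, is determined up to homotopy. The main obstacle is the bookkeeping in the graph-map case, where one must carefully check that each of the four sub-configurations in Figure \ref{FigureIntersection1} (produced by the independent choices (LG1) vs.\ (LG2) and (RG1) vs.\ (RG2)) matches exactly the corresponding local surface picture after lifting, and that the counter-clockwise boundary order of $p_1^1, p_2^1$ in $S_{\tilde q}$ is the correct one.
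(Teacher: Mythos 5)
Your proposal follows essentially the same route as the paper's proof: in both, one lifts to the universal cover, identifies the maximal common subword (for graph and quasi-graph maps) or the path $p$/$r$ (for singleton single and double maps) with a chain of polygons traversed by both lifts, observes that the combinatorial conditions (LG*), (RG*), (L*), (R*) together with the singleton conditions amount to a specific configuration of the marked points and curve segments in the outermost polygons, and concludes that the lifts cross in a unique point $\tilde q$ with $\basis(\pi(\tilde q))=\phi$. Your organization into four cases merely unpacks the paper's two (graph/quasi-graph together, singleton single/double together), and your handling of the excluded invertible graph maps and AR quasi-graph maps is the same as the paper's.
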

\begin{proof}Let $\tilde{\gamma}_1$ be a lift of $\gamma_1$. We distinguish two cases.\\
	First, assume that $\phi$ is a graph or quasi graph map and let $\sigma$ be the maximal common subword associated with $\phi$ as in Section \ref{sec::ALPbasis}. 
	The subword $\sigma$ of $\sigma(\gamma_1)$ corresponds to a section $\delta_1$ of $\tilde{\gamma_1}$.
	Let $\tilde{\gamma_2}$ be the unique lift of $\gamma_2$ such that the section $\delta_2$ corresponding to the subword $\sigma$
	passes through the same polygons as $\delta_1$.
	
	As we have seen in the proof of Lemma \ref{lemm::intersections-yield-morphisms}, 
	the conditions (LG1), (LG2), (RG1) and (RG2) are equivalent to certain cofigurations of $\delta_1$, $\delta_2$ and of the marked point in 
	the first and last polygons that $\delta_1$ and $\delta_2$ cross.
	These conditions force $\delta_1$ and $\delta_2$ to intersect in a (unique) point $\tilde{q}$.
	By construction, $\phi=\basis(\pi(\tilde{q}))$.

	Next, assume that $f$ is a singleton single or singleton double map. 
	If $\phi$ is a single map, denote by $p$ the non-trivial path which appears in the definition of single maps, 
	see Section \ref{sec::SingletonSingleMaps}. 
	Otherwise, let $p$ denote the non-trivial path 
	which was denoted by $r$ in the definition of singleton double maps, see Section \ref{sec::SingletonDoubleMaps}.
	There exists a polygon $P$ of the surface $\tilde{S}_A$, which corresponds to $p$ and is crossed by $\tilde{\gamma}_1$.  
	We write $\tilde{\gamma}_2$ for the unique lift of $\gamma_2$ which crosses $P$ 
	and denote by $\delta_i$ the restriction of $\tilde{\gamma}_i$ to $P$. 
	The combinatorial conditions in the definition of singleton single and singleton double maps 
	are then equivalent to certain configurations of the marked point in $P$ and the endpoints of $\delta_1$ and $\delta_2$.
	As above, this proves that $\delta_1$ and $\delta_2$ intersect in a (unique) point $\tilde{q}$, 
	such that $\basis(\pi(\tilde{q}))=\phi$.
\end{proof}

Theorem \ref{TheoremMorphismsIntersections} now follows directly from Lemmas \ref{lemm::intersections-yield-morphisms} and \ref{lemm::image-of-basis-map} as well as the result of Appendix \ref{AppendixARTheoryBands} which describes the Auslander-Reiten theory of band complexes.

%%%%%%%%%%%%%%%%%%%%%%%%%%%%%%%%%%%%%%%%%%%%%%%%%%%%%%%%%%%%%%%%%%
\section{Mapping Cones in the derived category of a gentle algebra}\label{sect::mapping-cones}

As before we identify the full subcategory consisting of the thick closure of the simple $A$-modules $\cT$ with  $\Dfd A$. In this Section we will show that the mapping cone along a standard basis element between indecomposable objects in $\Dfd A$ is given by the homotopy strings of the two curves resolving the corresponding (oriented) crossing. The main result of this section is the following.

\begin{theorem}\label{theo::mapping-cones}
Let $(\gamma_1, f_1)$ and $(\gamma_2, f_2)$ be two graded arcs or closed curves with non-empty graded oriented intersection $(\gamma_1,f_1) \overrightarrow{\cap}_{\gr} (\gamma_2,f_2)$ and let $\phi : \P_{(\gamma_1,f_1)} \to \P_{(\gamma_2,f_2)}$ be a standard basis morphism in $\Dfd A$ associated to a crossing point $X$ in $(\gamma_1,f_1) \overrightarrow{\cap}_{\gr} (\gamma_2,f_2)$.
 
Then the mapping cone 
$M^\bullet_{\phi}$  of $\phi$ is given by
  $$M^\bullet_{\phi} = P^\bullet_{(\gamma_3, f_3)} \oplus P^\bullet_{(\gamma_4, f_4)}$$ where the graded arcs or closed curves $(\gamma_3, f_3)$ and $(\gamma_4, f_4)$ are 
given by the  resolution of the crossing of $(\gamma_1, f_1)$ and $(\gamma_2, f_2)$ at $X$ as follows:
\begin{figure}[H]
\begin{tikzpicture}
\draw (-0.5,0) node[black!60] {$X$};
\draw[black, thick, ->] (-2,2)  --  (2,-2);
\draw (-1.5,1) node[black!60] {$\gamma_2$};
\draw[black, thick, <-] (-2,-2) -- (2,2);
\draw (1.5,1) node[black!60] {$\gamma_1$};
\draw[thick, <-] (-1.8,2.2) .. controls (-0.5,0.5) and (0.5,0.5) .. (1.8,2.2);
\draw (0,1.3) node[black!60] {$\gamma_3$};
\draw[thick, <-] (-1.8,-2.2) .. controls (-0.5,-0.5) and (0.5,-0.5) .. (1.8,-2.2);
\draw (0,-1.3) node[black!60] {$\gamma_4$};
%\filldraw[black] (0,0) circle (2pt) node[anchor=west]{Intersection point};
\end{tikzpicture}
\caption{Curves associated to the mapping cone $M^\bullet_{\phi} = P^\bullet_{(\gamma_3, f_3)} \oplus P^\bullet_{(\gamma_4, f_4)}$ of a map $\phi: \P_{(\gamma_1,f_1)} \to \P_{(\gamma_2, f_2)}$. } \label{Fig:Mapping Cone}
\end{figure}  
\end{theorem}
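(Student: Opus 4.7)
The plan is to proceed by case analysis on the type of the oriented graded intersection point $X$, using the five types (\ref{intersection1})--(\ref{intersection5}) of Definition~\ref{defi::oriented-graded-intersection}. By Lemma~\ref{lemm::intersections-yield-morphisms}, the morphism $\phi = \basis(X)$ is either a graph map, a quasi-graph map, a singleton single map, or a singleton double map, and for each of these classes an explicit description of the mapping cone as a (finite) direct sum of string and band complexes is provided by the graphical mapping cone calculus of \cite{CanakciPauksztelloSchroll}. The strategy is therefore: for each type of $X$, read off the homotopy strings $\sigma_3, \sigma_4$ (and their gradings) of the two summands of $M^\bullet_\phi$ from \cite{CanakciPauksztelloSchroll}, then independently compute the graded homotopy strings/bands of the two curves $\gamma_3, \gamma_4$ obtained by locally resolving $X$ as in Figure~\ref{Fig:Mapping Cone}, and check that the two agree via the correspondence $(\sigma(\gamma), \mu(f)) \leftrightarrow (\gamma,f)$ of Lemma~\ref{lemm::uniqueness-of-arc}.

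For the bulk of the work (types (\ref{intersection1})--(\ref{intersection4})) one first passes to the universal cover $\tilde S_A$ and considers the local picture consisting of the polygon(s) of $\tilde S_A$ containing the lift $\tilde X$ together with the sections $\delta_1, \delta_2$ of $\tilde\gamma_1, \tilde\gamma_2$ used in Section~\ref{sect::morphisms-as-intersections}. The resolution of the crossing at $X$ is then local: it replaces the pair of arcs $(\delta_1, \delta_2)$ by the pair that goes ``around'' $X$ in the two possible ways, so that reading the sequence of laminates crossed on either side of the resolution shows directly that $\sigma(\gamma_3)$ and $\sigma(\gamma_4)$ are obtained by splicing $\sigma(\gamma_1)$ and $\sigma(\gamma_2)$ along the common subword $\sigma(X)$, exactly as in the splicing rules of \cite{CanakciPauksztelloSchroll}. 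In particular, for type~(\ref{intersection1}) the resolution gives the standard splicing for graph maps; for type~(\ref{intersection2}) it gives the two strings obtained by interchanging the ``left'' and ``right'' parts of $\sigma(\gamma_1), \sigma(\gamma_2)$ around the overlap and cancelling the overlap itself (this is how mapping cones of quasi-graph maps appear in loc.\ cit.); and for types~(\ref{intersection3}) and~(\ref{intersection4}), where $\sigma(X)$ is empty, the resolution locally inserts or deletes the path $p$ (respectively $p$ and $q$) of the singleton map, matching the CPS description of singleton (single/double) mapping cones.

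To handle the gradings, one uses that a grading on $\gamma_i$ is determined by its value on any single laminate it crosses (see the remark after Definition~\ref{defi::grading-on-curves}). Fix the value of $f_3$ and $f_4$ on a laminate crossed away from $X$, where $\gamma_3$ (resp.\ $\gamma_4$) locally agrees with $\gamma_1$ or $\gamma_2$; then inductive application of the grading rule along the resolved curve, combined with the equality of gradings $f_1(\ell_1)=f_2(\ell_2)$ (or $f_1(\ell_1)=f_2(\ell_2)+1$ in type~(\ref{intersection2})) built into Definition~\ref{defi::oriented-graded-intersection}, shows that $\mu(f_3)$ and $\mu(f_4)$ agree with the gradings read off from the mapping cone of $\phi$.

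The remaining case (type~(\ref{intersection5}), an intersection at a puncture) will be the main technical obstacle. Here $\sigma(X)$ is a genuinely infinite homotopy string wrapping around the cycle of relations at the puncture, and the mapping cone involves infinite string complexes for which one needs a version of the splicing rule valid in the unbounded setting; this requires verifying that the resolved curves $\gamma_3, \gamma_4$ still satisfy the asymptotic wrapping condition of Theorem~\ref{theo::objects-as-arcs}(1) (counter-clockwise infinite wrapping around the boundary component) so that they correspond to honest objects of $D^b(A\text{-}\mathrm{mod})$. One then checks, by truncating with the horocycle $\zeta$ used in Definition~\ref{defi::oriented-graded-intersection}(\ref{intersection5}) and passing to the limit, that the resulting graded homotopy strings match the CPS formula. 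Once these case-by-case verifications are complete, the theorem follows by invoking Lemma~\ref{lemm::uniqueness-of-arc} to identify the abstract string/band data with the geometric arcs $(\gamma_3, f_3)$ and $(\gamma_4, f_4)$.
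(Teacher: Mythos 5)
Your proposal is essentially the same as the paper's proof: both reduce to the CPS mapping-cone calculus (Theorem~\ref{thm:mapping cone summary}), both work locally in the universal cover via the disc $S_{\tilde q}$, and both propagate the grading from a single crossing using Definition~\ref{defi::grading-on-curves}. The paper organizes the case analysis by the \emph{type of the standard basis morphism} (graph map, single map, then ``double map and boundary single map treated similarly'') rather than by the type of the intersection point, but the content is the same; in particular the quasi-graph map case is not treated separately because, as noted after Theorem~\ref{thm:mapping cone summary}, quasi-graph maps give rise to homotopy classes of single/double maps with a degree shift, so they are absorbed into those cases rather than handled by the ``interchange and cancel'' heuristic you describe. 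Your concern about the puncture case (type~(\ref{intersection5})) being the main technical obstacle is somewhat overstated: in the paper's framework it is absorbed into the graph-map case with an infinite common subword, and the CPS calculus already applies verbatim to infinite homotopy strings, so no separate truncation-and-limit argument is required.
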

We note that it can happen that the resolution of $\gamma_1$ and $\gamma_2$ at $X$ consists of a single curve (namely when $\gamma_1$ or $\gamma_2$ is closed or if the intersection of $\gamma_1$ and $\gamma_2$ is on the boundary) in which case one of the summands of $M^{\bullet}_{\phi}$ is understood to be zero. It may also happen that $\gamma_3$ or $\gamma_4$ is a non-primitive closed curve in which case the corresponding complex $\P_{(\gamma_i, f_i)}$ is decomposable, cf.~Remark \ref{RemarkNonPrimitiveBands}.

Before proving Theorem~\ref{theo::mapping-cones}, we state the following result on mapping cones in $\Dfd A$ whose proof is contained in Appendix \ref{AppendixMappingCones}. In the ungraded case, the theorem was proved (via somewhat different methods) in \cite{CanakciPauksztelloSchroll} and \cite{AddendumCanakciPauksztelloSchroll}.

 \begin{theorem}[{\ref{thm:appendix mapping cone summary}}]\label{thm:mapping cone summary}
 	Let $A$ be a graded gentle algebra and let $P^\bullet_{(\sigma_1, \mu_1)}$ and $P^\bullet_{(\sigma_1, \mu_2)}$ 
 	be indecomposable objects in $\Dfd A$ with graded homotopy strings or bands $(\sigma_1, \mu_1)$ and $(\sigma_1, \mu_2)$. Let $\phi \in {\rm Hom}_{\Dfd A}(P^\bullet_{(\sigma_1, \mu_1)}, P^\bullet_{(\sigma_2, \mu_2)})$ be a standard basis element. Then, the mapping cone $M=M^\bullet_{\phi}$ is isomorphic to
 	\begin{enumerate}
 		\item a (possibly non-primitive) band complex $\P_{(\sigma_3, \mu_3)}$ if $\sigma_1$ and $\sigma_2$ are homotopy bands;
 		\item a string complex $\P_{(\sigma_3, \mu_3)}$ if either $\sigma_1$ or $\sigma_2$ is a homotopy band or if the intersection of $\gamma_1$ and $\gamma_2$ is on the boundary;
 		\item a direct sum $\P_{(\sigma_3, \mu_3)} \oplus \P_{(\sigma_4, \mu_4)}$ of string complexes, otherwise.
 	\end{enumerate}
 	In all cases, there exist decompositions $\sigma_1=u_1 v_1$ and $\sigma_2=u_2v_2$ such that $\sigma_3$ is the reduction of the word  $u_1\overline{u}_2$ and $\sigma_4$ is the reduction of the word $\overline{v}_2v_1$. The resulting homotopy strings and bands are indicated by the green and red boxes resulting from the following graphical calculus. 
 	
 	\begin{enumerate}
 		\item \label{CaseGraphMap} Let \sloppy $\sigma_1 = \ldots  \sigma_{i-2} \sigma_{i-1} \sigma_i \ldots \sigma_j \sigma_{j+1} \sigma_{j+2} \ldots$ and \sloppy  $\sigma_2 = \ldots  \tau_{i-2} \tau_{i-1} \tau_i \ldots \tau_j \tau_{j+1} \tau_{j+2} \ldots$ 
 		and suppose $\phi$ is a graph map with common homotopy substring $ \sigma_i \ldots \sigma_j = \tau_i \ldots \tau_j$.   
 		Then  $(u_1, v_1)=(\dots \sigma_{i-2}\sigma_{i-1}, \sigma_{i} \sigma_{i+1} \ldots)$ and $(u_2, v_2)=(\dots \tau_{i-2}\tau_{i-1}, \tau_{i} \tau_{i+1} \ldots)$. 
 		\[
 		\scalebox{.9}{\begin{tikzpicture}[scale=1.5]
 			\node (A0) at (-1,0) {};
 			\node[scale=.7] (A1) at (0,0) {$\bullet$};
 			\node[scale=.7] (A2) at (1,0) {$\bullet$};
 			\node[scale=.7] (A3) at (2,0) {$\bullet$};
 			\node[scale=.7] (A4) at (3,0) {$\bullet$};
 			\node[scale=.7] (A5) at (4,0) {$\bullet$};
 			\node[scale=.7] (A6) at (5,0) {$\bullet$};
 			\node[scale=.7] (A7) at (6,0) {};
 			\node[scale=.7] (B0) at (-1,-1) {};
 			\node[scale=.7] (B1) at (0,-1) {$\bullet$};
 			\node[scale=.7] (B2) at (1,-1) {$\bullet$};
 			\node[scale=.7] (B3) at (2,-1) {$\bullet$};
 			\node[scale=.7] (B4) at (3,-1) {$\bullet$};
 			\node[scale=.7] (B5) at (4,-1) {$\bullet$};
 			\node[scale=.7] (B6) at (5,-1) {$\bullet$};
 			\node[scale=.7] (B7) at (6,-1) {};
 			\path[color=white] (A0) edge node[above,color=black,scale=.7]{$\cdots \sigma_{i-3}\sigma_{i-2}$}(A1)
 			(B0) edge node[below,color=black,scale=.7]{$\cdots \tau_{i-3}\tau_{i-2} $}(B1)
 			(A6) edge node[above,color=black,scale=.7]{$\sigma_{j+2}\sigma_{j+3} \cdots$}(A7)
 			(B6) edge node[below,color=black,scale=.7]{$\tau_{j+2}\tau_{j+3} \cdots$}(B7);
 			\draw [line join=round,
 			decorate, decoration={
 				zigzag,
 				segment length=4,
 				amplitude=.9,post=lineto,
 				post length=2pt
 			}] (A0) --  (A1)
 			(B0)--(B1)
 			(A6)--(A7)
 			(B6)--(B7);
 			\path
 			(A1) edge node[above,scale=.7]{$\sigma_{i-1}$} (A2)
 			(A2) edge node[above,scale=.7]{$\sigma_i$} (A3)
 			(A4) edge node[above,scale=.7]{$\sigma_j$} (A5)
 			(A5) edge node[above,scale=.7]{$\sigma_{j+1}$} (A6);
 			\path[->,font=\scriptsize,>=angle 90]
 			(A1) edge node[left]{$p$} (B1)
 			(A6) edge node[right]{$q$} (B6);
 			\path[font=\scriptsize,>=angle 90]
 			(A2)  edge[double]  (B2)
 			(A3)  edge[double]  (B3)
 			(A4)  edge[double]  (B4)
 			(A5)  edge[double]  (B5);
 			\draw[densely dotted] (A3)--(A4) (B3)--(B4);
 			\path
 			(B1) edge node[below,scale=.7]{$\tau_{i-1}$} (B2)
 			(B2) edge node[below,scale=.7]{$\tau_i$} (B3)
 			(B4) edge node[below,scale=.7]{$\tau_j$} (B5)
 			(B5) edge node[below,scale=.7]{$\tau_{j+1}$} (B6);
 			\draw[color=red] (-1,.07)--(1.07,.07)--(1.07,-1.07)--(-1,-1.07)--(-1, -.93)--(.9,-.93)--(.9,-.07)--(-1,-.07)--(-1,.07);
 			\draw[color=green] (3.93,.07)--(6,.07)--(6,-.07)--(4.07,-.07)--(4.07, -.93)--(6,-.93)--(6,-1.07)--(3.93,-1.07)--(3.93,.07);
 			\end{tikzpicture}}
 		\]
 		The gradings $\mu_3$ and $\mu_4$ are induced by the gradings $\mu_1$ and $\mu_2$, that is they agree on common homotopy substrings.
 		\item Let $\sigma_1= \ldots \sigma_i \sigma_{i+1} \ldots$ and $\tau = \ldots \tau_{j} \tau_{j+1} \ldots$ and suppose $\phi$ is a singleton single map. Then $(u_1, v_1)=\ldots  \sigma_{i-1}\sigma_ip, \sigma_{i+1}'\sigma_{i+2} \ldots)$, where $p\sigma_{i+1}'=\sigma_{i+1}$ and $(u_2, v_2)=(\ldots \tau_{j-1} \tau_j, \tau_{j+1} \tau_{j+2} \ldots)$. In particular, $\sigma_3 = \ldots \sigma_{i-1} \sigma_i p \overline{\tau}_j \overline{\tau}_{j-1}  \ldots $ and  $\sigma_4 = \ldots \overline{\sigma}_{i+2}  \overline{\sigma}_{i+1} p \tau_{j+1} \tau_{j+2} \ldots$:
 		
 		\[
 		\scalebox{.9}{\begin{tikzpicture}[scale=1.5]
 			\node (A0) at (-1,0) {};
 			\node[scale=.7] (A1) at (0,0) {$\bullet$};
 			\node[scale=.7] (A2) at (1,0) {$\bullet$};
 			\node[scale=.7] (A3) at (2.5,0) {$\bullet$};
 			\node[scale=.7] (A4) at (3.5,0) {};
 			\node[scale=.7] (B0) at (-1,-1) {};
 			\node[scale=.7] (B1) at (0,-1) {$\bullet$};
 			\node[scale=.7] (B2) at (1,-1) {$\bullet$};
 			\node[scale=.7] (B3) at (2.5,-1) {$\bullet$};
 			\node[scale=.7] (B4) at (3.5,-1) {};
 			\path[color=white] 
 			(A0) edge node[above,color=black,scale=.7]{$\cdots \sigma_{i-1} \sigma_{i-1}$}(A1)
 			(A3) edge node[above,color=black,scale=.7]{$\sigma_{i+2}  \sigma_{i+3}\cdots$}(A4)
 			(B0) edge node[below,color=black,scale=.7]{$ \cdots\tau_{j-2} \tau_{j-1} $}(B1)
 			(B3) edge node[below,color=black,scale=.7]{$\tau_{j+2} \tau_{j+3} \cdots $}(B4);
 			\draw [line join=round,
 			decorate, decoration={
 				zigzag,
 				segment length=4,
 				amplitude=.9,post=lineto,
 				post length=2pt
 			}] (A0) --  (A1)
 			(A3) -- (A4)
 			(B0)--(B1)
 			(B3)--(B4);
 			\path (A1) edge node[above,scale=.7]{$\sigma_i$} (A2);
 			\path[->,font=\scriptsize,>=angle 90]
 			(A2) edge node[left]{$p$} (B2)
 			(A2) edge node[above]{$\sigma_{i+1}$} (A3)
 			(B3) edge node[above]{$\tau_{j+1}$} (B2);
 			\path (B1) edge node[below,scale=.7]{$\tau_j$} (B2);
 			\draw[color=red] (-1,.07)--(1.07,.07)--(1.07,-1.07)--(-1,-1.07)--(-1, -.93)--(.93,-.93)--(.93,-.07)--(-1,-.07)--(-1,.07);
 			\draw[color=green] (2.57,-.6)--(2.57,-.07)--(3.5,-.07)--(3.5,.07)--(2.43,.07)--(2.43,-1.07) (2.57,-.6)--(2.57,-.93)--(3.5,-.93)--(3.5,-1.07)--(2.43,-1.07);
 			\path[->,font=\scriptsize,>=angle 90,color=green,scale=.7]
 			(B3) edge node[right]{$\overline{\sigma}_{i+1} p \tau_{j+1}$} (A3);
 			\end{tikzpicture}}
 		\]
 		and where the gradings $\mu_3$ and $\mu_4$ are induced by the gradings $\mu_1$ and $\mu_2$.
 		\item Let $\sigma_1 = \ldots \sigma_{i-2} \sigma_{i-1} \sigma_i \sigma_{i+1} \sigma_{i+2} \sigma_{i+3} \ldots$ and $\tau = \ldots \tau_{j-2} \tau_{j-1} \tau_j \tau_{j+1} \tau_{j+2} \ldots$ and suppose $\phi$ is a singleton double map. Then, $(u_1, v_1)=(\ldots \sigma_{i-2} \sigma_{i-1} p, \sigma_i' \sigma_{i+1} \sigma_{i+2} \ldots)$, where $p\sigma_i'=\sigma_i$ and $(u_2, v_2)=(\ldots \tau_{j-2} \tau_{j-1}, \tau_j \tau_{j+1} \ldots)$. In particular, $\overline{q}=\overline{\tau}_i\overline{p}\sigma_i$ and $\sigma_3 = \ldots \sigma_{i-2} \sigma_{i-1} p \overline{\tau}_{j-1} \overline{\tau}_{j-2}  \ldots $ and $\sigma_4 = \ldots \overline{\sigma}_{i+2}  \overline{\sigma}_{i+1} q \tau_{j+1} \tau_{j+2} \ldots  $:
 		
 		\[
 		\scalebox{.9}{\begin{tikzpicture}[scale=1.5]
 			\node (A0) at (-1,0) {};
 			\node[scale=.7] (A1) at (0,0) {$\bullet$};
 			\node[scale=.7] (A2) at (1,0) {$\bullet$};
 			\node[scale=.7] (A3) at (2,0) {$\bullet$};fßinvex
 			\node[scale=.7] (A4) at (3,0) {$\bullet$};
 			\node[scale=.7] (A5) at (4,0) {};
 			\node[scale=.7] (B0) at (-1,-1) {};
 			\node[scale=.7] (B1) at (0,-1) {$\bullet$};
 			\node[scale=.7] (B2) at (1,-1) {$\bullet$};
 			\node[scale=.7] (B3) at (2,-1) {$\bullet$};
 			\node[scale=.7] (B4) at (3,-1) {$\bullet$};
 			\node[scale=.7] (A5) at (4,0) {};
 			\path[color=white] (A0) edge node[above,color=black,scale=.7]{$\cdots \sigma_{i-3}\sigma_{i-2} $}(A1)
 			(B0) edge node[below,color=black,scale=.7]{$\cdots \tau_{j-3}\tau_{j-2} $}(B1)
 			(A4) edge node[above,color=black,scale=.7]{$\sigma_{i+2}\sigma_{i+3} \cdots$}(A5)
 			(B4) edge node[below,color=black,scale=.7]{$\tau_{j+2}\tau_{j+3} \cdots $}(B5);
 			\draw [line join=round,
 			decorate, decoration={
 				zigzag,
 				segment length=4,
 				amplitude=.9,post=lineto,
 				post length=2pt
 			}] (A0) --  (A1)
 			(B0)--(B1)
 			(A4)--(A5)
 			(B4)--(B5);
 			\path (A1) edge node[above,scale=.7]{$\sigma_{i-1}$} (A2);
 			\draw (A3) edge node[above,scale=.7]{$\sigma_{i+1}$} (A4);
 			\draw (B3) edge node[below,scale=.7]{$\tau_{j+1}$} (B4);
 			\path[->,font=\scriptsize,>=angle 90]
 			(A2) edge node[left]{$p$} (B2)
 			(A3) edge node[right]{$q$} (B3)
 			(A2) edge node[above,scale=.7]{$\sigma_i$} (A3)
 			(B2) edge node[below,scale=.7]{$\tau_j$} (B3);
 			\path (B1) edge node[below,scale=.7]{$\tau_{j-1}$} (B2);
 			\draw[color=red] (-1,.07)--(1.07,.07)--(1.07,-1.07)--(-1,-1.07)--(-1, -.93)--(.93,-.93)--(.93,-.07)--(-1,-.07)--(-1,.07);
 			\draw[color=green] (1.93,.07)--(4,.07)--(4,-.07)--(2.07,-.07)--(2.07, -.93)--(4,-.93)--(4,-1.07)--(1.93,-1.07)--(1.93,.07);
 			\end{tikzpicture}}
 		\]
 		\noindent The gradings $\mu_3$ and $\mu_4$ are induced by the gradings $\mu_1$ and $\mu_2$.
 		
 		\item \label{CaseQuasiMap} Let \sloppy $\sigma_1 = \ldots  \sigma_{i-2} \sigma_{i-1} \sigma_i \ldots \sigma_j \sigma_{j+1} \sigma_{j+2} \ldots$ and \sloppy  $\sigma_2 = \ldots  \tau_{i-2} \tau_{i-1} \tau_i \ldots \tau_j \tau_{j+1} \tau_{j+2} \ldots$ 
 		and suppose $\phi$ is a quasi graph map with common homotopy substring $ \sigma_i \ldots \sigma_j = \tau_i \ldots \tau_j$. 
 		Then  $(u_1, v_1)=(\dots \sigma_{i-2}\sigma_{i-1}, \sigma_{i} \sigma_{i+1} \ldots)$ and $(\overline{v}_2, \overline{u}_2)=(\ldots \tau_{i-2}\tau_{i-1}, \tau_{i} \tau_{i+1} \ldots)$. 
 		\[
 		\scalebox{.9}{\begin{tikzpicture}[scale=1.5]
 			\node (A0) at (-1,0) {};
 			\node[scale=.7] (A1) at (0,0) {$\bullet$};
 			\node[scale=.7] (A2) at (1,0) {$\bullet$};
 			\node[scale=.7] (A3) at (2,0) {$\bullet$};
 			\node[scale=.7] (A4) at (3,0) {$\bullet$};
 			\node[scale=.7] (A5) at (4,0) {$\bullet$};
 			\node[scale=.7] (A6) at (5,0) {$\bullet$};
 			\node[scale=.7] (A7) at (6,0) {};
 			\node[scale=.7] (B0) at (-1,-1) {};
 			\node[scale=.7] (B1) at (0,-1) {$\bullet$};
 			\node[scale=.7] (B2) at (1,-1) {$\bullet$};
 			\node[scale=.7] (B3) at (2,-1) {$\bullet$};
 			\node[scale=.7] (B4) at (3,-1) {$\bullet$};
 			\node[scale=.7] (B5) at (4,-1) {$\bullet$};
 			\node[scale=.7] (B6) at (5,-1) {$\bullet$};
 			\node[scale=.7] (B7) at (6,-1) {};
 			\path[color=white] (A0) edge node[above,color=black,scale=.7]{$\cdots \sigma_{i-3}\sigma_{i-2}$}(A1)
 			(B0) edge node[below,color=black,scale=.7]{$\cdots \tau_{i-3}\tau_{i-2} $}(B1)
 			(A6) edge node[above,color=black,scale=.7]{$\sigma_{j+2}\sigma_{j+3} \cdots$}(A7)
 			(B6) edge node[below,color=black,scale=.7]{$\tau_{j+2}\tau_{j+3} \cdots$}(B7);
 			\draw [line join=round,
 			decorate, decoration={
 				zigzag,
 				segment length=4,
 				amplitude=.9,post=lineto,
 				post length=2pt
 			}] (A0) --  (A1)
 			(B0)--(B1)
 			(A6)--(A7)
 			(B6)--(B7);
 			\path
 			(A1) edge node[above,scale=.7]{$\sigma_{i-1}$} (A2)
 			(A2) edge node[above,scale=.7]{$\sigma_i$} (A3)
 			(A4) edge node[above,scale=.7]{$\sigma_j$} (A5)
 			(A5) edge node[above,scale=.7]{$\sigma_{j+1}$} (A6);
 			
 			\draw[densely dotted] (A3)--(A4) (B3)--(B4);
 			\path
 			(B1) edge node[below,scale=.7]{$\tau_{i-1}$} (B2)
 			(B2) edge node[below,scale=.7]{$\tau_i$} (B3)
 			(B4) edge node[below,scale=.7]{$\tau_j$} (B5)
 			(B5) edge node[below,scale=.7]{$\tau_{j+1}$} (B6);
 			\draw[color=red, draw opacity=0.75, thick] (-1,.07)--(1.07,.07)--(1.07,-.93)--(3.93,-.93)--(6, -.93)--(6,-1.07)--(.9,-1.07)--(.9,-.07)--(-1,-.07)--(-1,.07);
 			\draw[color=green, draw opacity=0.75] (-1,-1.07)--(-1, -.93)--(.9,-.93)--(.9,.07)--(6,.07)--(6,-.07)--(1.07,-.07)--(1.07,-1.07)--(-1,-1.07);
 			\end{tikzpicture}}
 		\]
 		The gradings $\mu_3$ and $\mu_4$ are induced by the gradings $\mu_1$ and $\mu_2$, that is they agree on common homotopy substrings.
 	\end{enumerate}
 \end{theorem}

{\it Proof of Theorem~\ref{theo::mapping-cones}:}

Keeping the notation of Theorem~\ref{thm:mapping cone summary}, and in particular recalling that  $\sigma(\gamma_1) = \ldots \sigma_i \sigma_{i+1} \ldots $ and $\sigma(\gamma_2) = \ldots \tau_i \tau_{i+1} \ldots $ are the homotopy strings or bands of $\gamma_1$ and $\gamma_2$, we first consider the case when $\phi$ is a graph map. Locally in the surface this corresponds to the following configuration (where some of the curves may be trivial, for instance if the intersection point is on the boundary).

\begin{figure}[H]
\includegraphics[width=6cm]{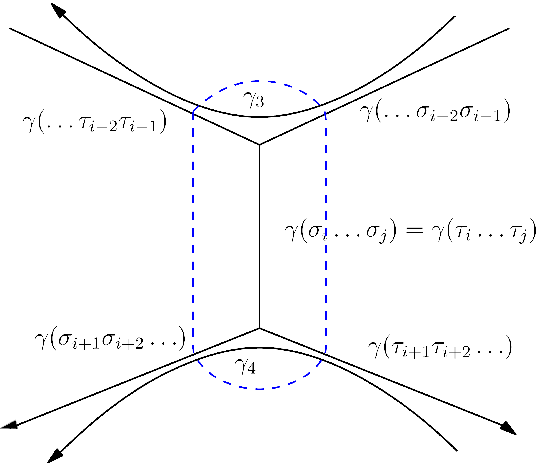}
    \caption{Curves associated to the mapping cone $M^\bullet_{\phi} = P^\bullet_{(\gamma_3, f_3)} \oplus P^\bullet_{(\gamma_4, f_4)}$ of a map $\phi:  \P_{(\gamma_1, f_1)} \to \P_{(\gamma_2, f_2)}$ when $\phi$ is a graph map. In this picture we have decomposed $\gamma_1$ into segments $\gamma({\ldots \sigma_{i-2} \sigma_{i-1}}),  \gamma({\sigma_{i} \ldots \sigma_{j} } )$ and $ \gamma({\sigma_{j+1} \sigma_{j+2} \ldots}) $  and $\gamma_2$ into segments $\gamma({\ldots \tau_{i-2} \tau_{i-2}}),  \gamma({\tau_{i} \ldots \tau_{j} }) $ and $ \gamma({\tau_{j+1} \tau_{j+2} \ldots})$.}  \label{Fig:Graph Map2}
\end{figure}

The blue dashed region in Figures \ref{Fig:Graph Map2} and \ref{Fig:Single Map} corresponds to the topological disc $S_{\tilde{q}}$ of Section \ref{sect::morphisms-as-intersections}.
Thus, we see that the curve $\gamma_3$ at the top is split into two subcurves, so that $\gamma_3 = \gamma({\ldots\sigma_{i-2}\sigma_{i-1}}) $   $\gamma({\ldots\tau_{i-2} \tau_{i-1}}) ^{-1}$. Furthermore, since the gradings $\mu(f_1)$ and $\mu(f_2)$ agree on the substrings $\sigma_i \ldots \sigma_j = \tau_i \ldots \tau_j$ and since the degree functions are directly determined by the gradings, the degree function $f_3$ induced by the grading $\sigma_3$ on $\sigma(\gamma_3)$ is consistent with the convention of Definition~\ref{defi::grading-on-curves}.
This proves that $\P_{(\gamma_3, f_3)}$ has the form as in the statement of the Theorem.

A similar argument at the bottom of the picture proves the result for $\P_{(\gamma_4, f_4)}$.

\smallskip

Next, we treat the case of singleton single maps. The case of quasi-graph maps follows directly from the treatment in Appendix \ref{AppendixMappingCones}, cf.~Figure \ref{FigureConeQuasiMap}. 
In that case, $\gamma({\sigma})$ and $\gamma({\tau})$ meet in a polygon which forms the whole of $S_{\tilde{q}}$.

\begin{figure}[H]
\includegraphics[width=6cm]{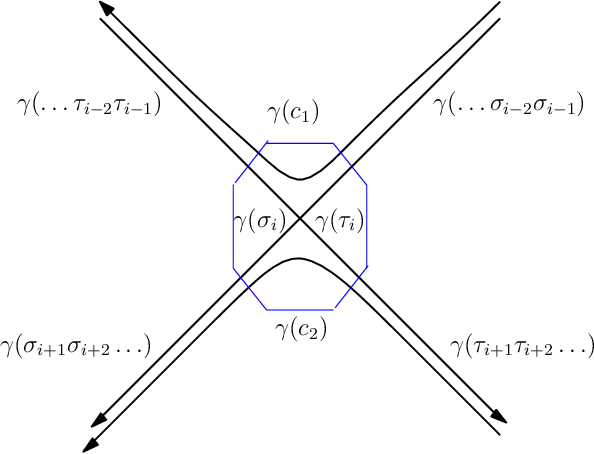}         %GraphMap.eps}
    \caption{}  \label{Fig:Single Map}
    \end{figure}

We see that $\gamma({c_2}) $ is obtained by  $\gamma({\tau_{i+1}\tau_{i+2}\ldots})^{-1}\gamma({p})^{-1}\gamma({\sigma_{i+1}\sigma_{i+2}\ldots})$, as in the previous case. Furthermore, by the definition of a single map, we have for the associated gradings $\mu$ and $\nu$ that  $\mu_{i}-1 = \nu_{i} $ and as above, this implies that the induced grading $f_{\rho_2}$ is consistent with the convention of Definition~\ref{defi::grading-on-curves}.  
We also see that $\gamma({c_1})$ is obtained in a similar fashion, by noticing that $\sigma_i p \overline{\tau}_i$ contains one copy of $p$, since $\sigma_i$ ends in (and $\overline{\tau}_i$ starts in) $p^{-1}$.

The remaining cases of a double map or of a single map arising from an intersection on the boundary of $S_A$ are treated in a similar fashion.

\hfill $\Box$

%%%%%%%%%%%%%%%%%%%%%%%%%%%%%%%%%%

\section{Auslander-Reiten triangles}\label{sect::AR-triangles}
Throughout this section we assume that $k$ is algebraically closed. The purpose of this section is to describe the Auslander-Reiten triangle of any perfect string object $P^\bullet_{(\gamma, f)}$ and show that it  is determined by a rotation of the corresponding graded arc $(\gamma, f)$ on $S_A$, where $A$ is any graded gentle algebra. We provide first a proof in the ungraded case which is based on the description of Auslander-Reiten triangles in \cite{ArnesenLakingPauksztello}. Afterwards we explain how the result  extends to the case of graded gentle algebras from the description of morphisms and compositions in $\Dfd A$ following \cite{OpperDerivedInvariants}. We also note that the much simpler Auslander-Reiten theory for band complexes in the graded case is described in Appendix \ref{AppendixARTheoryBands} generalising results from \cite{Bobinski} in the ungraded case via different methods. In short, all band complexes are $\tau$-invariant and their Auslander-Reiten triangles are induced from those of $k[x,x^{-1}]$.

\subsection{The ungraded case}
It is shown in \cite{Happel2} that in the bounded derived category of a module category of a finite dimensional algebra, the triangulated subcategory of perfect objects admits a Serre functor.
This is equivalent to the existence of Auslander-Reiten triangles in the category of perfect objects: in fact, there exists an Auslander-Reiten triangle ending in an indecomposable object $X$ if and only if $X$ is perfect. 
For a gentle algebra $A$, the indecomposable perfect objects in $D^b(A - \mod)$ are given by the string objects with finite homotopy string and the band objects. It then follows from Section~\ref{sect::indecomposables} that the indecomposable perfect objects are given by the arcs between marked points on the boundary and the graded closed curves.

We make the convention that given an arc (between marked points) $\gamma$ with homotopy string $\sigma(\gamma)$, the start  of $\gamma$ corresponds to  $s(\sigma(\gamma))$ and the end of $\gamma$ corresponds to $t(\sigma(\gamma))$ and we define the following: 

\begin{enumerate}
\item Let $_s\gamma$ be the arc obtained from $\gamma$ by rotating the start of $\gamma$  clockwise to the next marked point on the boundary. 
\item Let $\gamma_e$ be the arc obtained by rotating the end of $\gamma$  clockwise to the next marked point on the boundary. 
\item Let $_s\gamma_e$  be the arc obtained from $\gamma$ by rotating both the end and the start of $\gamma$  to the next marked point on the boundary. 
\end{enumerate}

\begin{figure}[H]
\includegraphics[width=6cm]{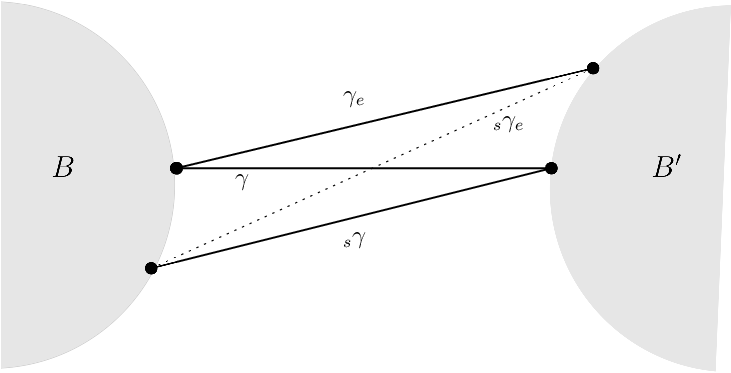}        
    \caption{The arcs $_s\gamma$, $\gamma_e$ associated to $\gamma$.}  \label{Fig:RotatedArcs}
\end{figure}  

We can now state the main result of this Section. 

\begin{theorem}\label{TheoremMiddleTermsARTriangleAsArcs} 
Let $P_{(\gamma, f)}^\bullet \in D^b(A - \mod)$ be an indecomposable perfect object corresponding to a finite graded arc $(\gamma, f)$. Then the Auslander-Reiten triangle 
starting in $P_{(\gamma, f)}^\bullet$ is given by 
\begin{displaymath}\begin{tikzcd}[ampersand replacement=\&] P_{(\gamma, f)}^{\bullet} \arrow{r}{\left(\begin{smallmatrix}_s\phi \\ {\phi_e}\end{smallmatrix}\right)} \& P_{(_s\gamma, _sf)}^{\bullet} \oplus {P_{(\gamma_e, f_e)}^{\bullet}} \arrow{r}{\left(\begin{smallmatrix}_s\psi & {\psi_e}\end{smallmatrix}\right)} \& P_{(_s\gamma_e, _sf_e)}^{\bullet} \arrow{r}{h} \& P_{(\gamma, f)}^{\bullet}[1],\end{tikzcd}\end{displaymath} 
where every morphism in the above triangle is given by a standard basis element corresponding to the associated graded intersections of arcs and where the gradings $_sf, f_e$ and $_sf_e$ are induced by the corresponding intersections.
	
\end{theorem}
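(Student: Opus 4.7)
The plan is to build the triangle explicitly using the intersection/morphism correspondence of Theorem \ref{TheoremMorphismsIntersections} and the mapping cone calculus of Theorem \ref{theo::mapping-cones}, then identify it with the AR triangle via its uniqueness. Throughout, I assume $\gamma$ runs from a marked point $u$ to a marked point $v$; let $u_s$ (respectively $v_e$) be the next marked point clockwise from $u$ (resp.\ $v$) on its boundary component.

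First I would identify the four morphisms $_s\phi$, $\phi_e$, $_s\psi$, $\psi_e$ as standard basis morphisms coming from graded oriented intersections. Each of the four pairs $(\gamma,{}_s\gamma)$, $(\gamma,\gamma_e)$, $({}_s\gamma,{}_s\gamma_e)$, $(\gamma_e,{}_s\gamma_e)$ consists of two arcs sharing exactly one boundary endpoint (the unrotated one) and locally coinciding on a neighbourhood of that endpoint. This produces a boundary intersection of the type described in Definition \ref{defi::oriented-graded-intersection}, whose common substring~$\sigma(q)$ is all but the outermost homotopy letter on one end of $\sigma(\gamma)$. The gradings ${}_sf$, $f_e$ and ${}_sf_e$ are precisely the unique gradings on the rotated arcs making these intersections graded oriented from $(\gamma,f)$, which explains the notation and yields the four morphisms via Theorem \ref{TheoremMorphismsIntersections}; each is a graph map (Section \ref{sect::graph-maps}).

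Next I would verify that the square formed by $_s\phi, \phi_e, _s\psi, \psi_e$ anti-commutes. Indeed, by Theorem \ref{TheoremMorphismsIntersections} both compositions $_s\psi \circ {}_s\phi$ and $\psi_e \circ \phi_e$ are graph maps $P^\bullet_{(\gamma,f)} \to P^\bullet_{({}_s\gamma_e,{}_sf_e)}$ sharing the same common substring (the homotopy string of $\gamma$ stripped of one letter at each end), so they are equal up to a sign by the injectivity part of Theorem \ref{TheoremMorphismsIntersections}. Then $({}_s\psi \ \ -\psi_e) \circ \binom{{}_s\phi}{\phi_e} = 0$ and the candidate triangle is a complex.

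Then I would show that the cone of $\binom{{}_s\phi}{\phi_e}$ is isomorphic to $P^\bullet_{({}_s\gamma_e,{}_sf_e)}$. Using the octahedral axiom applied to the factorisation $P^\bullet_{(\gamma,f)} \xrightarrow{{}_s\phi} P^\bullet_{({}_s\gamma,{}_sf)} \hookrightarrow P^\bullet_{({}_s\gamma,{}_sf)} \oplus P^\bullet_{(\gamma_e,f_e)}$, it suffices to compute $\operatorname{cone}({}_s\phi)$ by Theorem \ref{theo::mapping-cones}: resolving the unique shared-boundary crossing of $\gamma$ and $_s\gamma$ at the vertex $v$ yields on one side the arc from $u$ to $u_s$ (the ``concatenation'') and on the other a contractible segment at $v$, so after octahedron this gives $P^\bullet_{({}_s\gamma_e,{}_sf_e)}$ as claimed. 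To conclude, the resulting distinguished triangle has indecomposable outer terms (Theorem \ref{theo::objects-as-arcs}), a non-zero connecting morphism (otherwise the triangle would split, which is incompatible with the fact that none of the summands of the middle term is isomorphic to $P^\bullet_{(\gamma,f)}$ or to $P^\bullet_{({}_s\gamma_e,{}_sf_e)}$ as soon as $\gamma$ is not degenerate), and since $P^\bullet_{(\gamma,f)}$ is perfect, Happel's theorem ensures that the AR triangle starting at it exists and is unique up to isomorphism, identifying our triangle with it.

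The main obstacle is the cone computation in the third step: a case analysis is required depending on whether each of the outermost homotopy letters of $\sigma(\gamma)$ is direct or inverse (four principal subcases, plus the degenerate case in which $\sigma(\gamma)$ is trivial or has length one). Each case must be matched against the corresponding picture in Theorem \ref{theo::mapping-cones} to confirm that the ``extra'' summand appearing there really collapses (or recombines with the $P^\bullet_{(\gamma_e,f_e)}$ term via the octahedron) to leave only $P^\bullet_{({}_s\gamma_e,{}_sf_e)}$, with the gradings matching on the nose.
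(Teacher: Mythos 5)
Your strategy is genuinely different from the paper's: you attempt to build the candidate triangle from scratch (intersections give the four maps, mapping cones give the triangle, then uniqueness identifies it with the Auslander--Reiten triangle), whereas the paper starts from the Auslander--Reiten triangle as produced by Algorithm~6.3 of \cite{ArnesenLakingPauksztello} and checks, in five explicit cases, that $\sigma_e$ coincides with the homotopy string of the arc obtained by the boundary rotation, with $\phi_e$ realised geometrically in each case. Your plan, if it worked, would be more self-contained; but as written it has two gaps that I do not see how to repair without essentially redoing the paper's case analysis.

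\textbf{The octahedron step is incorrect.} You propose to apply the octahedral axiom to the factorisation $P^\bullet_{(\gamma,f)} \xrightarrow{{}_s\phi} P^\bullet_{({}_s\gamma,{}_sf)} \hookrightarrow P^\bullet_{({}_s\gamma,{}_sf)}\oplus P^\bullet_{(\gamma_e,f_e)}$, but the composite of these two maps is $\left(\begin{smallmatrix}{}_s\phi\\0\end{smallmatrix}\right)$, not $\left(\begin{smallmatrix}{}_s\phi\\ \phi_e\end{smallmatrix}\right)$, since $\phi_e$ does not factor through the inclusion of the first summand. To compute $\operatorname{cone}\left(\begin{smallmatrix}{}_s\phi\\ \phi_e\end{smallmatrix}\right)$ one can instead write it as the cone of the induced map $\operatorname{cone}({}_s\phi)[-1]\to P^\bullet_{(\gamma_e,f_e)}$, but then one has to identify that induced map as a standard basis morphism attached to an intersection of $\delta$ (the arc from $u$ to $u_s$) with $\gamma_e$, and feed it back into Theorem~\ref{theo::mapping-cones}. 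None of this is automatic, and it is precisely where the case analysis of the paper lives.

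\textbf{The identification with the Auslander--Reiten triangle is unjustified.} Exhibiting a non-split distinguished triangle $X\to M\to Z\to X[1]$ with $X$ and $Z$ indecomposable does not make it the Auslander--Reiten triangle starting at~$X$: one must additionally know either that $Z\cong\uptau^{-1}X$, or that $X\to M$ is left almost split, or that the connecting map lies in the socle of $\operatorname{Hom}(Z,X[1])$ under the $\operatorname{End}(Z)$-action. None of these facts follow from the Krull--Schmidt argument you give, and $\operatorname{Hom}(P^\bullet_{({}_s\gamma_e,{}_sf_e)},P^\bullet_{(\gamma,f)}[1])$ can have dimension greater than one when $\gamma$ is not simple, since ${}_s\gamma_e$ may then cross $\gamma$ in the interior as well as near the rotated endpoints. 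For the same reason your claim that ${}_s\psi\circ{}_s\phi$ and $\psi_e\circ\phi_e$ agree up to sign ``by injectivity of $\basis$'' does not go through: Theorem~\ref{TheoremMorphismsIntersections} parametrises a basis, it says nothing about how compositions decompose in that basis.

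To close these gaps you would either have to independently identify ${}_s\gamma_e$ with $\uptau^{-1}\gamma$ (say via the Nakayama functor or Serre duality, neither of which is developed geometrically in the paper), or, as the paper does, invoke the algorithmic description of the Auslander--Reiten triangle from \cite{ArnesenLakingPauksztello} and check its five outputs against the rotated arcs directly.
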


\begin{corollary}\label{CorollaryTauIsGeometric}
	Let $P_{(\gamma, f)}^\bullet \in D^b(A - \mod)$ be an indecomposable perfect object corresponding the a finite graded arc $(\gamma, f)$.
	Let $( \uptau^{-1}\gamma, \uptau^{-1}f)$ be the graded arc corresponding to the inverse  Auslander Reiten translate 
	$\uptau^{-1} P^\bullet_{(\gamma, f)} = P_{( \uptau^{-1}\gamma, \uptau^{-1}f)}^\bullet$. Then 
	$\uptau^{-1} \gamma = {_s\gamma_e}$ 
	 and the corresponding oriented graded intersection point in  
	$\gamma \overrightarrow{\cap}_{{\rm gr}} ({_s\gamma_e})$ gives rise to a map $ P_{(_s\gamma_e,  \uptau^{-1}f)}^\bullet  \to P_{(\gamma, f[1])}^\bullet$ which uniquely determines the grading
	 $  \uptau^{-1}f$ on $_s\gamma_e$.
\end{corollary}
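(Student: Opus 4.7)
My plan is to deduce the corollary from Theorem~\ref{TheoremMiddleTermsARTriangleAsArcs} together with the morphism--intersection dictionary of Theorem~\ref{TheoremMorphismsIntersections}, treating the two claims separately.

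First I would observe that the identity $\uptau^{-1}\gamma = {_s\gamma_e}$ is immediate from Theorem~\ref{TheoremMiddleTermsARTriangleAsArcs}: the Auslander--Reiten triangle
\[
P^\bullet_{(\gamma,f)} \longrightarrow P^\bullet_{(_s\gamma, {_sf})} \oplus P^\bullet_{(\gamma_e, f_e)} \longrightarrow P^\bullet_{(_s\gamma_e, {_sf_e})} \xrightarrow{h} P^\bullet_{(\gamma,f)}[1]
\]
identifies $\uptau^{-1} P^\bullet_{(\gamma, f)}$ with $P^\bullet_{(_s\gamma_e, {_sf_e})}$, so $\uptau^{-1}\gamma = {_s\gamma_e}$ and $\uptau^{-1}f = {_sf_e}$. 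For the second assertion, the connecting morphism $h$ is nonzero, so by Theorem~\ref{TheoremMorphismsIntersections} it is (up to a scalar) a standard basis element corresponding to a unique oriented graded intersection in $(_s\gamma_e, {_sf_e}) \overrightarrow{\cap}_{\gr} (\gamma, f[1])$. Since the endpoints of $\gamma$ and of $_s\gamma_e$ are pairwise distinct marked points of $S_A$, every intersection of these two arcs lies in the interior of $S_A$ and is not a puncture.

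Next I would invoke the remark following Definition~\ref{defi::oriented-graded-intersection}: an interior non-puncture intersection $p$ of $\gamma$ and $_s\gamma_e$ gives rise, for the prescribed grading $f$ on $\gamma$, to a unique grading $g$ on $_s\gamma_e$ such that $p$ lies simultaneously in $(\gamma,f) \overrightarrow{\cap}_{\gr} (_s\gamma_e, g)$ and in $(_s\gamma_e, g) \overrightarrow{\cap}_{\gr} (\gamma, f[1])$. Applying this to the intersection singled out by $h$ forces $g = {_sf_e} = \uptau^{-1}f$, and at the same time exhibits the same crossing as the element of $\gamma \overrightarrow{\cap}_{\gr} {_s\gamma_e}$ announced by the statement.

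The main technical point is to verify that the particular intersection produced by the dictionary in Theorem~\ref{TheoremMorphismsIntersections} coincides with a geometric crossing of $\gamma$ with $_s\gamma_e$ that can actually be recognised on the surface (rather than arising as a degenerate or ``virtual'' configuration). I would do this by tracing the graphical mapping cone calculus of Theorem~\ref{theo::mapping-cones} applied to the first two morphisms of the triangle above: the cones that yield $_s\gamma$ and $\gamma_e$ are resolutions of oriented crossings of $\gamma$ with $_s\gamma$ and with $\gamma_e$ near the starting and ending marked points of $\gamma$, and a further resolution produces $_s\gamma_e$ together with one residual interior crossing of $\gamma$ and $_s\gamma_e$. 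A direct local check of the rules in Definition~\ref{defi::oriented-graded-intersection} at this residual crossing identifies the induced grading on $_s\gamma_e$ with $_sf_e$ and closes the proof.
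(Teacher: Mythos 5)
Your argument is correct and follows the route the paper intends: the corollary is stated without its own proof precisely because it falls out of Theorem~\ref{TheoremMiddleTermsARTriangleAsArcs} (which identifies $\uptau^{-1} P^\bullet_{(\gamma, f)}$ with $P^\bullet_{(_s\gamma_e, {_sf_e})}$ and asserts that $h$ is a standard basis element) combined with the dictionary of Theorem~\ref{TheoremMorphismsIntersections} and the remark after Definition~\ref{defi::oriented-graded-intersection}. Two small remarks. First, the claim that all intersections of $\gamma$ and $_s\gamma_e$ are interior is not needed and is not quite safe to assert in general: after rotating by one step on each side, an endpoint of $_s\gamma_e$ can coincide with an endpoint of $\gamma$ on a small boundary component, so a priori one must allow boundary intersections; but this is harmless, since the remark handles boundary intersections as well and still produces a unique induced grading on $_s\gamma_e$, so the conclusion that $h$ determines $\uptau^{-1}f$ goes through unchanged. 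Second, your third paragraph is superfluous: once $h$ is recognised as a standard basis element, Lemma~\ref{lemm::image-of-basis-map} (which underlies Theorem~\ref{TheoremMorphismsIntersections}) already produces a genuine intersection point on the surface, so there is no ``degenerate or virtual'' configuration to rule out, and the excursion through the mapping cone calculus of Theorem~\ref{theo::mapping-cones} is not required. Also note the exception in Theorem~\ref{TheoremMorphismsIntersections}(ii) does not apply here because $\gamma$ is a finite arc, not a closed curve, so $h$ is necessarily in the image of $\basis$.
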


In Figure~\ref{Fig:Graph Map} we give an example of the geometric realisation of the Auslander-Reiten translate of $P^\bullet_{(\gamma, f)}$.

\begin{figure}[H]
\includegraphics[width=6cm]{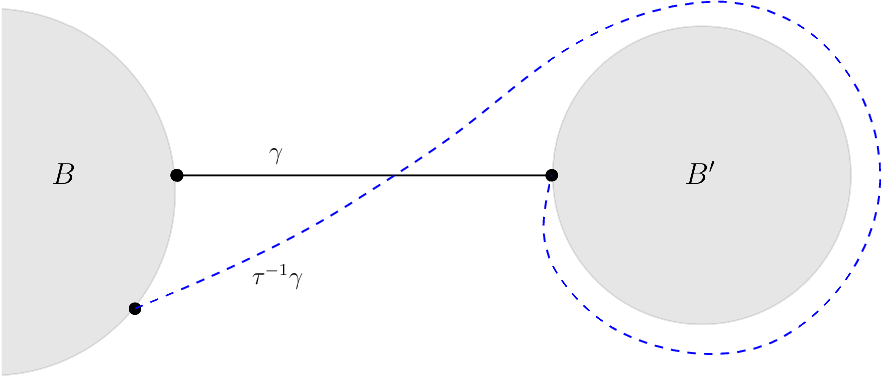}        
    \caption{The arcs associated to indecomposable perfect string objects $P^\bullet_{(\gamma, f)}$ and  $\uptau^{-1} P^\bullet_{(\gamma, f)}$in $\Perf(A)$.}  \label{Fig:Graph Map}
\end{figure}  

\begin{remark} A version of Theorem \ref{TheoremMiddleTermsARTriangleAsArcs}  holds for string complexes of homotopy strings which are infinite. 
Indeed, with a similar  proof, one can show that these irreducible maps  
are represented by intersections of arcs $\gamma$ and $_s\gamma$ (resp. $\gamma_e$), where $_s(-)$ (resp. $(-)_e$) is extended to arcs which  end (resp. start) at a puncture. In this case, the corresponding intersection is at the puncture and the associated map is  a graph map  given by an infinite subword.
\end{remark}

We now recall some general facts  on Auslander-Reiten triangles in $\mathcal{K}^b(A-\proj)$. The first explicit description of such triangles for gentle algebras was given in \cite{Bobinski}.
For every  indecomposable object $P^\bullet_{(\sigma, \mu)} \in \mathcal{K}^b(A-\proj)$, where $(\sigma, \mu)$ is a finite graded homotopy string or a homotopy band, there exists a distinguished triangle
\begin{displaymath}
  \begin{tikzcd}[ampersand replacement=\&] P^\bullet_{(\sigma, \mu)} \arrow{r}{f} \& Y \arrow{r}{g} \& Z \arrow{r}{h} \& X[1]
  \end{tikzcd}
\end{displaymath}
in $\mathcal{K}^b(A-\proj)$ such that the composition $h\circ u$ vanishes for every non-split morphism $u:U \rightarrow Z$.
Such a triangle is unique up to (non-unique) isomorphism. Furthermore, $Z$ is indecomposable and is denoted by $\uptau^{-1} P^\bullet_{(\sigma, \mu)}$. It defines a bijection $\uptau$, called the Auslander-Reiten translate, on the set of isomorphism classes of indecomposable objects.\\
Moreover, 
\begin{itemize}
	\item $Y$ is the direct sum of at most two indecomposable objects and up to isomorphism the entries in $f, g, h$ are standard basis elements \cite{ArnesenLakingPauksztello}.
	\item If $P^\bullet_{(\sigma, \mu)}$ is a band complex associated with a $m$-dimensional $K[X]$-module, then $Z \cong P_{(\sigma, \mu)}^{\bullet}$, that is $\uptau^{-1} P^\bullet_{(\sigma, \mu)} = P^\bullet_{(\sigma, \mu)}$, and  $Y$ is (isomorphic to) a direct sum of band complexes associated with $m\pm 1$-dimensional $K[X]$-modules.
	\end{itemize}

It can be shown that if $P^\bullet_{{(\sigma, \mu)}} $ is a band complex associated with a $1$-dimensional $K[X]$-module, then $h$ as above is not represented by an intersection. In particular, if the representing closed curve of $P_{(\sigma, \mu)}$ is simple, then none of the maps in an Auslander-Reiten triangle are represented by an intersection. \\

Let ${(\sigma, \mu)}$ be a finite graded homotopy string. The arc $\gamma(\sigma)$ shares its endpoint with $_s\gamma(\sigma)$ and its starting point with $\gamma(\sigma)_e$ in such a way that 
 we have oriented graded boundary intersections $\gamma(\sigma) \overrightarrow{\cap}_{{\rm gr}}  ({_s\gamma(\sigma)})$ (and $\gamma(\sigma) \overrightarrow{\cap}_{{\rm gr}}  \gamma(\sigma)_e$), see Figure \ref{Fig:RotatedArcs}.

\begin{proof}[Proof of Theorem \ref{TheoremMiddleTermsARTriangleAsArcs}]
Throughout the proof, we set $\sigma= \sigma(\gamma), {_s\sigma} = \sigma(_s\gamma), \sigma_e = \sigma(\gamma_e)$ and $_s\sigma_e = \sigma(_s\gamma_e)$.  We first note that since $_s \gamma(\sigma) = \gamma(\overline{\sigma})_e$, we have $_s \sigma = \overline{(\overline{\sigma}_e)}$. Therefore it is enough to prove the result for $\phi_e$, the proof for $_s\phi$ then follows. Furthermore, the proof for $_s\psi$ and $\psi_e$ then also follow noting that  $_s\sigma_e = _s(\sigma_e) = (_s\sigma)_e$.

To prove that $\phi_e$ is an irreducible map of the required form we follow Algorithm 6.3 in \cite{ArnesenLakingPauksztello} step by step.
Algorithm 6.3 in \cite{ArnesenLakingPauksztello} breaks down into five cases. 
	In each case, it suffices to prove that $\sigma_e$ is the homotopy string of the resolution of the boundary intersection of  $\gamma$ with the arc $\gamma_B$ containing in its homotopy class the boundary arc connecting the end of $\gamma$ and the end of $\gamma_e$. Let $\rho$ be the homotopy string of $\gamma_B$, that is $\gamma_B = \gamma(\rho)$. Locally in the surface we have the following configuration
	
	\begin{figure}[H]
\includegraphics[width=6cm]{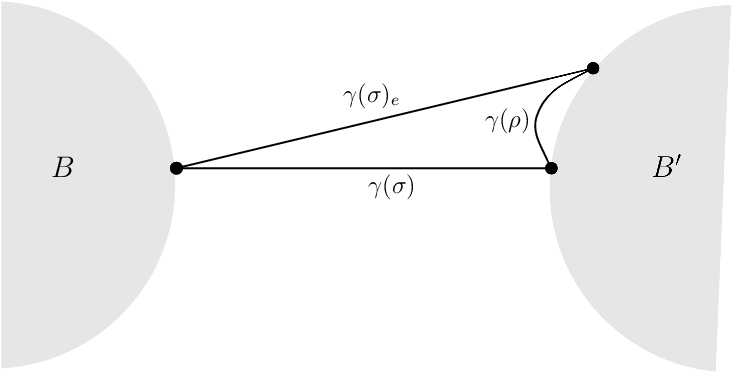}        
\end{figure}

  For what follows,  write $\sigma(\gamma)=\sigma_1 \cdots \sigma_n$ with homotopy letters $\sigma_i$. \\
	
	\textbf{Case 1:} \ \ Suppose  that there exists a maximal path $q$ in $Q$ (which then corresponds to a single homotopy letter) and a maximal inverse antipath $\theta=\theta_1 \cdots \theta_m$, such that $\sigma q\theta$ is a homotopy string. Note that by Remark 6.5 (4) in \cite{ArnesenLakingPauksztello}, $\theta$ is finite. Furthermore, $\gamma$ ends on the marked point corresponding to  the start of $\gamma(q)$. Since $t(q)=s(\theta)$ and $s(q)=t(\sigma)$, $\sigma q \theta$ is obtained from $\sigma$ as the homotopy string of the mapping cone of the singleton single map  $\xi: P^\bullet_{(\theta_1 \cdots \theta_n, \nu)} \rightarrow P^\bullet_{(\sigma, \mu)}$ induced by $q$ and thus $\sigma_e=\sigma q\theta$ where $\nu $ is the induced grading on the homotopy string $\theta_1 \cdots \theta_n$.	
By maximality of $\theta$, it follows that  $\gamma(\theta) \sim \gamma(\rho)$ and the single map $\xi$ corresponds to the oriented graded boundary intersection $\gamma(\rho) \overrightarrow{\cap}_{{\rm gr}} \gamma$. Then $\phi_e$ is a graph map induced by the subword $\sigma$ and hence is represented by the oriented graded intersection  $\gamma \overrightarrow{\cap}_{{\rm gr}} \gamma_e$ as claimed. \\
	
The other cases are treated in a similar way and we only give an outline for each. \\

	\textbf{Case 2:} \ \ 
	Suppose that $\sigma_{r+1} \ldots \sigma_n$ is a direct antipath and that $\sigma_r$ is an inverse homotopy letter. Suppose further that there exists $\alpha \in Q_1$ such that $\alpha \overline{\sigma_r} \notin I$. Assume also that there exists a maximal inverse antipath $\theta$, such that $ \overline{\alpha}\theta $ is a homotopy string.
	Then we have  $\rho=\overline{\sigma_{n}} \ldots \overline{\sigma_{r+1}} \ \overline{\alpha}\theta$ and $\sigma_e = \sigma_1  \ldots \sigma_r \overline{\alpha}\theta$, 
	which is the homotopy string of the mapping cone of a graph map $P^\bullet_{(\rho, \nu)} \rightarrow P^\bullet_{(\sigma, \mu)}$ associated with the common subword 
	$\sigma_{r+1} \ldots \sigma_n$  and where the grading $\nu$ on $\rho$ is such that $\mu_i = \nu_i$ for $r+1 \geq i \geq n$ and $\phi_e$ is a 
	graph map associated with the common subword $\sigma_1 \ldots \sigma_r$ of $\sigma$ and $\sigma_e$.\\

	\textbf{Case 3:}\ \ Suppose that $\sigma_{r+1} \ldots \sigma_n$ is a direct antipath and that $\sigma_r$ is an inverse homotopy letter. Suppose further that there exists no $\alpha \in Q_1$ such that $\alpha \overline{\sigma_r} \notin I$. In this case, $\sigma_e = \sigma_1 \ldots \sigma_{r-1}$  and $\rho = \sigma_{r+1} \ldots \sigma_n$ where $\sigma_e$ is the homotopy string of the mapping cone of the graph map $P_{(\rho, \nu)} \rightarrow P_{(\sigma, \mu)}$ associated to the common subword $\sigma_{r+1} \ldots \sigma_n$  with grading $\nu$ on $\rho$ induced by $\mu$ such that $\mu_i = \nu_i$ for $r+1 \geq i \geq n$ and where $\phi_e$ is a graph map associated with the common subword $\sigma_e$.\\

	\textbf{Case 4} \ \ Suppose that $\sigma_{r+1} \ldots \sigma_n$ is a direct antipath and that $\sigma_r$ is a direct homotopy letter and write $\sigma_r = q \alpha$ where $\alpha \in Q_1$.  Let $\theta$ be a maximal inverse antipath such that $ \sigma_1 \ldots \sigma_{r-1} q \theta$ is a homotopy string. Then one verifies that $\sigma_e =  \sigma_1 \ldots \sigma_{r+1} q \theta$ and  $\rho = \overline{\theta} \alpha \sigma_{r+1} \ldots \sigma_n$, where $\sigma_e$ is the homotopy string of a the mapping cone of the graph map $P_{(\rho, \nu)} \rightarrow P_{(\sigma, \mu)}$ associated to the common subword $\sigma_{r+1} \ldots \sigma_n$  with grading $\nu$ on $\rho$ induced by $\mu$ such that $\mu_i = \nu_i$ for $r+1 \geq i \geq n$  and $\phi_e$ is given by the graph map determined by the common subword $\sigma_1 \ldots \sigma_{r-1}$. \\

	\textbf{Case 5:}\ \  Suppose that   $\sigma$  is a direct antipath and suppose that there exists $\alpha \in Q_1$ such that $\alpha \sigma_1 \in I$. Let $ \theta$ be a maximal inverse  antipath starting at $s(\alpha)$.  Then $\sigma_e=\theta$, which is the mapping cone of	the graph map
	 $P_{(\rho, \nu)} \rightarrow P_{(\sigma, \mu)}$ associated to the subword, where $\rho=\overline{\theta}  \alpha \sigma$ $\sigma$ and where the grading $\nu$ agrees with $\mu$ on $\sigma$.  In that case, $\phi_e$ is a singleton single map.
	\\

If none of the above cases hold, then $\sigma_e$ is empty and  $P^\bullet_{(\gamma_e, f_e)}=0$, so there is nothing to show.
\end{proof}

\subsection{The graded case}
We now explain how the description of Auslander-Reiten triangles for string complexes from Theorem  \ref{TheoremMiddleTermsARTriangleAsArcs} extends to the graded case. By Theorem \ref{theo::ALP-basis} and as in the ungraded case, the morphism space between any two string or band complexes $X$ and $Y$ admits a Schauder basis which consists of the graph maps, singleton single and double maps as well as quasi-graph maps which is, with certain exceptions for maps between band complexes, in bijection with the intersections of the corresponding curves. Moreover, as pointed out in Appendix \ref{AppendixMorphisms} and parallel to the ungraded case, every such basis element can be uniquely reconstructed by any of its components by a ``propagation'' procedure. This is used to argue as in \cite[Proposition 2.8]{OpperDerivedInvariants} to see that the appearance of a basis element in the decomposition of a composition of basis elements with respect to the basis can be entirely understood in terms of the relative configuration of their associated intersections. With this one argues as in \cite[Proposition 2.10]{OpperDerivedInvariants} to show that the canonical oriented intersection $_s\gamma_e \overrightarrow{\cap} \gamma$ satisfies the conditions of a connecting morphism in an Auslander-Reiten triangle. At this point, the assumption that $k$ is algebraically closed is used in order to reduce arguments involving band complexes to such associated to $1$-dimensional local systems. Once the connecting morphism is identified, the description of mapping cones reveals the remaining terms of the Auslander-Reiten triangle of a string complex. The same proof as in \cite[Proposition 2.10]{OpperDerivedInvariants} therefore implies the following graded version of Theorem \ref{TheoremMiddleTermsARTriangleAsArcs}.
\begin{corollary}\label{TheoremMiddleTermsARTriangleAsArcsGraded}
Let $A$ be a graded gentle algebra and let $P_{(\gamma, f)}^\bullet \in \Perf(A)$ be an indecomposable perfect object corresponding to a finite graded arc $(\gamma, f)$. Then there is an Auslander-Reiten triangle 
starting in $P_{(\gamma, f)}^\bullet$ is given by 
	\begin{displaymath}\begin{tikzcd}[ampersand replacement=\&] P_{(\gamma, f)}^{\bullet} \arrow{r}{\left(\begin{smallmatrix}_s\phi \\ {\phi_e}\end{smallmatrix}\right)} \& P_{(_s\gamma, _sf)}^{\bullet} \oplus {P_{(\gamma_e, f_e)}^{\bullet}} \arrow{r}{\left(\begin{smallmatrix}_s\psi & {\psi_e}\end{smallmatrix}\right)} \& P_{(_s\gamma_e, _sf_e)}^{\bullet} \arrow{r}{h} \& P_{(\gamma, f)}^{\bullet}[1],\end{tikzcd}\end{displaymath} 
	where every morphism in the above triangle is given by a standard basis element corresponding to the associated graded intersections of arcs and where the gradings $_sf, f_e$ and $_sf_e$ are induced by the corresponding intersections.
\end{corollary}

%%%%%%%%%%%%%%%%%%%%%%%%%%%%%%%%%%%%%%%%%%%%%%%%%%%%%%%%%%%%%%%%%%%%%%%%
%%%%%%%%%%%%%%%%%%%%%%%%%%%%%%%%%%%%%%%%%%%%%%%%%%%%%%%%%%%%%%%%%%%%%%%%

\section{Avella-Alaminos--Geiss invariants in the surface}\label{sect::AG-invariant}

In \cite{AG}, Avella-Alaminos and Geiss define invariants for derived equivalence classes of gentle algebras. We 
will refer to these invariants as AG-invariants.  
In this Section, we show that these derived invariants  are encoded in the ribbon surface of a gentle algebra. In 
their paper, Avella-Alaminos and Geiss show that two gentle algebras that are derived equivalent have the same 
AG-invariant, but they also give an example of two gentle algebras that are not derived equivalent and yet have the 
same AG-invariant. Since then, many other examples of non-derived equivalent gentle algebras with the same 
AG-invariants have appeared in the literature, see for example \cite{Kalck, Amiot}.

 \subsection{The Avella-Alaminos--Geiss invariants} We begin by briefly recalling the definition of the AG-invariants. 
 Let $A = KQ/I$ be a gentle algebra  with augmented set of maximal paths $
 \overline{\cM} = \cM \cup \cM_0 $ (see Definition \ref{defi::ribbon-graph-of-gentle-algebra}).  
Let $\cF$ be the set of paths $w$  in $(Q,I)$ such that if $w = \alpha_1 \ldots \alpha_n$ then $\alpha_i 
\alpha_{i+1} \in I$ for all $i \in \{ 1, \ldots, n-1\}$, and such that $w$ is maximal for this property,  that is for all $
\beta \in Q_1$, if $t(\beta) = s(\alpha_1)$ then $\beta \alpha_1 \notin I$ and if $t(\alpha_n )  = s(\beta) $ then  $
\alpha_n \beta \notin I$. 
Let $\cF_0 = \{e_v \mid v \in W_0\}$ where $W_0$ is the subset of $Q_0$ containing all vertices that are either 
the source or target of only one arrow and  those vertices that are the target of exactly one arrow $\alpha$ and 
the source of exactly one arrow $\beta$ and $\alpha \beta \in I$.

Let $H_0 = m_0$ with $m_0 \in \overline{\cM}$. Set $F_0 =  f_0$ where $f_0 $ is the unique element in $\cF$, if it 
exists, such that   $t(f_0) = t(m_0)$ and such that if $m_0 = p \alpha$ is non-trivial with $\alpha \in  Q_1$ then 
$f_0 =  q \beta$ with $\beta \neq \alpha$ and $\beta \in Q_1$. If no such $f_0  \in \cF$ exists then we set $f_0 = 
e_{t(m_0)}$.   Note that in this case $e_{t(m_0)} \in \cF_0$.

Now define $H_1 = m_1$ where $m_1 $ is the unique element in $\cM$, if it exists, such that $s(m_1) = s(f_0)$ 
and such that  if  $f_0 =  \gamma q$ is non-trivial with $\gamma \in Q_1$ then $m_1 =  \delta r $ with $\delta \neq 
\gamma$ and $\delta \in Q_1$. If no such $m_1$ exists then we set $m_1 = e_{s(f_0)}$ and we note that 
$e_{s(f_0)} \in  \cM_0$.

Define $F_{i-1}$,  and $H_{i}$ for $i \geq 2$ in an analogous way to the above. The algorithm stops as soon as 
$H_i = H_0$ and we set $k =i$. Set $l$  equal to the number of arrows in $F_0, \ldots, F_{k-1}$. 

We repeat this process until every element of $\overline{\cM}$ has appeared once as one of the $H_i$. This gives 
rise to a set of tuples $(k,l)$. We add to this  a pair $(0,n)$ for each full cycle of relations of length $n$. 

The \emph{AG-invariant} of $A$ is the function $\phi_A : \mathbb{N} \times \mathbb{N} \to \mathbb{N}$ defined by 
sending $(i,j)$ to the  number of pairs  corresponding to these entries in the above algorithm. 

\begin{theorem}\label{thm-AG}
Let $A $ be a gentle algebra with associated ribbon surface $S_A$ and lamination $L_A$. Let $B_1, \ldots, B_n$ 
be the boundary components in $S_A$. Then the AG-invariant of $A$ is given by the  set of pairs $(b_i, c_i)$ for 
$1 \leq i \leq n$ where 
\begin{itemize}
\item $b_i$ is given by the number of marked points on $B_i$,
\item $c_i = l_i -b_i$ where $l_i$ is equal to the number of laminates starting or ending on $B_i$.  
\end{itemize}
Furthermore,  if $b_i \neq 0$, we also have $c_i= \sum_j k_j -2$ where $j$ runs over all  $k_j$-gons 
which have at least one side isotopic with a boundary segment of $B_i$. 
\end{theorem}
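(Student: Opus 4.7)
The plan is to treat marked and unmarked boundary components of $S_A$ separately, in each case relating them to the output of the AG-algorithm. For a marked boundary component $B_i$, I would establish a bijection between cycles $(H_0, F_0, \ldots, H_{k-1}, F_{k-1})$ of the AG-algorithm and traversals of $B_i$ in its induced boundary orientation, under which the $H_j \in \overline{\cM}$ are the $b_i = k$ marked points on $B_i$ in cyclic order, and the $F_j \in \cF \cup \cF_0$ are the $k$ vertices of the Koszul dual ribbon graph $\Gamma_A^\perp$ lying in the interiors of the boundary segments between them (recall from Section~\ref{sec::Koszul dual} that a vertex corresponding to a forbidden thread $f$ has degree $|f|+1$ and is therefore met by $|f|+1$ laminates of $L_A$). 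The content of the bijection is to match the algorithmic transition $H_j \to F_j \to H_{j+1}$ with the geometric step from a marked point, through the adjacent boundary segment, to the next marked point. Using the marking map $m$ of Proposition~\ref{prop::embedding-marked-ribbon-graph}, the AG-condition ``$t(F_j)=t(H_j)$ with $F_j$ not ending in the last arrow of $H_j$'' corresponds to selecting the boundary segment of $B_i$ on the side of $H_j$ adjacent to the half-edge $t(H_j)=m(H_j)$ of $\Gamma_A$; the dual condition for $H_{j+1}$ in terms of $F_j$ is analogous, now using the marking of $\Gamma_A^\perp$ at the vertex~$F_j$.

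Given this bijection, both counts follow. The total number of laminate endpoints on $B_i$ is $l_i = \sum_{j=0}^{k-1}(|F_j|+1) = b_i + l$, where $l = \sum_j |F_j|$ is the number of arrows in $F_0\cdots F_{k-1}$; hence $c_i = l_i-b_i = l$, matching the pair $(k,l) = (b_i,c_i)$ of the AG-algorithm. For the alternative formula, I would use Proposition~\ref{prop::polygons-with-one-boundary} to cut $S_A$ into polygons along the edges of $\Gamma_A$. The type-(1) polygon $P_j$ whose boundary edge is the segment between $H_j$ and $H_{j+1}$ has $k_j$ sides, of which $k_j-1$ are edges of $\Gamma_A$; each such edge is crossed by a distinct laminate whose other endpoint lies on the boundary segment of $P_j$. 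Thus $k_j-1 = |F_j|+1$, so $k_j-2 = |F_j|$, and summing over $j$ yields $\sum_j(k_j-2) = l = c_i$.

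For an unmarked boundary component $B$, Proposition~\ref{prop::polygons-with-one-boundary} places $B$ inside a unique type-(2) polygon $P$ whose $n$ sides are all edges of $\Gamma_A$, each crossed by a laminate terminating on $B$; this gives $b=0$ and $l_i=n$, so $c=n$. To match this with the $(0,n)$-contribution of the AG-algorithm, I would show that the cyclic sequence of $n$ vertices of $Q$ labelling the edges of $P$, joined by the arrows of $Q$ dictated by the ribbon structure at each corner of $P$, forms a full cycle of relations of length $n$ (at each corner, the gentle condition forces the pair of consecutive arrows that ``point into $P$'' to compose to a relation). The main obstacle is the verification in the marked case, which requires carefully matching the combinatorial AG-rules against the cyclic orders $\sigma$ and markings $m$ of both $\Gamma_A$ and $\Gamma_A^\perp$, and handling the boundary cases where $H_j$ or $F_j$ is trivial; once this is done, both the main formula and the alternative polygonal formula reduce to bookkeeping, and the unmarked case follows from the structure of type-(2) polygons together with the axioms of a gentle algebra.
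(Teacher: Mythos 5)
Your proposal follows essentially the same route as the paper's proof: treat marked and unmarked boundary components separately; for an unmarked component, invoke Proposition~\ref{prop::polygons-with-one-boundary} to identify the enclosing type-(2) $n$-gon with a full cycle of relations, giving the pair $(0,n)$; for a marked component, walk around $B_i$ matching each marked point to a maximal path $H_j$ and each boundary segment to a forbidden thread $F_j$ inscribed in the adjacent type-(1) $k_j$-gon, then count arrows to get $c_i = \sum_j (k_j - 2)$ and compare against the laminates meeting $B_i$ to deduce $c_i = l_i - b_i$. The only cosmetic difference is that you phrase the bookkeeping through vertices of the Koszul-dual ribbon graph $\Gamma_A^\perp$ and its marking, and you compute $c_i = l_i - b_i$ first and derive the polygonal formula second, whereas the paper works the other way round and never invokes $\Gamma_A^\perp$ explicitly; the underlying combinatorics is identical.
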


Note that in Theorem~\ref{thm-AG}, if a laminate ends and starts on the same boundary component, then it is counted twice.

{\it Proof of Theorem~\ref{thm-AG}.} First suppose that $B$ is a boundary component with no marked points.  
Then, by Proposition \ref{prop::polygons-with-one-boundary}, $B$ lies in the interior of an $n$-gon  $P$ which  corresponds to an $n$-cycle with full relations in $(Q,I)$. 
Therefore it corresponds to a pair $(0,n)$ in the algorithm of the AG-invariant. 
Furthermore, by construction, each side of $P$ corresponds to exactly one laminate incident with $B$.  

Now, let $B$ be a boundary component with marked 
points $m_1, \ldots, m_r$ ordered in counter-clockwise occurrence on $B$. Then  set $H_0$ to be the maximal path 
associated to the fan at $m_1$ or  if $i_1$ is the only edge of $\Gamma_A$ incident with $m_1$ set $H_0 = e_{i_1}$. Let   $F_0$ be the inverse path corresponding to the arrows inscribed in 
the polygon $P_1$ with boundary segment between $m_1$ and $m_2$. Clearly, if $P_1$ has $k_1$ edges 
(exactly one of which is a boundary segment by Proposition \ref{prop::polygons-with-one-boundary}) then there are $k_1 - 2$ arrows inscribed in that polygon giving an 
element in $\cF$ except when $k_1= 2$ in which case we set $F_0 = e_{j_1}$ where $j_1$ is the only internal 
edge of $P_1$. 
Now let $H_1$ be the path corresponding to the maximal fan at $m_2$ or if this fan consists 
of a single edge $i_2$ then set $H_1 = e_{i_2}$. We set $F_1$ to be equal to the inverse path consisting of $k_2-2$ inverse arrows inscribed in the $k_2$-gon $P_1$  with (unique) boundary segment between $m_2 $ and $m_3$ where $F_1 = e_{j_2}$ with $j_2$  the only internal edge of $P_1$ if $k_2 = 2$. We continue in a similar fashion along the boundary component $B$ in a counter-clockwise direction until we return to the fan at $m_1$. At this point the algorithm repeats and therefore stops and we move on to the next boundary component.   The number of steps in each part of the algorithm is given by the number of fans on the boundary component which is equal to the number of marked points on $B$. The total number of arrows in the inverse paths at $B$ corresponds to the sum of the arrows in the $k_j$-gons $P_j$ incident with $B$, that is it is equal to $\sum_{i=j}^r k_j -2$ as claimed.
We repeat this for every boundary component, thus covering every element in $\overline{\cM}$ exactly once.

Given a $k_j$-gon $P_j$ with one side isotopic to a boundary component $B_i$, 
 it follows from the construction of  the lamination $L_A$ that there are exactly $k_j-1$ laminates incident with the only boundary edge of $P_j$ and since there are as many marked points on a boundary component as there are boundary segments,  we have  $c_i = l_i - b_i$ as claimed. \hfill $\Box$

%%%%%%%%%%%%%%%%%%%%%%%%%

\appendix
\section{Morphisms between string and band complexes}\label{AppendixMorphisms}

Throughout all appendices, we assume that $k$ is an arbitrary field and in particular, not necessarily algebraically closed. We provide a description of the morphism spaces between string and band complexes associated to $1$-dimensional $k[X]$-modules for all graded gentle algebras $A$, see Theorem \ref{theo::ALP-basis}. We emphasize that the proof does not rely on the assumption that these objects are indecomposable or describe all indecomposable objects in $\Dfd{A}$. However, these points are addressed in subsequent appendices.
We start with a little bit of preparation.

\begin{lemma}\label{LemmaCofibrant}
Let $A$ be a graded gentle algebra and let $X$ be a string or non-exceptional band complex over $A$. Then the convolution of $X$ is a cofibrant dg $A$-module. 
\end{lemma}
\begin{proof}
Every (possibly infinite) string complex is a one-sided twisted complex and hence its convolution is semi-free and hence cofibrant. The same applies to all one-sided band complexes. If $X$ is a two-sided band complex, then its underlying band diagram must be an equioriented cycle, or, in other words, its underlying homotopy band must be of the form $\sigma=\sigma_0 \cdots \sigma_{n-1}$, where $\sigma_0, \dots, \sigma_{n-1}$ are admissible paths in $A$ such that $\sigma_i \sigma_{i+1}=0$ in $A$ for all $0 \leq i \leq n-1$ with $\alpha_{n}\coloneqq \alpha_0$. Because $X$ is not an exceptional band complex, we may assume up to rotation that $\alpha_{n-1}$ is a path of length at least $2$. If $\alpha_{n-1}=uv$ is a decomposition into non-trivial paths (which exists by assumption), then it is easy to see that there exists an isomorphism in $\Tw_{\pm} A$ between the mapping cone $\P_{v \sigma_0 \cdots \sigma_{n-2}} \rightarrow \P_{s(v)}$ of the map described in Figure \ref{fig:mapcofibrant} and the direct sum $Y$ of $X$ and the mapping cone of the identity of $\P_{s(v)}$. In particular, $X$ is a direct summand of and quasi-isomorphic to the one-sided twisted complex  $Y$.
\end{proof}
\begin{figure}
	\begin{displaymath}
	\begin{tikzcd}
		\bullet \arrow{r}{v} \arrow[dotted]{drrr}[swap]{\operatorname{Id}}  & \bullet \arrow{r}{\alpha_{0}} & \cdots \arrow{r}{\alpha_{n-2}} & \bullet \arrow[dotted]{d}{u}  \\ & & & \bullet. 
	\end{tikzcd}
\end{displaymath}
\caption{The sum of a graph map and a singleton single map between string complexes. Homotopy letters are indicated by solid arrows while components of the morphism are indicated by dashed arrows.} \label{fig:mapcofibrant}
\end{figure}

\noindent We can now state and prove the main theorem of this section.

\begin{theorem}\label{TheoremAppendixMorphisms}
	Let $A$ be a graded gentle algebra and let $(\sigma,\mu)$ and $(\tau,\nu)$ be graded homotopy strings or homotopy bands. A Schauder basis of the space of morphisms from $\P_{(\sigma,\mu)}$ to $\P_{(\tau,\nu)}$ is given by all  graph maps, quasi-graph maps, 
	singleton single maps and singleton double maps. In other words, every morphism $\P_{(\sigma,\mu)}$ to $\P_{(\tau,\nu)}$ in $\Dfd{A}$ is expressable in a unique way as a (possibly infinite) sum of graph maps, quasi-graph maps, singleton single and singleton double maps. 
\end{theorem}
\begin{proof}
	The case of ungraded algebras was proved in~\cite{ArnesenLakingPauksztello}; the general case follows from a suitable reformulation of their ideas. We remind the reader of the proof strategy. Suppose $f:\P_{\sigma, \mu} \rightarrow \P_{\tau, \nu}$ is an arbitrary cocycle in the morphism complex. We denote by $I$ the multiset of vertices on $\sigma$ (the start end end points of all homotopy letters of $\sigma$) and by $J$ the multiset of vertices on $\tau$. Both sets are endowed with a canonical linear or cyclic order depending on whether $\sigma$ and $\tau$ are  homotopy strings or homotopy bands. We have $f=\sum_{i \in I, j \in J}f_{i j}$, where $f_{i j}:P_i \rightarrow P_j'$ denotes its component between the summand $P_i \subseteq \P_{\mu, \sigma}$ and the summand $P_j' \subseteq \P_{\tau, \nu}$ associated with $i$ and $j$. The proof in~\cite{ArnesenLakingPauksztello} proceeds in two steps.
	First, Suppose  $(i,j) \in I \times J$ is a pair with $0 \neq f_{ij}$ and write $f_{ij}=\sum_{p}{\lambda_p p}$, where $p$ ranges over all admissible paths between $i \in Q_0$ to  $j \in Q_0$. For each $p$ with $\lambda_p \neq 0$, they prove the existence of a canonical cocycle $g:  \P_{\sigma, \mu} \rightarrow \P_{\tau, \nu}$ which completes $\lambda_pp: P_i \rightarrow P_j'$ to a cocycle by ``propagation'' of components. The construction proceeds as follows. Starting with $g^0=\lambda_p p: \P_{\sigma, \mu} \rightarrow \P_{\tau, \nu}$ and the unique bijection $\phi^0: I^0\coloneqq \{i\} \rightarrow J^0\coloneqq \{j\}$, one iteratively constructs a sequence of components $g^1, \dots$ and increasing sequences of linearly ordered sets $I^0 \subseteq I^1 \subseteq I^2 \subseteq \dots$ and $J^0 \subseteq J^1 \subseteq J^2\subseteq \dots \subset J$ together with bijections $\phi^i:I^i \rightarrow J^i$ and maps $\pi_{\ast}^i: {\ast}^i \rightarrow {\ast}$, where $\ast \in \{I, J\}$,  with the following properties for all $t \geq 0$ and all $\ast \in \{I,J\}$.
	\begin{enumerate}
		\item For $\ast \in \{I,J\}$, $\pi_{\ast}^t$ is order-preserving and maps consecutive elements to consecutive elements. Moreover, $\pi_{\ast}^{t+1}$ restricts to $\pi_{\ast}^t$ on $\ast^t$, where $\pi_{\ast}^0$ is defined as the inclusion $\ast^0 \hookrightarrow \ast$.
		\item $\phi^{t+1}$ restricts to $\phi^{t}$ on $I^t$.
		\item $I^{t+1} \setminus I^t$ is empty or a singleton set $\{u\}$. If it is non-empty (that is $u$ exists), let $v$ denote the unique neighbour of $u$ in $I^{t+1}$ which lies in $I^t$ and write $\pi(u)$ for $\pi_I^{t+1}(u)$.
		\item If $I^t=I^{t+1}$, then $I^{s}=I^t$ and $g^{s}=0$ for all $s \geq t$. Otherwise, $g^{t+1}: P_{\pi(u)} \rightarrow P_{\phi^{t+1}(u)}'$ is induced by a multiple of an admissible path and is uniquely determined by the following property: if $d$ denotes the unique component of the differential on $\P_{\sigma,\mu}$ between $P_u$ and $P_v$ and $d'$ denotes the unique component of the differential on $\P_{\tau, \nu}$ between $P_{\phi^{t+1}(u)}$ and $P_{\phi^{t}(v)}$, then the square diagram formed by $g^t, g^{t+1}, d$ and $d'$ (all of which are induced by multiples of admissible paths) commutes and if $a,b$ are any two maps in this diagram for which $b \circ a$ is defined, we have $b \circ a \neq 0$.
	\end{enumerate} 
	The main point is that such (possibly infinite) sequences exist because $A$ is associated to a gentle quiver, regardless of the grading. This is proved in \cite{ArnesenLakingPauksztello}. The properties of the sequences also guarantee that the maps $g^t$  form the components of a chain map $\P_{\sigma,\lambda} \rightarrow \P_{\tau, \nu}$ and that moreover, all components of $g$ are also components of $f$. Moreover, by distinguishing between different cases one finds that $g$ is a graph map, a quasi-graph map, a single or double map. By construction, $f'=f-g$ is a cocycle such that for each $f_{ij}'$ has strictly less components than $f_{ij}$. By induction one then concludes that $f$ is a (possibly infinite) sum of graph, single and double maps and one argues as in \cite[Proposition 4.1., Corollary 4.4.]{ArnesenLakingPauksztello} to show that single, double and graph maps form a Schauder basis for the cocycles in the morphism complex between $\P_{\sigma, \mu}$ and $\P_{\tau,\nu}$, that is, every cocycle is uniquely a (possibly infinite) sum of single, double and graph maps.  For such infinite sums to make sense, note that each component $f_{ij}$ is a finite sum of multiples of admissible paths, so that each individual component of an infinite product stabilizes after finitely many steps since the map $g$ associated to $\lambda_p p$ above depends up to scalar only on the path $p$. This finishes the first step.

	The second step is  found in \cite[Section 4.2.]{ArnesenLakingPauksztello} and is based on the observation that the homotopy relation between coycles (chain maps) is generated by the homotopy relations associated to components $q: P_i \rightarrow P_j'[-1]$, where $(i,j)$ is any pair in $I \times J$ and $q$ is any path from $i \in Q_0$ to $j \in Q_0$. The associated null-homotopic map $q \circ d_{\P_{\sigma,\mu}} + d_{\P_{\tau,\nu}} \circ q$ is necessarily a sum of two single maps or a double map. Thus the generating homotopy relations are of the form $h \simeq h'$, where $h, h'$ are single maps, and $h \simeq 0$, where $h$ is a single or double map. As before this relies on the combinatorics of gentle quivers and not on the grading of the underlying quiver. The careful analysis of the different cases (entirely analogous to the arguments in \cite{ArnesenLakingPauksztello}) then leads to the claimed basis: quasi-graph maps arise from collections of homotopic single and double maps; the remaining types of maps, namely graph maps, singleton single and singleton double maps, are not homotopic to any other standard basis element.
\end{proof}
\noindent One of the consequences of the proof is that every basis element, namely any graph, single or double map, can be uniquely reconstructed from any of its components.

\section{Mapping cones in the bounded derived category}\label{AppendixMappingCones}

\noindent We generalise the description of mapping cones in \cite{CanakciPauksztelloSchroll} from gentle algebras to arbitrary graded gentle algebras. The proof presented here uses a different approach based on elementary base changes of the complex rather than splittings. We would like to remind the reader of the extension of the notion of band complexes to non-primitive homotopy bands, see Remark \ref{RemarkNonPrimitiveBands}.
\begin{theorem}\label{thm:appendix mapping cone summary}
	Let $A$ be a graded gentle algebra and let $P^\bullet_{(\sigma_1, \mu_1)}$ and $P^\bullet_{(\sigma_1, \mu_2)}$ 
	be string or band complexes with graded homotopy strings or bands $(\sigma_1, \mu_1)$ and $(\sigma_1, \mu_2)$. Let 
	
	\begin{displaymath}
		\phi \in {\rm Hom}_{\Dfd{A}}(P^\bullet_{(\sigma_1, \mu_1)}, P^\bullet_{(\sigma_2, \mu_2)})
	\end{displaymath} 
	be a standard basis element. Then, the mapping cone $M=M^\bullet_{\phi}$ is isomorphic to
	\begin{enumerate}
		\item a (possibly non-primitive) band complex $\P_{\sigma_3, \mu_3}$ if $\sigma_1$ and $\sigma_2$ are homotopy bands;
		\item a string complex $\P_{(\sigma_3, \mu_3)}$ if either $\sigma_1$ or $\sigma_2$ is a homotopy band or if the intersection of $\gamma_1$ and $\gamma_2$ is in the boundary;
		\item a direct sum $\P_{(\sigma_3, \mu_3)} \oplus \P_{(\sigma_4, \mu_4)}$ of string complexes, otherwise.
	\end{enumerate}
	In all cases, there exist decompositions $\sigma_1=u_1 v_1$ and $\sigma_2=u_2v_2$ such that $\sigma_3$ is the reduction of the word  $u_1\overline{u}_2$ and $\sigma_4$ is the reduction of the word $\overline{v}_2v_1$. The resulting homotopy strings and bands are indicated by the green and red boxes resulting from the following graphical calculus. 
	
	\begin{enumerate}
		\item \label{CaseGraphMapAppendix} Let \sloppy $\sigma_1 = \ldots  \sigma_{i-2} \sigma_{i-1} \sigma_i \ldots \sigma_j \sigma_{j+1} \sigma_{j+2} \ldots$ and \sloppy  $\sigma_2 = \ldots  \tau_{i-2} \tau_{i-1} \tau_i \ldots \tau_j \tau_{j+1} \tau_{j+2} \ldots$ 
		and suppose $\phi$ is a graph map with common homotopy substring $ \sigma_i \ldots \sigma_j = \tau_i \ldots \tau_j$.    
		Then  $(u_1, v_1)=(\dots \sigma_{i-2}\sigma_{i-1}, \sigma_{i} \sigma_{i+1} \ldots)$ and $(u_2, v_2)=(\dots \tau_{i-2}\tau_{i-1}, \tau_{i} \tau_{i+1} \ldots)$. 
		\[
		\scalebox{.9}{\begin{tikzpicture}[scale=1.5]
				\node (A0) at (-1,0) {};
				\node[scale=.7] (A1) at (0,0) {$\bullet$};
				\node[scale=.7] (A2) at (1,0) {$\bullet$};
				\node[scale=.7] (A3) at (2,0) {$\bullet$};
				\node[scale=.7] (A4) at (3,0) {$\bullet$};
				\node[scale=.7] (A5) at (4,0) {$\bullet$};
				\node[scale=.7] (A6) at (5,0) {$\bullet$};
				\node[scale=.7] (A7) at (6,0) {};
				\node[scale=.7] (B0) at (-1,-1) {};
				\node[scale=.7] (B1) at (0,-1) {$\bullet$};
				\node[scale=.7] (B2) at (1,-1) {$\bullet$};
				\node[scale=.7] (B3) at (2,-1) {$\bullet$};
				\node[scale=.7] (B4) at (3,-1) {$\bullet$};
				\node[scale=.7] (B5) at (4,-1) {$\bullet$};
				\node[scale=.7] (B6) at (5,-1) {$\bullet$};
				\node[scale=.7] (B7) at (6,-1) {};
				\path[color=white] (A0) edge node[above,color=black,scale=.7]{$\cdots \sigma_{i-3}\sigma_{i-2}$}(A1)
				(B0) edge node[below,color=black,scale=.7]{$\cdots \tau_{i-3}\tau_{i-2} $}(B1)
				(A6) edge node[above,color=black,scale=.7]{$\sigma_{j+2}\sigma_{j+3} \cdots$}(A7)
				(B6) edge node[below,color=black,scale=.7]{$\tau_{j+2}\tau_{j+3} \cdots$}(B7);
				\draw [line join=round,
				decorate, decoration={
					zigzag,
					segment length=4,
					amplitude=.9,post=lineto,
					post length=2pt
				}] (A0) --  (A1)
				(B0)--(B1)
				(A6)--(A7)
				(B6)--(B7);
				\path
				(A1) edge node[above,scale=.7]{$\sigma_{i-1}$} (A2)
				(A2) edge node[above,scale=.7]{$\sigma_i$} (A3)
				(A4) edge node[above,scale=.7]{$\sigma_j$} (A5)
				(A5) edge node[above,scale=.7]{$\sigma_{j+1}$} (A6);
				\path[->,font=\scriptsize,>=angle 90]
				(A1) edge node[left]{$p$} (B1)
				(A6) edge node[right]{$q$} (B6);
				\path[font=\scriptsize,>=angle 90]
				(A2)  edge[double]  (B2)
				(A3)  edge[double]  (B3)
				(A4)  edge[double]  (B4)
				(A5)  edge[double]  (B5);
				\draw[densely dotted] (A3)--(A4) (B3)--(B4);
				\path
				(B1) edge node[below,scale=.7]{$\tau_{i-1}$} (B2)
				(B2) edge node[below,scale=.7]{$\tau_i$} (B3)
				(B4) edge node[below,scale=.7]{$\tau_j$} (B5)
				(B5) edge node[below,scale=.7]{$\tau_{j+1}$} (B6);
				\draw[color=red] (-1,.07)--(1.07,.07)--(1.07,-1.07)--(-1,-1.07)--(-1, -.93)--(.9,-.93)--(.9,-.07)--(-1,-.07)--(-1,.07);
				\draw[color=green] (3.93,.07)--(6,.07)--(6,-.07)--(4.07,-.07)--(4.07, -.93)--(6,-.93)--(6,-1.07)--(3.93,-1.07)--(3.93,.07);
		\end{tikzpicture}}
		\]
		The gradings $\mu_3$ and $\mu_4$ are induced by the gradings $\mu_1$ and $\mu_2$, that is they agree on common homotopy substrings.
		\item Let $\sigma_1= \ldots \sigma_i \sigma_{i+1} \ldots$ and $\tau = \ldots \tau_{j} \tau_{j+1} \ldots$ and suppose $\phi$ is a singleton single map. Then $(u_1, v_1)=\ldots  \sigma_{i-1}\sigma_ip, \sigma_{i+1}'\sigma_{i+2} \ldots)$, where $p\sigma_{i+1}'=\sigma_{i+1}$ and $(u_2, v_2)=(\ldots \tau_{j-1} \tau_j, \tau_{j+1} \tau_{j+2} \ldots)$. In particular, $\sigma_3 = \ldots \sigma_{i-1} \sigma_i p \overline{\tau}_j \overline{\tau}_{j-1}  \ldots $ and  $\sigma_4 = \ldots \overline{\sigma}_{i+2}  \overline{\sigma}_{i+1} p \tau_{j+1} \tau_{j+2} \ldots$:
		
		\[
		\scalebox{.9}{\begin{tikzpicture}[scale=1.5]
				\node (A0) at (-1,0) {};
				\node[scale=.7] (A1) at (0,0) {$\bullet$};
				\node[scale=.7] (A2) at (1,0) {$\bullet$};
				\node[scale=.7] (A3) at (2.5,0) {$\bullet$};
				\node[scale=.7] (A4) at (3.5,0) {};
				\node[scale=.7] (B0) at (-1,-1) {};
				\node[scale=.7] (B1) at (0,-1) {$\bullet$};
				\node[scale=.7] (B2) at (1,-1) {$\bullet$};
				\node[scale=.7] (B3) at (2.5,-1) {$\bullet$};
				\node[scale=.7] (B4) at (3.5,-1) {};
				\path[color=white] 
				(A0) edge node[above,color=black,scale=.7]{$\cdots \sigma_{i-1} \sigma_{i-1}$}(A1)
				(A3) edge node[above,color=black,scale=.7]{$\sigma_{i+2}  \sigma_{i+3}\cdots$}(A4)
				(B0) edge node[below,color=black,scale=.7]{$ \cdots\tau_{j-2} \tau_{j-1} $}(B1)
				(B3) edge node[below,color=black,scale=.7]{$\tau_{j+2} \tau_{j+3} \cdots $}(B4);
				\draw [line join=round,
				decorate, decoration={
					zigzag,
					segment length=4,
					amplitude=.9,post=lineto,
					post length=2pt
				}] (A0) --  (A1)
				(A3) -- (A4)
				(B0)--(B1)
				(B3)--(B4);
				\path (A1) edge node[above,scale=.7]{$\sigma_i$} (A2);
				\path[->,font=\scriptsize,>=angle 90]
				(A2) edge node[left]{$p$} (B2)
				(A2) edge node[above]{$\sigma_{i+1}$} (A3)
				(B3) edge node[above]{$\tau_{j+1}$} (B2);
				\path (B1) edge node[below,scale=.7]{$\tau_j$} (B2);
				\draw[color=red] (-1,.07)--(1.07,.07)--(1.07,-1.07)--(-1,-1.07)--(-1, -.93)--(.93,-.93)--(.93,-.07)--(-1,-.07)--(-1,.07);
				\draw[color=green] (2.57,-.6)--(2.57,-.07)--(3.5,-.07)--(3.5,.07)--(2.43,.07)--(2.43,-1.07) (2.57,-.6)--(2.57,-.93)--(3.5,-.93)--(3.5,-1.07)--(2.43,-1.07);
				\path[->,font=\scriptsize,>=angle 90,color=green,scale=.7]
				(B3) edge node[right]{$\overline{\sigma}_{i+1} p \tau_{j+1}$} (A3);
		\end{tikzpicture}}
		\]
		and where the gradings $\mu_3$ and $\mu_4$ are induced by the gradings $\mu_1$ and $\mu_2$.
		\item Let $\sigma_1 = \ldots \sigma_{i-2} \sigma_{i-1} \sigma_i \sigma_{i+1} \sigma_{i+2} \sigma_{i+3} \ldots$ and $\tau = \ldots \tau_{j-2} \tau_{j-1} \tau_j \tau_{j+1} \tau_{j+2} \ldots$ and suppose $\phi$ is a singleton double map. Then, $(u_1, v_1)=(\ldots \sigma_{i-2} \sigma_{i-1} p, \sigma_i' \sigma_{i+1} \sigma_{i+2} \ldots)$, where $p\sigma_i'=\sigma_i$ and $(u_2, v_2)=(\ldots \tau_{j-2} \tau_{j-1}, \tau_j \tau_{j+1} \ldots)$. In particular, $\overline{q}=\overline{\tau}_i\overline{p}\sigma_i$ and $\sigma_3 = \ldots \sigma_{i-2} \sigma_{i-1} p \overline{\tau}_{j-1} \overline{\tau}_{j-2}  \ldots $ and $\sigma_4 = \ldots \overline{\sigma}_{i+2}  \overline{\sigma}_{i+1} q \tau_{j+1} \tau_{j+2} \ldots  $:

		\[
		\scalebox{.9}{\begin{tikzpicture}[scale=1.5]
				\node (A0) at (-1,0) {};
				\node[scale=.7] (A1) at (0,0) {$\bullet$};
				\node[scale=.7] (A2) at (1,0) {$\bullet$};
				\node[scale=.7] (A3) at (2,0) {$\bullet$};fßinvex
				\node[scale=.7] (A4) at (3,0) {$\bullet$};
				\node[scale=.7] (A5) at (4,0) {};
				\node[scale=.7] (B0) at (-1,-1) {};
				\node[scale=.7] (B1) at (0,-1) {$\bullet$};
				\node[scale=.7] (B2) at (1,-1) {$\bullet$};
				\node[scale=.7] (B3) at (2,-1) {$\bullet$};
				\node[scale=.7] (B4) at (3,-1) {$\bullet$};
				\node[scale=.7] (A5) at (4,0) {};
				\path[color=white] (A0) edge node[above,color=black,scale=.7]{$\cdots \sigma_{i-3}\sigma_{i-2} $}(A1)
				(B0) edge node[below,color=black,scale=.7]{$\cdots \tau_{j-3}\tau_{j-2} $}(B1)
				(A4) edge node[above,color=black,scale=.7]{$\sigma_{i+2}\sigma_{i+3} \cdots$}(A5)
				(B4) edge node[below,color=black,scale=.7]{$\tau_{j+2}\tau_{j+3} \cdots $}(B5);
				\draw [line join=round,
				decorate, decoration={
					zigzag,
					segment length=4,
					amplitude=.9,post=lineto,
					post length=2pt
				}] (A0) --  (A1)
				(B0)--(B1)
				(A4)--(A5)
				(B4)--(B5);
				\path (A1) edge node[above,scale=.7]{$\sigma_{i-1}$} (A2);
				\draw (A3) edge node[above,scale=.7]{$\sigma_{i+1}$} (A4);
				\draw (B3) edge node[below,scale=.7]{$\tau_{j+1}$} (B4);
				\path[->,font=\scriptsize,>=angle 90]
				(A2) edge node[left]{$p$} (B2)
				(A3) edge node[right]{$q$} (B3)
				(A2) edge node[above,scale=.7]{$\sigma_i$} (A3)
				(B2) edge node[below,scale=.7]{$\tau_j$} (B3);
				\path (B1) edge node[below,scale=.7]{$\tau_{j-1}$} (B2);
				\draw[color=red] (-1,.07)--(1.07,.07)--(1.07,-1.07)--(-1,-1.07)--(-1, -.93)--(.93,-.93)--(.93,-.07)--(-1,-.07)--(-1,.07);
				\draw[color=green] (1.93,.07)--(4,.07)--(4,-.07)--(2.07,-.07)--(2.07, -.93)--(4,-.93)--(4,-1.07)--(1.93,-1.07)--(1.93,.07);
		\end{tikzpicture}}
		\]
		\noindent The gradings $\mu_3$ and $\mu_4$ are induced by the gradings $\mu_1$ and $\mu_2$.
		
		\item \label{CaseQuasiMapAppendix} Let \sloppy $\sigma_1 = \ldots  \sigma_{i-2} \sigma_{i-1} \sigma_i \ldots \sigma_j \sigma_{j+1} \sigma_{j+2} \ldots$ and \sloppy  $\sigma_2 = \ldots  \tau_{i-2} \tau_{i-1} \tau_i \ldots \tau_j \tau_{j+1} \tau_{j+2} \ldots$ 
		and suppose $\phi$ is a quasi map with common homotopy substring $ \sigma_i \ldots \sigma_j = \tau_i \ldots \tau_j$. 
		Then  $(u_1, v_1)=(\dots \sigma_{i-2}\sigma_{i-1}, \sigma_{i} \sigma_{i+1} \ldots)$ and $(\overline{v}_2, \overline{u}_2)=(\ldots \tau_{i-2}\tau_{i-1}, \tau_{i} \tau_{i+1} \ldots)$. 
		\[
		\scalebox{.9}{\begin{tikzpicture}[scale=1.5]
				\node (A0) at (-1,0) {};
				\node[scale=.7] (A1) at (0,0) {$\bullet$};
				\node[scale=.7] (A2) at (1,0) {$\bullet$};
				\node[scale=.7] (A3) at (2,0) {$\bullet$};
				\node[scale=.7] (A4) at (3,0) {$\bullet$};
				\node[scale=.7] (A5) at (4,0) {$\bullet$};
				\node[scale=.7] (A6) at (5,0) {$\bullet$};
				\node[scale=.7] (A7) at (6,0) {};
				\node[scale=.7] (B0) at (-1,-1) {};
				\node[scale=.7] (B1) at (0,-1) {$\bullet$};
				\node[scale=.7] (B2) at (1,-1) {$\bullet$};
				\node[scale=.7] (B3) at (2,-1) {$\bullet$};
				\node[scale=.7] (B4) at (3,-1) {$\bullet$};
				\node[scale=.7] (B5) at (4,-1) {$\bullet$};
				\node[scale=.7] (B6) at (5,-1) {$\bullet$};
				\node[scale=.7] (B7) at (6,-1) {};
				\path[color=white] (A0) edge node[above,color=black,scale=.7]{$\cdots \sigma_{i-3}\sigma_{i-2}$}(A1)
				(B0) edge node[below,color=black,scale=.7]{$\cdots \tau_{i-3}\tau_{i-2} $}(B1)
				(A6) edge node[above,color=black,scale=.7]{$\sigma_{j+2}\sigma_{j+3} \cdots$}(A7)
				(B6) edge node[below,color=black,scale=.7]{$\tau_{j+2}\tau_{j+3} \cdots$}(B7);
				\draw [line join=round,
				decorate, decoration={
					zigzag,
					segment length=4,
					amplitude=.9,post=lineto,
					post length=2pt
				}] (A0) --  (A1)
				(B0)--(B1)
				(A6)--(A7)
				(B6)--(B7);
				\path
				(A1) edge node[above,scale=.7]{$\sigma_{i-1}$} (A2)
				(A2) edge node[above,scale=.7]{$\sigma_i$} (A3)
				(A4) edge node[above,scale=.7]{$\sigma_j$} (A5)
				(A5) edge node[above,scale=.7]{$\sigma_{j+1}$} (A6);
				
				\draw[densely dotted] (A3)--(A4) (B3)--(B4);
				\path
				(B1) edge node[below,scale=.7]{$\tau_{i-1}$} (B2)
				(B2) edge node[below,scale=.7]{$\tau_i$} (B3)
				(B4) edge node[below,scale=.7]{$\tau_j$} (B5)
				(B5) edge node[below,scale=.7]{$\tau_{j+1}$} (B6);
				\draw[color=red, draw opacity=0.75, thick] (-1,.07)--(1.07,.07)--(1.07,-.93)--(3.93,-.93)--(6, -.93)--(6,-1.07)--(.9,-1.07)--(.9,-.07)--(-1,-.07)--(-1,.07);
				
				\draw[color=green, draw opacity=0.75] (-1,-1.07)--(-1, -.93)--(.9,-.93)--(.9,.07)--(6,.07)--(6,-.07)--(1.07,-.07)--(1.07,-1.07)--(-1,-1.07);				
		\end{tikzpicture}}
		\]
		The gradings $\mu_3$ and $\mu_4$ are induced by the gradings $\mu_1$ and $\mu_2$, that is they agree on common homotopy substrings.
	\end{enumerate}
\end{theorem}
\begin{proof}
	\noindent We regard $\P_1=P^\bullet_{(\sigma_1, \mu_1)}, \P_2=P^\bullet_{(\sigma_1, \mu_2)}$ and $M$ as objects in the category unbounded twisted complexes over $A$. Our approach is to apply a base change $\psi$ to the underlying graded $A$-module of $M=M^\bullet_{\phi}$ which will be of the form $\psi=\operatorname{Id}_M + u$, where $u: M \rightarrow M$ is an endomorphism of the underlying  graded $A$-module with $u^2=0$. In particular, $\psi^{-1}=\operatorname{Id}_M - u$ and if $\delta: M \rightarrow M[1]$ denotes the differential of $M$, then $\delta - \delta \circ u + u \circ \delta$ is the new differential after conjugation with $\psi$. We recall that the summands of $\P_1$ and $\P_2$ correspond to vertices of $\sigma_1$ and $\sigma_2$. We may and will assume that $\phi$ is not a singleton single map whose component lies at the start or end of both $\sigma_1$ and $\sigma_2$. Note that in this case $M$ is already in the desired form of a string complex. If $\phi$ is a graph map one can proceed analogous to the proof of \cite[Theorem 2.2.]{AddendumCanakciPauksztelloSchroll} to eliminate all invertible components of the differential of $M$ and transform it into the claimed shape.

	In all remaining cases, that is, when $\phi$ is a single or double map (singleton or not), $u$ arises from a dual standard basis element $\P_2 \rightarrow \P_1[1]$. Since $\P_2$ and $\P_1[1]$ appear as summands of $M$, we may regard such a basis element as a graded endomorphism of $M$. If $\phi$ is a quasi-graph map, then the same overlap of $\sigma_1$ and $\sigma_2$ gives rise to a dual graph map $\P_2 \rightarrow \P_1[1]$ which defines the desired graded endomorphism $u$ of $M$ with $u^2=0$. Next, suppose $\phi$ is a singleton single or singleton double map and let $p_{\alpha}:U \rightarrow V$ denote any non-zero component of $\phi$, where $U$ is a summand of $\P_1$ and $V$ is a summand of $\P_2$. We define the following maps. 
	\begin{enumerate}
		\item If there exists a component $d_{\beta}:U \rightarrow U'$  of the differential of $\P_1$ corresponding to an admissible path $\beta$ such that $\alpha$ and $\beta$ are part of the same maximal admissible path, let $u_{\alpha}:V \rightarrow U'$ denote the unique map which is induced by a multiple of an  admissible path such that $u_{\alpha} \circ p_{\alpha}=-d_{\beta}$.  In case such a component $d_{\beta'}$ does not exists, set $u_{\alpha}=0$.
		\item  If there exists a component $d_{\beta'}:V' \rightarrow V$  of the differential of $\P_2$ corresponding to an admissible path $\beta'$ such that $\alpha$ and $\beta'$ are part of the same maximal admissible path, let $u_{\alpha}:V \rightarrow U'$ denote the unique map which is induced by a multiple of an admissible path such that $p_{\alpha} \circ v_{\alpha}=d_{\beta'}$. In case such a component $d_{\beta'}$ does not exists, set $v_{\alpha}=0$.
	\end{enumerate}
	Define $u=u_{\alpha}+v_{\alpha}$. Note that in case $\phi$ is a double map this definition of $u$ is independent of the choice of component $p_{\alpha}$. We moreover, observe that if $\phi$ is a quasi-graph map and $p_{\alpha}$ is a component of $\phi$, then the dual graph map is the unique standard basis element (in the sense of the proof of Theorem  \ref{TheoremAppendixMorphisms}) determined by the components $u_{\alpha}$ and $v_{\alpha}$. Figure \ref{FigureConeSingleDoubleMap} displays the different instances for $u$ and the effect of conjugation with $\psi$ in case $\phi$ is a singleton single or singleton double map. Figure \ref{FigureConeQuasiMap} shows the same in case $\phi$ is the single map associated to a quasi-graph map. The case of a double map is completely analogous.
\end{proof}
\begin{figure}
	\begin{displaymath}
		\begin{array}{lcr}
			{		
				\begin{tikzcd}[ampersand replacement=\&]
					\text{} \arrow[dash, squiggly]{r} \& \bullet  \arrow[swap]{d}{\alpha}  \arrow{r}{-\beta}   \& \bullet \arrow[dash, squiggly]{r} \& \text{} \\
					\text{} \arrow[dash, squiggly]{r}  \&	\bullet  \arrow[dotted]{ur}{} \&  \bullet \arrow[dotted]{ul} \arrow{l}{\beta'}  \arrow[dash, squiggly]{r} \& \text{} 		
				\end{tikzcd}		
			} & {\phantom{PP}} &
			{
				\begin{tikzcd}[ampersand replacement=\&]
					\text{} \arrow[dash, squiggly]{r} \& \bullet  \arrow[swap]{d}{\alpha}     \& \bullet \arrow[dash, squiggly]{r} \& \text{} \\
					\text{} \arrow[dash, squiggly]{r}  \&	\bullet   \&  \bullet \arrow[swap]{u}{\gamma} \arrow[dash, squiggly]{r} \& \text{} 		
				\end{tikzcd}				
			}
			\\
			{		
				\begin{tikzcd}[ampersand replacement=\&]
					\text{} \arrow[dash, squiggly]{r} \&   \bullet \arrow{r}{-\beta} \arrow[swap]{d}{\alpha} \& \bullet \arrow[dash, squiggly]{r} \arrow{d}{\alpha'} \& \text{}\\
					\text{} \arrow[dash, squiggly]{r} \& \bullet \arrow[swap, dotted]{ur} \arrow[swap]{r}{\beta'} \& 		\bullet \arrow[dash, squiggly]{r} \& \text{}
				\end{tikzcd}		
			} & {\phantom{PP}} &
			{
				\begin{tikzcd}[ampersand replacement=\&]
					\text{} \arrow[dash, squiggly]{r} \&   \bullet  \arrow[swap]{d}{\alpha} \& \bullet \arrow[dash, squiggly]{r} \arrow{d}{\alpha'} \& \text{}\\
					\text{} \arrow[dash, squiggly]{r} \& \bullet   \& 		\bullet \arrow[dash, squiggly]{r} \& \text{}
				\end{tikzcd}				
			}
		\end{array}
	\end{displaymath}
	\caption{Portions of the mapping cones of singleton single and singleton double map (left hand side). The upper row in each diagram represents the summands associated with $\P_1$, the lower row the summands associated to $\P_2$. Components of $\phi$ are labelled by $\alpha$ and $\alpha'$ respectively and dotted arrows indicate components of $u$. All arrows are induced by subpaths of the same admissible path. After conjugation the resulting twisted complexes are depicted on the right hand side. }
	\label{FigureConeSingleDoubleMap}
\end{figure}

\begin{figure}
	\begin{displaymath}
		\begin{array}{c}
			{	\begin{tikzcd}[ampersand replacement=\&]
					\arrow[dash, squiggly]{r} \& \bullet \arrow[dash]{r}{e} \arrow[dash]{r} \& \bullet \arrow[dash]{r}  \& \cdots  \arrow[dash]{r}  \& \bullet \arrow[dash]{r}  \& \bullet \arrow{d}{\alpha} \arrow{r}{-\alpha} \& \bullet \arrow[dash]{r}  \& \cdots  \arrow[dash]{r}  \&  \bullet \arrow[dash, squiggly]{r} \& \text{}\\
					\arrow[dash, squiggly]{r} \& \bullet \arrow[dashed, swap]{ur} \arrow[dash]{r}  \& \bullet \arrow[dash]{r} \& \cdots \arrow[dash]{r} \&	\bullet \arrow[dotted, swap]{ur} \arrow[swap]{r}{\alpha} \& \bullet \arrow[dotted, swap]{ur} \arrow[dash]{r} \& \cdots  \arrow[dash]{r}  \& \bullet \arrow[dashed, swap]{ur} \arrow[dash, swap]{r}{f} \& \bullet \arrow[dash, squiggly]{r} \& \text{}
				\end{tikzcd}
			} \\
			{
				\begin{tikzcd}[ampersand replacement=\&]
					\arrow[dash, squiggly]{r} \& \bullet \arrow[dash]{r}{e} \arrow[dash]{r} \& \bullet \arrow[dash]{r}  \& \cdots  \arrow[dash]{r}  \& \bullet \arrow[dash]{r}  \& \bullet \arrow[near start, swap]{d}{\alpha}\& \bullet \arrow[dash]{r}  \& \cdots  \arrow[dash]{r}  \&  \bullet \arrow[dash, squiggly]{r} \& \text{}\\
					\arrow[dash, squiggly]{r} \& \bullet  \arrow[dash]{r}  \& \bullet \arrow[dash]{r} \& \cdots \arrow[dash]{r} \&	\bullet \arrow[near start]{urr}{\alpha} \& \bullet  \arrow[dash]{r} \& \cdots  \arrow[dash]{r}  \& \bullet \arrow[dash, swap]{r}{f} \& \bullet \arrow[dash, squiggly]{r} \& \text{}
				\end{tikzcd}
			}
		\end{array}	
	\end{displaymath}
	\caption{Transformations in the mapping cone of a quasi-graph map. The upper diagram show the original mapping cone, the lower diagram the transformed twisted complexed. Components of $u$ are represented by dotted and dashed arrows and all dotted arrows are isomorphisms. By the left and right end point conditions $e$ composes to zero with the left dashed arrow (if non-zero) if $e$ points outwards and $f$ composes to zero with the right dashed arrow (if non-zero) if $f$ points inwards.}
	\label{FigureConeQuasiMap}
\end{figure}

%%%

\section{Classificaton of indecomposable objects}\label{AppendixIndecomposableObjectsGraded}
This section is dedicated to the proof of Theorem~\ref{TheoremClassificationGraded}. Throughout, we assume that $k$ is an arbitrary field.  We will also assume a certain familiarity with the construction of the partially wrapped Fukaya category of a graded marked surface in the sense of \cite{HaidenKatzarkovKontsevich} as well as the bijection between curves on the surfaces and certain objects in the Fukaya category akin to the description of string and band complexes in this paper.
\begin{theorem}\label{TheoremClassificationGraded}
	Let $A = kQ/I$ be a graded gentle algebra and $(S_A, M)$ its marked ribbon surface. 
	Then every string and band complex over $A$ is indecomposable. Moreover, every indecomposable object in the thick closure $\cT$ of the simple $A$-modules is isomorphic to a string or a band complex.  
\end{theorem}
\noindent By \cite{BoothGoodbodyOpper}, in the above theorem in the context of graded gentle algebras, we have $\Dfd A \simeq \cT$. Note that this is not clear in general if $A$ is an arbitrary finite-dimensional graded algebra.  Indeed, for any finite-dimensional graded algebra $A$, one always has the  inclusions $\Perf(A) \subseteq \cT\subseteq \Dfd{A}$  and if $A$ satisfies any one of the following conditions, then the second inclusion is an equivalence:
\begin{itemize}
	\item $A$ is homologically smooth;
	\item $A$ is concentrated in non-positive degrees.
\end{itemize}
\noindent Note however that it is not clear in general whether every object in $\Dfd{A}$ lies in $\cT$, cf.~\cite{Goodbody} for a discussion of a related problem.  

For the case of a graded gentle algebra, combining Theorem~\ref{TheoremClassificationGraded} with the above mentioned result of \cite{BoothGoodbodyOpper}, we obtain the following: 

\begin{corollary}
    Let $A = kQ/I$ be a graded gentle algebra and $(S_A, M)$ its marked ribbon surface. 
	Then every string and band complex over $A$ is indecomposable. Moreover, every indecomposable object in $\Dfd A$ is isomorphic to a string or a band complex.
\end{corollary}

\begin{proof}[Proof of Theorem \ref{TheoremClassificationGraded}]The proof proceeds in several steps, which we outline now. The idea is to exploit that $A$ is the Koszul dual of a smooth but possibly infinite dimensional graded gentle algebra $A^{\invex}$. We then use the classification of indecomposable objects in partially wrapped Fukaya categories due to \cite{HaidenKatzarkovKontsevich} and explicitly compute their image under the double Koszul functor $\Phi: \Perf(A^{\invex}) \rightarrow \Perf(A^!) \simeq \cT $. One of the main challenges here is the case when $S_A$ has punctures in which case it can happen that $A^{!}$ is merely a (non-commutative!) completion of $A^{\invex}$ and $\Phi$ is no longer full nor faithful. Subsequently, we identify the kernel of $\Phi$ and show that, up to splitting off contractible summands, the isomorphism classes of indecomposable objects from \cite{HaidenKatzarkovKontsevich} which are not in the kernel are mapped bijectively under $\Phi$ onto the set of isomorphism classes of string and band complexes over $A$. We then prove that $\Phi$ is essentially surjective and that string and band complexes are indecomposable which provides the desired classification of objects in $\cT$.
	
\textbf{Step 1: description of $A^{\invex}$ and  $A^!$ and relation to Fukaya categories}:\; Let $Q^{\invex}$ denote the graded quiver obtained from $Q$ by inverting the direction of all arrows, that is $Q^{\invex}_0=Q_0$, $Q^{\invex}_1=\{\alpha^{\invex} \, | \alpha \in Q_1\}$, $|\alpha^{\invex}|=1-|\alpha|$ and $s(\alpha^{\invex})=t(\alpha)$ as well as $t(\alpha^{\invex})=s(\alpha)$. Moreover, let $I^{\invex} \subseteq kQ^{\invex}$ denote the ideal generated by $\{\beta^{\invex} \alpha^{\invex} \, | \, \alpha \beta \not \in I\}$. We recall that a closed anti-path in $Q^{\invex}$ is a closed path $q=\alpha_1^{\invex} \cdots \alpha_l^{\invex}$ such that $\alpha_i^{\invex}\alpha_{i+1}^{\invex} \in I^{\invex}$ for all $1 \leq i \leq l$, where $\alpha_{l+1}^{\invex}\coloneqq \alpha_1^{\invex}$. Since such paths are in bijection with closed paths $p$ in $Q$ such that $p \not \in I$, the assumption that $A$ is finite-dimensional implies that $Q^{\invex}$ admits no closed anti-paths. In particular, \cite[Proposition 3.4]{HaidenKatzarkovKontsevich} implies that $A^{\invex}$ is a homologically smooth (but possibly infinite-dimensional) graded gentle algebra. From \cite[Proposition 3.4, 2.]{HaidenKatzarkovKontsevich} one also deduces that $A^{\invex}$ is finite-dimensional if and only if $S_A$ contains no unmarked components, that is, $A$ is homologically smooth. 

Let $\hat{A}^{\invex}$ denote the graded adic completion of $A^{\invex}$ at the arrow ideal. Equivalently, $\hat{A}^{\invex}$ is given by the graded adic completion at the ideal of $A^{\invex}$ which is generated (as a vector space) by all paths of the form $\alpha_1^{\invex} \cdots \alpha_l^{\invex}$, where $\alpha_l \cdots \alpha_1$ is contained in an infinite cyclic anti-path in $A$. It is not difficult to see that the natural map $A^{\invex} \rightarrow \hat{A}^{\invex}$ is an isomorphism if and only if $S_A$ contains no unmarked components with vanishing winding number. Here, by the winding number of an unmarked boundary component $B$ we mean the winding number of the closed homotopy string associated to the embedded closed curve which surrounds $B$ once in clockwise direction. Such homotopy strings are exactly the cyclic primitive anti-paths $\alpha_l \cdots \alpha_1$. Thus, the winding number $\omega$ of an unmarked component $B$ corresponding to such an anti-path satisfies
	\begin{displaymath}
		\omega=\sum_{i=1}^l (|\alpha_i|-1).
	\end{displaymath}
	  Next, we claim that the Koszul dual $A^!$ of $A$ is quasi-isomorphic to $\hat{A}^{\invex}$. To subsequently simplify notation, we replace every unmarked component in $S_A$ by a puncture and switch between these two points of view without further mention. As a first step of the description of $A^!$, we describe the arc $s_x$ corresponding to the simple $A$-module of a vertex $x \in Q_0$, which we regard as a dg $A$-module with trivial differential. To do so, we add an auxiliary marked point on every connected component of $\partial S_A \setminus M$ and denote by $N$ the union of the auxiliary marked points with the set of punctures. For $x \in Q_0$, denote by $\pi_x$ an arc corresponding to the projective $A$-module of $x$ and let $\epsilon_x$ be an arc with end points in $N$ which crosses $\pi_y$ only if $x=y$ and crosses $\pi_x$ exactly once and so that in this case, the crossing occurs in the interior of $S_A$. The arc $s_x$ is, by definition, then obtained from $\epsilon_x$ by rotating all boundary components of $\partial S_A$ counter-clockwise so that points of $N$ which are not punctures are mapped to their closest neighbour in $M$. This determines $s_x$ uniquely up to homotopy and shows that $s_x$ and $s_y$ only intersect in $M$. In particular, every oriented intersection of $s_x$ and $s_y$ corresponds to a boundary graph map or a singleton single map, supported at the ends of the two associated string complexes. As is well-known in the ungraded case, the homotopy string $w_x$ of $s_x$ is of the form $w_x=u_xv_x^{-1}$, where $\{u_x,v_x\}$ is the set of maximal (and possibly infinite) anti-paths with $t(u)=x=t(v)$. Here, $u_x$ or $v_x$ are understood to be trivial whenever there is only one or no such path. A graphical representation of the string complex $S_x\coloneqq P_{w_x}$ is found in Figure \ref{FigureNotation}. Since all homotopy letters of $w_x$ and $w_y$ consist of arrows and since $u_y, v_y$ are maximal anti-paths, the relationship between maps and intersections from Appendix \ref{AppendixMorphisms} shows that each intersection $p \in s_x \overrightarrow{\cap} s_y$ (which is automatically at a marked point or a puncture) is represented by a boundary graph map $f_p$ corresponding to an overlap $z \in \{u_y, v_y\}$ which is a left substring\footnote{$z$ is a left substring of $w$ if $w=zw'$} of $v_x$ or $u_x$. In particular, every component of $f_p$ is invertible and any non-zero composition $f_q \circ f_p$ is another such graph map. Define $$S\coloneqq \bigoplus_{x \in Q_0} S_x.$$
	   From the discussion above, the results of Appendix \ref{AppendixMorphisms} as well as its consequence, Theorem \ref{TheoremMorphismsIntersections}, it follows that the endomorphism dg algebra $A^! = \operatorname{End}^{\bullet}(S)$ is quasi-isomorphic to $\hat{A}^{\invex}$. Up until this point, it follows that the completion map $A^{\invex} \rightarrow \hat{A}^{\invex} \cong A^!$ induces (via pushforward) an exact functor
	  \begin{displaymath}
	  \Phi: \Perf(A^{\invex}) \rightarrow \Perf(A^!)\simeq\cT.
	  \end{displaymath}
	  which maps the projective $A^{\invex}$-module of a vertex $x \in Q_0^{\invex}=Q_0$ to the infinite string complex $S_x$ associated to the simple $A$-module of $x$ and an arrow $\alpha^{\invex}: x \rightarrow y$ to the corresponding graph map $S_x \rightarrow S_y$.
	  
	  We also remark that, upon replacing unmarked components of $S_A$ by ``fully marked'' components by adding them to the set of marked points and replacing marked points $p \in M$ by marked intervals $[0,1]\cong I \subseteq \partial S_A$, the collection of arcs $\cA_{s}\coloneqq\{s_x \mid x \in Q_0\}$ on the surface $S_A$ forms a \textit{full formal arc system} in the sense of \cite{HaidenKatzarkovKontsevich} and that $A^{\invex}$ is the category algebra of a partially wrapped Fukaya category as defined in loc.~cit~associated to $\cA_s$ and a suitable line field $\eta$. In particular, by \cite[Proposition 3.2]{HaidenKatzarkovKontsevich}, $A^{\invex}$ is \emph{canonically} Morita equivalent to the category algebra of the partially wrapped Fukaya category of any triple $(S_A, \cA, \eta)$ where $\cA$ denotes \textit{any} full (but not necessarily formal) arc system. Note that such categories are minimal $A_\infty$-categories in the non-formal case. We refer the reader to \cite{HaidenKatzarkovKontsevich} for further details. 
	  
	  We will use the Morita invariance subsequently in the following situation: the collection of arcs $\Pi=\{\pi_x \mid x \in Q_0\}$ can be completed to a non-formal but full arc system $\hat{\Pi}$ on $S_A$ by adding a single embedded arc $\pi_B$ to $\Pi$ for each unmarked component (puncture) $B$ with vanishing winding number such that $\pi_B$ does not intersect any arc in $\Pi$ in the interior and such that $\pi_B$ connects $B$ with any marked point in $M$. Thus $\hat{\Pi}$ yields a partially wrapped Fukaya category, which we denote by $\cF^{\invex}$, with objects $\mathrm{Ob}{\cF^{\invex}}=\hat{\Pi}$ and such that $\Perf \cF^{\invex} \simeq \Perf A^{\invex}$. We note that $\Pi \subseteq \hat{\Pi}$ defines a full subcategory $\cF \subseteq \cF^{\invex}$. It follows from the definition of  $\cF^{\invex}$ and the fact that $\Pi$ is formal, that $\cF$ is a $\Bbbk$-linear category (without differential and higher $A_\infty$-multiplications) whose category algebra is canonically isomorphic to $A$. 
	  \\[0.5em]
	\textbf{Step 2: twisted complexes over $A^{\invex}$ as double complexes over $A$}:\; We compute the images of indecomposable objects under $\Phi$ and identify them with string and band complexes as well as the kernel of $\Phi$. We regard the set of string complexes $S_x$, $x \in Q_0$ as the objects of a full differential graded subcategory $\cS \subseteq \Tw^{\Sigma}_{\pm} \cP_A$ consisting of unbounded, one-sided twisted complexes over the graded $\Bbbk$-linear category $\cP_A$ from Section \ref{SectionHomotopyStringAndBands}. We recall that $\cP_A$ consists of objects $P_x$, $x \in Q_0$ corresponding to direct summands $Ax \subseteq A$ (projective $A$-modules) of $A$ inside $\cD(A)$. We also regard $\cP^{\invex}\coloneqq\cP_{A^{\invex}}$ as a non-full subcategory of $\widehat{\cP}^{\invex}\coloneqq \cP_{\hat{A}^{\invex}}$.

	 A classification of indecomposable objects in $\Tw \cP^{\invex}\simeq \Perf(A^{\invex})$ was given in \cite{HaidenKatzarkovKontsevich}: every indecomposable object is represented, in most cases uniquely, by a finite string or band complex and is uniquely associated with the homotopy class of an admissible curve on $\Sigma_A$, see \cite{HaidenKatzarkovKontsevich} for the definition of ``admissible curves''. The assignment from curves to indecomposable object is analogous to the finite-dimensional smooth case with closed curves corresponding to band complexes. However, two subtleties need to be mentioned here. First, \textit{exceptional closed curves}, that is, closed curves with vanishing winding number around a puncture $p$, are no longer represented by a unique band complex but rather non-uniquely as a mapping cone $C=(P_x^{\invex} \xrightarrow{\alpha} P_x^{\invex})$, where $P_x^{\invex} \in \cP^{\invex}$, $x \in Q^{\invex}_0=Q_0$ such that $S_x$ is represented by an arc ending at $p$. The  map $\alpha$ corresponds to the power $Q=P^d$ of an irreducible polynomial $P \in k[t]$ with non-trivial constant term under the identification $\End{\Tw \cP^{\invex}}(P_x) \cong k[t]$, $|t|=0$. We call such objects \emph{exceptional string complexes} and will show below that objects in $\ker \Phi$ are precisely those in the thick closure of $\Perf(A^!)$ formed by the exceptional string complexes. The lack of uniqueness stems from the fact that $C$ is isomorphic to any other exceptional string complex obtained by replacing $x$ by any other vertex $y$ for which $S_y$ has an endpoint in $p$. 
	 
	 The second subtlety concerns the description of objects associated to closed curves. Given a primitive closed curve $\gamma$ which is not exceptional, the construction in \cite[Section 4]{HaidenKatzarkovKontsevich} first interprets $\gamma$ as a concatenation of a sequence of arcs $\gamma_1, \dots, \gamma_l$ along intersections at marked points and the object $X_{\gamma}$ associated to $\gamma$ as a one-sided(!) band complex in the string complexes associated to $\gamma_1, \dots, \gamma_l$. Importantly, as shown in \cite{HaidenKatzarkovKontsevich}, the choice of $\gamma_1, \dots, \gamma_l$ does not affect the resulting isomorphism class in $\Perf(A^{\invex})$, so that if one can choose $\gamma_1, \dots, \gamma_l$ to be the arcs corresponding to the projective $A^{\invex}$-modules (henceforth called \textit{projective arcs}), $X_{\gamma}$ is a band complex in our sense. We note that requiring $\gamma_1, \dots, \gamma_l$ to be projective arcs, makes the sequence and the intersections at which one concatenates unique up to rotation (essentially encoding the homotopy band of $\gamma$). The drawback is that it is impossible to find such a sequence if the homotopy band over $A^{\invex}$ associated to $\gamma$ is an equioriented cycle because it is not a one-sided complex. However, because $Q^{\invex}$ contains no closed antipaths, $X_{\gamma}$ is not an exceptional band complex and the same arguments as in Lemma \ref{LemmaCofibrant} show that $X$ is a direct summand in $\Tw_{\pm} \cP^{\invex}$ of a one-sided twisted complex $Y$ so that the canonical map $X \rightarrow Y$ induces a quasi-isomorphism on convolutions. This shows that the convolution of $X$ is cofibrant and a representative of the isomorphism class in $\Dfd{A}$ associated to the curve $\gamma$. The upshot is that the isomorphism class in $\Perf(A^{\invex})$ associated to any curve on $S_A$, which is not an exceptional closed curve, is represented by a string or band complex in our sense of a \emph{unique} equivalence class of homotopy band or string and such a complex has a cofibrant convolution.

The natural dg functor $\varphi:\cP^{\invex} \rightarrow \cS$ induces a dg functor $\Tw_{\pm} \varphi: \Tw_{\pm} \cP^{\invex} \rightarrow \Tw_{\pm} \cS$. After composing with the totalization functor $\Tw_{\pm} \cS \rightarrow \Tw^{\Sigma}_{\pm} \cP_A$, every string or band complex over $A^{\invex}$ is identified with the totalization of the double complex obtained as follows: replace every vertex $x$ of the underlying homotopy band or string by $S_x$ and every homotopy letter $\alpha: x \rightarrow y$ by the corresponding graph map $f_{\alpha}:S_x \rightarrow S_y$.  The resulting totalization $T$ is not a minimal twisted complex but we will show that it is isomorphic to a direct sum of a contractible complex and a string or non-exceptional band complex respectively. Because band and string complexes  over $A^{\invex}$, exceptional or not, have cofibrant convolutions, the functor $\Phi$ can be computed via the composition
\begin{displaymath}
\Ho^0(\Tw_{\pm} \cP^{\invex}) \rightarrow \Ho^0(\Tw_{\pm} \cS) \rightarrow \Ho^0(\Tw^{\Sigma}_{\pm} \cP_A) \rightarrow   \cD(A).
\end{displaymath}
Its kernel is the thick subcategory consisting of finite direct sums of those indecomposables whose image in $\Ho^0(\Tw^{\Sigma}\cP_A)$ is acyclic.
	 \\[0.5em]
	\textbf{Step 3: splitting into contractible and minimal complexes}:\; Let $\omega=\sigma_1 \cdots \sigma_m$ be a non-exceptional finite homotopy string or homotopy band over $A^{\invex}$ with homotopy letters $\sigma_i$.  For notational convenience, we identify the linearly (resp.~cyclically) ordered set of vertices in $\omega$ with $\{1, \dots, |J|\}$ inside $\mathbb{Z}$ or $\mathbb{Z}/|J|\mathbb{Z}$ respectively. For each $i \in J$, denote by $w_i$ the homotopy string of $S_i$ which we may assume to be oriented in a way so that the graph map between $S_i$ and $S_{i+1}$ corresponding to $\sigma_i$ is associated with an overlap $u_i$ in decompositions of $w_i^{(-1)^{i} }=u_iv_i$ and $w_{i+1}^{(-1)^{i}}=u_i v_{i+1}$. We shall call a vertex of $u_i$ (resp.~$u_{i-1}$) inside $w_i$ \textit{outgoing} (resp.\ \textit{incoming}). Among these, we refer by $\mathfrak{o}(i)$ (resp.\ $\mathfrak{i}(i)$) to the vertex corresponding $t(u_i)$ inside $w_i$ (resp.~$w_{i+1}$). Likewise, we use $\mathfrak{o}^+(i)$ (resp.~$\mathfrak{i}^+(i)$) for its neighbouring vertex $t(\alpha)$ in $v_i=\alpha v_i'$ (resp.\ $v_{i+1}=\alpha v_{i+1}'$), see Figure \ref{FigureNotation}.

	\begin{figure}
		\begin{tikzcd}[ampersand replacement=\&]
			\blackdiamond \arrow{r} \arrow[equal]{d} \& \cdots \arrow{r} \arrow[equal]{d} \& \blackdiamond \arrow{r}{\alpha_{\mathfrak{o}}} \arrow[equal]{d} \& \scalebox{1.5}{$\circ$} \& \arrow{l} \bullet \& \cdots \arrow{l}  \\
			\blackdiamond \arrow{r} \& \cdots \arrow{r} \& \scalebox{1.5}{$\diamond$} \&\arrow[swap]{l}{\alpha_{\mathfrak{i}}}  \bullet  \& \arrow{l} \cdots   \& \arrow{l} \bullet
		\end{tikzcd}
		
		\caption{The diagram of a graph map $S_x \rightarrow S_y$ with maps $\alpha_{\mathfrak{o}}:\mathfrak{o} \rightarrow \mathfrak{o}^+$ and $\alpha_{\mathfrak{i}}:\mathfrak{i}^+ \rightarrow \mathfrak{i}$. The upper row represents $S_x$, the second row $S_y$. Objects corresponding to the projective $A$-module of $x$ and $y$ are marked with $\circ$ and $\diamond$. Arrows to the left and the right of these symbols represent maximal antipaths. \textit{Outgoing} (upper row) and \textit{incoming} vertices (lower row) are marked with diamond shapes.}
		\label{FigureNotation}
	\end{figure}

	 By definition, the differential on $S_i$ contains unique components $\mathfrak{o}(i) \rightarrow \mathfrak{o}^+(i)$ and $\mathfrak{i}^+(i) \rightarrow \mathfrak{i}(i)$ whenever the domain and the target are defined. We denote by $\alpha_{\star}(i) \in Q_1$, $\star \in \{\mathfrak{i},\mathfrak{o}\}$ the arrows associated with these components. We observe that if $\mathfrak{i}^+(i)=\mathfrak{o}(i)$ or, equivalently, $\mathfrak{i}(i)=\mathfrak{o}^+(i)$, then $\alpha_{\mathfrak{o}}(i)=\alpha_{\mathfrak{i}}(i)$ and $\alpha_{\mathfrak{i}}(i+1)\alpha_{\mathfrak{o}}(i)\alpha_{\mathfrak{o}}(i-1)$ is admissible. The totalisation double complex $T \in \Tw_{\pm}^{\Sigma} \cP_A$ from Step $2$ admits a $k$-linear splitting $U \oplus \cI \oplus \cO$, where $\cI$ (resp.\ $\cO$) is spanned by all objects corresponding to incoming (resp. outgoing) vertices and $U$ by the remaining ones. Moreover, in matrix form the differential on $T$ with respect to this splitting is of the form
	\begin{displaymath}
		\Phi= \left( \begin{array}{ccc} E & 0 & G \\ F  & \ast & \Psi \\ 0 & 0 & \ast \end{array}\right),
	\end{displaymath}
	\noindent where $E: U \rightarrow U$, $F: U \rightarrow \cI$, $G: \cO \rightarrow U$ are maps and where and $\Psi:\cO \rightarrow \cI$ is an isomorphism corresponding to a map $\operatorname{Id} + \psi$ upon identification of $\mathfrak{o}(i)$ with $\mathfrak{i}(i+1)$, cf.~Figure~\ref{FigureDoubleComplex}.  The components of $\psi^m$, $m \geq 1$, are the admissible paths of length $m$ of the form $\alpha_{\mathfrak{i}}(j) \alpha_{\mathfrak{o}}(j-1) \cdots \alpha_{\mathfrak{o}}(j-m)$, where $j \in J$ such that $\mathfrak{i}^+(i)=\mathfrak{o}(i)$ for $j-m < i < j$.  Since $\dim A < \infty$,  $\psi$ is nilpotent and hence $\Psi^{-1}=\sum_{m \geq 0} \psi^m$. This allows us to decompose the complex $T$ in the usual way (c.f.\ \cite[Lemma 2.1.]{AddendumCanakciPauksztelloSchroll}) as a direct sum of a contractible complex $(\cI \oplus \cO, \Psi)$ and another complex $T'=(U, E + G \Psi^{-1} F)$, cf.~Figure \ref{FigureDoubleComplex}.

	\begin{figure}
		\begin{displaymath}
			\begin{array}{lcr}
				{		
					\begin{tikzcd}[ampersand replacement=\&, column sep=1.5em]
						\bullet \arrow{r} \arrow[equal]{d} \& \cdots \arrow{r} \arrow[equal]{d} \& \bullet \arrow{r}{\gamma} \arrow[equal]{d} \& \scalebox{1.5}{$\circ$} \& \arrow{l} \cdots  \& \arrow{l} \bullet \\
						\bullet \arrow{r} \& \cdots \arrow{r} \& \scalebox{1.5}{$\circ$} \&\arrow[swap]{l}{\beta}  \bullet \arrow[equal]{d} \& \arrow{l} \cdots \arrow[equal]{d}  \& \arrow{l}  \arrow[equal]{d} \bullet \\
						\bullet \arrow[equal]{d} \arrow{r}{\varepsilon} \& \bullet \arrow{r}  \& \bullet \arrow{r}{\alpha}  \& \scalebox{1.5}{$\circ$} \& \arrow{l} \cdots \& \arrow{l} \bullet \\
						\scalebox{1.5}{$\circ$} \& \arrow{l}{\delta} \bullet \& \bullet \arrow{l} \&\arrow{l} \cdots
					\end{tikzcd}
				} & {\phantom{PP}} &
				{
					\begin{tikzcd}[ampersand replacement=\&, column sep=1.5em]
						\bullet \arrow{r} \arrow[equal]{d} \& \cdots \arrow{r} \arrow[equal]{d} \& \bullet \arrow[equal]{d} \& \color{red}{\scalebox{1.5}{$\circ$}} \& \arrow[red]{l} \color{red}{\cdots}  \& \arrow[red]{l} \color{red}{\bullet} \\
						\bullet \arrow{r} \& \cdots \arrow{r} \& \scalebox{1.5}{$\circ$} \&  \bullet \arrow[equal]{d} \& \arrow{l} \cdots \arrow[equal]{d}  \& \arrow{l}  \arrow[equal]{d} \bullet \\
						\bullet \arrow[equal]{d} \& \color{red}{\bullet} \arrow[red]{r}  \& \color{red}{\bullet} \arrow[dashed, red, crossing over, pos=0.1]{uur}{\alpha\beta\gamma}  \& \scalebox{1.5}{$\circ$} \& \arrow{l} \cdots \& \arrow{l} \bullet \\
						\scalebox{1.5}{$\circ$} \& \color{red}{\bullet} \arrow[dashed, red]{u}{\delta\varepsilon} \& \color{red}{\bullet} \arrow[red]{l} \&\arrow[red]{l} \color{red}{\cdots}
					\end{tikzcd}
				}
			\end{array}
		\end{displaymath}
		\caption{Example of a double complex and the splitting of its totalization. Each graph map on the left represents a homotopy letter of a homotopy string over $A^{\invex}$. The arrow $\beta$ corresponds to the only component of $\psi$ and $\psi^2=0$. The remaining components of $\Psi$ are contributed by the identities arising from the graph maps between the individual rows. The components of $G \Psi^{-1}F$ are the homotopy letters $\alpha \beta \gamma$ and $\delta \varepsilon$.}
		\label{FigureDoubleComplex}
	\end{figure} 
	
	 We note that the same approach works for band complexes as well: if $M=(V, \varphi)$ is the underlying indecomposable $k[t, t^{-1}]$-module of $T$, then $U$ may be written as $U' \otimes_k M$ and similarly for $\cI$ and $\cO$. In addition, the components of the differential are obtained by tensoring the components of the original differential with $\operatorname{Id}_V$ in all but one case and by $\varphi$ in the remaining one. In all cases, we thus conclude that $T'$ is a string or band complex, respectively and it is not difficult to see that its underlying walk is given by the reduction of
	\begin{displaymath}
		w_{|J|}^{(-1)^{|J|}} \cdots w_2 w_1^{-1}.
	\end{displaymath}
	\noindent Lastly, we consider the case of an exceptional string complex $P^{\invex} \xrightarrow{\alpha} P^{\invex}$ with $\alpha$ corresponding to the power of an irreducible polynomial $p$ with non-trivial constant coefficient. Using that $p$ has non-trivial constant term, a short calculation shows that the image under $\Ho^0(\Tw_{\pm} \cP^{\invex}) \rightarrow  \Ho^0(\Tw^{\Sigma}{\pm} \cP_A)$ (which is an infinite, one-sided twisted complex whose differentials consists of multiples of identity morphisms) is acyclic. Below we will see a more direct and conceptual proof of this fact.  We have shown that $\Phi$ maps non-exceptional indecomposable objects to string and band complexes up to isomorphism. It remains to show that every string and band complex over $A$ is the image of an indecomposable object under $\Phi$. 
	\\[0.5em]
	\textbf{Step 4: all string and band complexes lie in the image}:\; Let $\omega$ be a homotopy string or band over $A$. We may decompose  $\omega$ uniquely into finitely many pieces $\omega=w_1 \dots w_m$ subject to the condition that $w_i$ or its inverse is a (possibly infinite) antipath but neither $w_iw_{i+1}$ nor its inverse are antipaths. In particular, each arrow in a homotopy letter of $\omega$ belongs to distinct $w_i$. Now, each $w_i$ appears in the homotopy string $\omega_{i}$ of a unique simple module $S_i$ and if $w_i w_{i+1}$ or its inverse contain an admissible path $p$ of length $2$, then $p$ determines a map $\alpha_i$ between $S_{i}$ and $S_{i+1}$, or, equivalently, an admissible path of $A^{\invex}$. The collection of all $S_{x_i}$ and the maps $\alpha_i$ therefore determine a homotopy string (resp.\ band) of $A^{\invex}$ which is seen to be a preimage $P_{\omega}$ after choice of an appropriate grading. 	\\[0.5em]
	\textbf{Step 5: description of $\Dfd{A^{\invex}}$:}\; Because $A^{\invex}$ is homologically smooth, we have $\Dfd{A^{\invex}} \subseteq \Perf(A^{\invex})$ and because $\Dfd{A^{\invex}}$ is idempotent complete, it is sufficient to determine the string and band complexes which lie in it. First, we observe that exceptional string complexes lie in $\Dfd{A^{\invex}}$. For the general case, the analysis of thecohomology of string and band complexes (which is created at the ends of homotopy strings as well as wherever a direct homotopy letter is adjacent to an inverse homotopy letter in a homotopy string or homotopy band), shows that an indecomposable object has infinite-dimensional cohomology if and only if it is a string complex whose underlying homotopy strings starts or ends on a vertex whose associated projective is infinite-dimensional. In terms of curves this translates to the the assertion that if $X \in \Perf(A^{\invex})$ is an object corresponding to a curve $\gamma \subseteq S_A$, then $X \in \Dfd{A^{\invex}}$ if and only if $\gamma$ is either a closed curve or it is an arc none of whose end points are punctures. Every object of $\Dfd{A^{\invex}}$ is then a finite direct sum of such.\\[0.5em]
		\textbf{Step 6: the functor $\Phi:\Perf(A^{\invex}) \rightarrow \Perf(A^!) \simeq \cT$ is essentially surjective:}\; The computation of the pushforward functor $\Phi\colon\Perf(A^{\invex}) \rightarrow \Perf(A^!)$ in Step 2, shows that the subcategory $\Dfd{A^{\invex}}$ is mapped into the subcategory $\Dfd{A^!}\cap \Perf(A^!)$. The essential surjectivity will be a consequence of the following three main ingredients which we will show in subsequent steps.
	\begin{enumerate}
		\item $\Phi$ is equivalent to the completion functor of the formal completion of $\Perf(A^{\invex})$ along a thick subcategory $\cB$ in the sense of \cite{Efimov}.
		\item there exists an orthogonal decomposition $\Dfd{A^{\invex}}=\cB \oplus \cE$, where $\cE$ denotes the thick subcategory spanned by all exceptional string complexes and $\cB$ denotes the thick subcategory generated by all band complexes as well as the non-exceptional string complexes corresponding to arcs without end points in punctures.  Moreover, $\Dfd{A^!} \subseteq \Perf(A^!)$ and $\Phi$ restricts to an equivalence  $\cB \rightarrow \Dfd{A^!}$;
		\item The induced functor between the cosingularity categories
		\begin{displaymath}\begin{tikzcd}
				\Xi:\Perf(A^{\invex})/\Dfd{A^{\invex}} \arrow{r} & \Perf(A^!) /\Dfd{A^!},
			\end{tikzcd}
		\end{displaymath}
		\noindent is equivalent to an additive functor
		\begin{displaymath}\begin{tikzcd}
				\Xi:\prod_{B}\Perf\big(k(x_B)\big) \arrow{r} & \prod_{B} \Perf\big(k((x_B))\big),
			\end{tikzcd}
		\end{displaymath}
		\noindent where both products are indexed by the set of unmarked boundary components $B \subseteq S_A$ and where $k(x_B)$ and $k((x_B))$ denote the fields of fractions of $k[x_B]$, where $|x_B|=\omega(B)$, and its graded completion $k[[x_B]]$. Moreover, $\Xi$ maps the object $k(x_B)$ to $k((x_B))$. 
	\end{enumerate} 
Assuming all three for now, (1) implies that $\Xi$ exists. It then follows from (2) that $\Xi$ is essentially surjective. Indeed, the structure of the cosingularity categories implies that the underlying additive categories are products of semi-simple categories $\Perf(k(x_B))$ and $\Perf(k((x_B)))$ whose simple objects  are (up to isomorphism) given by 
\begin{displaymath}
k(x_B), \dots, k(x_B)[|x_B|-1] \,\text{ and }\, k((x_B)), \dots, k((x_B))[|x_B|-1].
\end{displaymath} Then essential surjectivity of $\Xi$ implies that for every $Y\in \Perf(A^!)$, there exists $X \in \Perf(A^{\invex})$ and a distinguished triangle
\begin{displaymath}
	\begin{tikzcd}
		\Phi(X) \arrow{r}{\alpha} & D \arrow{r} & Y \arrow{r} & \Phi(X)[1], 
	\end{tikzcd}
\end{displaymath}
with $D \in \Dfd{A^!}$. By (2) there exists $B \in \cB$ such that $D \cong \Phi(B)$. Then (1), the inclusion $\cB \subseteq \Perf(A^{\invex})$ and the isomorphism \cite[(4.4)]{Efimov} show that the canonical map $$\Hom{\Perf(A^{\invex})}^{\bullet}(X, B) \rightarrow \Hom{\Perf(A^!)}^{\bullet}(\Phi(X), \Phi(B))\cong \Hom{\Perf(A^!)}^{\bullet}(\Phi(X), D)$$ is an isomorphism. Hence, $\alpha=\Phi(\alpha')$ for some morphism $\alpha'$ in $\Perf(A^{\invex})$. Because $\Phi$ is triangulated, it follows that $Y$ is isomorphic to the image of the mapping cone of $\alpha'$ under $\Phi$. This shows that $\Phi$ is essentially surjective.

\textbf{Step 6 (1): the subcategory $\cB$ and formal completion:}\; We remind ourselves of the definition of the arc collections $\Pi$ and $\hat{\Pi}$ as well as the associated Fukaya category $\cF^{\invex}$ which we defined in the final paragraph of Step 1.  We also recall that the category  $\cB \subseteq \Dfd{A^{\invex}}$ is defined as the thick subcategory generated by the non-exceptional string complexes corresponding to arcs without end points in punctures and all band complexes. In other words, the object associated to a curve $\gamma$ lies in $\cB$ if and only if $\gamma$ is closed but neither an exceptional closed curve nor an arc which has an end point which is a puncture (unmarked component). Here, we recall from \cite[Section 4]{HaidenKatzarkovKontsevich} and in particular, \cite[Lemma 4.1]{HaidenKatzarkovKontsevich} that the map which associates to a curve on $S_A$ an object of $\Perf(A^{\invex})$ in the form of a string or band complex is independent from the chosen arc system. In our case this means that for each admissible curve $\gamma \subseteq S_A$ the canonical Morita equivalence $\Perf(\cF^{\invex}) \simeq \Perf(A^{\invex})$ maps the object in either category corresponding to $\gamma$  to the object in the other category, up to isomorphism. We claim that
\begin{enumerate}
	\item[I)]  $\cB$ coincides with the thick closure of the subcategory $\cF \subseteq \cF^{\invex}$ defined in Step 1, and,
	\item [II)] $\cB$ is orthogonal to every exceptional string complex.
\end{enumerate}
We note that II) is a consequence of I) and the straightforward calculation that each object of $\cF$ is orthogonal to any object $Z$ associated to every exceptional closed curve\footnote{To see this, one uses the fact that the differential of an exceptional string complex corresponds to a polynomial with \emph{non-zero} constant coefficient.} as $Z$ can be represented in the form of an exceptional string complex associated to $\pi_B$ for some unmarked component $B$ with vanishing winding number. Likewise, I) is a consequence of the fact that every admissible curve whose object lies in $\cB$ is obtained by concatenating arcs from $\Pi \subseteq \hat{\Pi}$ at their end points. Thus, we have shown that $\Dfd{A^{\invex}}$ is the orthogonal decomposition of $\cB$ and the thick subcategory $\cE$ generated by the exceptional string complexes. Because $\cB \simeq \Perf(\cF)\simeq \Perf(A)$, we further conclude that 
$\Phi$ agrees up to equivalence with the derived completion functor of the formal completion of $\Perf(A^{\invex})$ at the thick subcategory $\cB$ in the sense of \cite{Efimov}. It follows from \cite[Theorem 4.1]{Efimov} that $\Phi$ restricts to an equivalence of $\cB$ onto its image. Moreover, Proposition 4.3 in loc.~cit.~provides another proof that the exceptional string complexes lie in the kernel of $\Phi$. Altogether, we know so far that $\Phi(\cB) \subseteq \Perf(A^!)\cap \Dfd{A^!}$.\\[0.5em]
\textbf{Step 6 (2): $\Dfd{A^!} \subseteq \Perf(A^!)$ and equivalence $\cB \simeq \Dfd{A^!}$:}\; 
We already know that $\Phi$ restricts to an equivalence of $\cB$ onto its image in $\Perf(A^!)$ and we claim that this is, in fact, $\Dfd{A^!}$. To show this, we consider the pullback along $A^{\invex} \rightarrow A^!$ which induces a conservative functor $\Dfd{A^!} \rightarrow \Dfd{A^{\invex}}$. We note that every object in $\Dfd{A^!}$ and $\Dfd{A^{\invex}}$ is necessarily a finite direct sum of indecomposable objects because direct sums commute with cohomology. Let $Z \in \Dfd{A^{\invex}}$ be the pullback of an indecomposable object $X \in \Dfd{A^!}$. We claim that $Z \in \cB$. It is then sufficient to show that ech of its indecomposable direct summands $Z'$ lies in $\cB$. Indeed, suppose that $Z' \not \in \cB$, then because $\Dfd{A^{\invex}}=\cB \oplus \cE$, $Z'$ must be an exceptional string complex modeled by a complex $C=P^{\invex}_x \xrightarrow{\alpha} P^{\invex}_x$ with $\alpha$ corresponding to a polynomial $P \in k[t]\cong \End{}^{\bullet}(P^{\invex}_x)$ with non-zero constant term. Then it follows that the pullback of $Z$ to an object $\overline{Z} \in \Dfd{k[t]}$ along the inclusion $\{P^{\invex}_x\} \hookrightarrow \cP^{\invex}$ has a direct summand (the image  $\overline{Z}' \in \Dfd{k[t]}$ of $Z'$) which does not lie in the essential image of the pullback $\Dfd{k[[t]]} \rightarrow \Dfd{k[t]}$ along the inclusion $k[t] \hookrightarrow k[[t]]$. This is due to the fact  that all objects in the essential image of this pullback are isomorphic to finite direct sums of shifts of $k[t]$-modules on which $t$ acts nilpotently. On the other hand, $\overline{Z}'$ is also obtained as the pullback of $X$ along the inclusion $\{\widehat{P}^{\invex}_x\} \hookrightarrow \widehat{\cP}^{\invex}$ (which gives an object $\overline{X} \in \Dfd{k[[t]]}$) followed by the pullback along the algebra homomorphism $k[t] \hookrightarrow k[[t]]$. This yields a contradiction and it follows that $Z' \in \cB$, and hence $Z \in \cB$ since $Z'$ was arbitrary.
	
	Combining this with our previous observations, we have shown that the pullback functor restricts to a (conservative) functor $\Dfd{A^!} \rightarrow \cB$ which has a fully-faithful left adjoint, namely the restriction of $\Phi$ (the pushforward) to $\cB$. Thus $\Phi$ restricts to an equivalence $\cB \rightarrow \Dfd{A^!}$ with quasi-inverse given by the pullback. In particular, we have also shown that $\Dfd{A^!} \subseteq \Perf(A^!)$. 
	\\[0.5em]
		\textbf{Step 6 (3): the cosingularity categories of $\Perf(A^{\invex})$ and $\Perf(A^!)$:}\; We determine the structure of the cosingularity category of $\Perf(A^{\invex})$ first. Afterwards and via similar arguments, we prove the analogous result for the cosingularity category of the completion $\Perf(A^!)$.
		
		We begin with the observation that the endomorphism ring of $\pi_B$ in the Fukaya category $\cF^{\invex}$ is $k[x_B]$ and we henceforth identify polynomials in $x_B$ with endomorphisms of $\pi_B$. Furthermore, the mapping cone of every non-zero endomorphism $f$ of $\pi_B$  inside $\Perf(A^{\invex})$ belongs to $\Dfd{A^{\invex}}$ which implies that $f$ becomes invertible in the cosingularity category.
		
		  Next, we use the invertibility of $x_B$ in the cosingularity category to show that the objects $\pi_B$ and $\pi_{B'}$ become orthogonal in the cosingularity category whenever $B \neq B'$.  Let $X$ be an object in the thick closure of the arcs in the set  $\hat{\Pi} \setminus \{\pi_B\}$. We claim that there is an ideal of the form $(x_B^i) \subseteq k[x_B]\cong E\coloneqq\End{\Perf(A^{\invex})}^{\bullet}(\pi_B,\pi_B)$ which acts by zero on $\Hom{\Dfd{A^{\invex}}}^{\bullet}(X, \pi_B)$ via postcomposition. The claim is certainly true for $i=1$ when $X$ itself is an object corresponding to an arc of $\hat{\Pi} \setminus \{\pi_B\}$. To show that this extends to their whole closure, suppose we are given a distinguished triangle
		\begin{displaymath}
			\begin{tikzcd}
		U \arrow{r}{\alpha} &  X \arrow{r}{\beta} & V \arrow{r} & U[1]
		\end{tikzcd}
		\end{displaymath}
		in $\Perf(A^{\invex})$ such that $x_B^i$ acts by zero on the graded vector space $\Hom{\Perf(A^{\invex})}^{\bullet}(V, \pi_B)$ and $x_B^j$ acts by zero on $\Hom{\Perf(A^{\invex})}^{\bullet}(U, \pi_B)$ respectively. By passing to the maximum of the two, we may assume $i=j$. Let $f \in \Hom{\Perf(A^{\invex})}^{\bullet}(X, \pi_B)$ be homogeneous. Then $x_B^i\circ  (f\circ \alpha)=0$ by assumption and hence there exists $g \in \Hom{\Perf(A^{\invex})}(V, \pi_B)$ such that $g \circ \beta=x_B^i\circ f$. Now, the claim follows from $x_{B}^{2i} \circ f=(x_{B}^i \circ g) \circ \beta=0$. Assume now that $B \neq B'$ and that $\pi_B \xrightarrow{u} Z \xleftarrow{v} \pi_{B'}[i]$ is a roof which represents a morphism $\pi_B \rightarrow \pi_{B'}[i]$ in the cosingularity category so that the mapping cone of $u$ lies in $\Dfd{A^{\invex}}$. Then a power of $x_B$ acts by zero on $v$ as $Z$ belongs to the thick subcategory generated by $\hat{\Pi} \setminus \{\pi_{B'}\}$ (we recall that $\cB$ is generated by $\{\pi_x \mid x \in Q_0\}$). But every non-zero power of $x_B$ becomes invertible in the cosingularity category and hence $v$ must become zero in there too. Thus, $\pi_B$ and $\pi_{B'}$ become orthogonal in the cosingularity category and by the usual generation arguments, so do the thick subcategories of the cosingularity category generated by each of them.
		
		Next, we want to determine the graded endomorphism ring of $\pi_B$ inside the cosingularity category and show that it is of the form $k(x_B)$. Let $\pi_B \xrightarrow{u} Z \xleftarrow{v} \pi_B[i]$ be a roof representing such an endomorphism $\vartheta$ of degree $i$ so that there exists a distinguished triangle
		\begin{displaymath}
		\begin{tikzcd}
		D \arrow{r}{\alpha} & \pi_B \arrow{r}{u} & Z \arrow{r} & D[1] 
		\end{tikzcd}
		\end{displaymath}
		with $D \in \Dfd{A^{\invex}}$. Again, because $x_B^j $ becomes invertible in the cosingularity category, for any $j \geq 1$, we may replace $\alpha$ by $\alpha'\coloneqq x_B^j \circ \alpha$ and  $Z$ with the mapping cone $Z'$ of $\alpha'$ and hence the above roof by a roof $\pi_B \rightarrow Z' \leftarrow \pi_B$. But because $D \in \Dfd{A^{\invex}}$ and by our previous arguments, we know that $x_B^j \circ \alpha=0$ for $j \gg 0$, so that $Z'\cong\pi_B \oplus D$ inside $\Perf(A^{\invex})$ in this case. All this shows that we can represent $\vartheta$ by a roof of the form $\pi_B \rightarrow \pi_B \leftarrow \pi_B$. It follows that the endomorphism ring of $\pi_B$ in the cosingularity category is $k(x_B)$.

If $|x_B|=0$ and hence $k(x_B)$ is an ordinary field, it is well-known that $\Perf(k(x_B))$ is semi-simple. In the other cases, the fact that all non-zero elements of $k(x_B)$ are invertible still implies that every distinguished triangle in the cosingularity category splits and that every object in its thick closure is a direct sum of the objects $k(x_B), \dots, k(x_B)[|x_B|-1]$ since $k(x_B)\cong k(x_B)[|x_B|]$ via $x_B$. This shows that the cosingularity category of $\Perf(A^{\invex})$ has the shape as claimed at the  beginning of Step (6).

Finally, the case of the cosingularity category of $\Perf(A^!)$ is similar to the previous one. First, note that our description of $\Phi$ in Step 3 implies that $\Phi(\pi_B)$ is isomorphic to the one-sided infinite string complex associated to the curve $\pi_B$ on $S_A$. Its underlying homotopy string is of the form $\alpha_1 \cdots \alpha_l \alpha_1 \cdots$ where $\alpha_1 \dots \alpha_l$ is a cyclic closed anti-path. Due to the description of morphisms from Appendix \ref{AppendixMorphisms}, it follows now that $E^!\coloneqq\End{\Perf(A^!)}(\Phi(\pi_B))\cong k[[x_B]]$ and that for all $B' \neq B$, $\Hom{\Perf(A^{\invex})}(\pi_B, \pi_{B'})\cong \Hom{\Perf(A^!)}(\Phi(\pi_B), \Phi(\pi_{B'}))$ is an isomorphism and both spaces are finite-dimensional. Here, we used the fact that $\pi_B$ and $\pi_{B'}$ intersect only at a marked point but not at a puncture. Moreover, the mapping cone of $x_B$ inside $\Perf(A^!)$ is the image of the mapping cone of $x_B$ inside $\Perf(A^{\invex})$ and hence lies in $\Dfd{A^!}$ showing that the endomorphism ring of $\pi_B$ inside the cosingularity category of $\Perf(A^!)$ contains $k((x_B))$ as a subring due to the fact that $k((x_B))$ is a graded field which implies that every homomorphism of graded rings from $k((x_B))$ into any graded ring is injective.  
As previously for $\Perf(A^{\invex})$, one now shows that for each object $X$ in the thick closure of $\Phi(\gamma)$, $\gamma \in \hat{\Pi}\setminus\{\pi_B\}$, there exists a power of the ideal $(x_B) \subseteq E^!$ which acts by zero on $\Hom{\Perf(A^!)}(X, \pi_B)$. Finally, the same string of arguments as before then proves that the cosingularity of $\Perf(A^!)$ has the promised structure. This finishes the proof of (3) and concludes the proof that $\Phi$ is essentially surjective.\\[0.5em]
		\textbf{Step 7:  string and band complexes are indecomposable and decompositions are unique:}\; To start with, we note that all band complexes and all finite string complexes  are indecomposable with local endomorphism ring, since they constitute objects in $\Dfd{A^!}$ which is equivalent to a subcategory of $\Dfd{A^{\invex}}$ by Step 6. In particular, every object in $\Dfd{A^!}$ has an essentially unique finite decomposition into string and band complexes. We will now prove that infinite string complexes have local endomorphism ring from which indecomposability and uniqueness of decompositions follow. Let $X:=X_{\gamma}$ be such a string complex and its associated arc $\gamma$ and denote by $E=\End{}^{\bullet}(X)$ its graded endomorphism ring in the derived category. Let $U \subseteq E$ denote the subspace spanned by the linear combinations of basis elements with finite support, that is, which have a finite number of non-trivial components. Equivalently, $U$ consists of all linear combinations of basis elements associated to self-intersections of $\gamma$ which are different from punctures. In particular, $U$ is finite-dimensional as follows from the bijection between basis elements in $E$ and self-intersections of $\gamma$. It also implies that every element in $U$ has only finitely many non-trivial components. We claim that $U$ is a nilpotent ideal of $E$. The fact that a composition of morphisms $g \circ f$ has finitely many non-trivial components whenever $f$ or $g$ have only finitely many non-trivial components, shows that $U$ is an ideal. Let $f \coloneqq f_1 \circ \cdots \circ f_r \in U^r$ be a composition of basis elements $f_i$ in $U$. The following claim implies that $U$ is nilpotent: there exists a constant $N \geq 1$ such that for all $r \geq m N$ with $m \geq 1$, every component of any element $f \in U^r$ is an element in $\rad(A)^{m}$. Indeed, since $A$ is finite-dimensional, $\rad(A)$ is nilpotent which proves that $U$ is nilpotent. To prove our claim, let $\cG$ denote the quiver with the same underlying vertices as the string diagram of $X$ and which contains an arrow for each non-trivial component of some basis element in $U$ connecting the corresponding vertices and labelled with the underlying path in $A$ associated to this component. We define the label of a path in $\cG$ as the composition of the labels of its arrows in the obvious order. In particular, such labels (which are paths in $Q$) may be in $I$ and therefore corresponds to the zero element in $A$. By construction, the labels of the arrows of each path of length $r$ in $\cG$ with non-zero label $u$ uniquely determines a sequence of basis elements $f_1, \dots, f_r$ in $E$ whose composition $f=f_1 \circ \cdots f_r \in U^r$ has a component which, when written as a linear combination of admissible paths, contains $u$ with non-vanishing coefficient.
		
		 Likewise, if $u$ appears with non-vanishing coefficient in a linear combination of admissible paths describing a non-zero component of any composition $g_1 \circ \cdots \circ g_s$ of basis elements $g_i \in U$, then $g_1, \dots, g_s$ determines a path of length $s$ in $\cG$ with label $u$. Let $\mathfrak{F}$ denote the set of paths of length $r \geq 1$ in $\cG$ whose label is a trivial path. This set is partially ordered by the subpath relation.  We observe that $\mathfrak{F}$ cannot contain closed paths: if it did, then a multiple of $\operatorname{Id}_X$ would be a constituent of an element in $U^r$ which is impossible as $\operatorname{Id}_X$ has infinitely many non-trivial components. Because each path in $\mathfrak{F}$ belongs to a graph map with finite support, we further conclude $|\mathfrak{F}| < \infty$. We define $N-1$ as the maximal length of the paths in $\mathfrak{F}$. Then for $r \geq m N$, the label of every path in $\cG$ lies in $\rad(A)^{m}$ and it follows that every component of every element in $U^r$ lies in $\rad(A)^m$ as claimed.
		Next, we observe that for any given finite interval $J$ in the vertex set of $\cG$ (which is linearly ordered), there is a sufficiently high power such that for any infinite graph map associated to those endpoints of $\gamma$ which are punctures, has support outside of $J$. It follows that these graph maps act nilpotently on $U$ and therefore that the maximal two-sided ideal $\mathfrak{m}$ generated by all standard basis elements in $E$ except the identity, acts nilpotently on $U$. This in turn shows that $E$ is complete with respect to the $\mathfrak{m}$-adic topology. Now, one can show as in the case of the power series ring, that every element $f \in \operatorname{Id}_X+(\mathfrak{m} \cap E^0)$ is invertible with well-defined inverse
		\begin{displaymath}
			f^{-1} = \sum_{i=0}^{\infty}(\operatorname{Id}_X-f)^{i}.
		\end{displaymath}
In particular, $E^0$ is local and $X$ is indecomposable.
		\end{proof}

\section{Auslander-Reiten theory for band complexes}\label{AppendixARTheoryBands}
\noindent Throughout this section, we assume that $k$ is an arbitrary field. For any finite-dimensional graded gentle algebra $A=kQ/I$, we describe the Auslander-Reiten triangles in $\Perf(A)$ which involve a band complex. The result is analogous to the ungraded case, however, the available proofs in the literature do not seem to generalize to the graded setting.  The idea of our proof is borrowed from the functorial filtration approach in \cite{ButlerRingel} and adapts some of the results in \cite{Bennett-Tennenhaus} to the graded setting. In general terms, for every graded homotopy band $\zeta$ over $A$, one aims to find a pair of additive functors
\begin{displaymath}
	\begin{tikzcd}
		H^0(\Tw_{\operatorname{min}}A) \arrow[shift left=0.5em]{rr}{G_{\zeta}} & & \arrow[shift left=0.5em]{ll}{F_{\zeta}} \Vectgr
	\end{tikzcd}
\end{displaymath}
\noindent between the homotopy category of minimal finite one-sided twisted complexes over the category $\cP_A$ (see Appendix \ref{AppendixIndecomposableObjectsGraded}) and the category of graded modules over $k[t,t^{-1}]$, that is, pairs $(V, \varphi)$ consisting of a graded vector space equipped with a degree-preserving automorphism $\varphi$. Under suitable assumptions and as shown in \cite{ButlerRingel}, the functor $F_{\zeta}$, maps the Auslander-Reiten sequences  in $\Vectgr$ to Auslander-Reiten triangles in $H^0(\Tw_{\operatorname{min}}A) \simeq \Perf(A)$. We start with the description of $F_{\zeta}$.  Given $X=(V, \varphi)$, its $n$-th homogeneous component  $(V^n, \varphi|_{V^n})$ is naturally a $k[t,t^{-1}]$-module. Set  $F_{\zeta}(X)=\bigoplus_{n \in \mathbb{Z}}\P_{\zeta, (V^n, \varphi|_{V^n})}[-n]$. The definition is then extended to a functor in the natural way. 

To describe $G_{\zeta}$, we recall from \cite{CrawleyBoevey} that a \textbf{relation} between vector spaces $V$ and $W$ is a subspace of $C \subseteq V \oplus W$. Its inverse $C^{-} \subseteq W \oplus V$ is the subspace obtained from $C$ under the isomorphism $V \oplus W \cong W \oplus V$. If $D \subseteq U \oplus V$ and $C \subseteq V \oplus W$ are relations, then their \textbf{composition} $CD \subseteq U \oplus W$ is the space $CD=\{(u, w) \, | \, \exists v \in V: (u,v) \in D, (v,w) \in C\}$. If $C$ is a relation between $V$ and itself, we write $C^m$ for its $m$-fold composition and if $R \subseteq V$ is a subspace, we denote by $C^mR$, the subspace of all $(v,v') \in C^m$ such that $v \in R$. We write $C^mv$ as shorthand for the case $R=<v>$.

The following example will be important to us. Let $\sigma$ be homotopy letter. Assume  $T=\oplus_{x \in Q_0} P_x \otimes V_x$ is a twisted complex with differential $d=\sum_{q}d_q$, $d_q=q \otimes \psi_q: P_{s(q)} \otimes V_{s(q)} \rightarrow P_{t(q)} \otimes V_{t(q)}$ with $q$ indexed by the admissible paths in $A$ and $\psi_q$ is a graded linear map of degree $1$. Then, we set $V=\bigoplus_{x}V_x$ and $C_\sigma=\{(v,\varphi_\sigma(v)) \in V \oplus V \}$ if $\sigma$ is direct. Otherwise, set $C_\sigma= C_{\sigma^{-1}}^{-1}$. By composition, every homotopy band $\zeta=\sigma_1 \cdots \sigma_m$ induces a relation $C_{\zeta}=C_{\sigma_1} \cdots C_{\sigma_m}$ on $V$. Note that, since $\zeta$ is closed and $\omega(\zeta)=0$, $C_{\zeta}$ is \textit{graded} in the sense that $(u,v) \in C_{\zeta}$ if and only if $(u_x^i, v_x^i) \in C_{\zeta}$ for all $x \in Q_0$, $i \in \mathbb{Z}$, where $\ast_x^i$ denotes the $i$-th homogeneous component of $\ast$ in the graded vector space $V_x$.

For any relation $C$ on a finite-dimensional vector space $V$, consider the auxiliary subspaces $C''=\bigcap_{ m \geq 0}C^mV$ and $C'=\bigcup_{m \geq 0}C^m0$ of $V$. Then, set
\begin{displaymath}
	\begin{array}{ccc}C^{\sharp}= C^{\prime \prime} \cap \left(C^{-1}\right)^{\prime \prime}	& \text{ and } & C^{\flat}= \left(\left(C^{-1}\right)^{\prime} \cap C^{\prime \prime}\right) + \left(C^{\prime} \cap \left(C^{-1}\right)^{\prime \prime}\right).
	\end{array}
\end{displaymath}
\noindent The definitions are invariant under the passage from $C$ to its inverse. Moreover, $C^{\flat} \subseteq C^{\sharp}$  and by \cite[Lemma 4.5.]{CrawleyBoevey}, $C$ induces an automorphism $\theta$ on the quotient $V_C=C^{\sharp}/ C^{\flat}$  defined by the requirement that $\theta(v + C^{\flat}) = w + C^{\flat}$ if and only if $w \in C^{\sharp} \cap (C^{\flat } + Cv)$. Note that if $C$ is graded in the above sense, then so are all its powers and hence $(V_C, \theta) \in k[t,t^{-1}]\operatorname{-grmod}$. If $C=C_{\zeta}$, we write $V_\zeta$ for $V_{C}$. Note that $V_y \subseteq C^{\flat}$ for all $y \neq s(\zeta)$ and $V_{\zeta}$ is identified with a subquotient of $V_{s(\zeta)}$. The construction of the pair $(V_{\zeta}, \theta)$ is naturally extended to a functor from the cocycle category  (whose morphisms are chain maps)   $\tilde{G}_{\zeta}: C^0(\Tw_{\operatorname{min}} A) \rightarrow \Vectgr$. Indeed, given a chain map $f:T \rightarrow T'$, it decomposes into components $f=\sum_{q}f_q$ (see $d$ above) and $\tilde{G}_{\zeta}(f)$ is induced by $f_{e_{s(\zeta)}}$. Since $T$ and $T'$ are minimal, $\tilde{G}_{\zeta}$ descends to a functor $G_{\zeta}: H^0(\Tw_{\operatorname{min}} A) \rightarrow \Vectgr$. 

After unravelling the definitions, we observe two important properties of $F_{\zeta}$ and $G_{\zeta}$.
\begin{enumerate}
	
	\item $G_{\zeta}(\P_{u, \mu})=0$, unless $u$ is a homotopy band in the equivalence class of $\zeta$.
	\item $G_{\zeta} \circ F_{\zeta} \simeq \operatorname{Id}_{\Vectgr}$.
\end{enumerate}
We are now prepared to prove the following.

\begin{proposition}
	Every Auslander-Reiten triangle in $\Perf(A)$ which involves a band complex is induced by an Auslander-Reiten sequence in $k[t,t^{-1}]-\operatorname{mod}$. In particular, for every $n \geq 1$, $\lambda \in k^{\times}$, there exists an Auslander-Reiten triangle of the form
	\begin{displaymath}
		\begin{tikzcd}
			\P_{\zeta, J_n(\lambda)} \arrow{r} & 	\P_{\zeta, J_{n+1}(\lambda)} \oplus \P_{\zeta, J_{n-1}(\lambda)} \arrow{r} & \P_{\zeta, J_n(\lambda)} \arrow{r} & \P_{\zeta, J_n(\lambda)}[1]. 
		\end{tikzcd}
	\end{displaymath}
	\noindent Here, $\P_{\zeta, J_{n-1}(\lambda)}=0$ if $n=1$.
\end{proposition}
\begin{proof}
	Let $\zeta$ be a homotopy band. Suppose $f: F_{\zeta}(U) \rightarrow F_{\zeta}(V)$ is a chain map and $G_{\zeta}(f)$ is an isomorphism. Our first claim is that $f$ must be an isomorphism. We may assume $U=V$ and write $f=\sum_{q}f_q$, $f_q= q \otimes \varphi_q \in \Hom{A}(P_{s(q)}, P_{t(q)}) \otimes_k \Hom{}^{-|q|}(U, U)$ with the sum being indexed by the admissible paths of $(Q,I)$. $V_{x}$ decomposes as $\bigoplus_{1 \leq i \leq m: s(\sigma_i)=x}U_i$ for copies $U_i$ of $U$ and we can express $G_{\zeta}(f)$ as $f_{e_{s(\sigma_1)}}|_{U_1}$. Since $f$ is a chain map, we infer that also $f_{e_{s(\sigma_2)}}|_{U_2}$ is an isomorphism. After induction we conclude that for each $x \in Q_0$, either $V_x=0$ or $f_{e_x}$ is an isomorphism. Since $A$ is finite dimensional, $\Psi=\sum_{q: l(q)> 0}f_q$ is nilpotent and consequently, $f=\sum_{x}f_{e_x} + \Psi$ is an isomorphism.\newline
	\phantom{PP} Next, consider the induced functor $\Tw F_{\zeta}: \Tw k[t,t^{-1}] \rightarrow \Tw \left(\Tw_{\operatorname{min}}A\right)$. Postcomposition with the totalization functor $\Tw \left(\Tw A\right) \rightarrow \Tw A$ (which is a quasi-equivalence) yields a dg functor $\hat{F}: \Tw k[t,t^{-1}] \rightarrow \Tw A$ with image in $\Tw_{\operatorname{min}} A$ which descends to an exact functor on the homotopy categories. In particular, every short exact sequence $A \hookrightarrow B \twoheadrightarrow C$ in $k[t,t^{-1}]\operatorname{-mod}$ gives rise to a distinguished triangle $F_{\zeta}(A) \rightarrow F_{\zeta}(B) \rightarrow F_{\zeta}(C) \rightarrow F_{\zeta}(A)[1]$ in $\Perf(A)$. Thus, by properties (1) and (2) above as well as our first claim in this proof, it follows from \cite[Lemma, p.164]{ButlerRingel}  that $F_{\zeta}$ preserves irreducible maps and indecomposability.  Thus, it maps Auslander-Reiten sequences in $k[t,t^{-1}]-\operatorname{mod}$ to Auslander-Reiten triangles in $\Perf(A)$. 
\end{proof}

\section*{Acknowledgements}
We would like to thank Nathan Broomhead for pointing out the paper \cite{HaidenKatzarkovKontsevich} to us, and Rosie Laking and David Pauksztello for discussions about morphisms in the derived category of a gentle algebra.
We are grateful to Bernhard Keller for encouraging us to extend an earlier version of the paper to a graded setting,
and to Igor Burban for his comments on an earlier version of the paper. 
We further thank Martin Kalck and Joe Karmazyn for helpful discussions. 
We also thank the anonymous referee for their remarks encouraging us to generalise the paper to the graded case, leading to a greatly expanded paper.
%%%

\end{document}